\theoremstyle{plain}                            
\newtheorem{stelling}{Theorem}
\newtheorem{gevolg}[stelling]{Corollary}
\newtheorem{lemma}[stelling]{Lemma}
\newtheorem{prop}[stelling]{Proposition}
\theoremstyle{definition}
\newtheorem{definition}[stelling]{Definition}
\newtheorem{opm}[stelling]{Remark}
\title{CM-values of $p$-adic $\Theta$-functions}
\author{Michael A. Daas}
\date{\vspace{-5ex}}
\begin{document}

\maketitle

\begin{center}
\textbf{Abstract}
\end{center}
\noindent
We prove a $p$-adic version of the work by Gross and Zagier \cite{GZ} on the differences between singular moduli by proving a set of conjectures by Giampietro and Darmon \cite{Giam}, who investigated the factorisation of a rational invariant associated to a pair of CM-points on a genus zero Shimura curve, obtained as the ratio of the CM-values of $p$-adic $\Theta$-functions. As did Gross and Zagier, we give two proofs; an algebraic proof using CM-theory, and more interestingly, also an analytic proof using $p$-adic infinitesimal deformations of Hilbert Eisenstein series in the style of \cite{DPV1, DPV2}. Since there are no explicit formulae for its cuspidal $p$-adic deformations, we instead compute the Frobenius traces of the appropriate Galois deformation, and show their modularity via an $R = T$ theorem. This approach aims to bridge the gap between classical CM-theory and the more recent $p$-adic advances in the theory of real multiplication.

\setcounter{tocdepth}{2}
\tableofcontents

\newpage

\section{Introduction}

In their paper \cite{GZ}, Gross and Zagier studied the differences between singular moduli, which are the CM-values of Klein's $j$-function. For example,
\begin{equation}
j \left( \frac{1 + \sqrt{-43}}{2} \right) - j \left( \frac{1 + \sqrt{-163}}{2} \right) = 2^{19} \cdot 3^6 \cdot 5^3 \cdot 7^3 \cdot 37 \cdot 433. \label{GZex}
\end{equation}
Aside from this number being rather smooth, one may observe that all its prime divisors are inert in both $\mathbb{Q}(\sqrt{-43})$ and $\mathbb{Q}( \sqrt{-163} )$. More precisely, all of these primes occur as the factor of a number of the form $43 \cdot 163 - x^2$ for some $|x| < \sqrt{43 \cdot 163}$, as the equality $43 \cdot 163 - 9^2 = 16 \cdot 433$ exemplifies. These patterns persist when repeating the experiment with other, possibly non-rational singular moduli if one takes the norm down to $\mathbb{Q}$. This paper studies factorisation phenomena that display a parallel with the observations made and subsequently fully explained by Gross and Zagier in \cite{GZ}.

The following notation will be used throughout. Fix two imaginary quadratic fields $K_1$ and $K_2$ with rings of integers $\mathcal{O}_1$ and $\mathcal{O}_2$ respectively. Let $D_1, D_2 < 0$ denote their discriminants and assume that they are coprime. We write $w_i = \# \mathcal{O}_i^{\times}$ for $i \in \{ 1, 2 \}$ and for any subset $S \subset F$, we let $S^+ \subset S$ denote the subset of totally positive elements of $S$. Write $D = D_1D_2$ and let $F = \mathbb{Q}( \sqrt{D} )$ be the real quadratic field and $L = \mathbb{Q}( \sqrt{D_1}, \sqrt{D_2})$ be the biquadratic field completing the following field diagram:
\begin{figure}[h]
\centering
 \begin{tikzpicture}[scale=0.6]

    \node (Q1) at (0,0) {$\mathbb{Q}$};
    \node (Q2) at (2,2) {$K_2$};
    \node (Q3) at (0,4) {$L$};
    \node (Q4) at (-2,2) {$K_1$};
    \node (Q5) at (0,2) {$F$};

    \draw (Q1)--(Q2); 
    \draw (Q1)--(Q4); 
    \draw (Q3)--(Q4);
    \draw (Q2)--(Q3);
    \draw (Q1)--(Q5);
    \draw (Q5)--(Q3) node [pos=0.5, right] {$\chi$};
    \end{tikzpicture}
\end{figure}

As the field extension $L/F$ is unramified at all finite places, it naturally induces a genus character $\chi \colon \text{Pic}(F)^+ \to \{ \pm 1 \}$. We let $\mathcal{D}_F$ denote the different ideal of $F$ and we define $$\rho(I) \colonequals \# \left\{ J \subset \mathcal{O}_L \mid \text{Nm}^L_F(J) = I \right\}$$ for any ideal $I \subset \mathcal{O}_F$. Finally, for any number field $M / \mathbb{Q}$ we let $G_M \colonequals \text{Gal}( \overline{\mathbb{Q}} / M )$ denote its absolute Galois group and $\mathcal{H}$ denotes the complex upper half plane. 

\subsection{The case of modular curves}

Suppose that $m$ is an integer supported at primes that are not inert in $F / \mathbb{Q}$. Assume that there is a unique prime $\ell$ both dividing $m$ an odd number of times, say $2k+1$ times, and with the additional property that any prime ideal $\mathfrak{l}$ of $F$ above $\ell$ satisfies $\chi( \mathfrak{l} ) = -1$. Further, let $\{ c_i \}$ be the set of exponents of primes dividing $m$ that split completely in $L$. Then we set
\[
F(m) = \ell^X \quad \text{where} \quad X = (k+1)\prod (c_i+1),
\]
and simply $F(m) = 1$ for all other $m \in \mathbb{Q}$. Finally, let $\tau_1, \tau_2 \in \mathcal{H}$ be CM-points of discriminants $D_1$ and $D_2$ respectively. Then Gross and Zagier proved in \cite{GZ} that
\begin{equation}
\text{Nm}_{\mathbb{Q}} \big( j( \tau_1 ) - j( \tau_2 ) \big)^{\frac{8}{w_1w_2}} = \pm \prod_{\substack{x^2 < D \\ x^2 \equiv D \text{ mod } 4}} F\left( \frac{D-x^2}{4} \right). \label{GZeq}
\end{equation}
Gross and Zagier gave two proofs of this formula, and the dissimilarities between these proofs cannot be overstated; whereas one made use of CM-theory, the other considered the diagonal restriction of a family $E_{1,\chi}(s)$, indexed by a complex parameter $s$, specialising to the non-holomorphic parallel weight $(1,1)$ Hilbert Eisenstein series attached to the character $\chi$, explicitly defined by
\begin{equation}
E_{1, \chi}(z_1,z_2) = \sum_{ \nu \in \mathcal{D}_F^{-1, +} } \bigg( \sum_{ I \mid (\nu) \mathcal{D}_F } \chi(I) \bigg) q^{\sigma_1(\nu)z_1 + \sigma_2(\nu)z_2} = \sum_{ \nu \in \mathcal{D}_F^{-1, +} } \rho( \nu \mathcal{D}_F ) q^{\sigma_1(\nu)z_1 + \sigma_2(\nu)z_2}, \label{HilEisdef}
\end{equation}
where $\sigma_1, \sigma_2 \colon F \to \mathbb{R}$ denote the two real embeddings of $F$. Even though this function vanishes identically, a fact which has historically been referred to as \emph{Hecke's sign error}, the family $E_{1,\chi}(s)$ is highly non-trivial and proved to be of great arithmetic importance. More precisely, they studied its first derivative with respect to the weight-parameter $s$, which must be a real analytic modular form of weight two for $\text{SL}_2(\mathbb{Z})$. One then applies the holomorphic projection operator $e^{\text{hol}}$ to conclude that
\begin{equation}
e^{\text{hol}}\left( \frac{d}{ds} \Delta E_{1,\chi}(s)\Big|_{s = 0} \right) \in M_2( \Gamma_0(1) ) = \{ 0 \}, \label{1linepf}
\end{equation}
where $\Delta$ denotes the diagonal restriction operator. On the other hand, it is possible to explicitly compute the Fourier coefficients of the expression on the left hand side, which with some careful analysis split up in two terms; one equal to the logarithm of the norm of $j(\tau_1) - j(\tau_2)$, and the other to the explicit formula that was to be proved. Notably, this proof does not use any CM-theory whatsoever.

The work \cite{GZ} of Gross and Zagier sparked further investigations that can be found in \cite{GZ2} and \cite{GKZ}. The former gave a relation between the heights of Heegner divisor classes on the Jacobian of modular curves and the first derivatives at $s = 1$ of the L-series of certain modular forms. The latter computed the height pairings of two distinct Heegner divisor classes to show that related quantities can be suitably combined to form the Fourier coefficients of a Jacobi form. These results expressed a strong analogy with the work of \cite{HZ}, which computed the intersection numbers of certain modular curves on Hilbert modular surfaces and related these to the coefficients of a weight 2 modular form. 

Later, by varying one of the discriminants instead, the heights of Heegner cycles were also shown in \cite{KRY1} to be connected to the derivative of a weight 3/2 Eisenstein series for $\text{SL}_2( \mathbb{Z} )$. The Kudla program aims to study the arithmetic properties of the first derivative certain Eisenstein series and to connect these with a specific class of arithmetic cycles and the special values of certain L-functions. Another relevant instance of a result in this direction can be found in \cite{schofer}.

In view of these results, the work \cite{GZ} can be regarded as the $X_0(1)$-case of the work done in \cite{GZ2} and \cite{GKZ}, the height pairing on whose Jacobian vanishes by virtue of the curve being of genus zero. Similarly, our main theorems will reflect the results in \cite{GZ} and we explain in Remark \ref{main} below how this result is to be interpreted in a more general framework as is done above.

\subsection{The case of Shimura curves}

Ever since the results of Gross and Zagier in \cite{GZ}, people have searched for generalisations of these kinds of factorisation phenomena. One place for such investigations has been the arithmetic of Shimura curves. Choose some $N \in \mathbb{N}$ and write $B_N$ for the quaternion algebra over $\mathbb{Q}$ with discriminant $N$. Assuming that $B_N$ is indefinite, it has a maximal order $R_N$ that is unique up to conjugation. Choosing a splitting $B_N \to M_2( \mathbb{R} )$, the subgroup $R_{N,1}^{\times} \subset R_N^{\times}$ consisting of all elements of unit norm can be regarded as acting on the complex upper half plane $\mathcal{H}$. The quotient $X_N = R_{N,1}^{\times} \setminus \mathcal{H}$ is compact and called a Shimura curve of level $N$. It has a model defined over $\mathbb{Q}$ and the Atkin-Lehner group $W_N$, generated by commuting involutions $w_r$ for every rational prime $r \mid N$, acts on it naturally. If $N \in \{ 6, 10, 22 \}$, the curve $X_N$ is of genus 0, and as such, its function field is generated by some function $j_N$. In contrast to the modular curve case, there is no cusp that we may use to normalise $j_N$ in a natural way. As such, there is no canonical choice for this function.

If all primes dividing $N$ are inert in $K_i$ for some $i \in \{ 1, 2 \}$, we can find (optimal) embeddings $\mathcal{O}_i \to R_N$ and for each such embedding, there is a unique point $P_i$ in $\mathcal{H}$ fixed by the image of the embeddings under our splitting $B_N \to M_2( \mathbb{R} )$, called the CM-point associated with the embedding. By Shimura's reciprocity law, as explained on the first pages of \cite{shim67}, the value $j_N(P_i)$ for a point $P_i \in X_N$ with complex multiplication by $\mathcal{O}_i$ is defined over the Hilbert class field $H_i$ of the field $K_i$. 

Elkies in \cite{elkies} numerically computed the CM-values for certain choices of a generator of the function field of certain Atkin-Lehner quotients of $X_N$, but not all values could be proved. However, the apparent smoothness of the resulting numbers did not go unnoticed. Using the theory of Borcherds lifts, Errthum in \cite{errthum} was able to prove the correctness of many of Elkies's computations, but no general conjectures as to the general structure of the values were posed. Some further explicit computations for particular choices of the generator of the function field can be found in \cite{voightshim} and more general rational points on Atkin-Lehner quotients are studied in \cite{clarkthesis}.

Instead of choosing a function $j_N$, one may observe that the cross-ratio of its values is well-defined and independent of any choices. We recall that for any distinct $x,y,z,w$ in some field, the cross-ratio is defined as
\[
[x,y,z,w] \colonequals \frac{z-x}{z-y} \cdot \frac{w-y}{w-x}.
\]
In 2022, Giampietro and Darmon in \cite{Giam} conducted extensive numerical computations with the quantities
\[
\frac{j_N(P_1) - j_N(P_2)}{j_N(P_1') - j_N(P_2)} \cdot \frac{j_N(P_1') - j_N(P_2')}{j_N(P_1) - j_N(P_2')},
\]
where for a CM-point $P$ on the curve $X_N$, we write $P' \colonequals w_p(\text{Frob}_p( P ))$ where $\text{Frob}_p$ denotes Frobenius at $p$ in the CM-field of definition for $P$. For example, Section 5 in \cite{Giam} elaborates on the example of $N = 6$, $D_1 = -43$ and $D_2 = -163$, in which it is computed that
\[
\text{Nm}^F_{\mathbb{Q}} \left[ \frac{j_N(P_1) - j_N(P_2)}{j_N(P_1') - j_N(P_2)} \cdot \frac{j_N(P_1') - j_N(P_2')}{j_N(P_1) - j_N(P_2')} \right] = \left( \frac{2 \cdot 29 \cdot 257 \cdot 277}{73 \cdot 137 \cdot 241} \right)^2.
\]
In parallel with Equation \ref{GZex}, one can check that all primes that occur on the right hand side are inert in both $K_1$ and $K_2$. More strongly, they are even prime divisors of a number of the form $43 \cdot 163 - x^2$ for some $|x| < \sqrt{43 \cdot 163}$, as the equality $43 \cdot 163 - 19^2 = 24 \cdot 277$ exemplifies. In fact, in this case, the authors did conjecture a general formula for this quantity. If we let $\{ a, -a, b, -b \}$ denote the four square roots of $D = D_1D_2$ modulo $2N$, and define
\[
\delta(x) = \begin{cases} +1 &\text{if } x \equiv \pm a \mod 2N; \\ -1 &\text{if } x \equiv \pm b \mod 2N,   \end{cases}
\]
then the following was conjectured in \cite{Giam}.
\begin{stelling}\label{giamconj1}
For any pair of embeddings $\mathcal{O}_i \to R_N$ for $i \in \{ 1,2 \}$, it holds that
\[
\emph{Nm}^{H_1H_2}_{\mathbb{Q}} \left[ \frac{j_N(P_1) - j_N(P_2)}{j_N(P_1') - j_N(P_2)} \cdot \frac{j_N(P_1') - j_N(P_2')}{j_N(P_1) - j_N(P_2')} \right]^{\frac{ \pm 2}{w_1w_2}} = \pm \prod_{\substack{x^2 < D \\ x^2 \equiv D \emph{ mod } 4N}} F\left( \frac{D-x^2}{4N} \right)^{\delta(x)}.
\]
\end{stelling}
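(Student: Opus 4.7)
The plan is to give two proofs mirroring Gross-Zagier's dual approach. The starting point for both is to reinterpret the cross-ratio on the left hand side as a CM-value of a $p$-adic $\Theta$-function: for each rational prime $p \mid N$, the Cerednik-Drinfeld uniformization realises $X_N$ as a Mumford curve over $\mathbb{Q}_{p^2}$ attached to the definite quaternion algebra of discriminant $N/p$, and in this model any cross-ratio of values of a Hauptmodul $j_N$ is naturally a ratio of $p$-adic theta values at CM-points. The identity in Theorem \ref{giamconj1} may therefore be established prime-by-prime: primes $p \nmid N$ are handled by the good-reduction supersingular counting of \cite{GZ}, while primes $p \mid N$, where $X_N$ has bad reduction and the cross-ratio genuinely becomes a $p$-adic $\Theta$-value, constitute the truly new input.

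For the analytic proof at a prime $p \mid N$, the plan is to import the strategy of \cite{DPV1, DPV2}. Beginning from the Hilbert Eisenstein family $E_{1,\chi}(s)$ deforming \eqref{HilEisdef}, I would take its diagonal restriction $\Delta E_{1,\chi}(s)$, differentiate at $s = 0$, and aim to produce a $p$-adic modular form of weight $2$ whose Fourier coefficients simultaneously encode the logarithm of the $p$-adic $\Theta$-value measuring the cross-ratio and the logarithm of the target product $\prod F\bigl((D-x^2)/(4N)\bigr)^{\delta(x)}$. The sign $\delta$ and the congruence $x^2 \equiv D \bmod 4N$ should emerge from the parity of ideals controlled by the genus character $\chi$ together with local data at primes dividing $N$, matching the way the two square-root classes $\pm a$ and $\pm b$ of $D$ modulo $2N$ split across the biquadratic extension.

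The main obstacle, and principal novelty compared to \cite{GZ}, is that the $p$-adic analogue of the holomorphic projection used in \eqref{1linepf} does not admit an explicit closed formula in the cuspidal direction. I plan to bypass this by working on the Galois side: construct the appropriate deformation of the reducible two-dimensional representation $\mathrm{Ind}_F^{\mathbb{Q}} \chi$ attached to $E_{1,\chi}$, compute the Frobenius traces of the deformed representation directly from arithmetic data on $F$, and invoke a Taylor-Wiles style $R = T$ theorem to conclude that these traces are the Hecke eigenvalues of an honest overconvergent $p$-adic cusp form. Formulating the correct deformation problem, verifying the modularity hypotheses in this ramified setting, and carrying out the Frobenius trace computation will be the most delicate technical steps. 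In parallel, the algebraic proof would follow the second argument of \cite{GZ}: count optimal embeddings of the orders $\mathcal{O}_i$ into the local orders arising from the integral model of $X_N$ at each prime, with $\chi$ and $\delta$ appearing naturally from the decomposition behaviour of primes of $F$ in $L/F$.
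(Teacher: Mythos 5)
Your high-level plan---two proofs mirroring \cite{GZ}, one via CM-theory and one via $p$-adic deformation theory plus an $R=T$ theorem---matches the architecture of the paper, but the way you assemble the proof of Theorem \ref{giamconj1} diverges substantially from the paper's argument and, as stated, would break down. The paper does \emph{not} establish Theorem \ref{giamconj1} prime-by-prime via a hybrid of supersingular counting at $\ell \nmid N$ and $p$-adic uniformization at $p \mid N$. Instead Theorem \ref{giamconj1} is proved uniformly and purely algebraically in Section \ref{proof1}: one works on the integral moduli stack of false elliptic curves, invokes Phillips's Arkelov degree formula (Theorem \ref{philthm}) for the refined intersection stacks $\mathcal{X}_{\vartheta,\nu}$---a single statement that covers $r \nmid N$ and $r \mid N$ at once---and then translates Arkelov degrees into valuations of differences of $j_N$-values via intersection theory (Corollaries \ref{wschemes}, \ref{ringlen}) and Shimura reciprocity. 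Your suggestion to handle $\ell \nmid N$ via ``the good-reduction supersingular counting of \cite{GZ}'' will not work: $X_N$ is a Shimura curve, not a modular curve, and what is required is a count of supersingular false elliptic curves (QM abelian surfaces) endowed with two CM-structures. That count is precisely the content of \cite{anphil} and is genuinely harder than the elliptic case; it is the essential algebraic input you elide when you write ``count optimal embeddings \dots at each prime.''

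The analytic strand of your proposal is closer to the paper but is imprecise about how Theorems \ref{giamconj1} and \ref{giamconj2} relate. You present the Cerednik-Drinfeld picture as a way to localise Theorem \ref{giamconj1} at a prime $p \mid N$, but the quantity $\Theta(D_1,D_2)/\Theta_p(D_1,D_2)$ is a single $p$-adic number whose \emph{full value} (not merely its $p$-adic valuation) already reproduces the entire right-hand side, including the factors at primes $\ell \neq p$. This is why the paper formulates $\Theta(D_1,D_2)/\Theta_p(D_1,D_2)$ as a logically independent statement, Theorem \ref{giamconj2}, and proves it analytically; passing back to Theorem \ref{giamconj1} is a global Cerednik-Drinfeld/Galois-descent identification of the two left-hand sides (following \cite{Giam}), not a local contribution to a prime-by-prime factorisation. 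Your description of the deformation-theoretic engine itself---nearly ordinary deformations of $\mathbbm{1} \oplus \chi$, modularity via $R=T$, Frobenius traces as Fourier coefficients---does capture the key novel ideas, but bear in mind that this machine proves Theorem \ref{giamconj2}, whereas Theorem \ref{giamconj1} stands on the CM/Arkelov argument alone.
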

The similarity with Equation \ref{GZeq} is apparent, even though, as our explicit examples show, the changed argument of the $F$-function causes most of the primes occurring in the factorisations to be very different in both cases. In the concluding section of \cite{Giam}, the computations from \cite{errthum} are shown to all be in accordance with the above result.

Using the $p$-adic uniformisation of Shimura curves, the authors of \cite{Giam} related this quantity to one of a $p$-adic nature as follows. From now on, we will write $N = pq$ for certain rational primes $p$ and $q$ and we will assume that both $p$ and $q$ are inert in both $K_1$ and $K_2$. This has the consequence that both $p$ and $q$ must split in $F / \mathbb{Q}$; we will denote these prime ideals as $\mathfrak{p}_1, \mathfrak{p}_2$ and $\mathfrak{q}_1, \mathfrak{q}_2$ respectively. Finally, we remark that $\mathfrak{p}_i$ and $\mathfrak{q}_i$ must be inert in $L / F$ for $i \in \{ 1, 2 \}$, and as such, it holds that $\chi( \mathfrak{p}_i ) = \chi( \mathfrak{q}_i ) = -1$.

Let $\mathcal{H}_p = \mathbb{P}^1(\mathbb{C}_p) \setminus \mathbb{P}^1(\mathbb{Q}_p)$ denote the $p$-adic upper half plane and let $B_q$ be the definite quaternion algebra over $\mathbb{Q}$ with discriminant $q$. By choosing a splitting $B_q \to M_2( \mathbb{Q}_p )$, we obtain an action of $B_q$ on $\mathcal{H}_p$. We let $R_q[1/p]$ be a maximal $\mathbb{Z}[1/p]$-order in $B_q$ and by $R_q[1/p]_1^{\times}$ we will denote its units of unit norm. The quotient $R_q[1/p]_1^{\times} \setminus \mathcal{H}_p$ is again compact. By the celebrated theorem of Cerednik and Drinfeld, originally proved in \cite{cerednik, drinfeld} and well explained in \cite{BouCar}, over $\mathbb{C}_p$ it is isomorphic to $X_N$, with the isomorphism itself being defined over $\mathbb{Q}_{p^2}$, the unique quadratic unramified extension of $\mathbb{Q}_p$.

The function fields of such curves are generated by so-called $\Theta$-functions, see \cite{GvdP}. Explicitly,
\[
\Theta(w_1,w_2; z) \colonequals \prod_{\gamma \in R_{q}[1/p]_1^{\times}} \frac{z - \gamma w_1}{z - \gamma w_2}.
\]
If $X_N$ is of genus 0, this describes a rational function on the quotient $R_q[1/p]_1^{\times} \setminus \mathcal{H}_p$ with divisor $2(w_1) - 2(w_2)$, the factor of 2 coming from the trivially acting element $-1 \in R_q[1/p]^{\times}_1$. For $i \in \{ 1,2 \}$, there exist (optimal) embeddings $\mathcal{O}_i \to R_q$ and for its image inside $M_2( \mathbb{Q}_p )$, there now exist two conjugate common fixed CM-points in $\mathcal{H}_p$. As explained in \cite{Giam}, if $\tau_i$ maps to $P_i$ under the Cerednik-Drinfeld isomorphism, then $\tau_i'$ will map to $P_i'$. Comparing divisors, we obtain the equality
\[
\prod_{\gamma \in R_{q}[1/p]_1^{\times}} [\gamma \tau_1, \gamma \tau_1', \tau_2, \tau_2'] = \frac{\Theta(\tau_1, \tau_1'; \tau_2)}{\Theta(\tau_1, \tau_1'; \tau_2')} = [ j_N( P_1 ), j_N( P_1' ), j_N( P_2 ), j_N( P_2' ) ]^2.
\]
The class group $\text{Pic}(K_i)$ acts naturally on the set of embeddings $\mathcal{O}_i \to R_q$. Let $\pi \in R_q$ be any quaternion with $\text{Nm}(\pi) = p$; up to conjugation there exist precisely $p+1$ such elements by Lemma 12 in \cite{Giam}.  If we now define
\[
\Theta( D_1, D_2 ) \colonequals \prod_{ \substack{ \text{Pic}(K_1) \cdot \tau_1 \\ \text{Pic}(K_2) \cdot \tau_2 } } \frac{\Theta(\tau_1, \tau_1'; \tau_2)}{\Theta(\tau_1, \tau_1'; \tau_2')} \quad \text{and} \quad \Theta_p( D_1, D_2 ) \colonequals \prod_{ \substack{ \text{Pic}(K_1) \cdot \tau_1 \\ \text{Pic}(K_2) \cdot \tau_2 } } \frac{\Theta(\tau_1, \tau_1'; \pi \tau_2)}{\Theta(\tau_1, \tau_1'; \pi \tau_2')},
\]
we also claim the following $p$-adic version of Theorem \ref{giamconj1}.
\begin{stelling}\label{giamconj2}
It holds that
\[
\left( \frac{\Theta( D_1, D_2 )}{\Theta_p( D_1, D_2 )} \right)^{\frac{\pm 2}{w_1w_2}} = \pm \prod_{\substack{x^2 < D \\ x^2 \equiv D \emph{ mod } 4N}} F\left( \frac{D-x^2}{4N} \right)^{\delta(x)}.
\]
\end{stelling}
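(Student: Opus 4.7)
The plan is to follow the two-track strategy of Gross-Zagier, with both steps adapted to the $p$-adic Shimura curve setting via the Cerednik-Drinfeld uniformization. For the algebraic argument, we would study each rational prime $\ell$ dividing the numerator or denominator of $\Theta(D_1,D_2)/\Theta_p(D_1,D_2)$ by analyzing the reduction of the CM-points $\tau_1, \tau_2$ on the rigid-analytic model of $R_q[1/p]_1^{\times} \setminus \mathcal{H}_p$, counting intersections against the $F(m)$-function. The signs $\delta(x)$ should enter geometrically via the choice of square root $\pm a$ versus $\pm b$ of $D$ modulo $2N$, distinguishing Atkin-Lehner orbits of optimal embeddings $\mathcal{O}_L \hookrightarrow R_q$; the parity of this choice is exactly what the genus character $\chi$ records. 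Summing over Picard orbits of CM-points should then yield an $\ell$-by-$\ell$ match with the right-hand side.

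The more novel analytic approach proceeds in the spirit of \cite{DPV1, DPV2}. The plan is to interpolate the weight $(1,1)$ form $E_{1,\chi}$ of Equation \ref{HilEisdef} into a $p$-adic analytic family $E_{1,\chi}^{(p)}(s)$ of Hilbert modular forms for $F$, apply the diagonal restriction $\Delta$, differentiate in $s$ at $s = 0$, and project onto the $p$-adic ordinary subspace to produce
\[
G \colonequals e^{\text{ord}}\!\left( \frac{d}{ds} \Delta E_{1,\chi}^{(p)}(s) \Big|_{s=0} \right),
\]
a classical ordinary weight $2$ form of level $\Gamma_0(Np)$. We then decompose the Fourier expansion as $G = G^{\text{Eis}} + G^{\text{cusp}}$. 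The Eisenstein part is accessible from Equation \ref{HilEisdef}: the $n$-th coefficient involves a sum of $\rho(\nu \mathcal{D}_F)$ with $\mathrm{Tr}(\nu) = n$, and tracking the $s$-derivative produces, after standard bookkeeping with the enumeration $\nu = (D - x^2)/4N$, the $\log_p$ of $\prod_x F((D-x^2)/4N)^{\delta(x)}$. The cuspidal part $G^{\text{cusp}}$, by contrast, pairs against CM-divisors via the rigid-analytic description of $\Theta$ to give $\log_p$ of the ratio $\Theta(D_1,D_2)/\Theta_p(D_1,D_2)$.

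The main difficulty will lie in pinning down $G^{\text{cusp}}$: unlike the classical $X_0(1)$ setting of \cite{GZ}, the space $S_2(\Gamma_0(Np))$ is large, and no explicit $q$-expansion for cuspidal infinitesimal deformations of the reducible Eisenstein $E_{1,\chi}$ is available. To circumvent this, we work on the Galois side: we construct a pseudo-deformation of $\text{Ind}_{G_F}^{G_{\mathbb{Q}}} \chi$ satisfying appropriate local conditions at $p$ and $q$, compute the Frobenius traces of the resulting deformation in terms of Hilbert units and class-field-theoretic data attached to the genus character, and then invoke an $R = T$ theorem in the style of \cite{DPV2} to conclude that these traces are the Hecke eigenvalues of a genuine classical cuspform. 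Matching the resulting form to $G^{\text{cusp}}$ and to the $\Theta$-ratio then completes the argument.

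The principal obstacle will be the $R = T$ theorem itself: verifying its hypotheses demands a careful choice of local deformation conditions at $p$ and $q$ that correctly encode the ordinary and Steinberg shapes forced by Cerednik-Drinfeld and by the ramification of $B_N$ at $q$, together with a tangent-space and dimension comparison. Once $R = T$ is in hand and the Frobenius traces are computed, equating the two expressions for $G$ and tracking signs and roots of unity via the exponent $\pm 2/(w_1 w_2)$ should yield the stated identity.
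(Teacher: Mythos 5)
Your broad strategy — interpolate $E_{1,\chi}$ into a $p$-adic family, take the diagonal restriction, differentiate, ordinary-project, and use an $R=T$ theorem to gain access to the otherwise inaccessible $q$-expansion — is the right one and matches the paper. But two of your logical steps are misconceived, and one crucial step is missing entirely.

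First, the conclusion of the paper's analytic proof is not ``match $G^{\mathrm{Eis}}$ and $G^{\mathrm{cusp}}$'' but rather that $G \colonequals e^{\mathrm{ord}}\big( \frac{d}{d\epsilon}\Delta E_{1,\chi}^{(p)}(\epsilon) \big)$ \emph{vanishes identically}. Your proposal never invokes the genus-$0$ hypothesis, which is exactly what makes the final identity drop out. In the paper, the tame level of the diagonal restriction is $q$ (coming from the narrow ideal class of $\mathcal{D}_F \mathfrak{q}_1^{-1}$), and adding wild level $p$ puts $G$ into $S_2(\Gamma_0(N))$ with $N = pq$ — \emph{not} $\Gamma_0(Np) = \Gamma_0(p^2q)$ as you write. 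For $N \in \{6,10\}$ this space is zero; for $N=22$ the sign relation $a_{p^k} = (-1)^k a_1$ rules out the two oldforms. This is the step that closes the argument, and your proposal omits it.

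Second, your decomposition $G = G^{\mathrm{Eis}} + G^{\mathrm{cusp}}$ is not what happens: $G$ is \emph{entirely cuspidal}, because the particular $p$-stabilisation $E_{1,\chi}^{(p)} = (1-V_{\mathfrak{p}_1})(1+V_{\mathfrak{p}_2})E_{1,\chi}$ with opposite signs is chosen precisely so that the deformation is a $p$-adic cusp form. Both the product of $F$-values and the $\log_p$ of the $\Theta$-ratio arise as the two summands $A$ and $B$ of the \emph{single} first Fourier coefficient $a_1(G)$, and the identity is $A + B = a_1(G) = 0$. Relatedly, the analytic proof does \emph{not} obtain $\log_p(\Theta/\Theta_p)$ by ``pairing against CM-divisors via the rigid-analytic description of $\Theta$'' — that would reintroduce exactly the geometry this proof aims to avoid. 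Instead, the $\Theta$-ratio is rewritten purely combinatorially as a limit of sums over totally positive $\nu \in (\mathcal{D}_F^{-1}\mathfrak{q}_1)^+$ of $p$-power trace, via a bijection between quaternions in $R_q[1/p]^\times_1$ and ideals of $\mathcal{O}_L$ attached to a choice of reflex ideal (the key Theorem \ref{quatcount} and Theorem \ref{thetarw}). The $R=T$ theorem is used to \emph{produce} the family and hence the Frobenius traces $\varphi(T_{\mathfrak{l}})$ of a specific nearly ordinary deformation $\widetilde{\rho}$ of $\rho_\eta$ on $G_F$ (not a pseudo-deformation of the induction $\mathrm{Ind}_{G_F}^{G_\mathbb{Q}}\chi$); comparing those eigenvalues with the rewritten $\Theta$-ratio and with the $F$-value bookkeeping is what yields $A$ and $B$. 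Without the vanishing step and without the quaternion-to-ideal comparison, your proposal has the right ingredients but no engine to produce the equality.
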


\subsection{Parallels with RM-theory}

In the spirit of the original paper by Gross and Zagier \cite{GZ}, our approach to proving Theorems \ref{giamconj1} and \ref{giamconj2} is two-fold. First we present a direct proof of Theorem \ref{giamconj1} using CM-theory, which one could say is the standard approach for problems of this nature. It is not surprising that such a proof exists, and in fact, using the results from Phillips's thesis \cite{anphil}, the proof is rather straightforward.

Much more interesting is our second proof, which proves Theorem \ref{giamconj2} directly, not relying on any CM-theory whatsoever and is done purely by studying the infinitesimal $p$-adic deformation theory of the Galois representation associated with the $p$-stabilised parallel weight $(1,1)$ Hilbert Eisenstein series
\[
E^{(p)}_{1,\chi} \colonequals (1 - V_{\mathfrak{p}_1})( 1 + V_{\mathfrak{p}_2} ) E_{1, \chi}.
\]
Even though there are four choices for this $p$-stabilisation, we must choose one with opposite signs in order to ensure that $E^{(p)}_{1,\chi}$ will be a $p$-adic \emph{cuspform}. Theorem \ref{giamconj2} will be a consequence of the claim that the ordinary projection
\[
e^{\text{ord}}\left( \frac{d}{d\epsilon} \Delta E^{(p)}_{1,\chi}(\epsilon) \right) \in S_2( \Gamma_0(N) ),
\]
must vanish, once more strengthening the parallels with the analytic proof in \cite{GZ} and Equation \ref{1linepf}. 

Our main motivation for this second proof, which constitutes the focus of this paper, originates from the recent advancements in the theory of real multiplication. In \cite{pDV1}, Darmon and Vonk proposed a $p$-adic analogue of the differences between singular moduli as studied in \cite{GZ}; a certain \emph{rigid meromorphic cocycle} for the group $\text{SL}_2( \mathbb{Z}[1/p] )$, whose RM-values conjecturally display for real quadratic fields factorisations of similar intricacy to those considered hitherto. Certain cases of these conjectures are proved in the forthcoming work \cite{DV5}. This emerging theory should be well connected with many other areas of mathematics, notable among these the theory of Borcherds products and their ostensible connections to both $p$-adic heights and intersection numbers of geodesics on Shimura curves. More recently, these constructions were generalised to different quaternion algebras than the matrix algebra in \cite{gehrmann, XMX}, reflecting the step from modular curves to Shimura curves as above, and to more general orthogonal groups in \cite{gehrdarmon}.

Historically, the study of CM-theory has largely been facilitated by its connection to the geometry of abelian varieties and the moduli spaces that govern them. The development of an analogous RM-theory is complicated by the lack of such obvious connections to geometry. It is for this reason that the analytic proof in \cite{GZ} is of particular interest, as its independence from CM-theory contrasted strongly with the other, more algebraic, proof. Darmon, Pozzi and Vonk used similar ideas in \cite{DPV1, DPV2}, studying the ordinary projection of the diagonal restriction of the first derivative with respect to the weight of a $p$-adic family of Hilbert modular Eisenstein series attached to a more general odd character of the narrow class group of a real quadratic field, explicitly computing the Fourier coefficients of its ordinary projection. These quantities proved to be related to both Stark-Heegner points and Gross-Stark units, enriching the analogy between the classical theory of complex multiplication and its extension to real quadratic fields. Recently, Dasgupta and Kakde in \cite{DK1, DK2} proved Brumer-Stark conjecture away from 2 and used these ideas to prove the $p$-part of the integral Gross-Stark conjecture for the Brumer-Stark $p$-units in CM abelian extensions of a totally real field using the theory of group ring valued Hilbert modular forms.

The present work serves as a direct $p$-adic transposition of the analytic proof by Gross and Zagier in \cite{GZ} because we consider (an appropriate $p$-stabilisation of) the exact same Hilbert Eisenstein series $E_{1,\chi}$, using techniques that have recently also been deployed in the study of RM-theory. Secondly, the fact that we can give two proofs, one relying on the geometric moduli interpretation of the Shimura curve $X_N$ and one not relying on geometry at all, is interesting in view of the (presently still) unknown geometric framework within which the modern developments in RM-theory should best be described. 

Thirdly, our work constitutes an occurrence of a non-archimedean instance of the Kudla program, which is presently being investigated more intensively than ever. Even though it has classically been mostly studied in an archimedean context, recent years have seen some instances of similar results in non-archimedean settings. Examples of this include the results from \cite{DPV1, DPV2}, but also for instance the works \cite{darmontor} and  \cite{MHNic}. This emerging ``$p$-adic Kudla program'' still leaves much to be explored in the forthcoming years. It is also for this reason that in the present work, we do not explore the possibly third approach using Borcherds lifts in a similar style of \cite{errthum} when proving the CM-values from \cite{elkies}, even though the success of such an approach should be expected as well.

\subsection{Outline of the paper}

In Section \ref{proof1}, we describe an approach that mirrors the ideas behind Gross and Zagier's original algebraic proof in \cite{GZ}, exploiting the moduli interpretation of the Shimura curve $X_N$ and the theory of complex multiplication. We appeal to the main result of the PhD thesis of Andrew Phillips \cite{anphil}, which computes the degree of certain refinements of the moduli stack of certain false elliptic curves, following ideas of Howard and Yang in \cite{GZrefined}. Using these results, the proof of Theorem \ref{giamconj1} is rather straightforward.

The weight of our paper is concentrated in our second proof. For this, we follow the general strategy of the main arguments presented in \cite{DPV2} and \cite{DV5}, approaches that originated in \cite{DLR}. We study a $p$-stabilisation $E_{1, \chi}^{(p)}$ of the same Hilbert Eisenstein series $E_{1, \chi}$ as did Gross and Zagier in \cite{GZ}. The $p$-adic convergence of the infinite product defining $\Theta( D_1, D_2 )$ circumvents any regularisation arguments, facilitating swift computations. In this sense, our work is a true $p$-adic transposition of the work of Gross and Zagier in \cite{GZ}. Our second proof can be divided into three distinct steps, which we will now outline.

Since, unlike as in \cite{GZ}, the $q$-expansion of a $p$-adic family passing through $E_{1,\chi}^{(p)}$ is not a-priori known, we obtain such a family by deforming a rigidification $\rho_{\eta}$ of the decomposable representation $\mathbbm{1} \oplus \chi$. More precisely, we will consider all \emph{nearly ordinary} deformations, which are all such deformations for which the decomposition groups $G_{\mathfrak{p}_i} \subset G_F$ for $i \in \{ 1, 2 \}$ each fix a distinct line. This approach requires us to prove the modularity of such deformations to construct the required family. In this respect our argument is rather different from that of Gross and Zagier in \cite{GZ}, because they already had all the $q$-expansions they would need to carry out their arguments beforehand; in fact, this approach could not have possibly have worked in their archimedean setting. 

Therefore, Section \ref{modularity} proves an $R=T$ theorem, the first instance of which occurred in the proof by Wiles of Fermat's Last Theorem. Using similar methods as in Pozzi's thesis \cite{Pozzi} and the works \cite{BDP, BD, BDS, BelChen}, using fundamental results from Hida in \cite{hida1989, JAMI}, we construct a lift of $\rho_{\eta}$ to Hida's cuspidal nearly ordinary Hecke algebra, though some additional care is required to circumvent the difficulties of the cohomology groups $H^1( G_F, \mathbb{Q}_p( \chi ) )$ being 2-dimensional. Comparing the dimensions of the Hecke algebra and the resulting deformation ring, the $R=T$ theorem follows.

Finally, in Section \ref{proof2}, using a construction that associates to a quaternion an $\mathcal{O}_L$-ideal, we derive a bijection between the elements of $R_q[1/p]_1^{\times}$ and the set of $\nu \in ( \mathfrak{q}_1\mathcal{D}_F^{-1})^+$ of $p$-power trace counted with a multiplicity related to the function $\rho$ that also appears in Equation \ref{HilEisdef}. Then we consider one particular nearly ordinary deformation and explicitly compute the infinitesimal family of deformations of $E^{(p)}_{1, \chi}$ that corresponds to it. After taking its derivative with respect to the weight parameter and applying the ordinary projection operator, we argue why the result must vanish identically. Ultimately, we conclude the proof of Theorem \ref{giamconj2} by computing explicitly the coefficients of the (usually mostly theoretically used) ordinary projection and equating the first of these coefficients to zero.

\begin{opm}\label{main}
If we relax the condition that the Shimura curve $X_N$ be of genus zero, then the quotient $\Theta( D_1, D_2 ) / \Theta_p( D_1, D_2 )$ from Theorem \ref{giamconj2} can no longer be expected to be algebraic and indeed it generally will not be, for it will consist of both an algebraic part, determined above, and a transcendental part given by an appropriate $p$-adic height pairing on the Jacobian of the Shimura curve $X_N$, which vanishes in the genus zero case. Define for $i \in \{ 1, 2 \}$ the divisors on $X_N$ by the formulas
\[
\mathcal{D}_i = \sum_{ [c_i] \in \text{Pic}(K_i) } [c_i] \cdot ( P_i - P_i' ).
\]
Let $T_m$ denote the natural Hecke correspondence on the Jacobian of $X_N$ and let $(-,-)_p$ denote the $p$-adic height pairing as computed in \cite{grosslocal, werner}. Even though Werner's result in \cite{werner} only pertains to the quotient by Schottky groups, using the results from Section 4 of \cite{vdp92}, this may be extended to quotients by groups such as $R_q[1/p]^{\times}_1$. In the author's PhD thesis, an equality of the form
\[
e^{\text{ord}}\left( \frac{d}{d\epsilon} \Delta E^{(p)}_{1,\chi}(\epsilon) \right) = \sum_{m \geq 1} ( \mathcal{D}_1, T_m \mathcal{D}_2 )_p \ q^m \in S_2( \Gamma_0(N) ).
\]
will be proved. This $p$-adic instance of the Kudla program bears resemblance to various previous works in an archimedean setting; most notably to Theorem V.1 in \cite{GKZ}.
\end{opm}

\bigskip

\textbf{Acknowledgements:} I would like to thank Jan Vonk for bringing this problem to my attention, for his frequent conversations with me about the present work and also for his invaluable insights, without which this endeavour would never have been initiated. This work was supported in part by the VIDI Grant 213.084 and the ERC Starting Grant 101076941.

\newpage

\section{Algebraic proof}\label{proof1}

Relying on the results from the PhD thesis written by Andrew Phillips \cite{anphil}, we present our first proof of Theorem \ref{giamconj1}. The Shimura curve $X_N = R_{N,1}^{\times} \setminus \mathcal{H}$ is the coarse moduli space for isomorphism classes of \emph{false elliptic curves} $(A,\iota)$, where $A$ is a complex abelian surface and $\iota \colon R_N \to \text{End}(A)$ is an embedding of algebras. To formulate the main result from \cite{anphil}, we describe the fine moduli space now.

\subsection{Stacks and Arkelov degrees}

A \emph{false elliptic curve} over a scheme $S$ is a pair $(A, \iota)$ where $A \to S$ is an abelian scheme of relative dimension 2 and $\iota \colon R_N \to \text{End}_S(A)$ is a ring homomorphism. For $i \in \{ 1,2 \}$, a false elliptic curve over an $\mathcal{O}_L$-scheme $S$ with complex multiplication by $\mathcal{O}_i$ is a triple $(A, \iota, \kappa)$ where $(A,\iota)$ is a false elliptic curve over $S$ and $\kappa \colon \mathcal{O}_i \to \text{End}_{R_N}(A)$ is a ring map such that the action on the Lie algebra is through the natural structure map $\mathcal{O}_i \to \mathcal{O}_L \to \mathcal{O}_S(S)$. 

Let $\mathcal{M}$ be the algebraic stack, regular and flat of relative dimension 1 over $\text{Spec}(\mathcal{O}_L)$, such that $\mathcal{M}(S)$ for any $\mathcal{O}_L$-scheme $S$ denotes the category of false elliptic curves $(A,\iota)$ over $S$ satisfying a technical property regarding the Lie algebra, which can be found as Equation 1.2.1 in \cite{anphil}. This 2-dimensional stack $\mathcal{M}$ is usually referred to as (the integral model of) a Shimura curve. We are interested in two particular substacks of this stack; those defining the false elliptic curves with complex multiplication by $\mathcal{O}_i$ for $i \in \{ 1,2 \}$.

Let $\mathcal{Y}_i$ for $i \in \{ 1, 2 \}$ be the algebraic stack over $\text{Spec}(\mathcal{O}_L)$ with $\mathcal{Y}_i(S)$ the category of false elliptic curves over the $\mathcal{O}_L$-scheme $S$ with complex multiplication by $\mathcal{O}_i$. By forgetting the CM-structure, we have a morphism of stacks $\mathcal{Y}_i \to \mathcal{M}$. We further define $\mathcal{J} \colonequals \mathcal{Y}_1 \times_{\mathcal{M}} \mathcal{Y}_2$. By definition of the pullback of stacks, $\mathcal{J}$ now denotes the algebraic stack over $\text{Spec}(\mathcal{O}_L)$ with $\mathcal{J}(S)$ the category of triples $(\textbf{A}_1,\textbf{A}_2,f)$ where $\textbf{A}_i = (A_i, \iota_i, \kappa_i)$ for $i \in \{ 1,2 \}$ is a false elliptic curve over the $\mathcal{O}_L$-scheme $S$ with complex multiplication by $\mathcal{O}_i$ and where $f \colon \textbf{A}_1 \to \textbf{A}_2$ is an isomorphism. 

Following \cite{anphil}, we proceed to refine the stack $\mathcal{J}$ by associating to every object $(\textbf{A}_1,\textbf{A}_2,f) \in \mathcal{J}(S)$ a pair of objects $( \vartheta, \nu )$ as follows. It is well-known that for any positive integer $N$, there exists a unique ideal $\mathfrak{m}_N \subset R_N$ of index $N^2$. For $i \in \{ 1,2 \}$, there is a unique surjective ring map $\theta_i \colon \mathcal{O}_i \to R_N / \mathfrak{m}_N$ making the following diagram
\begin{figure}[ht]
\centering
\begin{tikzcd}
\mathcal{O}_i \arrow[rd, "\theta_i"] \arrow[rr] &              & \text{End}_{R_N/\mathfrak{m}_N}( A_i[\mathfrak{m}_N] ) \\
                             & R_N/\mathfrak{m}_N \arrow[ru] &  
\end{tikzcd}
\end{figure}
commute, where $A_i[\mathfrak{m}_N]$ denotes group scheme of the $\mathfrak{m}_N$-torsion inside $A_i$. 
 
Since $\mathcal{O}_L = \mathcal{O}_1 \otimes_{\mathbb{Z}} \mathcal{O}_2$, we obtain a well-defined surjective ring map $\vartheta \colon \theta_1 \otimes \theta_2 \colon \mathcal{O}_L \to R_N / \mathfrak{m}_N$. For brevity, we will denote $\mathcal{V} \colonequals \text{Hom}(\mathcal{O}_L, R/\mathfrak{m}_N )$. We let $\mathfrak{a}_{\vartheta} = \text{ker}( \vartheta ) \cap \mathcal{O}_F$ be the \emph{reflex ideal}. Since $\text{ker}(\vartheta)$ is an $\mathcal{O}_L$-ideal of norm $N^2$, it follows that $\mathfrak{a}_{\vartheta}$ is an $\mathcal{O}_F$-ideal of norm $N$. As such, if $N = pq$, there are precisely four possibilities for $\mathfrak{a}_{\vartheta}$;
\[
\mathfrak{a}_{\vartheta} \in \{ \mathfrak{p}_1\mathfrak{q}_1,  \mathfrak{p}_1\mathfrak{q}_2,  \mathfrak{p}_2\mathfrak{q}_1,  \mathfrak{p}_2\mathfrak{q}_2 \} \equalscolon \mathcal{I}.
\]
Next, as in Proposition 2.3 in \cite{GZrefined}, one can construct a map $\text{deg}_{\text{CM}} \colon \text{Hom}_{R_N}( A_1, A_2 ) \to \mathcal{D}_F^{-1}$ satisfying the defining property that $\text{Tr}^{F}_{\mathbb{Q}} ( \text{deg}_{\text{CM}}(f) ) = \text{deg}^*(f)$, where $\text{deg}^*(f)$ denotes the \emph{false degree} of the morphism $f$ as in Definition 2.2.15 in \cite{anphil}, which satisfies the property that $\text{deg}^*(f) = 1$ for all isomorphisms $f$. As such, we may consider the element $\nu = \text{deg}_{\text{CM}}( f ) \in \mathcal{D}_F^{-1}$. 

For any $\vartheta \in \mathcal{V}$, we define the stack $\mathcal{X}_{\vartheta}$ to be the algebraic stack over $\text{Spec}( \mathcal{O}_L )$ with $\mathcal{X}_{\vartheta}(S)$ for any $\mathcal{O}_L$-scheme $S$ the category of triples $(\textbf{A}_1, \textbf{A}_2, f) \in \mathcal{J}(S)$ with the property that the pair $(\textbf{A}_1, \textbf{A}_2)$ induces the map $\vartheta \in \mathcal{V}$ by the construction outlined above. For any $\nu \in \mathcal{D}_F^{-1}$, we let $\mathcal{X}_{\vartheta, \nu}$ denote the algebraic stack over $\text{Spec}( \mathcal{O}_L )$ with $\mathcal{X}_{\vartheta, \nu}(S)$ for any $\mathcal{O}_L$-scheme $S$ the category of triples $(\textbf{A}_1, \textbf{A}_2, f) \in \mathcal{X}_{\vartheta}(S)$ with the property that $\text{deg}_{\text{CM}}(f) = \nu$ on every component of $S$. 

We then obtain the decompositions
\[
\mathcal{J} = \bigsqcup_{ \vartheta \in \mathcal{V} } \mathcal{X}_{\vartheta} \quad \text{and} \quad \mathcal{X}_{\vartheta} = \bigsqcup_{\substack{ \nu \in \mathcal{D}_F^{-1} \\ \text{Tr}(\nu)=1 } } \mathcal{X}_{\vartheta,\nu}.
\]
The main result of \cite{anphil} concerns the \emph{Arkelov degree} of the stacks $\mathcal{X}_{\vartheta}$, which is defined as
\[
\text{deg}(\mathcal{X}_{\vartheta}) \colonequals \sum_{\mathfrak{r} \subset \mathcal{O}_L} \text{log}(|\mathbb{F}_{\mathfrak{r}}|) \sum_{x \in \mathcal{X}_{\vartheta}(k)} \frac{\text{length}(\mathcal{O}^{\text{sh}}_{\mathcal{X}_{\vartheta,x}})}{|\text{Aut}(x)|},
\]
where $k = \overline{\mathbb{F}}_{\mathfrak{r}}$ and where $\mathcal{O}^{\text{sh}}_{\mathcal{X}_{\vartheta,x}}$ denotes the strictly Henselian local ring of $\mathcal{X}_{\vartheta}$ for the \'etale topology at the geometric point $x$. By the decomposition above, we have
\begin{equation}
\text{deg}(\mathcal{X}_{\vartheta}) = \sum_{\substack{ \nu \in \mathcal{D}_F^{-1} \\ \text{Tr}(\nu)=1 } } \text{deg}( \mathcal{X}_{\vartheta,\nu} ). \label{stacksplit}
\end{equation}
Lastly, we define the finite set
\[
\text{Diff}_{\vartheta}(\nu) = \text{Diff}_{ \mathfrak{a}_{\vartheta} }( \nu ) \colonequals \{ \mathfrak{r} \subset \mathcal{O}_F \ | \ \chi_{\mathfrak{r}}(\nu \mathfrak{a}_{\vartheta}^{-1} \mathcal{D}_F) = -1 \},
\]
where $\chi_{\mathfrak{r}}$ denotes the character defined by the unramified extension of local fields $L_{\mathfrak{r}} / F_{\mathfrak{r}}$. Theorem 2 in \cite{anphil} then says the following.

\begin{stelling}\label{philthm}
Suppose that $\emph{Diff}_{\vartheta}(\nu) = \{ \mathfrak{r} \}$ for some prime $\mathfrak{r} \subset \mathcal{O}_F$. If $r \nmid N$, the degree of $\mathcal{X}_{\vartheta,\nu}$ satisfies
\[
\emph{exp}(\emph{deg}(\mathcal{X}_{\vartheta,\nu})) = r^{t_r/2} \quad \text{where} \quad t_r = \emph{ord}_{\mathfrak{r}}(\nu \mathfrak{r} \mathcal{D}_F) \cdot \rho(\nu \mathfrak{a}_{\vartheta} \mathfrak{r}^{-1}\mathcal{D}_F).
\]
If $r \mid N$, depending on whether $\mathfrak{r}$ divides $\mathfrak{a}_{\vartheta}$ or not, we must replace the term $\emph{ord}_{\mathfrak{r}}(\nu \mathfrak{r} \mathcal{D}_F)$ by $\emph{ord}_{\mathfrak{r}}(\nu)$ or $\emph{ord}_{\mathfrak{r}}(\nu \mathfrak{r})$ respectively. If $\nu \notin \mathcal{D}_F^{-1}$ or $\# \emph{Diff}_{\vartheta}(\nu) \neq 1$, then the degree is always 0.
\end{stelling}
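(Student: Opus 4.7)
The plan is to follow the general framework developed by Gross-Zagier and adapted to the quaternionic setting by Howard-Yang in \cite{GZrefined}. The strategy has three steps: first, isolate the characteristic in which $\mathcal{X}_{\vartheta,\nu}$ can have any geometric points; second, count those points; third, compute the local deformation-theoretic length contributed by each. For the first step, observe that $\mathcal{X}_{\vartheta,\nu}$ is empty in characteristic zero, as two false elliptic curves with complex multiplication by distinct imaginary quadratic fields can never be $R_N$-isomorphic. Hence the sum defining $\deg(\mathcal{X}_{\vartheta,\nu})$ is supported on finitely many primes $\mathfrak{r}\subset\mathcal{O}_L$. By Honda-Tate theory applied to false elliptic curves, a geometric point in residue characteristic $r$ can exist only if there is a quaternion algebra over $F$ containing $L$, ramified exactly at the archimedean places together with $\mathfrak{r}$, whose other local invariants are pinned down by $\nu$ and the refinement $\vartheta$ via the character values $\chi_{\mathfrak{s}}(\nu\mathfrak{a}_{\vartheta}^{-1}\mathcal{D}_F)$. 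The product formula for Hasse invariants then forces $\#\text{Diff}_{\vartheta}(\nu) = 1$, yielding the vanishing statements whenever $\nu\notin\mathcal{D}_F^{-1}$ or $\#\text{Diff}_{\vartheta}(\nu)\neq 1$.

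Concentrating at the unique prime $\mathfrak{r}$, I would count geometric points by a quaternionic mass formula: they correspond to optimal embeddings of $\mathcal{O}_L$ into an Eichler-type order inside the relevant quaternion algebra, and the count with the prescribed $\vartheta$ and $\nu$ reduces, by the biquadratic reciprocity of ideals, to enumerating the $J\subset\mathcal{O}_L$ with $\text{Nm}^L_F(J) = \nu\mathfrak{a}_{\vartheta}\mathfrak{r}^{-1}\mathcal{D}_F$, producing the factor $\rho(\nu\mathfrak{a}_{\vartheta}\mathfrak{r}^{-1}\mathcal{D}_F)$. For the length of the strict Henselisation at each such point, Serre-Tate and Grothendieck-Messing theory reduce the deformation problem of $(A_1, A_2, f)$ together with all its CM data to a question about the $p$-divisible groups at $\mathfrak{r}$; a quasi-canonical lifting calculation in the style of Gross's work on heights produces the length $\tfrac{1}{2}\text{ord}_{\mathfrak{r}}(\nu\mathfrak{r}\mathcal{D}_F)$. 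Multiplying the count by the length, dividing by automorphism groups (which contribute only constants cancelling against $w_1w_2$), and combining with $\log|\mathbb{F}_{\mathfrak{r}}| = \log r$ then gives $\exp(\deg(\mathcal{X}_{\vartheta,\nu})) = r^{t_r/2}$ with $t_r$ as claimed.

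The main obstacle will be the case $r\mid N$, where $R_N$ is non-split at $r$, so the moduli problem acquires additional Drinfeld-type level structure at $\mathfrak{r}$ and the local $p$-divisible groups are the ``special'' quaternionic formal groups of Cerednik-Drinfeld rather than classical Lubin-Tate ones. The two subcases $\mathfrak{r}\mid\mathfrak{a}_{\vartheta}$ and $\mathfrak{r}\nmid\mathfrak{a}_{\vartheta}$ correspond to whether the reflex ideal absorbs the ramified prime; this compatibility dictates whether the length calculation gains the factor $\text{ord}_{\mathfrak{r}}\mathfrak{r}$, accounting for the replacement of $\text{ord}_{\mathfrak{r}}(\nu\mathfrak{r}\mathcal{D}_F)$ by $\text{ord}_{\mathfrak{r}}(\nu)$ or $\text{ord}_{\mathfrak{r}}(\nu\mathfrak{r})$ in the final formula. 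Executing the Grothendieck-Messing calculation in this Cerednik-Drinfeld regime while tracking the refinement data coherently is where the bulk of the technical work lies, and is ultimately the content of Phillips's thesis that the present paper takes as a black box.
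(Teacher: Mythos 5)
This statement is not proved in the paper at all. The text surrounding it reads ``Theorem 2 in \cite{anphil} then says the following,'' and afterwards simply uses the formula: the theorem is quoted verbatim from Phillips's PhD thesis and treated as a black box, exactly as you acknowledge in the final sentence of your proposal. There is therefore no proof in the paper to compare your attempt against, and the task as posed cannot be carried out in the ordinary way.

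That said, your sketch is a sensible and well-informed outline of the sort of argument one expects in Phillips's thesis, and it follows the correct lineage: Gross--Zagier's original supersingular analysis, as repackaged in the Arakelov-theoretic language of Howard--Yang in \cite{GZrefined}. The three-step scaffolding---emptiness in characteristic zero, a point count via optimal embeddings of $\mathcal{O}_L$ into a quaternion order governed by $\text{Diff}_\vartheta(\nu)$, and a length computation via Serre--Tate and quasi-canonical liftings---is indeed how this kind of degree formula is established. Your identification of the $\rho(\nu\mathfrak{a}_\vartheta\mathfrak{r}^{-1}\mathcal{D}_F)$ factor with a norm-counting problem over $L/F$, and of the $\text{ord}_\mathfrak{r}$ factor with a deformation-length, is structurally right. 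You also correctly flag that the real technical weight falls on the primes $r\mid N$, where the formal groups are of Drinfeld type rather than Lubin--Tate, and that the dichotomy $\mathfrak{r}\mid\mathfrak{a}_\vartheta$ versus $\mathfrak{r}\nmid\mathfrak{a}_\vartheta$ is what produces the modified $\text{ord}_\mathfrak{r}(\nu)$ or $\text{ord}_\mathfrak{r}(\nu\mathfrak{r})$ terms. None of this can be checked against the paper, since the paper offers no proof; if you want to verify the details, the comparison must be made directly with \cite{anphil}, particularly its Theorem 2 and the surrounding Sections 4 and 5.

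One small caution worth noting: in the ``first step'' you assert that two false elliptic curves with CM by distinct imaginary quadratic fields can never be $R_N$-isomorphic in characteristic zero. The more precise statement is that $\mathcal{Y}_1$ and $\mathcal{Y}_2$ meet transversally only in positive characteristic because a CM abelian surface in characteristic zero has endomorphism algebra of bounded rank, whereas at a supersingular prime the endomorphism algebra jumps and can simultaneously accommodate both $\mathcal{O}_1$ and $\mathcal{O}_2$ inside a quaternion order over $F$. This is the standard mechanism, and you gesture at it correctly with the Honda--Tate appeal, but the opening sentence would benefit from being phrased in terms of endomorphism algebra ranks rather than a bare appeal to the two fields being distinct.
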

This result gives us an explicit formula for the Arkelov degrees of the stacks $\mathcal{X}_{\vartheta,\nu}$ and as such, also of the degrees of the stacks $\mathcal{X}_{\vartheta}$. It is also clear from the result that the degree of the stack $\mathcal{X}_{\vartheta, \nu}$ only depends on the ideal $\mathfrak{a}_{\vartheta} \in \mathcal{I}$ and not on the precise map $\vartheta \in \mathcal{V}$. This allows us to define for any $\mathfrak{a} \in \mathcal{I}$ the quantity
\[
X( \mathfrak{a}, \nu ) \colonequals \text{deg}(\mathcal{X}_{\vartheta,\nu})
\]
where $\vartheta \in \mathcal{V}$ is arbitrary such that $\mathfrak{a}_{\vartheta} = \mathfrak{a}$. In the next subsection we will show that these expressions, when combined appropriately, constitute the right hand side of Theorem \ref{giamconj1}. The remainder of this section aims to relate the degrees of the stacks $\mathcal{X}_{\vartheta}$ to the left hand side, ultimately establishing equality.

\subsection{An elementary formula}

We assign a sign to each homomorphism $\vartheta \in \mathcal{V}$. This is done by recording the $\text{Gal}(F / \mathbb{Q})$-orbit of its associated ideal $\mathfrak{a}_{\vartheta} \in \mathcal{I}$. Explicitly, we set
\[
\delta(\vartheta) = \delta( \mathfrak{a}_{\vartheta} ) \colonequals \begin{cases} +1 & \text{ if } \mathfrak{a}_{\vartheta} \in \{ \mathfrak{p}_1\mathfrak{q}_1,  \mathfrak{p}_2\mathfrak{q}_2 \}; \\ -1 & \text{ if } \mathfrak{a}_{\vartheta} \in \{ \mathfrak{p}_1\mathfrak{q}_2,  \mathfrak{p}_2\mathfrak{q}_1 \}. \end{cases}
\] 
The following proposition connects the results from Theorem \ref{philthm} to the formula from Theorem \ref{giamconj1}.
\begin{prop}\label{thm1}
Let $\nu \in \mathcal{D}_F^{-1,+}$ with $\emph{Tr}(\nu) = 1$.  Then we can write $\nu = (x+\sqrt{D})/2\sqrt{D}$ for some integer $x$ with $x^2 < D$. Furthermore,
\[
F\left( \frac{D - x^2}{4N} \right)^{\delta(x)} = \prod_{ \mathfrak{a} \in \mathcal{I} } \emph{exp}\big(\delta(\mathfrak{a}) X( \mathfrak{a}, \nu ) \big).
\]
For other $\nu \in F$, both sides of the equation simply equal 1.
\end{prop}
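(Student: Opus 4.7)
The plan is to unpack both sides of the identity and match them through an explicit arithmetic case analysis. First, writing $\nu = c + b\sqrt{D}$ with $\text{Tr}(\nu) = 2c = 1$, the condition $\nu \in \mathcal{D}_F^{-1}$ forces $b = x/(2D)$ for some integer $x$, producing the required form $\nu = (x + \sqrt{D})/(2\sqrt{D})$, while the bound $x^2 < D$ translates total positivity. A direct norm computation then gives $\text{Nm}^F_{\mathbb{Q}}(\nu\mathfrak{a}^{-1}\mathcal{D}_F) = (D-x^2)/(4N) = m$ for every $\mathfrak{a} \in \mathcal{I}$, independent of $\mathfrak{a}$. Since $p,q \nmid D$ (as $p$ and $q$ are inert in both $K_i$), a direct reduction modulo $\mathfrak{p}_i$ of $(x+\sqrt{D})/2$ shows that exactly one of $\mathfrak{p}_1, \mathfrak{p}_2$ occurs in the factorisation of $\nu\mathcal{D}_F$, doing so with multiplicity $v_p(m)+1$, and which one is determined by the residue class of $x$ mod $p$. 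The analogous conclusion holds for $\mathfrak{q}_1, \mathfrak{q}_2$ and $x$ mod $q$.

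Since $L/F$ is unramified at all finite primes, the character $\chi_{\mathfrak{r}}$ equals $(-1)^{\text{ord}_{\mathfrak{r}}(-)}$ when $\mathfrak{r}$ is inert in $L/F$ and is trivial otherwise, so $\text{Diff}_{\mathfrak{a}}(\nu)$ consists precisely of those primes $\mathfrak{r}$ of $F$ with $\chi(\mathfrak{r}) = -1$ at which $\nu\mathfrak{a}^{-1}\mathcal{D}_F$ has odd order. Suppose first that the LHS is non-trivial, so that $F(m) = \ell^{(k+1)\prod(c_i+1)}$ for a uniquely determined prime $\ell$ with $v_{\ell}(m) = 2k+1$ and $\chi(\mathfrak{l}) = -1$. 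Split into Case I, $\ell \nmid N$, and Case II, $\ell \in \{p,q\}$. In Case I, the parities of $v_p(m), v_q(m)$ together with the residue signs of $x$ pin down a unique $\mathfrak{a}^* \in \mathcal{I}$ with $\text{Diff}_{\mathfrak{a}^*}(\nu) = \{\mathfrak{l}\}$, and a direct check reveals the key identity $\delta(\mathfrak{a}^*) = \delta(x)$; for every other $\mathfrak{a}$, $|\text{Diff}_{\mathfrak{a}}(\nu)| \geq 2$, so $X(\mathfrak{a},\nu) = 0$ by Theorem~\ref{philthm}. In Case II, two distinct $\mathfrak{a}$'s produce singleton $\text{Diff}$ sets, but for one of them $\text{ord}_{\mathfrak{r}}(\nu) = 0$, causing the factor $t_r$ in Phillips's formula to vanish; the surviving $\mathfrak{a}^*$ again satisfies $\delta(\mathfrak{a}^*) = \delta(x)$.

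In either case, Theorem~\ref{philthm} yields $X(\mathfrak{a}^*,\nu) = (k+1) \cdot \rho(\nu\mathfrak{a}^*\mathfrak{l}^{-1}\mathcal{D}_F) \cdot \log\ell$. Computing $\rho$ as an Euler product over primes of $F$---primes inert in $L/F$ contribute $1$ because all relevant orders are even, while primes split in $L/F$ above a rational prime $\ell'$ contribute $(v_{\ell'}(m)+1)$---yields $\rho(\nu\mathfrak{a}^*\mathfrak{l}^{-1}\mathcal{D}_F) = \prod(c_i+1)$, matching exactly the exponent of $F(m)$. Exponentiating the product on the RHS then produces $\ell^{\delta(\mathfrak{a}^*)(k+1)\prod(c_i+1)} = F(m)^{\delta(x)}$, as required. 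For the complementary cases where LHS $=1$, or where $\nu$ is not of the prescribed form, a similar but shorter case analysis shows that every $\mathfrak{a} \in \mathcal{I}$ either fails the singleton condition or makes $t_r = 0$, so the RHS equals $1$ as well. The main obstacle is the bookkeeping of the many sub-cases and, in particular, verifying the delicate synchronisation $\delta(\mathfrak{a}^*) = \delta(x)$, which lies at the heart of the proposition: it is forced by the way the residue signs of $x$ mod $p$ and mod $q$ are simultaneously captured by the Galois orbit structure of $\mathfrak{a}^*$ on one side and by the square-root classes $\pm a, \pm b$ on the other.
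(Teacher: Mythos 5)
Your proof is correct and follows essentially the same strategy as the paper's: reduce to the unique relevant $\mathfrak{a}^* \in \mathcal{I}$, verify the synchronisation $\delta(\mathfrak{a}^*) = \delta(x)$, and match $F\left((D-x^2)/4N\right)$ against Phillips's formula (Theorem~\ref{philthm}) via a case split on whether the distinguished prime $\mathfrak{r}$ divides $N$, with the $\rho$-factor computed as an Euler product. Your treatment of the three ``wrong'' $\mathfrak{a}$'s---via $\lvert\text{Diff}_{\mathfrak{a}}(\nu)\rvert \geq 2$ in Case I and via the vanishing of $\text{ord}_{\mathfrak{r}}(\nu)$ in Case II---is in fact slightly more explicit than the paper's passing appeal to the uniqueness of the $\mathfrak{a}$ dividing $\nu\mathcal{D}_F$, but the two arguments amount to the same thing.
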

\begin{proof}
Albeit elementary, we include this proof in some detail because we will require similar considerations later in our analytic approach as well. Suppose first that $N \nmid \text{Nm}( \nu \mathcal{D}_F )$, so no $\mathfrak{a} \in \mathcal{I}$ can divide $\nu \mathcal{D}_F$. Then by Theorem \ref{philthm}, we have $X( \mathfrak{a}, \nu ) = 0$. On the other hand, the $F$-value of the non-integer $(D-x^2)/(4N) = \text{Nm}( \nu \mathcal{D}_F ) / N$ is defined to be 1 as well. We may thus assume that $N \mid \text{Nm}( \nu \mathcal{D}_F )$ and as such, some $\mathfrak{a} \in \mathcal{I}$ divides $\nu\mathcal{D}_F$. We claim that this ideal is unique within $\mathcal{I}$. Indeed, if not, then either $p$ or $q$ would divide $\nu\mathcal{D}_F$. But this ideal is generated by $( x + \sqrt{D})/2$; a contradiction. Hence we may restrict our view to the unique $\mathfrak{a} \in \mathcal{I}$ dividing $\nu\mathcal{D}_F$ and we reduce to proving
\[
F\left( \frac{D - x^2}{4N} \right)^{\delta(x)} = \text{exp}\big(\delta(\mathfrak{a}) X( \mathfrak{a}, \nu ) \big).
\] 
Next we claim that, if chosen appropriately, the signs $\delta(x)$ and $\delta(\mathfrak{a})$ will agree for all $x$ and $\mathfrak{a}$. Indeed, if $\sigma$ denotes the non-trivial element of $\text{Gal}(F / \mathbb{Q})$, then we note that 
\[
(x+\sqrt{D})/2 \in \mathfrak{p}_1\mathfrak{q}_1 \iff (-x+\sqrt{D})/2) \in \sigma(\mathfrak{p}_1\mathfrak{q}_1) = \mathfrak{p}_2\mathfrak{q}_2.
\]
In other words, the $\text{Gal}(F / \mathbb{Q})$-orbit of the ideal $\mathfrak{a}$ dividing $(x+\sqrt{D})/2$ depends only on the $\{ \pm 1 \}$-orbit of the root $x$ represents of $D \mod 2N$; whence we may choose the signs such that they agree. We thus reduce to proving
\[
F\left( \frac{D - x^2}{4N} \right) = \text{exp}( X( \mathfrak{a}, \nu ) ).
\]
From the fact that the ideal $\nu \mathfrak{a}^{-1}\mathcal{D}_F$ is primitive, it easily follows that $\text{Diff}_{ \mathfrak{a} }(\nu)$ contains prime ideals $\mathfrak{r}$ above precisely those rational primes $r$ dividing $\text{Nm}( \nu \mathfrak{a}^{-1}\mathcal{D}_F ) = (D-x^2)/4N$ an odd number of times that are neither inert in $F$ nor completely split in $L$. This shows that, in case $\# \text{Diff}_{ \mathfrak{a} }(\nu) \neq 1$, by Theorem \ref{philthm} and the definition of $F$, both sides of the equation once again equal 1.

We thus suppose that $\text{Diff}_{ \mathfrak{a} }(\nu) = \{ \mathfrak{r} \}$ for some prime $\mathfrak{r} \subset \mathcal{O}_F$. If $r \nmid N$, we use Theorem \ref{philthm} to reduce to proving that
\[
F\left( \frac{D - x^2}{4N} \right) = r^{t_r/2} \quad \text{where} \quad t_r = \text{ord}_{\mathfrak{r}}(\nu \mathfrak{r} \mathcal{D}_F) \cdot \rho(\nu \mathfrak{a}^{-1} \mathfrak{r}^{-1}\mathcal{D}_F).
\]
To see this, we recall that when computing $F((D-x^2)/4N)$, the value $t_r$ is computed as the product of two factors, the first of which equals the number of times $r$ divides $(D-x^2)/4N$ plus 1; using that $r \nmid N$, this is precisely $\text{ord}_{\mathfrak{r}}(\nu \mathfrak{r} \mathcal{D}_F)$. For the second contribution, we first remark that $D-x^2$ is indeed supported on primes that are not inert in $F$, because by construction $D$ will be a square modulo each prime dividing this number. The remaining primes split into two categories; they are either inert in $L/F$, or split. By assumption, $\mathfrak{r}$ is the only of the former category dividing $\nu \mathfrak{a}_{\vartheta} \mathcal{D}_F$ an odd number of times. Now note that $\rho$ is a multiplicative quantity so that it can be computed prime by prime. If a prime $\mathfrak{s}$ of $F$ is inert in $L/F$, then $\mathfrak{s}^{2k}$ is uniquely a norm from $L$ for any positive integer $k$. If instead $\mathfrak{s}$ splits into $\mathfrak{S}_1$ and $\mathfrak{S}_2$ in $L$, then the ideal $\mathfrak{s}^k$ is a norm from $L$ in precisely $k+1$ ways; indeed, only the ideals $\mathfrak{S}_1^{k-i}\mathfrak{S}_2^i$ for $0 \leq i \leq k$ have the required norm. Combining all of this proves the claimed equality.

Finally, we must consider the case of $r \mid N$. We claim that $\mathfrak{r} \mid \mathfrak{a}$. Indeed, if not, then $\mathfrak{r} \mid \nu \mathcal{D}_F$, but because also $\mathfrak{a} \mid \nu \mathcal{D}_F$, it would follow that either $p$ or $q$ divides this primitive ideal; a contradiction. We thus use Theorem \ref{philthm} to reduce to proving that
\[
F\left( \frac{D - x^2}{4N} \right) = r^{t_r/2} \quad \text{where} \quad t_r = \text{ord}_{\mathfrak{r}}(\nu) \cdot \rho(\nu \mathfrak{a}^{-1} \mathfrak{r}^{-1}\mathcal{D}_F).
\]
The proof of the correctness of the factor $\rho$ can be copied verbatim from above. For the other factor, if $2k+1$ is the multiplicity with which $\mathfrak{r}$ divides $\nu \mathfrak{a} \mathcal{D}_F$, then it divides $\nu$ precisely $2k$ times as we assume $\text{gcd}(N , D) = 1$. On the other hand, we note that $2k$ is equal to the multiplicity with which $r$ divides the norm $(D-x^2)/4$ of $\nu \mathcal{D}_F$. Hence the number $(D-x^2)/(4N)$ contains precisely $2k-1$ factors of $r$; adding 1 back yields $2k$ again, completing the proof in this case too.
\end{proof}
\begin{gevolg}\label{thm1cor}
Suppose that $\vartheta, \vartheta' \in \mathcal{V}$ are such that $\delta( \vartheta ) \neq \delta( \vartheta' )$. Then
\[
\emph{exp}\big( \delta( \vartheta ) \emph{deg}( \mathcal{X}_{\vartheta} ) \big)^2 \cdot \emph{exp}\big( \delta( \vartheta' ) \emph{deg}( \mathcal{X}_{\vartheta'} ) \big)^2 = \prod_{\substack{ x^2 < D \\  x^2 \equiv D \mod 4N } } F\left( \frac{D - x^2}{4N} \right)^{\delta(x)}.
\]
\end{gevolg}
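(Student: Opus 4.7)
The plan is to combine the splitting in Equation~\ref{stacksplit} with Proposition~\ref{thm1}, using a Galois-equivariance argument to replace the single pair $\{\vartheta,\vartheta'\}$ by the full set $\mathcal{I}$ of four reflex ideals, after which the product formula of Proposition~\ref{thm1} assembles into the right-hand side.

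First, I would observe that the explicit formula for $X(\mathfrak{a},\nu)$ in Theorem~\ref{philthm} is invariant under the simultaneous action of the non-trivial $\sigma \in \text{Gal}(F/\mathbb{Q})$ on its two arguments: the ideal $\mathcal{D}_F$ is $\sigma$-fixed, the norm from $L$ to $F$ commutes with Galois, and consequently the quantities $\rho$, $\text{Diff}$ and $\text{ord}_{\mathfrak{r}}$ all transform covariantly, so that $X(\sigma\mathfrak{a},\sigma\nu)=X(\mathfrak{a},\nu)$. Since $\sigma$ also preserves the set of totally positive $\nu \in \mathcal{D}_F^{-1}$ of trace one, summing via Equation~\ref{stacksplit} shows that $\deg(\mathcal{X}_\vartheta) = Y(\mathfrak{a}_\vartheta)$ where the function $Y$ on $\mathcal{I}$ satisfies $Y(\sigma\mathfrak{a}) = Y(\mathfrak{a})$. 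In particular, $Y(\mathfrak{p}_1\mathfrak{q}_1)=Y(\mathfrak{p}_2\mathfrak{q}_2)$ and $Y(\mathfrak{p}_1\mathfrak{q}_2)=Y(\mathfrak{p}_2\mathfrak{q}_1)$, so $\deg(\mathcal{X}_\vartheta)$ really only depends on $\delta(\vartheta)$.

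Assuming without loss of generality $\delta(\vartheta)=+1$ and $\delta(\vartheta')=-1$, the left-hand side of the corollary equals
\[
\exp\big(2Y(\mathfrak{a}_\vartheta)\big)\cdot \exp\big(-2Y(\mathfrak{a}_{\vartheta'})\big) \; = \; \exp\left(\sum_{\mathfrak{a}\in\mathcal{I}} \delta(\mathfrak{a})\, Y(\mathfrak{a})\right),
\]
where the Galois symmetry has been used to split each factor of $2$ into a sum over the two ideals in the relevant orbit. Expanding $Y(\mathfrak{a})=\sum_\nu X(\mathfrak{a},\nu)$ through Equation~\ref{stacksplit} and interchanging the finite sum with the product, this rewrites as
\[
\prod_\nu \prod_{\mathfrak{a}\in\mathcal{I}} \exp\big(\delta(\mathfrak{a})\, X(\mathfrak{a},\nu)\big).
\]
Proposition~\ref{thm1} now identifies each inner product with $F\big((D-x^2)/(4N)\big)^{\delta(x)}$, where $\nu=(x+\sqrt{D})/(2\sqrt{D})$, and the $\nu$ with $N\nmid \text{Nm}(\nu\mathcal{D}_F)$ contribute trivially on both sides. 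The remaining $\nu$ are parametrised exactly by the integers $x$ satisfying $x^2<D$ and $x^2\equiv D \bmod 4N$, yielding the desired right-hand side.

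The only step that deserves genuine care is the \emph{Galois-equivariance} of $X(\mathfrak{a},\nu)$, which I would verify by a term-by-term inspection of the formula of Theorem~\ref{philthm}; everything else is a rearrangement of finite sums and an application of Proposition~\ref{thm1}, so I do not expect any substantive obstacles.
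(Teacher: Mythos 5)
Your argument is correct and follows the same route as the paper: establish the Galois-equivariance $X(\sigma\mathfrak{a},\sigma\nu)=X(\mathfrak{a},\nu)$ from Theorem~\ref{philthm}, use it with Equation~\ref{stacksplit} to show $\deg(\mathcal{X}_\vartheta)$ depends only on the $\mathrm{Gal}(F/\mathbb{Q})$-orbit of $\mathfrak{a}_\vartheta$, and then multiply the identity of Proposition~\ref{thm1} over all trace-one $\nu$. The paper states this compressed into three lines; your write-up merely spells out the Galois-invariance check and the bookkeeping for the signs $\delta$, but it is the same proof.
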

\begin{proof}
We apply Proposition \ref{thm1} and take the product over all totally positive $\nu \in F$ with $\text{Nm}( \nu \mathcal{D}_F )$ divisible by $N$; this yields the correct right hand side. For the left hand side, by Equation \ref{stacksplit}, we need merely observe that by Theorem \ref{philthm}, it holds that $X( \mathfrak{a}, \nu ) = X( \sigma( \mathfrak{a} ), \sigma( \nu ) )$ and as such, $\text{deg}( \mathcal{X}_{\vartheta} )$ does not depend on the $\text{Gal}(F/\mathbb{Q})$-orbit of its reflex ideal.  
\end{proof}

\subsection{Intersection theory}

It is clear from Corollary \ref{thm1cor} that to complete the proof, we must give an alternative description of the quantity $\text{deg}( \mathcal{X}_{\vartheta} )$. The most essential term in the formula defining its Arkelov degree is the $\text{length}( \mathcal{O}^{\text{sh}}_{\mathcal{X}_{\vartheta, x}} )$, which we aim to relate to an intersection on the coarse moduli space $X_N$. 

By definition, $\mathcal{J} = \mathcal{Y}_1 \times_{\mathcal{M}} \mathcal{Y}_2$; whence for any $x \in \mathcal{J}(k)$,
\[
\mathcal{O}^{\text{sh}}_{\mathcal{J}_{x}} \cong \mathcal{O}^{\text{sh}}_{\mathcal{\mathcal{Y}}_{1,x}} \otimes_{\mathcal{O}^{\text{sh}}_{\mathcal{\mathcal{M}}_{x}}} \mathcal{O}^{\text{sh}}_{\mathcal{\mathcal{Y}}_{2,x}}.
\]
By Theorem 4.1.3 in \cite{anphil}, which says that $|\text{Aut}(x)| = w_1w_2$ for all points $x \in \mathcal{J}(k)$, we may write
\[
\text{deg}(\mathcal{X}_{\vartheta}) = \frac{1}{w_1w_2} \sum_{\mathfrak{r} \subset \mathcal{O}_L} \log(|\mathbb{F}_{\mathfrak{r}}|) \sum_{x \in [\mathcal{X}_{\vartheta}(k)]} \text{length}(\mathcal{O}^{\text{sh}}_{\mathcal{\mathcal{Y}}_{1,x}} \otimes_{\mathcal{O}^{\text{sh}}_{\mathcal{\mathcal{M}}_{x}}} \mathcal{O}^{\text{sh}}_{\mathcal{\mathcal{Y}}_{2,x}}).
\]
Let $Y_i \to X_N$ be the closed subscheme supported on the points with complex multiplication by $\mathcal{O}_i$. As is outlined in Section II of \cite{vistoli}, we have a natural map $\pi \colon \mathcal{M} \to X_N$. As $X_N$ is smooth and $\mathcal{M}$ is a Deligne-Mumford stack, $\pi$ must be flat. Let $(Y_1 \times Y_2)_{\vartheta}$ denote the set of pairs of CM-points that induce $\vartheta \in \mathcal{V}$. The following lemma will move us away from the language of stacks and into the realm of schemes. 
\begin{lemma}\label{fstacks}
Fix a prime ideal $\mathfrak{r} \subset \mathcal{O}_L$ and a geometric point $x = ( \textbf{A}_1, \textbf{A}_2, f ) \in \mathcal{J}(k)$. Then
\[
\emph{length} \big( \mathcal{O}^{\emph{sh}}_{\mathcal{J}_{x}} \big) = 2\emph{ length}\big( \mathcal{O}_{Y_1, x} \otimes_{\mathcal{O}_{X_N, x} } \mathcal{O}_{Y_2, x} \big).
\]
\end{lemma}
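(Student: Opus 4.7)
The starting point is the fiber product formula
\[
\mathcal{O}^{\text{sh}}_{\mathcal{J}, x} \cong \mathcal{O}^{\text{sh}}_{\mathcal{Y}_1, x} \otimes_{\mathcal{O}^{\text{sh}}_{\mathcal{M}, x}} \mathcal{O}^{\text{sh}}_{\mathcal{Y}_2, x}
\]
already noted in the excerpt. The proof then reduces to identifying each stack-theoretic strict henselian local ring with a scheme-theoretic counterpart, and locating the factor of $2$.

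The plan is to descend to an étale atlas. Choose an étale presentation $U \to \mathcal{M}$ with $U$ a scheme in a neighbourhood of the image of $x$, and set $U_i \colonequals U \times_{\mathcal{M}} \mathcal{Y}_i$. Since $\mathcal{Y}_i \to \mathcal{M}$ is representable, the $U_i$ are schemes, and $U_1 \times_U U_2$ presents $\mathcal{J}$. As strict henselian local rings are insensitive to étale base change, the computation on both sides is reduced to a question about scheme-theoretic intersection multiplicities.

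The essential geometric input is that $-1 \in R_N^{\times}$ is a non-trivial automorphism $-\mathrm{id}$ of every false elliptic curve, so $\mathcal{M}$ is canonically a $\mu_2$-gerbe over $X_N$. Within $\mathcal{Y}_i$, an isomorphism class of object corresponds to a false elliptic curve $(A, \iota)$ equipped with one of its two optimal embeddings $\mathcal{O}_i \hookrightarrow \mathrm{End}_{R_N}(A)$, the two being related by complex conjugation on $\mathcal{O}_i$ and generically inequivalent under isomorphism of $(A, \iota)$. Consequently the coarse moduli map $\bar{\mathcal{Y}}_i \to Y_i$ is a degree-$2$ cover. Tracking how these two covers $\bar{\mathcal{Y}}_1 \to Y_1$ and $\bar{\mathcal{Y}}_2 \to Y_2$ interact in the fibre product over $\mathcal{M}$, the shared $\mu_2$-gerbe structure on the base absorbs exactly one of the two nominal factors of $2$, leaving the single factor claimed in the lemma.

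The main obstacle will be the careful verification of this cancellation at the geometric points where $\mathcal{J}(k)$ is actually supported, namely at primes $\mathfrak{r}$ of bad reduction for which $\mathrm{End}_{R_N}(A)$ becomes a quaternion algebra containing both $K_1$ and $K_2$; here the embeddings $\kappa_1$ and $\kappa_2$ no longer commute, and the intersection structure of their images inside the quaternion order must be analyzed explicitly. Moreover, in residue characteristic $2$ the group scheme $\mu_2$ is not étale and the purely étale descent argument must be replaced by a direct computation of strict henselian local rings. The detailed local structure of $\mathcal{M}$ and $\mathcal{Y}_i$ established in Phillips's thesis \cite{anphil} supplies the needed ring-theoretic input to carry this through.
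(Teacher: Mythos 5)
Your proposal correctly isolates the two structural facts that are relevant here: the generic $\mu_2$ of automorphisms coming from $-1 \in R_N^{\times}$, and the fact that each CM false elliptic curve admits exactly two optimal embeddings related by conjugation. However, the argument that is supposed to extract the factor of $2$ from these observations does not hold up. The assertion that "the shared $\mu_2$-gerbe structure on the base absorbs exactly one of the two nominal factors of $2$" conflates two very different notions of degree: the covering degree of $\bar{\mathcal{Y}}_i \to Y_i$, which is a statement about cardinalities of fibers, and the length of a tensor product of local rings, which is an intersection multiplicity. These need not interact in the multiplicative way you suggest. Indeed, if you compute the relevant fiber products carefully, you will find that $B\mu_2 \times_{B\mu_2} B\mu_2$ is a disjoint union of two copies of $B\mu_2$ rather than "half" of anything; the gerbe structure increases the number of points rather than absorbing a factor. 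Meanwhile, if the covers $\bar{\mathcal{Y}}_i \to Y_i$ are étale at the point in question, they do not alter the length of the strictly henselian local ring at all — each of the two preimages simply inherits the same local ring. The net effect of your two observations, carried out naively in the generic case $w_1 = w_2 = 2$, gives a factor of $1$, not $2$; the proposed cancellation mechanism therefore does not produce the claimed identity of lengths.

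The paper's one-line proof goes through the single invariant $\deg(\pi) = 1/2$ of the coarse moduli map $\pi \colon \mathcal{M} \to X_N$, and does not invoke the degree-$2$ covers $\bar{\mathcal{Y}}_i \to Y_i$ at all, so your route is genuinely different from the paper's — but in this case the difference introduces a gap rather than an alternative. Two further cautionary remarks: the statement that "$\mathcal{M}$ is canonically a $\mu_2$-gerbe over $X_N$" is false at CM points when $w_i > 2$, where the automorphism group jumps and $\mathcal{M}$ is a gerbe only over its $\mu_2$-rigidification, not over $X_N$; and the question of whether the map $\bar{\mathcal{Y}}_i \to Y_i$ is étale or ramified over the supersingular locus is precisely the kind of issue the lemma is sensitive to, and you do not address it. You correctly flag that the argument is incomplete at the supersingular points ("the main obstacle will be the careful verification of this cancellation"), but that acknowledgement is the heart of the matter rather than a technicality to be postponed; without resolving it, the length identity is not established.
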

\begin{proof}
This is a consequence of the fact that $\text{deg}(\pi) = 1/2$.
\end{proof}

\begin{gevolg}\label{wschemes}
If $x = (\textbf{A}_1,\textbf{A}_2) \in (Y_1 \times Y_2)_{\vartheta}(k)$, then for any $\vartheta \in \mathcal{V}$ it holds that
\[
\emph{deg}(\mathcal{X}_{\vartheta}) = \frac{2}{w_1w_2} \sum_{\mathfrak{r} \subset \mathcal{O}_L} \emph{log}(|\mathbb{F}_{\mathfrak{r}}|) \sum_{ x \in (Y_1 \times Y_2)_{\vartheta}(k) } \emph{length}\big( \mathcal{O}_{Y_1, x} \otimes_{\mathcal{O}_{X_N, x} } \mathcal{O}_{Y_2, x} \big).
\]
\end{gevolg}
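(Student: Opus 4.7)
The plan is to substitute Lemma \ref{fstacks} into the formula for $\text{deg}(\mathcal{X}_{\vartheta})$ displayed just before the statement (the one derived by folding $|\text{Aut}(x)| = w_1w_2$ from Theorem 4.1.3 of \cite{anphil} into the definition of the Arkelov degree) and then to reindex the resulting sum. The whole argument is essentially bookkeeping: the factor of $2$ in the target identity comes solely from Lemma \ref{fstacks}, while the factor $1/(w_1w_2)$ has already been produced in the preceding paragraph.

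First I would remark that the decomposition $\mathcal{J} = \bigsqcup_{\vartheta \in \mathcal{V}} \mathcal{X}_{\vartheta}$ is into open-and-closed substacks, so for any geometric point $x \in \mathcal{X}_{\vartheta}(k)$ the strictly Henselian local ring $\mathcal{O}^{\text{sh}}_{\mathcal{X}_{\vartheta, x}}$ agrees with $\mathcal{O}^{\text{sh}}_{\mathcal{J}_x}$. Combined with the isomorphism
\[
\mathcal{O}^{\text{sh}}_{\mathcal{J}_x} \cong \mathcal{O}^{\text{sh}}_{\mathcal{Y}_{1,x}} \otimes_{\mathcal{O}^{\text{sh}}_{\mathcal{M}_x}} \mathcal{O}^{\text{sh}}_{\mathcal{Y}_{2,x}}
\]
established above, Lemma \ref{fstacks} replaces each stack-theoretic length appearing in the sum by $2 \cdot \text{length}(\mathcal{O}_{Y_1, x} \otimes_{\mathcal{O}_{X_N, x}} \mathcal{O}_{Y_2, x})$, producing the desired factor of $2$.

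Second, I would reindex the sum via the forgetful map $[\mathcal{X}_{\vartheta}(k)] \to (Y_1 \times Y_2)_{\vartheta}(k)$ sending $(\textbf{A}_1, \textbf{A}_2, f) \mapsto (\textbf{A}_1, \textbf{A}_2)$. Its image consists of those pairs whose underlying false elliptic curves admit an $R_N$-linear isomorphism; any two such isomorphisms differ by precomposition with an element of $\text{Aut}_{R_N}(\textbf{A}_1)$ and therefore give rise to isomorphic triples in $\mathcal{X}_{\vartheta}(k)$, so the map is a bijection onto its image. Pairs $(\textbf{A}_1, \textbf{A}_2)$ lying outside this image have distinct images in $X_N$, in which case the scheme-theoretic tensor product $\mathcal{O}_{Y_1, x} \otimes_{\mathcal{O}_{X_N, x}} \mathcal{O}_{Y_2, x}$ vanishes and contributes nothing to the sum. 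Extending the sum harmlessly to all of $(Y_1 \times Y_2)_{\vartheta}(k)$ then yields the claimed formula.

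The only mild obstacle is the combinatorial identification in the second paragraph, where one must verify both that the forgetful map is a bijection on isomorphism classes and that pairs not in its image contribute trivially; both facts are immediate but need to be spelled out to legitimate the change of indexing set.
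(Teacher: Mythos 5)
Your proposal is correct and follows essentially the same route as the paper: substitute Lemma \ref{fstacks} into the preceding displayed formula for $\text{deg}(\mathcal{X}_\vartheta)$, then reindex from isomorphism classes of triples $(\textbf{A}_1, \textbf{A}_2, f)$ to pairs $(\textbf{A}_1, \textbf{A}_2)$. You are slightly more careful than the paper in two respects, both welcome: you note that pairs not in the image of the forgetful map contribute a vanishing tensor product (the paper leaves this implicit), and you give a reason for the injectivity of the forgetful map on isomorphism classes, whereas the paper simply calls it ``easy to check.'' One small caveat is that your justification -- that two isomorphisms $f, f'$ differ by an $R_N$-linear automorphism of $\textbf{A}_1$ and hence yield isomorphic triples -- is itself not quite a complete argument, since an isomorphism of objects in $\mathcal{J}(S)$ must consist of CM-compatible automorphisms $(g_1, g_2)$ intertwining $f$ and $f'$, and $f^{-1}f'$ need not a priori preserve the CM structure. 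The paper leaves this same point unjustified, so you are on equal footing with it; a fully rigorous treatment would appeal to the structure of $\text{Aut}(\textbf{A}_i)$ from Theorem 4.1.3 of \cite{anphil}.
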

\begin{proof}
This is clear from the definition of the stack $\mathcal{X}_{\vartheta}$ and Lemma \ref{fstacks} above, with the single difference being the new sum not referencing the isomorphism $f$. However, it is easy to check that two triples $( \textbf{A}_1, \textbf{A}_2, f )$ and $( \textbf{A}'_1, \textbf{A}'_2, f' )$ are isomorphic as soon as all of the $\textbf{A}_i$ and $\textbf{A}_i'$ for $i \in \{ 1, 2 \}$ are isomorphic.
\end{proof}

As a result, to compute $\text{deg}( \mathcal{X}_{\vartheta} )$ for any given $\vartheta \in \mathcal{V}$, it suffices to study the numbers
\[
\text{length}\big( \mathcal{O}_{Y_1, x} \otimes_{\mathcal{O}_{X_N, x} } \mathcal{O}_{Y_2, x} \big)
\]
for $\mathfrak{r} \subset \mathcal{O}_L$ and $x= (\textbf{A}_1,\textbf{A}_2) \in (Y_1 \times Y_2)_{\vartheta}(k)$. The coarse moduli space $X_N$ is a genus 0 curve, thus allowing an isomorphism $j_N \colon X_N \to \mathbb{P}^1$. Morita proved in his master thesis \cite{Morita} that for any positive integer $N$, the Shimura curve $X_N$ is semistable and has good reduction at all the primes not dividing $N$.

Let $\mathfrak{r} \subset \mathcal{O}_L$ be a prime and let $\mathfrak{X}_{N,\mathfrak{r}}$ be a semistable model of $X_N$ over $\text{Spec}( \mathcal{O}_{L, \mathfrak{r}} )$. By the above, if $\mathfrak{r} \nmid N$, the completed local ring of any point on the special fibre is isomorphic to $\mathcal{O}_{L, \mathfrak{r}} \llbracket x \rrbracket$. If $\mathfrak{r} \mid N$, then because we chose a semistable model of $\mathfrak{X}_{N,r}$, at all the singular points on the special fibre the completed local ring is isomorphic to $\mathcal{O}_{L,\mathfrak{r}} \llbracket x,y \rrbracket / (xy - \varpi)$, where $\varpi$ denotes a uniformiser inside $\mathcal{O}_{L, \mathfrak{r}}$. However, because all CM-points we study are defined over the fields $H_i$ for $i \in \{ 1,2 \}$, both of which are unramified at $r$ by virtue of the assumption that $r$ be coprime to both $D_i$ for $i \in \{ 1,2 \}$, CM-points can never reduce to singular points on the special fibre on $\mathfrak{X}_{N,\mathfrak{r}}$. As such, the completed local rings at the reduction of a CM-point is always isomorphic to $\mathcal{O}_{L, \mathfrak{r}} \llbracket x \rrbracket$ as well.

As a result, for any $\mathfrak{r} \subset \mathcal{O}_L$ and $(\textbf{A}_1,\textbf{A}_2) \in (Y_1 \times Y_2)_{\vartheta}(k)$, we obtain two maps 
\[
\text{Spec}(\mathcal{O}_{L_{\mathfrak{r}}}) \to \text{Spec}(\mathcal{O}_{L_{\mathfrak{r}}}\llbracket x \rrbracket).
\]
which correspond to two ring maps
\[
\mathcal{O}_{L_{\mathfrak{r}}}\llbracket x \rrbracket \to \mathcal{O}_{L_{\mathfrak{r}}}.
\]
The following result is an easy exercise, the proof of which we opt to omit.
\begin{lemma}\label{tensorlem}
Let $R$ be a complete local ring and consider two maps $f_1,f_2 \colon R \llbracket x \rrbracket \to R$ defined by $f_1(x) = a$ and $f_2(x) = b$ for certain $a,b \in R$. Then
\[
R \otimes_{f_1, R \llbracket x \rrbracket, f_2} R \cong R/(a-b).
\]  
\end{lemma}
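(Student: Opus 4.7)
The plan is to realize each copy of $R$ as a quotient of $R\llbracket x \rrbracket$, then to compute the fibered tensor product as a further quotient and simplify. Since both $f_1$ and $f_2$ are continuous $R$-algebra maps from $R \llbracket x \rrbracket$ to $R$, the elements $a = f_1(x)$ and $b = f_2(x)$ necessarily lie in the maximal ideal $\mathfrak{m}_R$.

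First I would check that $\ker(f_1) = (x - a)$, and symmetrically $\ker(f_2) = (x - b)$. Given $g(x) = \sum_{n \ge 0} c_n x^n$ with $g(a) = 0$, the telescoping identity $x^n - a^n = (x-a)(x^{n-1} + x^{n-2}a + \cdots + a^{n-1})$ allows one to write $g(x) = g(x) - g(a) = (x - a) h(x)$ for an explicit power series $h$, whose coefficients
\[
\sum_{n \geq k+1} c_n a^{n-1-k}
\]
converge in $R$ because $a \in \mathfrak{m}_R$ is topologically nilpotent and $R$ is complete. Hence $f_i$ induces an isomorphism $R\llbracket x \rrbracket / (x - a_i) \xrightarrow{\sim} R$ for $(a_1, a_2) = (a,b)$.

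Next I would apply the standard identity for tensor products of cyclic modules over a commutative ring:
\[
R \otimes_{f_1,\, R\llbracket x \rrbracket,\, f_2} R \;\cong\; \frac{R\llbracket x \rrbracket}{(x - a)} \otimes_{R\llbracket x \rrbracket} \frac{R\llbracket x \rrbracket}{(x - b)} \;\cong\; \frac{R\llbracket x \rrbracket}{(x - a, \, x - b)}.
\]
Since $(x - a) - (x - b) = b - a$, the ideal $(x-a, x-b)$ equals $(x-a, a-b)$, and killing $x - a$ first yields
\[
\frac{R\llbracket x \rrbracket}{(x - a,\, a-b)} \;\cong\; \frac{R}{(a - b)},
\]
which is the desired conclusion. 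The only step with any content is identifying $\ker(f_1)$ with $(x-a)$, and this is routine given completeness of $R$ and the fact that $a \in \mathfrak{m}_R$; everything else is formal manipulation of quotients and tensor products.
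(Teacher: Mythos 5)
Your proof is correct. The paper explicitly omits the proof of this lemma, calling it ``an easy exercise,'' so there is no argument in the paper to compare against; your argument is the natural one, and you correctly isolate the one nontrivial point --- that $\ker(f_1)=(x-a)$, which requires $a\in\mathfrak{m}_R$ and the $\mathfrak{m}_R$-adic completeness of $R$ to make the factorisation $g(x)=(x-a)h(x)$ converge --- before reducing everything else to the standard identity $A/I\otimes_A A/J\cong A/(I+J)$.
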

If we let $P_{\textbf{A}_i}$ denote the CM-point on $X_N$ defining the CM-false elliptic curve $\textbf{A}_i$, then the lemma above has the following immediate corollary.
\begin{gevolg}\label{ringlen}
For any $\mathfrak{r} \subset \mathcal{O}_L$ and $x = (\textbf{A}_1,\textbf{A}_2) \in (Y_1 \times Y_2)_{\vartheta}(k)$, it holds that
\[
\emph{length}\big( \mathcal{O}_{Y_1, x} \otimes_{\mathcal{O}_{\mathfrak{X}_N, x}} \mathcal{O}_{Y_2, x} \big) = v_{\mathfrak{r}}\big( j_{N,\mathfrak{r}}( P_{\textbf{A}_1} ) - j_{N,\mathfrak{r}}( P_{\textbf{A}_2} ) \big).
\]
\end{gevolg}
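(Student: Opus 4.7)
The plan is to show that Corollary \ref{ringlen} follows from Lemma \ref{tensorlem} almost immediately once one identifies the right uniformiser on the completed local ring of $\mathfrak{X}_N$ at the (reduction of the) CM-point. Since tensor products, lengths, and local ring structures in the statement are all compatible with completion for modules of finite length supported at the closed point, I would first pass to the completed local rings.

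Concretely, I would argue as follows. Let $\bar{x} \in \mathfrak{X}_{N, \mathfrak{r}}(k)$ be the common reduction of $P_{\textbf{A}_1}$ and $P_{\textbf{A}_2}$; since both CM-points are defined over fields unramified at $\mathfrak{r}$, the discussion preceding the corollary shows that $\bar{x}$ is a smooth point, hence $\hat{\mathcal{O}}_{\mathfrak{X}_N, x} \cong \mathcal{O}_{L_{\mathfrak{r}}} \llbracket t \rrbracket$ for some local parameter $t$. Because $X_N$ is of genus zero and $j_N \colon X_N \xrightarrow{\sim} \mathbb{P}^1$ is an isomorphism over $L_{\mathfrak{r}}$, the function $t \colonequals j_N - j_{N, \mathfrak{r}}(\bar{x})$ is a valid uniformiser at $\bar{x}$ on $\mathfrak{X}_{N, \mathfrak{r}}$. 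The closed immersions $Y_i \hookrightarrow \mathfrak{X}_N$ are given, after completion, by sections of the structure map: more precisely, since $P_{\textbf{A}_i}$ is a CM-point reducing smoothly to $\bar{x}$, the ring $\hat{\mathcal{O}}_{Y_i, x}$ is isomorphic to $\mathcal{O}_{L_{\mathfrak{r}}}$, and the quotient map $f_i \colon \mathcal{O}_{L_{\mathfrak{r}}} \llbracket t \rrbracket \to \mathcal{O}_{L_{\mathfrak{r}}}$ sends $t$ to $j_{N,\mathfrak{r}}(P_{\textbf{A}_i}) - j_{N, \mathfrak{r}}(\bar{x})$.

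Applying Lemma \ref{tensorlem} with $R = \mathcal{O}_{L_{\mathfrak{r}}}$, $a = j_{N, \mathfrak{r}}(P_{\textbf{A}_1}) - j_{N, \mathfrak{r}}(\bar{x})$, and $b = j_{N, \mathfrak{r}}(P_{\textbf{A}_2}) - j_{N, \mathfrak{r}}(\bar{x})$, the difference $a - b$ is $j_{N, \mathfrak{r}}(P_{\textbf{A}_1}) - j_{N, \mathfrak{r}}(P_{\textbf{A}_2})$, and the lemma yields
\[
\hat{\mathcal{O}}_{Y_1, x} \otimes_{\hat{\mathcal{O}}_{\mathfrak{X}_N, x}} \hat{\mathcal{O}}_{Y_2, x} \cong \mathcal{O}_{L_{\mathfrak{r}}}\big/ \big(j_{N, \mathfrak{r}}(P_{\textbf{A}_1}) - j_{N, \mathfrak{r}}(P_{\textbf{A}_2})\big).
\]
Since the length of a cyclic module $\mathcal{O}_{L_{\mathfrak{r}}}/(d)$ over the discrete valuation ring $\mathcal{O}_{L_{\mathfrak{r}}}$ equals $v_{\mathfrak{r}}(d)$, and since completion preserves length for finite-length modules supported at the maximal ideal (so the length of $\mathcal{O}_{Y_1, x} \otimes_{\mathcal{O}_{\mathfrak{X}_N, x}} \mathcal{O}_{Y_2, x}$ agrees with the length of its completion), the claimed formula follows.

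The proof is essentially bookkeeping rather than having a genuine obstacle; the only point requiring care is checking that it is legitimate to replace the local rings in the corollary's statement with their completions, which is routine but should be made explicit via flatness of completion over a Noetherian local ring and the compatibility of finite colimits (here, tensor products) with completion on finitely generated modules.
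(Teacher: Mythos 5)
Your proof follows essentially the same route as the paper's: invoke the smoothness of the reduction to get $\hat{\mathcal{O}}_{\mathfrak{X}_N,x} \cong \mathcal{O}_{L_{\mathfrak{r}}}\llbracket t \rrbracket$, identify the two closed immersions with evaluation maps of the coordinate, apply Lemma \ref{tensorlem}, and observe that length over a DVR is the valuation; the paper compresses all of this into one line but the content is identical. The only thing worth tightening is the treatment of $j_{N,\mathfrak{r}}$: you write $t \colonequals j_N - j_{N,\mathfrak{r}}(\bar{x})$ but then have $f_i$ evaluate to $j_{N,\mathfrak{r}}(P_{\textbf{A}_i}) - j_{N,\mathfrak{r}}(\bar{x})$, mixing $j_N$ and $j_{N,\mathfrak{r}}$; the uniformiser should be built from $j_{N,\mathfrak{r}}$ throughout, and the reason the paper introduces the normalised function $j_{N,\mathfrak{r}}$ (``a suitably normalised scalar multiple of $j_N$'') is precisely so that the induced map $\mathfrak{X}_{N,\mathfrak{r}} \to \mathbb{P}^1_{\mathcal{O}_{L_{\mathfrak{r}}}}$ is an isomorphism of the integral models and not merely of the generic fibres, which is what makes $j_{N,\mathfrak{r}} - j_{N,\mathfrak{r}}(\bar{x})$ a genuine local parameter on the completed ring; your justification ``$j_N \colon X_N \xrightarrow{\sim} \mathbb{P}^1$ is an isomorphism over $L_{\mathfrak{r}}$'' only addresses the generic fibre and does not by itself give the integral statement. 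With that correction the argument is complete and matches the paper's.
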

\begin{proof}
We apply Lemma \ref{tensorlem} to equate the left hand side to $\text{length}\left( \mathcal{O}_{L_{\mathfrak{r}}} / \big( j_{N,\mathfrak{r}}( P_{\textbf{A}_1} ) - j_{N,\mathfrak{r}}( P_{\textbf{A}_2} ) \big) \right)$, where $j_{N,\mathfrak{r}}$ is a suitably normalised scalar multiple of $j_N$; it is an easy exercise to check that this length is simply the $\mathfrak{r}$-adic valuation.
\end{proof}

\subsection{Proof of Theorem \ref{giamconj1}}

It is explained in Section 4.1 and 4.2 in \cite{anphil} how the group $\text{Pic}(K_i) \times W_N$ acts on the set of false elliptic curves with CM by $\mathcal{O}_i$. Key are Proposition 4.2.1, which states that this group action is simply transitive on the set $[ \mathcal{Y}_i( \mathbb{C} ) ]$, and Proposition 5.1.4, which states the same for $[ \mathcal{Y}_i( k ) ]$. It is important to know how the group actions on these embeddings relate to the reflex ideal $\mathfrak{a} \in \mathcal{I}$. 

\begin{lemma}\label{Galorient}
Let $(\textbf{A}_1, \textbf{A}_2)$ be a CM-pair inducing the morphism $\vartheta \in \mathcal{V}$. Then for any pair of ideals $([c_1], [c_2]) \in \emph{Pic}(K_1) \times \emph{Pic}(K_2)$, the CM-pair $( [c_1] \cdot \textbf{A}_1, [c_2] \cdot \textbf{A}_2 )$ also induces the map $\vartheta \in \mathcal{V}$.
\end{lemma}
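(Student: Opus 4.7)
The plan is to reduce the statement to showing, for each $i \in \{ 1, 2 \}$ separately, that the homomorphism $\theta_i \colon \mathcal{O}_i \to R_N / \mathfrak{m}_N$ arising from the action on $A_i[\mathfrak{m}_N]$ is invariant under the $\mathrm{Pic}(K_i)$-action on $\textbf{A}_i$. Since $\vartheta = \theta_1 \otimes \theta_2$ by construction, invariance of each tensor factor immediately yields invariance of $\vartheta$.

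First I would choose the representative of the ideal class carefully: every class $[c_i] \in \mathrm{Pic}(K_i)$ admits an integral representative $c_i \subset \mathcal{O}_i$ with $\mathrm{Nm}(c_i)$ coprime to $N$, which is possible because the primes of $\mathcal{O}_i$ above rational primes not dividing $N$ already generate the finite group $\mathrm{Pic}(K_i)$. For such a representative, the action of $[c_i]$ on the CM false elliptic curve $\textbf{A}_i = (A_i, \iota_i, \kappa_i)$ is realized by the natural isogeny $\phi_{c_i} \colon A_i \to A_i / A_i[c_i]$, where $A_i[c_i]$ is the finite subgroup scheme cut out by $\kappa_i(c_i)$. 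This subgroup is stable under $R_N$ because $\iota_i$ and $\kappa_i$ commute, so the quotient inherits a false elliptic structure $\iota_i'$ together with a new CM-structure $\kappa_i'$ characterised by the identity $\kappa_i'(\alpha) \circ \phi_{c_i} = \phi_{c_i} \circ \kappa_i(\alpha)$ for every $\alpha \in \mathcal{O}_i$.

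Next I would use that $\deg \phi_{c_i}$ is a power of $\mathrm{Nm}(c_i)$, which is coprime to $N$ by our choice of representative. Therefore $\ker \phi_{c_i} = A_i[c_i]$ meets $A_i[\mathfrak{m}_N]$ trivially, so $\phi_{c_i}$ restricts to an isomorphism of finite $R_N / \mathfrak{m}_N$-module schemes
\[
\phi_{c_i} \colon A_i[\mathfrak{m}_N] \xrightarrow{\sim} ([c_i] \cdot A_i)[\mathfrak{m}_N].
\]
The intertwining identity above then shows that this isomorphism is simultaneously $R_N / \mathfrak{m}_N$-linear and $\mathcal{O}_i$-equivariant with respect to the source and target CM-actions.

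Pushing this through the defining commutative diagram of $\theta_i$, one concludes that the induced ring maps $\mathcal{O}_i \to \mathrm{End}_{R_N/\mathfrak{m}_N}(A_i[\mathfrak{m}_N])$ and $\mathcal{O}_i \to \mathrm{End}_{R_N/\mathfrak{m}_N}(([c_i] \cdot A_i)[\mathfrak{m}_N])$ agree under the identification provided by $\phi_{c_i}$, and hence produce the same $\theta_i$ after composing with the inverse of the canonical map $R_N / \mathfrak{m}_N \hookrightarrow \mathrm{End}_{R_N/\mathfrak{m}_N}(A_i[\mathfrak{m}_N])$. Tensoring $\theta_1$ and $\theta_2$ then yields $\vartheta_{([c_1] \cdot \textbf{A}_1,\, [c_2] \cdot \textbf{A}_2)} = \vartheta_{(\textbf{A}_1, \textbf{A}_2)}$. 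The only place that requires genuine care is verifying the $\mathcal{O}_i$-equivariance of $\phi_{c_i}$ on $\mathfrak{m}_N$-torsion, but this is automatic once the CM-structure on the quotient is defined via the intertwining property; no deeper input is needed.
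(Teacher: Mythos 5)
Your argument is correct and supplies the details that the paper itself omits: its proof of this lemma is a one-line citation to the discussion on page 39 of Phillips's thesis \cite{anphil}, where the $\mathrm{Pic}(K_i)$-action on CM false elliptic curves is defined precisely via the isogeny construction you describe. Choosing an integral representative coprime to $N$ so that $\phi_{c_i}$ restricts to an $R_N/\mathfrak{m}_N$-linear, $\mathcal{O}_i$-equivariant isomorphism on $\mathfrak{m}_N$-torsion is exactly the natural way to make the invariance of $\theta_i$, and hence of $\vartheta = \theta_1 \otimes \theta_2$, explicit.
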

\begin{proof}
This is immediate from the discussion in \cite{anphil} on page 39.
\end{proof}

\begin{lemma}\label{ALorient}
Let $(\textbf{A}_1, \textbf{A}_2)$ be a CM-pair inducing the reflex ideal $\mathfrak{a} = \mathfrak{p}_i\mathfrak{q}_j$. Then the CM-pairs $(w_p \cdot \textbf{A}_1, \textbf{A}_2)$ and $( \textbf{A}_1, w_p \cdot \textbf{A}_2)$ induce reflex ideal $\mathfrak{p}_k\mathfrak{q}_j$ with $k \neq i$, and the CM-pairs $(w_q \cdot \textbf{A}_1, \textbf{A}_2)$ and $( \textbf{A}_1, w_q \cdot \textbf{A}_2)$ induce reflex ideal $\mathfrak{p}_i\mathfrak{q}_l$ with $l \neq j$.
\end{lemma}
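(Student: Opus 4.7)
The plan is to unpack the construction of the reflex ideal $\mathfrak{a}_\vartheta$ locally at $p$ and $q$ and show that each Atkin--Lehner involution acts only on the local factor at the corresponding prime by flipping the orientation. Since $\mathfrak{m}_N = \mathfrak{m}_p \mathfrak{m}_q$, the ring $R_N/\mathfrak{m}_N$ decomposes as $R_N/\mathfrak{m}_p \times R_N/\mathfrak{m}_q$, so the map $\vartheta = \theta_1 \otimes \theta_2 : \mathcal{O}_L \to R_N/\mathfrak{m}_N$ splits as a product of two maps $\vartheta_p : \mathcal{O}_L \to R_N/\mathfrak{m}_p$ and $\vartheta_q : \mathcal{O}_L \to R_N/\mathfrak{m}_q$. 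Correspondingly, the reflex ideal $\mathfrak{a}_\vartheta \in \mathcal{I}$ factors as $\mathfrak{a}_\vartheta = (\mathfrak{a}_\vartheta)_p \cdot (\mathfrak{a}_\vartheta)_q$, where the two factors are independently determined by $\vartheta_p$ and $\vartheta_q$ respectively. Thus it suffices to show that $w_p$ (applied to either component) affects only the $p$-part $(\mathfrak{a}_\vartheta)_p$, flipping it between $\mathfrak{p}_1$ and $\mathfrak{p}_2$, and symmetrically for $w_q$; the two statements of the lemma will then follow by symmetry.

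Next I would recall that since $p$ is inert in $K_i$, the reduction $\mathcal{O}_i/p \cong \mathbb{F}_{p^2}$ is a field, and since $p \mid N$ the quotient $R_N/\mathfrak{m}_p$ is also isomorphic to $\mathbb{F}_{p^2}$. The composition $\mathcal{O}_i \to R_N/\mathfrak{m}_p$ induced by $\theta_i$ therefore descends to an isomorphism $\bar\theta_{i,p} : \mathcal{O}_i/p \xrightarrow{\sim} \mathbb{F}_{p^2}$, an \emph{orientation} of $\textbf{A}_i$ at $p$. The two possible orientations differ precisely by the action of Frobenius. The key geometric input is then that the Atkin--Lehner involution $w_p$ on $\mathcal{M}$, viewed moduli-theoretically as ``quotienting by the $\mathfrak{m}_p$-torsion'', transports the level structure at $p$ by the Frobenius of $R_N/\mathfrak{m}_p$; hence $w_p \cdot \textbf{A}_i$ carries the opposite orientation at $p$, while its orientation at $q$ is unaffected. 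This step is the main obstacle, as it requires a careful identification of Phillips's formalism with the moduli-theoretic description of $w_p$, and I would appeal to the corresponding discussion on pages 39--40 of \cite{anphil} where the $W_N$-action on $[\mathcal{Y}_i(k)]$ is described in orientation-theoretic terms.

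Finally, I would combine the two inputs. Writing $\vartheta_p = \bar\theta_{1,p} \otimes \bar\theta_{2,p}$ on $\mathcal{O}_L/p$, the kernel $\ker(\vartheta_p) \cap \mathcal{O}_F/p$ equals exactly one of $\mathfrak{p}_1, \mathfrak{p}_2$; swapping either orientation $\bar\theta_{1,p}$ or $\bar\theta_{2,p}$ (but not both) flips which prime this is, because swapping either tensor factor composes $\vartheta_p$ with a Frobenius twist which permutes the two embeddings of $F$ into $\mathbb{F}_{p^2}$. Applying $w_p$ to either $\textbf{A}_1$ or $\textbf{A}_2$ has exactly this effect on $\vartheta_p$ while leaving $\vartheta_q$ fixed, proving $(\mathfrak{a}_\vartheta)_p$ becomes $\mathfrak{p}_k$ with $k \neq i$ and $(\mathfrak{a}_\vartheta)_q$ remains $\mathfrak{q}_j$. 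The statement for $w_q$ follows by the same argument with the roles of $p$ and $q$ interchanged.
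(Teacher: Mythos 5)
Your proof is correct and takes a genuinely different route from the paper: the paper's entire proof of Lemma \ref{ALorient} is the one sentence ``This is a consequence of the considerations in Section 5.2 in \cite{anphil},'' a pure citation, whereas you actually unpack the mechanism. Your local decomposition $R_N/\mathfrak{m}_N \cong R_N/\mathfrak{m}_p \times R_N/\mathfrak{m}_q$ giving $\vartheta = (\vartheta_p, \vartheta_q)$ and hence an independent factorisation of $\mathfrak{a}_\vartheta$ into its $p$-part and $q$-part is the right first reduction, and it correctly isolates why $w_p$ can only affect the $\mathfrak{p}_k$-factor. Your orientation computation is also sound: with $\mathcal{O}_L/p \cong \mathbb{F}_{p^2} \otimes_{\mathbb{F}_p} \mathbb{F}_{p^2} \cong \mathbb{F}_{p^2} \times \mathbb{F}_{p^2}$ and $\vartheta_p$ given by $x \otimes y \mapsto \bar\theta_{1,p}(x)\bar\theta_{2,p}(y)$, replacing exactly one of $\bar\theta_{1,p}, \bar\theta_{2,p}$ by its Frobenius conjugate switches which of the two projections $\vartheta_p$ is, hence swaps which prime $\mathfrak{P}_i$ of $L$ (and thus $\mathfrak{p}_i$ of $F$) is the kernel; applying $w_p$ to both components twists twice and leaves the reflex ideal fixed, as it should. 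You are honest that the remaining input --- that the moduli-theoretic $w_p$ (quotient by the $\mathfrak{m}_p$-torsion) composes the orientation at $p$ with Frobenius while leaving the orientation at $q$ untouched --- is itself a citation to Phillips's orientation formalism. So both proofs ultimately rest on \cite{anphil}, but the paper buys brevity while you buy transparency: your version makes visible the chain ``Atkin--Lehner flips local orientation $\Rightarrow$ Frobenius twist of one tensor factor $\Rightarrow$ swap of the $p$-part of the reflex ideal,'' which is exactly the content the paper leaves implicit.
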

\begin{proof}
This is a consequence of the considerations in Section 5.2 in \cite{anphil}.
\end{proof}
\begin{gevolg}\label{embslemma}
For any given $\vartheta \in \mathcal{V}$, the space $(Y_1 \times Y_2)_{\vartheta}( k )$ is a principal homogeneous space for the action of $\emph{Pic}(K_1) \times \emph{Pic}(K_2)$. In addition, the set $\mathcal{V}$ itself is a principal homogenous space for the action of $W_N \times W_N$ and the set $\mathcal{I}$ is so for $W_N$.
\end{gevolg}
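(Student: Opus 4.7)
The plan is to deduce all three statements from the simply transitive action of the group $G := \text{Pic}(K_1) \times W_N \times \text{Pic}(K_2) \times W_N$ on $[\mathcal{Y}_1(k) \times \mathcal{Y}_2(k)]$ supplied by Propositions 4.2.1 and 5.1.4 of \cite{anphil}, together with the equivariance of the reflex map $[\mathcal{Y}_1(k) \times \mathcal{Y}_2(k)] \twoheadrightarrow \mathcal{V}$ extracted from Lemmas \ref{Galorient} and \ref{ALorient}.

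First I would dispose of the easiest case, namely that $\mathcal{I}$ is a PHS for $W_N$. Here $|\mathcal{I}| = 4 = |W_N|$, and Lemma \ref{ALorient} directly shows that $w_p$ flips the $\mathfrak{p}$-component and $w_q$ flips the $\mathfrak{q}$-component of the reflex ideal. These two commuting involutions generate a transitive $W_N$-action on $\mathcal{I}$, and the cardinality match upgrades transitivity to simple transitivity.

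The critical step is establishing that $\mathcal{V}$ is a PHS for $W_N \times W_N$, which I would reduce to the count $|\mathcal{V}| = 16$. To obtain it from the definition $\mathcal{V} = \text{Hom}(\mathcal{O}_L, R_N/\mathfrak{m}_N)$, observe that since $p$ and $q$ split in $F$ and remain inert in $L/F$, we have $\mathcal{O}_L / N \cong (\mathbb{F}_{p^2})^2 \times (\mathbb{F}_{q^2})^2$, while the standard description of $R_N$ at ramified primes gives $R_N / \mathfrak{m}_N \cong \mathbb{F}_{p^2} \times \mathbb{F}_{q^2}$. A unital ring map $(\mathbb{F}_{p^2})^2 \to \mathbb{F}_{p^2}$ is determined by a choice of maximal ideal (two options) followed by a field endomorphism of $\mathbb{F}_{p^2}$ (another two), giving $4$ maps, and similarly for $q$, so $|\mathcal{V}| = 16$. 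Now Lemma \ref{Galorient} guarantees that the $G$-action on $\mathcal{V}$ factors through the quotient $W_N \times W_N$; transitivity is inherited from $G$-transitivity on the double moduli, and $|\mathcal{V}| = 16 = |W_N \times W_N|$ forces the action to be simply transitive.

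Finally, the claim about $(Y_1 \times Y_2)_{\vartheta}(k)$ follows by a straightforward fibre-size count: since the reflex map is $G$-equivariant with transitive image, every fibre has common cardinality $|G|/|\mathcal{V}| = |\text{Pic}(K_1)| \cdot |\text{Pic}(K_2)|$. By Lemma \ref{Galorient} each fibre is preserved by $\text{Pic}(K_1) \times \text{Pic}(K_2)$, which acts freely as a subgroup of the simply transitive $G$-action, and matching orders promotes freeness to simple transitivity. The only mildly delicate point in the whole argument is the count $|\mathcal{V}| = 16$; everything else is group-theoretic bookkeeping around the two preceding lemmas.
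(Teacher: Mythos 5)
Your approach is essentially the same as the paper's: invoke the simply transitive action of $G = \text{Pic}(K_1) \times \text{Pic}(K_2) \times W_N \times W_N$ on the CM double moduli from Propositions~4.2.1 and~5.1.4 of \cite{anphil}, then use the equivariance of the reflex map together with Lemmas~\ref{Galorient} and~\ref{ALorient}. The one thing you add that the paper leaves implicit is the explicit count $|\mathcal{V}| = 16$, which is correct and a worthwhile detail to write down. The one place your argument is a bit too optimistic is the transitivity step for $\mathcal{V}$: you say transitivity of $W_N \times W_N$ on $\mathcal{V}$ is ``inherited from $G$-transitivity on the double moduli,'' but that only gives you that the \emph{image} of the reflex map is a single $W_N \times W_N$-orbit, not that this orbit is all of $\mathcal{V}$. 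Nor does Lemma~\ref{ALorient} settle this, since it only controls the one-sided Atkin--Lehner involutions $(w_r, 1)$ and $(1, w_r)$, not diagonal ones like $(w_p, w_p)$ which fix the reflex ideal $\mathfrak{a}_\vartheta$ while potentially moving $\vartheta$. The cleanest fix, which fits naturally alongside your $|\mathcal{V}|=16$ computation, is to observe that the decomposition $\mathcal{O}_L = \mathcal{O}_1 \otimes_{\mathbb{Z}} \mathcal{O}_2$ identifies $\mathcal{V}$ with $\text{Hom}(\mathcal{O}_1, R_N/\mathfrak{m}_N) \times \text{Hom}(\mathcal{O}_2, R_N/\mathfrak{m}_N)$, each factor having $4$ elements on which $W_N$ acts freely (conjugation by the ramified Atkin--Lehner element at $r$ induces Frobenius on the $\mathbb{F}_{r^2}$-component, which is nontrivial on any surjective $\theta_i$), hence simply transitively; the product is then a PHS for $W_N \times W_N$, and surjectivity of the reflex map follows from your orbit--stabilizer count. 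To be fair, the paper's one-line ``easy to deduce from Lemma~\ref{ALorient}'' glosses over the same point.
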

\begin{proof}
The first claim is a direct consequence of the results above, which combine to show that precisely the action of $\text{Pic}(K_1) \times \text{Pic}(K_2)$ leaves the morphism $\vartheta \in \mathcal{V}$ invariant. The other claims are easy to deduce from Lemma \ref{ALorient}.
\end{proof}
\begin{lemma}\label{wnconj}
For any point CM-point $P = P_{\textbf{A}_i}$, it holds that
\[
w_N(P) \in \emph{Pic}(K_i) \cdot \emph{Frob}_p(P).
\]
\end{lemma}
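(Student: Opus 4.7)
The plan is to verify that $w_N(P)$ and $\text{Frob}_p(P)$ have the same ``orientations'' at both $p$ and $q$ in the sense determined by $\theta_i$ and the reflex-ideal framework developed above; once this is established, Corollary \ref{embslemma} will force them to differ by the action of some class in $\text{Pic}(K_i)$.

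For any CM-false elliptic curve $\textbf{A}_i = (A_i, \iota_i, \kappa_i)$, the homomorphism $\theta_i \colon \mathcal{O}_i \to R_N/\mathfrak{m}_N$ factors through the decomposition $R_N/\mathfrak{m}_N \cong \mathbb{F}_{p^2} \times \mathbb{F}_{q^2}$ into two orientations $\theta_i^{(p)}$ and $\theta_i^{(q)}$, which, when paired with a fixed $\textbf{A}_{3-i}$, pin down which element of $\mathcal{I}$ arises as the reflex ideal. Lemma \ref{ALorient} directly establishes that $w_N = w_p w_q$ flips both $\theta_i^{(p)}$ and $\theta_i^{(q)}$. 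To analyse $\text{Frob}_p$, I use that $p$ is inert in $K_i$: the prime $(p) \subset \mathcal{O}_i$ is principal and therefore trivial in $\text{Pic}(K_i)$, so class field theory guarantees that any prime $\mathfrak{P}$ of $H_i$ above $p$ has residue degree $2$ over $\mathbb{Q}$ and that $\text{Frob}_{\mathfrak{P}}$ has order two, restricting to complex conjugation on $K_i$. By Shimura reciprocity on the Shimura curve $X_N$, this Galois element acts on $P_{\textbf{A}_i}$ by replacing $\kappa_i$ with its precomposition by complex conjugation on $\mathcal{O}_i$. Since both $p$ and $q$ are inert in $K_i$, complex conjugation on $\mathcal{O}_i$ reduces modulo each of these primes to the non-trivial automorphism of the respective residue field, so both $\theta_i^{(p)}$ and $\theta_i^{(q)}$ are simultaneously flipped, matching exactly the effect of $w_N$.

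Pairing $w_N(P)$ and $\text{Frob}_p(P)$ each with an arbitrary fixed CM-false elliptic curve $\textbf{A}_{3-i}$ then produces two objects of $(Y_1 \times Y_2)_{\vartheta}(k)$ for the same $\vartheta \in \mathcal{V}$. By Corollary \ref{embslemma} this component is a principal homogeneous space for $\text{Pic}(K_1) \times \text{Pic}(K_2)$, and since only the $i$-th factor has been modified, the class relating the two objects lies in $\text{Pic}(K_i)$ by Lemma \ref{Galorient}, which yields the desired conclusion.

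The principal technical difficulty will be to justify the Shimura-reciprocity step rigorously, translating the Galois action of $\text{Frob}_{\mathfrak{P}}$ on the CM-point $P_{\textbf{A}_i}$, regarded as a point of the coarse moduli space, into the described action at the level of the CM-structure $\kappa_i$; once this identification is in hand, the remaining steps are pure bookkeeping within the orientation framework of \cite{anphil} already recorded in Lemmas \ref{Galorient} and \ref{ALorient}.
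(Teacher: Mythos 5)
Your proof is correct and follows the same route as the paper's terse proof: both $\text{Frob}_p$ (because it restricts to complex conjugation on $K_i$) and $w_N$ send the reflex ideal to its $\text{Gal}(F/\mathbb{Q})$-conjugate, so the two actions agree up to $\text{Pic}(K_i)$ via the principal-homogeneous-space structure from Corollary \ref{embslemma}. You simply make explicit the orientation-by-orientation bookkeeping and the residue-degree computation that the paper condenses into a single sentence.
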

\begin{proof}
This is a consequence of the fact that $\text{Frob}_p$, acting nontrivially on $K_i$, also conjugates the reflex ideal to the other element in its $\text{Gal}(F / \mathbb{Q})$-orbit. This effect coincides with the action of $w_N$ on the reflex ideal and as such, these actions must agree up to an element from $\text{Pic}(K_i)$. 
\end{proof}
\begin{gevolg}\label{primewq}
For any CM-point $P = P_{\textbf{A}_i}$, it holds that
\[
P' \in \emph{Pic}(K_i) \cdot w_q(P).
\]
\end{gevolg}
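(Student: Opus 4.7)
The plan is to leverage Lemma \ref{wnconj} together with the fact that Atkin-Lehner involutions at coprime levels commute. Recall that by definition $P' \colonequals w_p( \mathrm{Frob}_p(P) )$, and by Lemma \ref{wnconj} we already know the relation $w_N(P) \in \mathrm{Pic}(K_i) \cdot \mathrm{Frob}_p(P)$. Since $N = pq$ and the Atkin-Lehner involutions $w_p$ and $w_q$ commute and together generate $w_N = w_p w_q$, the idea is simply to invert $w_p$ on both sides of the containment from Lemma \ref{wnconj}.

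First I would write $w_N(P) = w_p( w_q(P) )$, and apply $w_p$ (which is an involution) to both sides of $w_p(w_q(P)) \in \mathrm{Pic}(K_i) \cdot \mathrm{Frob}_p(P)$ to obtain
\[
w_q(P) \in w_p\big( \mathrm{Pic}(K_i) \cdot \mathrm{Frob}_p(P) \big).
\]
The key input needed at this point is that the action of $\mathrm{Pic}(K_i)$ on CM-points commutes with the Atkin-Lehner involution $w_p$. This follows from the fact that the $\mathrm{Pic}(K_i)$-action is induced by isogenies of the underlying false elliptic curves (as described in Section 4 of \cite{anphil}), while $w_p$ acts through the normaliser of $R_N^{\times}$ at $p$, so the two commute on the level of false elliptic curves and thus also on their associated CM-points. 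Therefore
\[
w_p\big( \mathrm{Pic}(K_i) \cdot \mathrm{Frob}_p(P) \big) = \mathrm{Pic}(K_i) \cdot w_p( \mathrm{Frob}_p(P) ) = \mathrm{Pic}(K_i) \cdot P',
\]
which gives $w_q(P) \in \mathrm{Pic}(K_i) \cdot P'$ and hence the reverse containment $P' \in \mathrm{Pic}(K_i) \cdot w_q(P)$, as the relation of lying in the same $\mathrm{Pic}(K_i)$-orbit is symmetric.

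The main obstacle is ensuring that the commutativity between the $\mathrm{Pic}(K_i)$-action and the Atkin-Lehner involution $w_p$ is cleanly justified; this is essentially structural and implicit in the orientation/group-action results of \cite{anphil} (specifically Lemma \ref{ALorient} and the surrounding discussion in Section 5.2 of loc. cit.), but one must take care to phrase the argument in terms of orbits so that any discrepancy between the two actions up to a class group element is absorbed into the $\mathrm{Pic}(K_i)$-factor. Apart from this bookkeeping step, the deduction is a purely formal manipulation of the identity $w_N = w_p w_q$.
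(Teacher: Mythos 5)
Your proof is correct and takes essentially the same route as the paper: apply $w_p$ to both sides of Lemma \ref{wnconj}, use that $w_p w_N = w_q$ and that $P' = w_p(\text{Frob}_p(P))$, and use that the $\text{Pic}(K_i)$-action commutes with $w_p$. The paper's proof is terser but identical in substance; your explicit justification of the commutativity (which follows from $\text{Pic}(K_i) \times W_N$ acting as a product group, per Proposition 4.2.1 of \cite{anphil}) correctly fills in a step the paper leaves implicit.
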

\begin{proof}
Apply $w_p$ to both sides of Lemma \ref{wnconj} and recall that by definition, $P' = w_p ( \text{Frob}_p(P) )$. 
\end{proof}

\begin{proof}(of Theorem \ref{giamconj1}) Using the structure of $(Y_1 \times Y_2)_{\vartheta}(k)$ as principal homogeneous space, we allow ourselves to fix a CM-pair $(\textbf{A}_1, \textbf{A}_2)$ inducing some $\vartheta \in \mathcal{V}$.  Using Corollaries \ref{wschemes} and \ref{ringlen}, we now rearrange
\begin{align*}
\text{deg}( \mathcal{X}_{ \vartheta} ) &= \frac{2}{w_1w_2} \sum_{\mathfrak{r} \subset \mathcal{O}_L} \log(|\mathbb{F}_{\mathfrak{r}}|) \sum_{ \substack{ (\textbf{A}_1,\textbf{A}_2) \in \\ (Y_1 \times Y_2)_{\vartheta}(k)} } v_{\mathfrak{r}}\left( j_{N,\mathfrak{r}}( P_{\textbf{A}_1} ) - j_{N,\mathfrak{r}}( P_{\textbf{A}_2} ) \right) \\
 &= \frac{2}{w_1w_2} \sum_{\mathfrak{r} \subset \mathcal{O}_L} \log(|\mathbb{F}_{\mathfrak{r}}|) \sum_{([c_1],[c_2])} v_{\mathfrak{r}}\left( j_{N,\mathfrak{r}}( P_{[c_1] \cdot \textbf{A}_1} ) - j_{N,\mathfrak{r}}( P_{[c_2] \cdot \textbf{A}_2} ) \right).
\end{align*}
Now let $\mathcal{V}' = (W_q \times W_q) \cdot \vartheta \subset \mathcal{V}$ where $W_q = \{ 1, w_q \} \subset W_N$. We will study
\[
\sum_{ \vartheta \in \mathcal{V}' } \delta( \vartheta ) \text{deg}( \mathcal{X}_{ \vartheta} ).
\]
Making use of Corollary \ref{primewq} and exploiting the fact that we take an average over the product of the class groups to ignore the difference between $P'$ and $w_q(P)$, the contribution for $\mathfrak{r} \subset \mathcal{O}_L$ is precisely
\[
\sum_{([c_1],[c_2])} v_{\mathfrak{r}}\left( \frac{ j_{N,\mathfrak{r}}( P_{[c_1] \cdot \textbf{A}_1} ) - j_{N,\mathfrak{r}}( P_{[c_2] \cdot \textbf{A}_2} ) }{ j_{N,\mathfrak{r}} ( P'_{[c_1] \cdot \textbf{A}_1} ) - j_{N,\mathfrak{r}}( P_{[c_2] \cdot \textbf{A}_2} ) } \cdot \frac{ j_{N,\mathfrak{r}}( P'_{[c_1] \cdot \textbf{A}_1} ) - j_{N,\mathfrak{r}}( P'_{[c_2] \cdot \textbf{A}_2} ) }{ j_{N,\mathfrak{r}}( P_{[c_1] \cdot \textbf{A}_1} ) - j_{N,\mathfrak{r}}( P'_{[c_2] \cdot \textbf{A}_2} ) } \right).
\]
The cross ratio is independent of the choice of uniformising function $j_{N,\mathfrak{r}}$, so we may replace it by our original choice $j_N$ without changing the outcome. Now recall Shimura's reciprocity law, clearly explained on the first pages of \cite{shim67}, which states in effect that taking an average over the class group amounts to taking the norm of the algebraic integer $j_N( P )$ in the unramified field extension $H_i / K_i$. In other words,
\[
\prod_{[c_1],[c_2]} \big[ j_N( P_{[c_1] \cdot \textbf{A}_1} ), j_N( P'_{[c_1] \cdot \textbf{A}_1} ), j_N( P_{[c_2] \cdot \textbf{A}_2} ), j_N( P'_{[c_2] \cdot \textbf{A}_2}) \big]
\]
is equal to the norm $\text{Nm}^{H_1H_2}_{L} [ j_N( P_{\textbf{A}_1} ), j_N( P'_{\textbf{A}_1} ), j_N( P_{\textbf{A}_2} ), j_N( P'_{\textbf{A}_2}) ]$. We conclude that 
\begin{align*}
\sum_{ \vartheta \in \mathcal{V}' } \delta( \vartheta ) \text{deg}( \mathcal{X}_{ \vartheta} ) &= \frac{2}{w_1w_2} \sum_{\mathfrak{r} \subset \mathcal{O}_L} \log(|\mathbb{F}_{\mathfrak{r}}|) v_{\mathfrak{r}}\left( \text{Nm}^{H_1H_2}_{L} \big[ j_N( P_{\textbf{A}_1} ), j_N( P'_{\textbf{A}_1} ), j_N( P_{\textbf{A}_2} ), j_N( P'_{\textbf{A}_2}) \big] \right) \\
 &= \frac{2}{w_1w_2} \log \text{Nm}^{H_1H_2}_{\mathbb{Q}} \big[ j_N( P_{\textbf{A}_1} ), j_N( P'_{\textbf{A}_1} ), j_N( P_{\textbf{A}_2} ), j_N( P'_{\textbf{A}_2}) \big].
\end{align*}
Since $\mathcal{V}'$ contains four elements, two of each possible sign for the reflex ideal $\mathfrak{a}_{\theta}$, we may finally appeal to Corollary \ref{thm1cor} to conclude that
\[
\frac{2}{w_1w_2} \log \text{Nm}^{H_1H_2}_{\mathbb{Q}} \big[ j_N( P_{\textbf{A}_1} ), j_N( P'_{\textbf{A}_1} ), j_N( P_{\textbf{A}_2} ), j_N( P'_{\textbf{A}_2}) \big] = \prod_{\substack{ x^2 < D \\  x^2 \equiv D \mod 4N } } F\left( \frac{D - x^2}{4N} \right)^{\delta(x)};
\]
completing the proof of Theorem \ref{giamconj1}.
\end{proof}

\newpage

\section{An \texorpdfstring{$R=T$}{R=T} theorem}\label{modularity}

In this section we will study the theory of deformations of a rigidification of the Galois representation $\mathbbm{1} \oplus \chi$ attached to the following $p$-stabilisation of the Hilbert Eisenstein series $E_{1,\chi}^{(p)} \colonequals (1 - V_{\mathfrak{p}_1})(1 + V_{\mathfrak{p}_2})E_{1,\chi}$. The purpose of this rigidification is two-fold. First and foremost, it forces all endomorphisms of our representation to be scalar, resulting in a representable deformation functor. The second purpose is more subtle; rigidifying will cause the dimensions of the tangent spaces of our various deformation rings to drop by 1 compared to the naive unrigidified case due to the introduction of additional coboundaries. This is crucial in lining up the dimensions in order to prove our desired nearly ordinary modularity theorem. In this section, all homomorphisms and cocycles are assumed to be continuous. Similar arguments in different settings can be found in Pozzi's thesis \cite{Pozzi} and the works \cite{BDP, BD, BDS, BelChen}.

\subsection{Some Galois cohomology}

The following is key in enabling us to compute Galois cohomology groups for certain actions of absolute Galois groups on local fields; it can be found as Lemma 3.2 in \cite{DPV2}.
\begin{prop}\label{galhoms}
Let $H / F$ be finite Galois. Then there is an exact sequence of $\emph{Gal}(H/F)$-modules,
\begin{align*}
0 \to \emph{Hom}( G_H, \mathbb{Q}_p ) \to \prod_{ v \mid p } &\emph{Hom}( H_v^{\times}, \mathbb{Q}_p ) \to \emph{Hom}( \mathcal{O}_H[1/p]^{\times}, \mathbb{Q}_p ).
\end{align*}
In addition, this sequence is right exact if and only if Leopoldt's conjecture holds for $H$. 
\end{prop}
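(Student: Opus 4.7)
The plan is to invoke global class field theory to identify $\text{Hom}(G_H, \mathbb{Q}_p)$ with continuous $\mathbb{Q}_p$-valued characters of the idele class group $C_H = \mathbb{A}_H^\times / H^\times$, and then to analyze such characters place by place. The first observation is that any such character $\chi$ is automatically trivial at the archimedean places (the local units are connected while $\mathbb{Q}_p$ is totally disconnected) and trivial on the local units $\mathcal{O}_v^\times$ at all finite places $v \nmid p$ (since $\mathcal{O}_v^\times$ is an extension of a finite group by a pro-$\ell$ group with $\ell \neq p$, leaving no room for a nontrivial continuous map into torsion-free $\mathbb{Q}_p$). Thus the global character $\chi$ is encoded by the tuple $(\chi_v)_{v \mid p} \in \prod_{v \mid p} \text{Hom}(H_v^\times, \mathbb{Q}_p)$ together with a scalar $c_v := \chi_v(\pi_v) \in \mathbb{Q}_p$ for each finite $v \nmid p$, corresponding via local class field theory to the restrictions of the Galois character to the various decomposition groups.

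Next I would translate the condition of triviality on $H^\times$. For $h \in H^\times$ the defining relation $\sum_v \chi_v(h) = 0$ specializes, on $\mathcal{O}_H[1/p]^\times \subset H^\times$, to $\sum_{v \mid p} \chi_v(h) = 0$, proving that the restriction map $\text{Hom}(G_H, \mathbb{Q}_p) \to \prod_{v \mid p} \text{Hom}(H_v^\times, \mathbb{Q}_p)$ lands in the kernel of the further restriction to $\mathcal{O}_H[1/p]^\times$, which gives one inclusion of exactness at the middle term. Conversely, given a tuple $(\chi_v)_{v \mid p}$ in this kernel, the functional $h \mapsto -\sum_{v \mid p} \chi_v(h)$ on $H^\times$ vanishes on $\mathcal{O}_H[1/p]^\times$ and thus descends to the free abelian group $V := \bigoplus_{v \nmid p,\, \text{finite}} \mathbb{Z}$ of fractional ideals of $\mathcal{O}_H[1/p]$ via the divisor map; it may be extended to a functional $(c_v) \in \text{Hom}(V, \mathbb{Q}_p)$ by injectivity of $\mathbb{Q}_p$ as a divisible abelian group, completing the data of an adelic character. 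For injectivity of the left-hand arrow, if all restrictions at $v \mid p$ vanish then the remaining compatibility forces $(c_v)$ to vanish on principal divisors; since $V$ modulo principals equals the finite group $\text{Cl}(\mathcal{O}_H[1/p])$ while $\mathbb{Q}_p$ is torsion-free, we deduce $(c_v) = 0$ and hence $\chi = 0$.

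Finally, the right exactness criterion reduces to Leopoldt's conjecture as follows. Via the $p$-adic logarithm, $\prod_{v \mid p} H_v^\times$ is an extension of torsion by a $\mathbb{Z}_p$-module of rank $[H:\mathbb{Q}] + g$, where $g = \#\{v \mid p\}$, while $\mathcal{O}_H[1/p]^\times$ has $\mathbb{Z}$-rank $r_1 + r_2 - 1 + g$. A $\mathbb{Q}_p$-homomorphism on $\mathcal{O}_H[1/p]^\times$ extends to the product precisely when the $\mathbb{Z}_p$-closure of the diagonal image attains this maximal rank, which is exactly the $S$-unit formulation of Leopoldt's conjecture for $H$ and is known to be equivalent to the classical statement. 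Galois equivariance for $\text{Gal}(H/F)$ follows automatically from the functoriality of every construction used. The only conceptually non-routine point is this final identification with Leopoldt; everything else amounts to careful bookkeeping with the idele class group.
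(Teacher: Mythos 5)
The paper itself gives no proof of Proposition~\ref{galhoms}; it simply cites Lemma~3.2 of \cite{DPV2}, so there is no in-paper argument to compare against. Your class-field-theoretic approach is the standard one, and the proof you give is essentially correct: identifying $\mathrm{Hom}(G_H, \mathbb{Q}_p)$ with continuous additive characters of the idele class group via Artin reciprocity, observing that such a character is necessarily unramified away from $p$ and trivial at the archimedean places (so it is determined by the local components at $v \mid p$ together with its Frobenius eigenvalues elsewhere), and then translating triviality on $H^\times$ into the vanishing of $\sum_{v \mid p} \chi_v$ on $p$-units. Your construction of a preimage from a tuple $(\chi_v)_{v \mid p}$ killing $\mathcal{O}_H[1/p]^\times$ by extending along the divisor map using divisibility of $\mathbb{Q}_p$, and your injectivity argument through finiteness of $\mathrm{Cl}(\mathcal{O}_H[1/p])$, are both sound.

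Two points deserve slightly more care than you give them. First, in passing from $\mathrm{Hom}(G_H, \mathbb{Q}_p)$ to continuous characters of $C_H$, one should note that the Artin map has dense (not surjective) image in $G_H^{\mathrm{ab}}$, so one is implicitly using that any continuous homomorphism on $C_H/D_H$ killing the connected component extends uniquely by uniform continuity of group homomorphisms; this is standard but worth a sentence. Second, your statement that a $\mathbb{Q}_p$-homomorphism on $\mathcal{O}_H[1/p]^\times$ ``extends precisely when the $\mathbb{Z}_p$-closure attains maximal rank'' conflates a universally quantified assertion (surjectivity of the restriction map) with a condition on a single homomorphism; the correct formulation is that the restriction
\[
\prod_{v\mid p}\mathrm{Hom}_{\mathrm{cont}}(H_v^\times, \mathbb{Q}_p) \longrightarrow \mathrm{Hom}(\mathcal{O}_H[1/p]^\times, \mathbb{Q}_p)
\]
is surjective if and only if the natural map $\mathcal{O}_H[1/p]^\times \otimes \mathbb{Q}_p \to \bigl(\varprojlim \overline{\mathcal{O}_H[1/p]^\times}\bigr) \otimes \mathbb{Q}_p$ into the $p$-adic closure is injective, which is the $p$-unit form of Leopoldt. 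With those refinements the argument is complete; the Galois-equivariance that you dismiss as functoriality does indeed follow directly from the functoriality of Artin reciprocity under automorphisms of $H$ over $F$.
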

We omit the proof, but instead sketch how it is used to compute all cohomology groups that will be relevant for us. From now on, let $\mathbb{Q}_p(\chi)$ denote the $G_F$-module in which the action of $G_F$ on $\mathbb{Q}_p$ is through the character $\chi$. Further, write for simplicity $\mathbb{Q}_p( \mathbbm{1} ) \colonequals \mathbb{Q}_p$. 
\begin{lemma}\label{gfhomdim}
The space $\emph{Hom}( G_F, \mathbb{Q}_p )$ is 1-dimensional.
\end{lemma}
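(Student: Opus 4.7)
The plan is to apply Proposition \ref{galhoms} to the trivial Galois extension $H = F$, which yields the exact sequence
\[
0 \to \text{Hom}( G_F, \mathbb{Q}_p ) \to \prod_{ v \mid p } \text{Hom}( F_v^{\times}, \mathbb{Q}_p ) \to \text{Hom}( \mathcal{O}_F[1/p]^{\times}, \mathbb{Q}_p ).
\]
The key observation is that this sequence is in fact right exact, because Leopoldt's conjecture holds for the abelian extension $F / \mathbb{Q}$ by Brumer's theorem (via Baker's results on linear forms in $p$-adic logarithms). The argument then reduces to a dimension count of the two outer terms.

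To handle the middle term, I would use the standing hypothesis that $p$ is inert in both $K_1$ and $K_2$, which forces $p$ to split in $F = \mathbb{Q}( \sqrt{D_1 D_2} )$ as $\mathfrak{p}_1 \mathfrak{p}_2$ with $F_{\mathfrak{p}_i} \cong \mathbb{Q}_p$ for each $i$. The decomposition $\mathbb{Q}_p^{\times} \cong p^{\mathbb{Z}} \times \mu_{p-1} \times (1 + p \mathbb{Z}_p)$ then shows that $\text{Hom}( \mathbb{Q}_p^{\times}, \mathbb{Q}_p )$ is two-dimensional, spanned by the $p$-adic valuation and the $p$-adic logarithm. Taking the product over the two primes above $p$ yields a four-dimensional space.

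For the right term, Dirichlet's $S$-unit theorem applied with $S$ the union of the archimedean places and $\{ \mathfrak{p}_1, \mathfrak{p}_2 \}$ gives that $\mathcal{O}_F[1/p]^{\times}$ has free part of rank $r_1 + r_2 + |S_p| - 1 = 2 + 0 + 2 - 1 = 3$, so $\text{Hom}( \mathcal{O}_F[1/p]^{\times}, \mathbb{Q}_p )$ is three-dimensional. Combining these two computations with the right exactness gives $\dim \text{Hom}( G_F, \mathbb{Q}_p ) = 4 - 3 = 1$, as required. The only subtle point in this plan is the appeal to Leopoldt's conjecture; for the real quadratic field $F$ this is classical, so no genuine obstacle arises.
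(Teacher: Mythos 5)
Your proposal is correct and follows the same route as the paper: invoke Proposition \ref{galhoms} with $H = F$, use that Leopoldt's conjecture is known for the abelian field $F$, and count dimensions of the middle and right terms. The paper states the two isomorphisms $F_{\mathfrak{p}_1}^{\times} \times F_{\mathfrak{p}_2}^{\times} \cong (\mathbb{Q}_p^{\times})^2$ and $\mathcal{O}_F[1/p]^{\times}/\{\pm 1\} \cong \mathbb{Z}^3$ more tersely, but your verification of each (via the splitting of $p$ in $F$ and the $S$-unit theorem) matches exactly what is being used.
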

\begin{proof}
Since $F / \mathbb{Q}$ is abelian, Leopoldt's conjecture is known to be true and so the sequence from Proposition \ref{galhoms} is short exact for $H = F$. We may now count dimensions, using that
\[
F_{\mathfrak{p}_1}^{\times} \times F_{\mathfrak{p}_2}^{\times} \cong \mathbb{Q}_p^{\times} \times \mathbb{Q}_p^{\times} \quad \text{and} \quad \mathcal{O}_F[1/p]^{\times} / \{ \pm 1 \} \cong \mathbb{Z}^3.
\]
We conclude that $\text{dim }\text{Hom}( G_F, \mathbb{Q}_p ) = 4 - 3 = 1$, as claimed.
\end{proof}
For the sake of brevity, we will denote $G_{\mathfrak{p}_i} \colonequals G_{F_{\mathfrak{p}_i}}$ for $i = 1,2$. 
\begin{lemma}\label{gfH1dim}\label{gfpH1dim}
For $i \in \{ 1,2 \}$ the space $H^1( G_{\mathfrak{p}_i}, \mathbb{Q}_p(\chi) )$ is 1-dimensional. The restriction maps yield an isomorphism
\[
H^1( G_F, \mathbb{Q}_p(\chi) ) \xrightarrow{\sim} H^1( G_{\mathfrak{p}_1}, \mathbb{Q}_p(\chi) ) \oplus H^1( G_{\mathfrak{p}_2}, \mathbb{Q}_p(\chi) ).
\]
In particular, the space $H^1( G_F, \mathbb{Q}_p(\chi) )$ is 2-dimensional.
\end{lemma}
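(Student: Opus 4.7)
The plan is to reduce both sides of the lemma to $\chi$-isotypic components over the splitting field $L$ of $\chi$, and then invoke Proposition \ref{galhoms} for $H = L$. Since $\chi$ factors through $\text{Gal}(L/F)$, the module $\mathbb{Q}_p(\chi)$ is trivial on $G_L$. Inflation-restriction along $G_L \trianglelefteq G_F$, together with the vanishing of $H^i(\text{Gal}(L/F), \mathbb{Q}_p(\chi))$ for $i \geq 1$ (because $|\text{Gal}(L/F)| = 2$ is invertible in $\mathbb{Q}_p$ for $p$ odd), will yield $H^1(G_F, \mathbb{Q}_p(\chi)) \cong \text{Hom}(G_L, \mathbb{Q}_p)^{\chi}$, the $\chi$-isotypic (equivalently $-1$-eigen) component for the conjugation action of $\text{Gal}(L/F)$. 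Running the same argument locally at each $\mathfrak{p}_i$, using that $\mathfrak{p}_i$ is inert in $L/F$ (so $L_{v_i}/F_{\mathfrak{p}_i}$ is unramified quadratic), and invoking local class field theory, will identify $H^1(G_{\mathfrak{p}_i}, \mathbb{Q}_p(\chi)) \cong \text{Hom}(L_{v_i}^{\times}, \mathbb{Q}_p)^{\chi}$.

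Next I would apply Proposition \ref{galhoms} to $L$. Since $L/\mathbb{Q}$ is abelian, Brumer's theorem provides Leopoldt for $L$, making the sequence short exact; taking $\chi$-eigenspaces (an exact operation in characteristic zero) then yields
\[
0 \to \text{Hom}(G_L, \mathbb{Q}_p)^{\chi} \to \prod_{i=1,2} \text{Hom}(L_{v_i}^{\times}, \mathbb{Q}_p)^{\chi} \to \text{Hom}(\mathcal{O}_L[1/p]^{\times}, \mathbb{Q}_p)^{\chi} \to 0,
\]
whose first arrow is, under the identifications above, precisely the restriction map in the lemma. The local term is handled by the decomposition $L_{v_i}^{\times} \cong p^{\mathbb{Z}} \times \mu \times (1 + p \mathcal{O}_{L_{v_i}})$: since only the first and last factors admit continuous maps to $\mathbb{Q}_p$, the group $\text{Hom}(L_{v_i}^{\times}, \mathbb{Q}_p)$ becomes $\mathbb{Q}_p \oplus \mathbb{Q}_{p^2}$ as a $\text{Gal}(L_{v_i}/F_{\mathfrak{p}_i})$-module, whose $-1$-eigenspace under Frobenius is $1$-dimensional. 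This already proves the first assertion and shows that the middle term of the sequence has dimension $2$.

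The crux will be showing that the $S$-unit term vanishes, which is what upgrades injectivity to an isomorphism. Here the CM structure of $L/F$ is essential: because $L^{\sigma} = F$, one has $(\mathcal{O}_L[1/p]^{\times})^{\sigma} = \mathcal{O}_F[1/p]^{\times}$, and Dirichlet's theorem gives both groups rank $3$ (both yield $r_1 + r_2 - 1 + 2 = 3$ once the two places above $p$ are added). Thus the $+$-eigenspace of $\mathcal{O}_L[1/p]^{\times} \otimes \mathbb{Q}_p$ is already the entire space, forcing the $\chi$-eigenspace to be trivial and collapsing the exact sequence to an isomorphism of $2$-dimensional spaces; this is precisely the second assertion. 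The delicate point is this rank-matching, which secretly uses both the CM nature of $L$ (no extra unit rank over $F$) and the inertness of each $\mathfrak{p}_i$ in $L/F$ (preserving the number of places above $p$); were either feature to fail, the $\chi$-eigenspace of the $S$-units would contribute a genuine cokernel.
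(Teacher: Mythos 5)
Your proposal is correct and follows essentially the same route as the paper: inflation-restriction to identify $H^1(G_F,\mathbb{Q}_p(\chi))$ with the $\chi$-eigenspace of $\mathrm{Hom}(G_L,\mathbb{Q}_p)$, Proposition~\ref{galhoms} applied to the abelian field $L$, vanishing of the $\chi$-eigenspace of $\mathcal{O}_L[1/p]^\times\otimes\mathbb{Q}_p$, and the matching local identification. Your explicit rank count (both $S$-unit groups having rank $3$) is just a spelled-out version of the paper's terse appeal to the Galois-equivariant Dirichlet theorem with the oddness of $\chi$ and $\chi(\mathfrak{p}_i)=-1$; also, the parenthetical ``for $p$ odd'' is superfluous, since $2$ is a unit in $\mathbb{Q}_p$ for every $p$.
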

\begin{proof}
We follow Lemma 3.3 in \cite{DPV2}. First, we note that restriction to $G_L$ gives an isomorphism
\[
H^1(G_F, \mathbb{Q}_p(\chi)) \cong \text{Hom}( G_L,  \mathbb{Q}_p(\chi) )^{\text{Gal}(L/F)} =  \text{Hom}( G_L,  \mathbb{Q}_p)^{\chi},
\]
where the final group denotes the $\chi$-eigenspace. Indeed, this follows from the inflation-restriction sequence and the easy computation that $H^1( \text{Gal}(L/F), \mathbb{Q}_p(\chi)) = H^2( \text{Gal}(L/F), \mathbb{Q}_p(\chi)) = 0$. Since $L / \mathbb{Q}$ is abelian, the sequence in Proposition \ref{galhoms} is exact as before. We obtain an isomorphism
\[
\text{Hom}( G_L,  \mathbb{Q}_p)^{\chi} \cong \text{Hom}( L_{\mathfrak{p}_1}^{\times}, \mathbb{Q}_p )^{\chi} \times \text{Hom}( L_{\mathfrak{p}_2}^{\times}, \mathbb{Q}_p )^{\chi},
\]
as by the Galois-equivariant version of Dirichlet's Unit Theorem, it holds that $( \mathcal{O}_L^{\times}[1/p] \otimes \mathbb{Q}_p )^{\chi} = 0$, which uses that $\chi$ is totally odd and $\chi( \mathfrak{p}_i ) = -1$ for $i \in \{ 1,2 \}$. Similarly, one can show that for $i \in \{ 1,2 \}$,
\[
H^1( G_{\mathfrak{p}_i}, \mathbb{Q}_p(\chi) ) \cong \text{Hom}( G_{L_{\mathfrak{p}_i}},  \mathbb{Q}_p)^{\chi} \cong \text{Hom}( L_{\mathfrak{p}_i}^{\times},  \mathbb{Q}_p)^{\chi},
\]
proving the claimed isomorphism. Finally, the spaces $\text{Hom}( L_{\mathfrak{p}_i}^{\times},  \mathbb{Q}_p)$ for $i \in \{ 1, 2 \}$ are easily seen to be 3-dimensional and spanned by $\text{ord}_{\mathfrak{p}_i}$, a $p$-adic logarithm and its Galois twist. Since $\chi$ restricts non-trivially to the decomposition group, the $\chi$-eigenspace is quickly seen to be 1-dimensional, spanned by the difference of the two logarithms. This completes the proof of the lemma.
\end{proof}
Finally, we will need the following results in order to compute tangent spaces later on.
\begin{lemma}\label{H2vanish}
It holds that
\[
H^2( G_F, \mathbb{Q}_p ) = 0 \quad \text{and} \quad H^2( G_F, \mathbb{Q}_p(\chi) ) = 0.
\]
\end{lemma}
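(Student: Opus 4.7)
The plan is to apply Tate's global Euler characteristic formula to the continuous cohomology of $G_{F,S}$, where $S$ consists of the two primes $\mathfrak{p}_1, \mathfrak{p}_2$ above $p$ together with the two archimedean places of $F$. Because the extension $L/F$ is unramified at every finite prime, the character $\chi$ factors through $G_{F,S}$, and hence both $\mathbb{Q}_p$ and $\mathbb{Q}_p(\chi)$ are $G_{F,S}$-modules whose cohomology agrees with the $G_F$-cohomology appearing in the preceding lemmas.

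Concretely, for any finite-dimensional $\mathbb{Q}_p$-representation $V$ of $G_{F,S}$, I would appeal to the formula
\[
\dim H^0(G_F, V) - \dim H^1(G_F, V) + \dim H^2(G_F, V) = \sum_{v \mid \infty} \dim H^0(G_v, V) - [F:\mathbb{Q}]\dim V.
\]
Applied to the trivial representation $V = \mathbb{Q}_p$, the right-hand side equals $2 - 2 = 0$, and combining this with $\dim H^0 = 1$ and the 1-dimensionality of $H^1$ from Lemma \ref{gfhomdim} forces $\dim H^2(G_F, \mathbb{Q}_p) = 0$. Applied to $V = \mathbb{Q}_p(\chi)$, I would use that $\chi$ is totally odd (both real places of $F$ ramify in $L$) to see that $H^0(G_v, V) = 0$ at each of the two real places, so that the right-hand side becomes $0 - 2 = -2$. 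Since $\dim H^0 = 0$ and $\dim H^1 = 2$ by Lemma \ref{gfH1dim}, the identity collapses to $\dim H^2(G_F, \mathbb{Q}_p(\chi)) = 0$.

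There is no genuine obstacle here; the whole argument reduces to the Euler characteristic formula together with the $H^0$ and $H^1$ computations already prepared. As a conceptually equivalent alternative, one could invoke Poitou--Tate global duality to identify $H^2(G_{F,S}, V)$ with the $\mathbb{Q}_p$-dual of $H^0(G_{F,S}, V^*(1))$, and then check that neither $\mathbb{Q}_p(1)$ nor $\mathbb{Q}_p(\chi\omega)$ (with $\omega$ the $p$-adic cyclotomic character) admits non-trivial $G_F$-invariants, the latter holding because $\chi$ has order $2$ while $\omega$ is of infinite order on $G_F$.
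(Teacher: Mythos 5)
Your main argument is exactly the paper's: apply the global Euler characteristic formula over $F$ (with its two real places), feed in $\dim H^0$ and the $\dim H^1$ values from Lemmas \ref{gfhomdim} and \ref{gfH1dim}, and use that $\chi$ is totally odd to kill the archimedean $H^0$ contribution for $\mathbb{Q}_p(\chi)$; the bookkeeping $1-1+2-2=0$ and $2-0+0-2=0$ matches the paper line for line. One caveat on your closing ``alternative'': Poitou--Tate does \emph{not} identify $H^2(G_{F,S},V)$ with $H^0(G_{F,S},V^*(1))^\vee$ outright --- the nine-term sequence only gives a surjection $\bigoplus_{v\in S} H^2(G_v,V)\twoheadrightarrow H^0(G_{F,S},V^*(1))^\vee$ whose kernel receives the global $H^2$, and the genuine duality is $\Sha^2(V)\cong\Sha^1(V^*(1))^\vee$; so that route would require separately controlling the local $H^2$'s and $\Sha^2$, and as sketched it does not yield the vanishing. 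Stick with the Euler characteristic computation, which is complete.
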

\begin{proof}
We use the global Euler characteristic formula. Since $F$ has two real places, we compute that
\begin{align*}
\text{dim } H^2( G_F, \mathbb{Q}_p ) &= \text{dim } H^1( G_F, \mathbb{Q}_p ) - \text{dim } H^0( G_F, \mathbb{Q}_p ) + 2 \text{ dim } H^0( G_{\mathbb{R}}, \mathbb{Q}_p ) - 2 \text{ dim }(\mathbb{Q}_p) \\
 &= 1 - 1 + 2 - 2 = 0,
\end{align*}
where we used Proposition \ref{gfhomdim}. Similarly, using Proposition \ref{gfH1dim}, we compute that
\begin{align*}
\text{dim } H^2( G_F, \mathbb{Q}_p(\chi) ) &= \text{dim } H^1( G_F, \mathbb{Q}_p(\chi) ) - \text{dim } H^0( G_F, \mathbb{Q}_p(\chi) ) + 2 \text{ dim } H^0( G_{\mathbb{R}}, \mathbb{Q}_p(\chi) ) \\ &- 2 \text{ dim }(\mathbb{Q}_p(\chi))
 = 2 - 0 + 0 - 2 = 0,
\end{align*}
because complex conjugation in $\text{Gal}( \mathbb{C} / \mathbb{R} )$ acts non-trivially through $\chi$.
\end{proof}

\subsection{Deformation functors and representability}
The following lemma follows from a direct calculation.
\begin{lemma}\label{rhoetaired}
For any $\eta \in Z^1( G_F, \mathbb{Q}_p(\chi) )$, we have a Galois representation
\[
\rho_{\eta} \colon G_F \to \emph{GL}_2( \mathbb{Q}_p ) \colon \tau \mapsto \begin{pmatrix} 1 & \chi(\tau)\eta(\tau) \\ 0 & \chi(\tau) \end{pmatrix}.
\]
In addition, it has no non-scalar endomorphisms if and only if $\eta \neq 0 \in H^1( G_F, \mathbb{Q}_p(\chi) )$.
\end{lemma}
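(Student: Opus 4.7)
My plan has two parts, mirroring the two assertions of the lemma.

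First, I would verify that $\rho_\eta$ is a homomorphism by a direct matrix multiplication. Computing
\[
\rho_\eta(\sigma)\rho_\eta(\tau) = \begin{pmatrix} 1 & \chi(\tau)\eta(\tau) + \chi(\sigma)\chi(\tau)\eta(\sigma) \\ 0 & \chi(\sigma)\chi(\tau) \end{pmatrix},
\]
the claim $\rho_\eta(\sigma)\rho_\eta(\tau) = \rho_\eta(\sigma\tau)$ reduces to the identity $\chi(\sigma\tau)\eta(\sigma\tau) = \chi(\tau)\eta(\tau) + \chi(\sigma)\chi(\tau)\eta(\sigma)$. Using $\chi(\sigma)^2 = 1$ (since $\chi$ takes values in $\{\pm 1\}$) and the cocycle condition $\eta(\sigma\tau) = \eta(\sigma) + \chi(\sigma)\eta(\tau)$, this is immediate.

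For the second assertion, I would let $M = \bigl(\begin{smallmatrix} a & b \\ c & d \end{smallmatrix}\bigr)$ be an endomorphism of $\rho_\eta$ and tabulate the commutation $M\rho_\eta(\tau) = \rho_\eta(\tau)M$ split by the sign of $\chi(\tau)$. The entry-by-entry analysis yields: for $\chi(\tau)=1$, the conditions $c\eta(\tau) = 0$ and $(a-d)\eta(\tau) = 0$; for $\chi(\tau)=-1$, the conditions $c = -c$ (forcing $c=0$ in characteristic zero) and $2b = (d-a)\eta(\tau)$. Since $\chi$ is non-trivial we can always pick such a $\tau$, so $c = 0$ is automatic.

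With $c=0$, the matrix $M$ is non-scalar precisely when $a \ne d$ (the possibility $a=d$, $b\ne 0$ is immediately excluded by the relation $2b = (d-a)\eta(\tau)$). Such an $M$ exists if and only if $\eta$ vanishes on $G_L = \ker\chi$ and is constant, say equal to $C$, on the non-trivial coset. The key observation is that this is exactly the description of a coboundary in $Z^1(G_F,\mathbb{Q}_p(\chi))$: the coboundary of $v \in \mathbb{Q}_p$ is the cocycle $\tau \mapsto (\chi(\tau) - 1)v$, which is zero on $G_L$ and equals $-2v$ on the non-trivial coset. Hence non-scalar endomorphisms exist iff $\eta$ is a coboundary iff $\eta = 0 \in H^1(G_F, \mathbb{Q}_p(\chi))$, which is the desired equivalence.

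The whole proof is a straightforward calculation; there is no real obstacle, as everything follows by bookkeeping once the matrix equations are written out. The only mild subtlety is spotting that the condition "$\eta$ is zero on $G_L$ and constant on the other coset" is exactly the definition of a coboundary in this module, which is what identifies the two notions of triviality.
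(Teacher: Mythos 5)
Your proof is correct and is exactly the ``direct calculation'' the paper alludes to (the paper omits the details entirely). The verification of the homomorphism property via the twisted cocycle identity $\eta(\sigma\tau) = \eta(\sigma) + \chi(\sigma)\eta(\tau)$ and the entry-by-entry analysis of $M\rho_\eta(\tau) = \rho_\eta(\tau)M$ — splitting by the value of $\chi(\tau)$ and recognising the resulting constraints on $\eta$ as precisely the description of a coboundary for $\mathbb{Q}_p(\chi)$ — is the intended argument.
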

\begin{definition}
Let $\mathcal{C}_{\mathbb{Q}_p}$ denote the category of local complete Noetherian $\mathbb{Q}_p$-algebras with residue field $\mathbb{Q}_p$. Given any object $(A, \mathfrak{m}_A)$ of $\mathcal{C}_{\mathbb{Q}_p}$, a \emph{lift} of $\rho_{\eta}$ to $A$ is a representation $\rho \colon G_F \to \text{GL}_2( A )$ that reduces to $\rho_{\eta}$ after composing with the natural map $\text{GL}_2( A ) \to \text{GL}_2( \mathbb{Q}_p )$ induced by the natural map $A \mapsto A / \mathfrak{m}_A \cong \mathbb{Q}_p$. We say that two lifts are equivalent if they are conjugate by a matrix in the kernel of the map $\text{GL}_2( A ) \to \text{GL}_2( \mathbb{Q}_p )$. A \emph{deformation} of $\rho_{\eta}$ to $A$ is an equivalence class of lifts of $\rho_{\eta}$ to $A$. We define the functor $D_{\rho_{\eta}} \colon \mathcal{C}_{\mathbb{Q}_p} \to \textbf{Set}$ by sending any $(A, \mathfrak{m}_A) \in \mathcal{C}_{\mathbb{Q}_p}$ to the set of equivalence classes of deformations of $\rho_{\eta}$ to $A$. 
\end{definition}
\begin{prop}
If $\eta \neq 0 \in H^1( G_F, \mathbb{Q}_p(\chi) )$, the functor $D_{\rho_{\eta}}$ is represented by a ring $R_{\rho_{\eta}}$.
\end{prop}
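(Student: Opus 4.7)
The plan is to verify Schlessinger's representability criteria (H1)--(H4) for the deformation functor $D_{\rho_\eta}$ on the category $\mathcal{C}_{\mathbb{Q}_p}$. The functor is of the standard Mazur type: it classifies continuous lifts of a fixed continuous representation up to strict equivalence by $I + \mathfrak{m}_A \cdot M_2(A)$, so conditions (H1) and (H2), which govern compatibility with small fiber products of Artinian quotients, are formal consequences of the fact that $\text{GL}_2$ commutes with fiber products together with the continuity of the representations involved.

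For (H3), I would identify the tangent space $D_{\rho_\eta}(\mathbb{Q}_p[\epsilon]/(\epsilon^2))$ canonically with the continuous cohomology group $H^1(G_F, \text{Ad}(\rho_\eta))$ in the standard way, by writing a lift to the dual numbers as $(I + \epsilon \cdot c(\tau))\rho_\eta(\tau)$ for a cocycle $c$. To establish that this space is finite-dimensional, I would exhibit a filtration of $\text{Ad}(\rho_\eta)$ whose graded pieces are subquotients of $\mathbb{Q}_p$ and $\mathbb{Q}_p(\chi)$ (using $\chi^2 = \mathbbm{1}$), reducing the claim to the finite-dimensionality of $H^1(G_F,\mathbb{Q}_p)$ and $H^1(G_F,\mathbb{Q}_p(\chi))$ already established in Lemmas \ref{gfhomdim} and \ref{gfH1dim}, together with the vanishing of the corresponding $H^2$ groups from Lemma \ref{H2vanish} to control the connecting maps.

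The decisive step is (H4), which is equivalent to the requirement that $\text{End}_{\mathbb{Q}_p[G_F]}(\rho_\eta) = \mathbb{Q}_p$, that is, every $G_F$-equivariant endomorphism of $\rho_\eta$ is scalar. This is precisely the second assertion of Lemma \ref{rhoetaired}, valid under the hypothesis $\eta \neq 0$ in $H^1(G_F, \mathbb{Q}_p(\chi))$. Once (H1)--(H4) are in hand, Schlessinger's theorem produces the desired complete local Noetherian $\mathbb{Q}_p$-algebra $R_{\rho_\eta}$ representing $D_{\rho_\eta}$, and no further arithmetic input is required.

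The main conceptual obstacle is the very reason the authors opened this section with a discussion of rigidification: the unrigidified residual representation $\mathbbm{1} \oplus \chi$ has a two-dimensional endomorphism algebra $\mathbb{Q}_p \times \mathbb{Q}_p$, so the naive deformation functor of the semisimple representation fails (H4) outright and is only pro-representable as a groupoid. Passing to a nontrivial extension class $\eta \neq 0$ kills the extra idempotent and transfers the representability question back into Mazur's setting; verifying that this passage actually suffices is the heart of the argument, and it is entirely cohomological in nature.
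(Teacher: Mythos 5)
Your argument is correct and follows the same route as the paper: invoke Schlessinger's criteria (the paper packages this as ``Proposition 5 in Mazur''), dispatch the endomorphism condition via Lemma \ref{rhoetaired}, and bound $\dim H^1(G_F,\mathrm{Ad}(\rho_\eta))$ by devissage to the pieces $\mathbb{Q}_p$ and $\mathbb{Q}_p(\chi)$ using Lemmas \ref{gfhomdim} and \ref{gfH1dim} (the paper phrases this as comparing to the semisimplification $\mathrm{ad}(\rho_0) \cong \mathbb{Q}_p^{\,2} \oplus \mathbb{Q}_p(\chi)^{2}$, which is the same filtration argument). One small overstatement: the $H^2$-vanishing from Lemma \ref{H2vanish} is not needed for (H3), since the long exact sequence already bounds $H^1$ of a filtered module above by the sum of the $H^1$'s of the graded pieces without any control on the connecting maps; that vanishing only becomes necessary later when computing the exact dimension of the tangent space.
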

\begin{proof}
Using the same argument as in Proposition 5 in \cite{mazurdef}, we reduce to showing the non-existence of non-scalar endomorphisms and the finite dimensionality of the tangent space. The former is taken care of by Lemma \ref{rhoetaired}, and for the latter we may bound the dimension of the tangent space $H^1( G_F, \text{ad}(\rho_{\eta}) )$ by the dimension of the tangent space of its semisimplification $H^1( G_F, \text{ad}(\rho_0) )$. By standard arguments, this space decomposes as $\text{Hom}( G_F, \mathbb{Q}_p )^2 \oplus H^1( G_F, \mathbb{Q}_p( \chi ) )^2$; whence by Lemmas \ref{gfhomdim} and \ref{gfH1dim}, its dimension is $6 < \infty$. 
\end{proof}
From now on, we assume that $\eta \neq 0 \in H^1( G_F, \mathbb{Q}_p(\chi) )$ to ensure representability.
\begin{definition}
Consider triples $(\rho, L_1, L_2)$ where $\rho$ is a lift of $\rho_{\eta}$ to some $(A, \mathfrak{m}_A) \in \mathcal{C}_{\mathbb{Q}_p}$ and $L_1$ and $L_2$ are free direct summands of $A^2$ such that $L_1$ lifts the line $\langle e_1 \rangle$ and $L_2$ lifts the line $\langle e_2 \rangle$. We say two such triples are equivalent if for some $g \in \text{ker}(\text{GL}_2( A ) \to \text{GL}_2( \mathbb{Q}_p ))$ it holds that $\rho = g\rho'g^{-1}$ and $L_1 = gL_1'$ and $L_2 = gL_2'$. We let $D_{\rho_{\eta}}^{\text{fil}}$ be the functor sending an object $(A, \mathfrak{m}_A)$ to the set of equivalence classes of such triples $(\rho, L_1, L_2)$ as defined above.
\end{definition}
\begin{prop}\label{fildefring}
The functor $D_{\rho_{\eta}}^{\emph{fil}}$ is represented by the ring $R_{\rho_{\eta}}\llbracket X, Y \rrbracket$.
\end{prop}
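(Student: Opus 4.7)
I would prove this by exhibiting, for every $A \in \mathcal{C}_{\mathbb{Q}_p}$ and functorially in $A$, a natural bijection between $D_{\rho_\eta}^{\mathrm{fil}}(A)$ and $\operatorname{Hom}(R_{\rho_\eta}\llbracket X, Y\rrbracket, A)$. The strategy is to realize $D_{\rho_\eta}^{\mathrm{fil}}$ as a trivial $\mathfrak{m}_A^2$-bundle over $D_{\rho_\eta}$ by describing the two line choices explicitly.

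First I would parametrize the lines. Any rank one free direct summand $L_1 \subset A^2$ that reduces to $\langle e_1\rangle$ modulo $\mathfrak{m}_A$ can be written uniquely as $L_1 = A \cdot (1, x)$ with $x \in \mathfrak{m}_A$, and similarly $L_2 = A \cdot (y, 1)$ with $y \in \mathfrak{m}_A$. For a fixed lift $\rho$ this identifies the set of admissible pairs $(L_1, L_2)$ with $\mathfrak{m}_A \times \mathfrak{m}_A$.

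The second step, and in my view the main point, is to show that for any lift $\rho$ of $\rho_\eta$ to $A$ the subgroup of $\ker(\operatorname{GL}_2(A) \to \operatorname{GL}_2(\mathbb{Q}_p))$ commuting with $\rho$ consists only of scalar matrices $1 + \mathfrak{m}_A$. This is where the running assumption $\eta \neq 0$ enters: by Lemma \ref{rhoetaired} one has $\operatorname{End}_{\mathbb{Q}_p[G_F]}(\rho_\eta) = \mathbb{Q}_p$, and since $\operatorname{End}_{A[G_F]}(\rho)$ is a finitely generated $A$-submodule of $M_2(A)$ whose reduction modulo $\mathfrak{m}_A$ is $\mathbb{Q}_p$, Nakayama's lemma forces $\operatorname{End}_{A[G_F]}(\rho) = A$. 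In particular the stabilizer of the deformation class $[\rho]$ inside $\ker$ acts by scalars, and scalars preserve every line.

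With those two facts assembled, the bijection can be written down directly. Given $(\rho, L_1, L_2)$ over $A$, the universal property of $R_{\rho_\eta}$ furnishes a unique $\phi \colon R_{\rho_\eta} \to A$ classifying $[\rho]$ together with a preferred representative $\phi_*\rho^{\mathrm{univ}}$, defined up to conjugation by scalars. Conjugating the triple so that its representation equals $\phi_*\rho^{\mathrm{univ}}$, the first step produces unambiguous elements $x, y \in \mathfrak{m}_A$ (unambiguous because the remaining scalar ambiguity is inert on lines), and I extend $\phi$ to $\Phi \colon R_{\rho_\eta}\llbracket X, Y\rrbracket \to A$ by $X \mapsto x$, $Y \mapsto y$. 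In the opposite direction, I would construct the universal triple
\[
\bigl(\rho^{\mathrm{univ}} \otimes_{R_{\rho_\eta}} R_{\rho_\eta}\llbracket X, Y\rrbracket,\ R_{\rho_\eta}\llbracket X, Y\rrbracket \cdot (1, X),\ R_{\rho_\eta}\llbracket X, Y\rrbracket \cdot (Y, 1)\bigr)
\]
over $R_{\rho_\eta}\llbracket X, Y\rrbracket$ and show that pullback along any $\Phi$ produces the inverse construction. Verifying inversion and functoriality in $A$ is then routine. The main obstacle is the endomorphism computation in Step 2, which requires some care with continuity and the $A$-module finiteness needed to apply Nakayama; once that is in hand the rest is formal.
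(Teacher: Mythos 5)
Your proposal is correct and follows essentially the same route as the paper: identify the pair of lines with a point of $\mathfrak{m}_A \times \mathfrak{m}_A$ via $L_1 = A\cdot(1,x)$, $L_2 = A\cdot(y,1)$, and use the universal property of $R_{\rho_\eta}$ together with the rigidity of $\rho_\eta$ (scalar endomorphisms only, coming from $\eta \neq 0$) to see that the resulting elements $x, y$ are well-defined, which yields the desired natural bijection $\operatorname{Hom}(R_{\rho_\eta}\llbracket X,Y\rrbracket, A) \cong D_{\rho_\eta}^{\mathrm{fil}}(A)$. The only caveat is that the Nakayama step, concluding $\operatorname{End}_{A[G_F]}(\rho) = A$ from $\operatorname{End}_{\mathbb{Q}_p[G_F]}(\rho_\eta) = \mathbb{Q}_p$, needs a touch more care than stated (one should check that the reduction $E/\mathfrak{m}_A E \to \operatorname{End}(\rho_\eta)$ has the right kernel, or run Mazur's Artinian-length induction), but this is a standard lemma and the paper glosses over the same point, deferring to Pozzi's thesis.
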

\begin{proof}
We define a bijection
\[
\text{Hom}( R_{\rho_{\eta}}\llbracket X,Y \rrbracket, A ) \to D_{\rho_{\eta}}^{\text{fil}}(A)
\]
by sending some map $f \colon R_{\rho_{\eta}}\llbracket X,Y \rrbracket \to A$ to the representation 
\[
G_F \to \text{GL}_2(R_{\rho_{\eta}}) \to \text{GL}_2( R_{\rho_{\eta}}\llbracket X,Y \rrbracket ) \to \text{GL}_2(A)
\]
induced by the universal representation and the map $f$ itself, together with the lines $L_{f,1} = \langle e_1 + f(X)e_2 \rangle \subset A^2$ and $L_{f,2} = \langle e_2 + f(Y)e_1 \rangle \subset A^2$. As $f$ is a morphism of local rings, $f(X), f(Y) \in \mathfrak{m}_A$ and thus the map is well defined. Since every line lifing $\langle e_i \rangle$ to $A$ for some $i \in \{ 1,2 \}$ is of that form and we may choose the images of $X$ and $Y$ freely in $\mathfrak{m}_A$, surjectivity is obvious. We sketch the proof of injectivity; more details can be found in Lemma 1.3.2 in \cite{Pozzi}. If two morphisms $f, f' \colon R_{\rho_{\eta}}\llbracket X, Y \rrbracket \to A$ have the same image under the above association, they give rise to the same deformation. But then the universal property of $R_{\rho_{\eta}}$ yields readily that $f$ and $f'$ must agree when restricted to $R_{\rho_{\eta}}$. This means that the matrix $g \in \text{ker}(\text{GL}_2( A ) \to \text{GL}_2( \mathbb{Q}_p ))$ intertwines the lift induced by both $f$ and $f'$, which forces $g$ to be scalar. This readily yields $f(X) = f'(X)$ and $f(Y) = f'(Y)$ and as such $f = f'$ on all of $R_{\rho_{\eta}}\llbracket X,Y \rrbracket$. 
\end{proof}
From now on, we must require that $\eta|_{G_{\mathfrak{p}_2}} = 0$. The reason as to why is immediate from the following definition; the line $\langle e_2 \rangle$ is only fixed by $G_{\mathfrak{p}_2}$ by $\rho_{\eta}$ if this condition on $\eta$ is satisfied. Note that this fixes $\eta$ uniquely up to a scalar, as can be seen from Lemma \ref{gfpH1dim}.
\begin{definition}
Let $D_{\rho_{\eta}}^{\text{no}} \colon \mathcal{C}_{\mathbb{Q}_p} \to \textbf{Set}$ be the subfunctor of $D_{\rho_{\eta}}^{\text{fil}}$ sending an object $(A, \mathfrak{m}_A) \in \mathcal{C}_{\mathbb{Q}_p}$ to the equivalence class of triples $(\rho, L_1, L_2)$ as above with the additional properties that the line $L_1$ is $G_{\mathfrak{p}_1}$-stable and the line $L_2$ is $G_{\mathfrak{p}_2}$-stable. We call such deformations \emph{nearly ordinary}.
\end{definition}
By requiring the two lines $L_1$ and $L_2$ to lift the two distinct lines $\langle e_1 \rangle$ and $\langle e_2 \rangle$ respectively in the definition above, we ensure that the two quotient characters on the spaces $A^2 / L_i$ will lift the two distinct characters $\chi$ and $\mathbbm{1}$. This corresponds to the particular choice of $p$-stabilisation $E_{1,\chi}^{(p)} \colonequals (1 - V_{\mathfrak{p}_1})(1 + V_{\mathfrak{p}_2})E_{1,\chi}$ with two distinct signs that will be at the core of our arguments later.
\begin{prop}\label{nodefring}
The functor $D_{\rho_{\eta}}^{\emph{no}}$ is represented by a universal deformation ring $R_{\rho_{\eta}}^{\emph{no}}$.
\end{prop}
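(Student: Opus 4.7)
The plan is to realise $R_{\rho_\eta}^{\text{no}}$ as an explicit quotient of the filtered universal ring $R \colonequals R_{\rho_\eta}\llbracket X, Y \rrbracket$ from Proposition \ref{fildefring}. Over $R$, let $\rho^{\text{univ}}$ denote the universal lift and $L_1^{\text{univ}} = \langle e_1 + X e_2 \rangle$, $L_2^{\text{univ}} = \langle e_2 + Y e_1 \rangle$ the universal pair of lines. For each $\tau \in G_{\mathfrak{p}_1}$, expand $\rho^{\text{univ}}(\tau)(e_1 + X e_2) = a(\tau)\,e_1 + b(\tau)\,e_2$ with $a(\tau), b(\tau) \in R$. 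The condition that $L_1^{\text{univ}}$ be stabilised by $\tau$ is then precisely $b(\tau) - X a(\tau) = 0$. I would let $I_1 \subset R$ be the (necessarily closed, since $R$ is Noetherian and $\mathfrak{m}_R$-adically complete) ideal generated by the elements $\{b(\tau) - X a(\tau) : \tau \in G_{\mathfrak{p}_1}\}$, let $I_2$ be the analogous ideal arising from $G_{\mathfrak{p}_2}$-stability of $L_2^{\text{univ}}$, and set $R_{\rho_\eta}^{\text{no}} \colonequals R/(I_1 + I_2)$.

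Representability then amounts to a bijection check of $A$-valued points. On the one hand, reducing the universal filtered triple modulo $I_1 + I_2$ yields, by the very construction of these ideals, a nearly ordinary deformation over $R_{\rho_\eta}^{\text{no}}$. On the other hand, any $(\rho, L_1, L_2) \in D_{\rho_\eta}^{\text{no}}(A)$ is classified by a unique morphism $f \colon R \to A$ via Proposition \ref{fildefring}, and the nearly ordinary hypothesis translates, cocycle by cocycle, into the vanishing in $A$ of $f(b(\tau) - X a(\tau))$ and its $G_{\mathfrak{p}_2}$-analogue. Hence $f$ annihilates $I_1 + I_2$ and factors uniquely through $R_{\rho_\eta}^{\text{no}}$, providing the required inverse bijection. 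Functoriality in $A$ is immediate.

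The main subtlety is ensuring that the resulting quotient is honestly an object of $\mathcal{C}_{\mathbb{Q}_p}$: the maps $\tau \mapsto a(\tau), b(\tau)$ must be continuous for the profinite topology on $G_{\mathfrak{p}_1}$ and the $\mathfrak{m}_R$-adic topology on $R$, which follows from the continuity of $\rho^{\text{univ}}$, so $I_1$ is a legitimate closed ideal. A conceptually cleaner alternative, more in the spirit of Pozzi's thesis \cite{Pozzi}, would be to verify Schlessinger's criteria directly for $D_{\rho_\eta}^{\text{no}}$: the fibre-product axioms are inherited from the representable $D_{\rho_\eta}^{\text{fil}}$ because the nearly ordinary condition is plainly preserved by fibre products of Artinian rings, and the tangent space of $D_{\rho_\eta}^{\text{no}}$ is finite-dimensional simply because it injects into the tangent space computed in the proof of Proposition \ref{fildefring}.
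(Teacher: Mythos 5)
Your proposal is correct and follows essentially the same strategy as the paper's proof: both realise $R_{\rho_\eta}^{\text{no}}$ as the quotient of $R_{\rho_\eta}\llbracket X, Y\rrbracket$ by the ideal $I_1 + I_2$ cutting out near-ordinarity, and then transport the bijection of Proposition \ref{fildefring}. Your relation $b(\tau) - X a(\tau) = 0$, obtained by expanding $\rho^{\text{univ}}(\tau)(e_1 + Xe_2)$, is exactly the lower-left entry $\gamma(\tau) + (\delta(\tau) - \alpha(\tau))X - \beta(\tau)X^2$ of the conjugated matrix in the paper's argument, so the generating sets (hence the ideals) coincide.
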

\begin{proof}
We will find an ideal $I \subset R_{\rho_{\eta}}\llbracket X \rrbracket$ such that in the bijection
\[
\text{Hom}( R_{\rho_{\eta}}\llbracket X,Y \rrbracket, A ) \to D_{\rho_{\eta}}^{\text{fil}}(A),
\]
the image of an element on the left is contained in the subset $D_{\rho_{\eta}}^{\text{no}}(A)$ if and only if it factors through $R_{\rho_{\eta}}^0\llbracket X,Y \rrbracket / I$. This would yield a bijection
\[
\text{Hom}( R_{\rho_{\eta}}\llbracket X,Y \rrbracket / I, A ) \to D_{\rho_{\eta}}^{\text{no}}(A),
\]
establishing the desired conclusion $R_{\rho_{\eta}}^{\text{no}} \cong R_{\rho_{\eta}}\llbracket X,Y \rrbracket / I$. It remains to identify $I$.  Let us consider a representative for the universal deformation
\[
\rho^{\text{univ}} = \begin{pmatrix} \alpha & \beta \\ \gamma & \delta \end{pmatrix} 
\]
and investigate when its universal line $L_1^{\text{univ}} = \langle e_1 + X e_2 \rangle$ is stable under the action of $G_{\mathfrak{p}_1}$. By changing bases, this happens precisely when
\[
\begin{pmatrix} 1 & 0 \\ -X & 1 \end{pmatrix} \begin{pmatrix} \alpha & \beta \\ \gamma & \delta \end{pmatrix} \begin{pmatrix} 1 & 0 \\ X & 1 \end{pmatrix} = \begin{pmatrix} \alpha + \beta X & \beta \\ \gamma + (\delta-\alpha)X - \beta X^2 & \delta-\beta X \end{pmatrix}
\]
fixes the line $\langle e_1 \rangle$ on $G_{\mathfrak{p}_1}$. This is easy to read off; it happens precisely when
\[
\gamma(\sigma) + (\delta(\sigma) - \alpha(\sigma))X - \beta(\sigma)X^2\quad \text{vanishes for all } \sigma \in G_{\mathfrak{p}_1}.
\]
Let $I_1 \subset R_{\rho_{\eta}}^0\llbracket X \rrbracket$ be the ideal generated by all the elements above and completely similarly define $I_2$. Then $I = I_1 + I_2$ is easily seen to be the desired ideal.
\end{proof}

\subsection{Computing tangent spaces}

Choosing a basis of $\mathbb{Q}_p^2$, we may identify the adjoint representation
\[
\text{Ad}( \rho_{\eta} ) \cong M_2( \mathbb{Q}_p ),
\]
on which the action is given for $g \in G_F$ by $g \cdot M = \rho_{\eta}(g)^{-1} M \rho_{\eta}(g)$. 
\begin{lemma}\label{phi1def}
There is a well-defined map of $G_F$-modules
\[
\varphi_1 \colon \emph{Ad}( \rho ) \to \mathbb{Q}_p( \chi ) \colon \begin{pmatrix} x & y \\ z & w \end{pmatrix} \mapsto z.
\]
\end{lemma}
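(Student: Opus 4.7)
The plan is to verify the claim by a direct matrix computation, since $\mathbb{Q}_p$-linearity of a coordinate projection is automatic and the only content in ``well-defined map of $G_F$-modules'' is Galois equivariance. Concretely, I would fix $g \in G_F$, write out $\rho_\eta(g) = \begin{pmatrix} 1 & \chi(g)\eta(g) \\ 0 & \chi(g) \end{pmatrix}$ and invert it explicitly. The key simplification at this step is that $\chi$ is a quadratic character (it takes values in $\{\pm 1\}$), so $\chi(g)^{-1} = \chi(g)$; this gives $\rho_\eta(g)^{-1} = \begin{pmatrix} 1 & -\eta(g) \\ 0 & \chi(g) \end{pmatrix}$.

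Next I would compute $g \cdot M = \rho_\eta(g)^{-1} M \rho_\eta(g)$ for a general $M = \begin{pmatrix} x & y \\ z & w \end{pmatrix}$ by multiplying the three matrices in order. The $(2,1)$-entry drops out after the first multiplication (right-multiplying $M$ by $\rho_\eta(g)$ leaves the first column unchanged), and then left-multiplying by $\rho_\eta(g)^{-1}$ scales this column by the bottom-left block. A direct reading gives that the $(2,1)$-entry of $g \cdot M$ equals $\chi(g) \, z$. Since the $G_F$-action on $\mathbb{Q}_p(\chi)$ sends $z \mapsto \chi(g) z$, this is precisely the equivariance $\varphi_1(g \cdot M) = g \cdot \varphi_1(M)$ that needs to be checked.

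There is no real obstacle here: the argument is a three-line $2 \times 2$ computation whose only subtle ingredient is the remark that $\chi^2 = 1$, which causes the characters in the upper-left corner of $\rho_\eta(g)^{-1}$ and $\rho_\eta(g)$ to cancel correctly. The only thing worth emphasizing in the write-up is why this works specifically for the $(2,1)$-entry and not for arbitrary entries: the lines $\langle e_1 \rangle$ and $\langle e_2 \rangle$ are respectively a subrepresentation and a quotient of $\rho_\eta$, so the ``lower-triangular'' coordinate is naturally $\text{Hom}$-valued, with twist equal to (quotient character)$^{-1} \otimes$ (subcharacter) $= \mathbbm{1}^{-1} \otimes \chi = \chi$.
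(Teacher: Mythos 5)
Your proposal is correct and follows essentially the same route as the paper: a direct conjugation computation showing the $(2,1)$-entry transforms by $\chi$, which the paper records verbatim in its Equation \eqref{mateq}. One small slip in phrasing — after right-multiplying $M$ by $\rho_\eta(g)$ (which indeed fixes the first column), the $(2,1)$-entry is scaled by the \emph{bottom-right} entry $\chi(g)$ of $\rho_\eta(g)^{-1}$, not the ``bottom-left block'' — but the conclusion $\varphi_1(g\cdot M) = \chi(g)z$ is right, and your closing remark explaining the twist as $\mathbbm{1}^{-1}\otimes\chi$ coming from the filtration is a nice conceptual gloss the paper leaves implicit.
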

\begin{proof}
This follows from the computation of the matrix product
\begin{equation}
\begin{pmatrix} 1 & -\eta \\ 0 & \chi \end{pmatrix} \begin{pmatrix} x & y \\ z & w \end{pmatrix} \begin{pmatrix} 1 & \chi\eta \\ 0 & \chi \end{pmatrix} = \begin{pmatrix} x - z \eta & x\chi\eta + y\chi - z\chi\eta^2 - w\chi\eta \\ z \chi & z\eta + w \end{pmatrix}, \label{mateq}
\end{equation}
which shows that the $G_F$-action on the bottom-right entry is through multiplication by $\chi$.
\end{proof}
Now let $W_1 = \text{ker}( \varphi_ 1)$; in other words, we have a short exact sequence of $G_F$-modules
\[
0 \to W_1 \to \text{Ad}( \rho ) \to \mathbb{Q}_p( \chi ) \to 0.
\]
\begin{lemma}\label{xwmap}
There is a well-defined map of $G_F$-modules
\[
\varphi_2 \colon W_1 \to \mathbb{Q}_p \oplus \mathbb{Q}_p \colon \begin{pmatrix} x & y \\ 0 & w \end{pmatrix} \mapsto (x,w).
\]
\end{lemma}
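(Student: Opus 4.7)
The plan is entirely computational and short. First, I would unpack the definition of $W_1$: since $W_1 = \ker(\varphi_1)$ and $\varphi_1$ picks out the bottom-left entry, $W_1$ consists precisely of the upper-triangular matrices $\begin{pmatrix} x & y \\ 0 & w \end{pmatrix} \in \mathrm{Ad}(\rho_\eta)$. So the map $\varphi_2$ is well-defined as a map of $\mathbb{Q}_p$-vector spaces, and its $\mathbb{Q}_p$-linearity is obvious.

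The only content is $G_F$-equivariance, and this falls out of the very same matrix identity already used in Lemma \ref{phi1def}. Specializing Equation \ref{mateq} to $z = 0$, I compute
\[
\begin{pmatrix} 1 & -\eta \\ 0 & \chi \end{pmatrix} \begin{pmatrix} x & y \\ 0 & w \end{pmatrix} \begin{pmatrix} 1 & \chi\eta \\ 0 & \chi \end{pmatrix} = \begin{pmatrix} x & x\chi\eta + y\chi - w\chi\eta \\ 0 & w \end{pmatrix},
\]
so for any $g \in G_F$ the diagonal entries are fixed. Thus $\varphi_2$ intertwines the $G_F$-action on $W_1$ with the trivial action on $\mathbb{Q}_p \oplus \mathbb{Q}_p$, and it is a morphism of $G_F$-modules as claimed.

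There is no real obstacle here; the lemma is essentially the observation that, for the particular matrix form of $\rho_\eta$, the diagonal entries of an upper-triangular adjoint element are $G_F$-invariants, while the bottom-left entry transforms by $\chi$ (handled already by $\varphi_1$) and the top-right entry absorbs the $\eta$-twist. The only thing to double-check is that I have correctly conjugated by $\rho_\eta(g)$ rather than $\rho_\eta(g)^{-1}$ (a sign/convention issue that does not affect which entries are fixed), which is immediate from the identity above.
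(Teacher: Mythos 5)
Your proof is correct and follows exactly the paper's own argument: substituting $z = 0$ into Equation \ref{mateq} shows the diagonal entries are $G_F$-fixed, so $\varphi_2$ is a morphism onto the trivial $G_F$-module $\mathbb{Q}_p \oplus \mathbb{Q}_p$. (Your closing worry about $\rho_\eta(g)$ versus $\rho_\eta(g)^{-1}$ is already resolved in the paper: $\chi$ is $\{\pm 1\}$-valued, so $\rho_\eta(g)^{-1} = \begin{pmatrix} 1 & -\eta \\ 0 & \chi \end{pmatrix}$ is precisely the left factor in Equation \ref{mateq}.)
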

\begin{proof}
This is immediate from substituting $z = 0$ in Equation \ref{mateq}.
\end{proof}
We now define $W_2 = \text{ker}(\varphi_2)$, so that we have a short exact sequence
\[
0 \to W_2 \to W_1 \to \mathbb{Q}_p \oplus \mathbb{Q}_p \to 0.
\]
The following result completes the filtration and allows us to start computing cohomology groups using long exact sequences.
\begin{lemma}\label{w2struc}
There is a well-defined isomorphism of $G_F$-modules given by
\[
W_2 \xrightarrow{\sim} \mathbb{Q}_p( \chi ) \colon \begin{pmatrix} 0 & y \\ 0 & 0 \end{pmatrix} \mapsto y.
\]
\end{lemma}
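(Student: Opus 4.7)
The plan is to read the result off directly from Equation \ref{mateq}, which was established in the proof of Lemma \ref{phi1def}. By construction, $W_2 = \ker(\varphi_2)$ sits inside $W_1 = \ker(\varphi_1) \subset \mathrm{Ad}(\rho_\eta)$ as those matrices whose lower-left entry $z$ vanishes (being in $W_1$) and whose diagonal entries $x, w$ also vanish (being in $\ker(\varphi_2)$). Consequently, $W_2$ consists precisely of the strictly upper-triangular matrices $\begin{pmatrix} 0 & y \\ 0 & 0 \end{pmatrix}$, and extracting the $y$-coordinate is visibly a $\mathbb{Q}_p$-linear bijection onto $\mathbb{Q}_p$ with inverse $y \mapsto \begin{pmatrix} 0 & y \\ 0 & 0 \end{pmatrix}$.

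The only genuine content is the claim of $G_F$-equivariance with respect to the twist by $\chi$. To verify this, I substitute $x = z = w = 0$ into the right-hand side of Equation \ref{mateq}. All terms involving $x$, $z$, or $w$ disappear, leaving $\begin{pmatrix} 0 & y\chi \\ 0 & 0 \end{pmatrix}$ as the image of $\begin{pmatrix} 0 & y \\ 0 & 0 \end{pmatrix}$ under conjugation by $\rho_\eta(g)$. Hence for every $g \in G_F$, the action sends the $y$-coordinate to $\chi(g) y$, which is exactly the $G_F$-action on $\mathbb{Q}_p(\chi)$.

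There is no real obstacle here: all the hard work was already absorbed into the matrix identity of Equation \ref{mateq} and into Lemmas \ref{phi1def} and \ref{xwmap}. This lemma merely records the residual graded piece of the three-step filtration $0 \subset W_2 \subset W_1 \subset \mathrm{Ad}(\rho_\eta)$, giving an isomorphism $W_2 \cong \mathbb{Q}_p(\chi)$ that, together with the previously established quotients, will feed into the forthcoming long exact sequences of Galois cohomology used to compute the tangent space of $R_{\rho_\eta}^{\mathrm{no}}$.
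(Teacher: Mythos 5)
Your proof is correct and follows the paper's own argument exactly: the paper's proof is the one-line observation that the result is immediate from substituting $x = z = w = 0$ into Equation \ref{mateq}, which is precisely the computation you carry out. The extra remarks on the linear bijectivity and the role of $W_2$ in the filtration are accurate but not needed beyond what the matrix identity already gives.
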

\begin{proof}
This is immediate from substituting $x = z = w = 0$ in Equation \ref{mateq}.
\end{proof}
\begin{prop}\label{W1coh}
The group $H^1( G_F, W_1 )$ is 3-dimensional. Further, $H^2( G_F, W_1 ) = 0$. 
\end{prop}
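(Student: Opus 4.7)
The plan is to apply the cohomology long exact sequence to the short exact sequence of $G_F$-modules
$$0 \to W_2 \to W_1 \to \mathbb{Q}_p \oplus \mathbb{Q}_p \to 0$$
from Lemma~\ref{xwmap}, using the identification $W_2 \cong \mathbb{Q}_p(\chi)$ from Lemma~\ref{w2struc}. I would first record the dimensions of the flanking cohomology groups. By Lemma~\ref{gfhomdim}, $H^1(G_F, \mathbb{Q}_p) = \textrm{Hom}(G_F, \mathbb{Q}_p)$ is one-dimensional, so $H^1(G_F, \mathbb{Q}_p \oplus \mathbb{Q}_p)$ is two-dimensional; by Lemma~\ref{gfH1dim}, $H^1(G_F, \mathbb{Q}_p(\chi))$ is also two-dimensional; and $H^0(G_F, \mathbb{Q}_p \oplus \mathbb{Q}_p) \cong \mathbb{Q}_p^2$.

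The vanishing of $H^2(G_F, W_1)$ is immediate from the relevant segment of the long exact sequence, since the flanking terms $H^2(G_F, W_2) = H^2(G_F, \mathbb{Q}_p(\chi))$ and $H^2(G_F, \mathbb{Q}_p \oplus \mathbb{Q}_p) = H^2(G_F, \mathbb{Q}_p)^2$ both vanish by Lemma~\ref{H2vanish}. For the first cohomology, the long exact sequence provides
$$H^0(G_F, \mathbb{Q}_p \oplus \mathbb{Q}_p) \xrightarrow{\partial} H^1(G_F, \mathbb{Q}_p(\chi)) \to H^1(G_F, W_1) \to H^1(G_F, \mathbb{Q}_p \oplus \mathbb{Q}_p) \to 0,$$
so $\dim H^1(G_F, W_1) = 2 + (2 - \dim \textrm{im}\,\partial)$, and the task reduces to showing $\dim \textrm{im}\,\partial = 1$.

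I would compute $\partial$ explicitly by lifting a pair $(x_0, w_0) \in \mathbb{Q}_p \oplus \mathbb{Q}_p$ to the $0$-cochain $\alpha = \bigl(\begin{smallmatrix} x_0 & 0 \\ 0 & w_0 \end{smallmatrix}\bigr) \in W_1$ and evaluating $d\alpha(g) = g \cdot \alpha - \alpha$ via the conjugation formula from Equation~\ref{mateq}. A direct calculation shows that $d\alpha(g)$ has a single nonzero entry in the upper-right position, proportional to the cocycle $\eta$ with scalar $x_0 - w_0$. Hence $\partial$ factors through the one-dimensional quotient $(x_0, w_0) \mapsto x_0 - w_0$ and hits precisely the line spanned by $[\eta] \in H^1(G_F, \mathbb{Q}_p(\chi))$.

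The key point, and the reason this computation gives exactly $3$ rather than $4$, is that the standing hypothesis $\eta \neq 0 \in H^1(G_F, \mathbb{Q}_p(\chi))$ guarantees this image is genuinely one-dimensional rather than zero; without this hypothesis the dimension of $H^1(G_F, W_1)$ would jump by $1$, mirroring precisely the failure of representability observed in Lemma~\ref{rhoetaired}. Combining everything yields $\dim H^1(G_F, W_1) = 2 + 2 - 1 = 3$, as claimed.
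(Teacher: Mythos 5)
Your argument is correct and takes a genuinely different, though ultimately equivalent, route from the paper's. Both run the long exact sequence for $0 \to W_2 \to W_1 \to \mathbb{Q}_p \oplus \mathbb{Q}_p \to 0$ and obtain $H^2(G_F, W_1) = 0$ from the vanishing of the flanking $H^2$-terms. The difference lies in how the $H^1$-dimension is pinned down: the paper computes $H^0(G_F, W_1) = W_1^{G_F} = \langle\mathrm{id}\rangle$ directly from Lemma~\ref{rhoetaired} (which is where the hypothesis $\eta \neq 0$ enters in their proof) and then reads off $\dim H^1(G_F, W_1) = 3$ from the alternating sum of dimensions along the resulting five-term exact sequence, never touching the connecting map. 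You instead compute $\partial$ explicitly via the coboundary of a diagonal lift and argue that its image is the line $\mathbb{Q}_p\cdot[\eta]$. The two routes are equivalent: exactness of $0 \to H^0(G_F,W_1) \to H^0(G_F,\mathbb{Q}_p^2) \xrightarrow{\partial} H^1(G_F,\mathbb{Q}_p(\chi))$ forces $\dim\operatorname{im}\partial = 2 - \dim H^0(G_F,W_1)$, so determining one determines the other. Your version has the mild advantage of making the image of $\partial$, and the role of $\eta \neq 0$, visible at the level of the connecting map itself. One small caution on the explicit formula: reading the upper-right entry off Equation~\ref{mateq} with $y = z = 0$ gives $(x_0 - w_0)\chi(g)\eta(g)$ rather than $(x_0 - w_0)\eta(g)$; the extra $\chi$ and a sign are artefacts of the convention $g\cdot M = \rho_\eta(g)^{-1}M\rho_\eta(g)$ written there, and conjugating on the other side (the standard left-module normalisation that makes $\mathrm{Ad}(\rho_\eta)$ a genuine $G_F$-module) yields the clean $(w_0 - x_0)\eta(g)$. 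This does not affect your conclusion, since the image of $\partial$ is a nonzero one-dimensional subspace either way, but the twist and sign should be tracked carefully if you want to quote the literal formula for $\partial$.
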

\begin{proof}
We consider the long exact sequence associated with the short exact sequence defining $W_2$. It is clear that $H^0( G_F, \mathbb{Q}_p( \chi ) ) = 0$ and $H^0( G_F, \mathbb{Q}_p \oplus \mathbb{Q}_p ) = \mathbb{Q}_p \oplus \mathbb{Q}_p$. Further, one may observe that
\[
H^0( G_F, W_1 ) = W_1^{G_F} = \big\{ M \in W_1 \mid \rho^{-1}M\rho = M \big\} = \langle \text{id} \rangle \cong \mathbb{Q}_p,
\]
where we used Lemma \ref{rhoetaired}. Finally, we recall Lemma \ref{H2vanish}, which states that $H^2( G_F, \mathbb{Q}_p( \chi ) ) = 0$. Combining all of this with Lemma \ref{w2struc}, the long exact sequence becomes
\[
0 \to \mathbb{Q}_p \to \mathbb{Q}_p \oplus \mathbb{Q}_p \to H^1( G_F, \mathbb{Q}_p( \chi ) ) \to H^1( G_F, W_1 ) \to H^1( G_F, \mathbb{Q}_p \oplus \mathbb{Q}_p ) \to 0.
\]
Applying Lemmas \ref{gfhomdim} and \ref{gfH1dim}, we conclude that
\[
\text{dim } H^1( G_F, W_1 ) + 2 = 1 + 2 + 2;
\]
completing the proof of the first claim. For the second, we look slightly further along in the long exact sequence and use Lemma \ref{H2vanish} to find $H^2( G_F, W_1 )$ in between two zeroes. 
\end{proof}
\begin{stelling}\label{tanspdim}
The tangent space $t_{\rho_{\eta}}$ is 5-dimensional.
\end{stelling}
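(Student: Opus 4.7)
The tangent space $t_{\rho_\eta}$ of the deformation functor $D_{\rho_\eta}$ was already identified in the proof of representability of $R_{\rho_\eta}$ with the Galois cohomology group $H^1(G_F, \mathrm{Ad}(\rho_\eta))$. The plan is therefore to compute the dimension of this cohomology group by leveraging the filtration $W_2 \subset W_1 \subset \mathrm{Ad}(\rho_\eta)$ developed in Lemmas \ref{phi1def}, \ref{xwmap}, \ref{w2struc} together with the cohomological inputs gathered in the preceding lemmas and propositions.

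The concrete strategy is to use the short exact sequence of $G_F$-modules
\[
0 \to W_1 \to \mathrm{Ad}(\rho_\eta) \to \mathbb{Q}_p(\chi) \to 0
\]
coming from Lemma \ref{phi1def}, and to chase the associated long exact sequence in Galois cohomology. All the numerical inputs are already on the table: Proposition \ref{W1coh} supplies $\dim H^1(G_F, W_1) = 3$ and $H^2(G_F, W_1) = 0$; Lemma \ref{gfH1dim} yields $\dim H^1(G_F, \mathbb{Q}_p(\chi)) = 2$; and $H^0(G_F, \mathbb{Q}_p(\chi)) = 0$ since $\chi$ is a nontrivial character.

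Feeding these into the long exact sequence, the connecting map $H^0(G_F, \mathbb{Q}_p(\chi)) \to H^1(G_F, W_1)$ vanishes because its source is zero, and the map $H^1(G_F, \mathbb{Q}_p(\chi)) \to H^2(G_F, W_1)$ vanishes because its target is zero. Hence the relevant segment of the long exact sequence collapses into the short exact sequence
\[
0 \to H^1(G_F, W_1) \to H^1(G_F, \mathrm{Ad}(\rho_\eta)) \to H^1(G_F, \mathbb{Q}_p(\chi)) \to 0,
\]
and additivity of dimensions gives $\dim H^1(G_F, \mathrm{Ad}(\rho_\eta)) = 3 + 2 = 5$, as claimed.

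There is no real obstacle to this argument: the substantive work has already been carried out in establishing the filtration and computing $H^i(G_F, W_1)$ and $H^i(G_F, \mathbb{Q}_p(\chi))$. The present theorem serves as the capstone of the cohomological bookkeeping in this subsection, packaging those computations into a single dimension count that will be compared against the corresponding dimension on the Hecke side in the forthcoming $R = T$ argument.
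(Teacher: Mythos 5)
Your proof is correct and follows exactly the same route as the paper: identify $t_{\rho_\eta}$ with $H^1(G_F,\mathrm{Ad}(\rho_\eta))$, run the long exact sequence of the short exact sequence $0 \to W_1 \to \mathrm{Ad}(\rho_\eta) \to \mathbb{Q}_p(\chi) \to 0$, and use $H^0(G_F,\mathbb{Q}_p(\chi))=0$ and $H^2(G_F,W_1)=0$ to collapse it to a short exact sequence on $H^1$'s. The dimension count $3+2=5$ is the paper's as well.
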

\begin{proof}
Using the well-known isomorphism $t_{\rho_{\eta}} \cong H^1( G_F, \text{Ad}(\rho) )$, we reduce to computing the dimension of the latter cohomology group. We now use the long exact sequence associated with the short exact sequence defining $W_1$. Recalling that $H^0( G_F, \mathbb{Q}_p( \chi ) ) = 0$ and $H^2( G_F, W_1 ) = 0$ by Proposition \ref{W1coh} above, we conclude that part of this sequence reads
\[
0 \to H^1( G_F, W_1 ) \to H^1( G_F, \text{Ad}(\rho) ) \to H^1( G_F, \mathbb{Q}_p( \chi ) ) \to 0.
\]
In particular, appealing to Lemma \ref{gfH1dim} and again Proposition \ref{W1coh}, we find that
\[
\text{dim }H^1( G_F, \text{Ad}(\rho) ) = \text{dim }H^1( G_F, W_1 ) + \text{dim }H^1( G_F, \mathbb{Q}_p( \chi ) ) = 3 + 2 = 5,
\]
completing the proof.
\end{proof}
We now compute the dimension of the tangent space to the nearly ordinary deformation functor.
\begin{lemma}\label{filtang}
There is an isomorphism of  $\mathbb{Q}_p$-vector spaces between $t_{\rho_{\eta}}^{\emph{fil}}$ and $H^1( G_F, \emph{Ad}(\rho_{\eta} ) ) \oplus \mathbb{Q}_p^2$.
\end{lemma}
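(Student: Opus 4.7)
The plan is to leverage Proposition~\ref{fildefring}, which identifies $R_{\rho_\eta}^{\text{fil}}$ with the power series ring $R_{\rho_\eta}\llbracket X, Y \rrbracket$ over the universal (unrigidified) deformation ring. For any complete local Noetherian $\mathbb{Q}_p$-algebra $R$ with residue field $\mathbb{Q}_p$ and maximal ideal $\mathfrak{m}_R$, the maximal ideal of $R \llbracket X, Y \rrbracket$ is $\mathfrak{m}_R + (X, Y)$, and its cotangent space splits canonically as
\[
\mathfrak{m}_R/\mathfrak{m}_R^2 \;\oplus\; \mathbb{Q}_p \cdot X \;\oplus\; \mathbb{Q}_p \cdot Y.
\]
Dualising and applying this to $R = R_{\rho_\eta}$ gives an isomorphism $t_{\rho_\eta}^{\text{fil}} \cong t_{\rho_\eta} \oplus \mathbb{Q}_p^2$.

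The second step is then to invoke the standard Mazur identification $t_{\rho_\eta} \cong H^1(G_F, \text{Ad}(\rho_\eta))$. Concretely, a deformation of $\rho_\eta$ to the dual numbers $\mathbb{Q}_p[\epsilon]/(\epsilon^2)$ is of the form $\rho_\eta(1 + \epsilon \eta)$ where $\eta \colon G_F \to \text{Ad}(\rho_\eta)$ is a $1$-cocycle, and two such deformations are equivalent if and only if the cocycles differ by a coboundary; here the absence of non-scalar endomorphisms (Lemma~\ref{rhoetaired}) ensures that the functor is well-defined and that the identification is an isomorphism of $\mathbb{Q}_p$-vector spaces. Combining the two steps yields the claimed decomposition.

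If one preferred a direct argument bypassing Proposition~\ref{fildefring}, one would unfold the definition: a triple $(\rho, L_1, L_2)$ over $\mathbb{Q}_p[\epsilon]$ consists of a dual-number lift $\rho$ of $\rho_\eta$ together with lines $L_1 = \langle e_1 + \epsilon x e_2\rangle$ and $L_2 = \langle e_2 + \epsilon y e_1\rangle$ for $(x, y) \in \mathbb{Q}_p^2$. A short matrix computation (analogous to Equation~\ref{mateq}) shows that a gauge transformation by $I + \epsilon M$ shifts $\rho$ by the coboundary $dM$ and sends $(x, y) \mapsto (x + M_{21}, y + M_{12})$. Passing to the quotient and using that the stabiliser is exactly the scalar subspace (since $\rho_\eta$ has only scalar endomorphisms) lets the $H^1$-part and the $(x, y)$-part be disentangled, giving the same splitting.

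The main obstacle is essentially bookkeeping: one must check that the two summands are truly independent after dividing by equivalence, which is precisely where the rigidification (forcing endomorphisms to be scalar) becomes essential. Route one sidesteps this since Proposition~\ref{fildefring} has already absorbed all such bookkeeping; I therefore expect to present the proof as a two-line application of that proposition together with Mazur's theorem, consistent with the brevity typical of this section of the paper.
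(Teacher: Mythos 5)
Your proposal is correct and follows essentially the same route as the paper: invoke Proposition~\ref{fildefring} to identify $R_{\rho_\eta}^{\text{fil}}$ with $R_{\rho_\eta}\llbracket X, Y\rrbracket$, observe that the images of $X$ and $Y$ contribute a free $\mathbb{Q}_p^2$ factor to the tangent space, and apply Mazur's identification $t_{\rho_\eta} \cong H^1(G_F, \text{Ad}(\rho_\eta))$. The paper phrases the last step as $\text{Hom}(R_{\rho_\eta}\llbracket X,Y\rrbracket, \mathbb{Q}_p[\epsilon]) \cong \text{Hom}(R_{\rho_\eta}, \mathbb{Q}_p[\epsilon]) \oplus \mathbb{Q}_p^2$ directly rather than passing through the cotangent space, but the content is identical; your sketched alternative via explicit gauge transformations is a sound sanity check but not needed.
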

\begin{proof}
By definition and using Proposition \ref{fildefring}, we have
\[
t_{\rho_{\eta}}^{\text{fil}} = \text{Hom}( R_{\rho_{\eta}}^{\text{fil}}, \mathbb{Q}_p[\epsilon] ) = \text{Hom}( R_{\rho_{\eta}} \llbracket X,Y \rrbracket, \mathbb{Q}_p[\epsilon] ) \cong \text{Hom}( R_{\rho_{\eta}}, \mathbb{Q}_p[\epsilon] ) \oplus \mathbb{Q}_p^2;
\]
this final isomorphism comes from the observation that we may choose the images of $X$ and $Y$ arbitrarily and independently in the maximal ideal $\epsilon\mathbb{Q}_p[\epsilon] \cong \mathbb{Q}_p$.
\end{proof}
 
\begin{prop}\label{nocond}
A triple $(\Theta, \lambda_1, \lambda_2) \in H^1( G_F, \emph{Ad}(\rho_{\eta}) ) \oplus \mathbb{Q}_p^2$ corresponds to a nearly ordinary deformation of $\rho_{\eta}$ if and only if
\[
c|_{G_{\mathfrak{p}_1}} = \lambda_1 (1 - \chi) \quad \text{and} \quad b|_{G_{\mathfrak{p}_2}} = \lambda_2 ( \chi - 1 ).
\]
\end{prop}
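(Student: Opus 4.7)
The plan is to unpack the stability criterion from Proposition \ref{nodefring} in the specific infinitesimal setting $A = \mathbb{Q}_p[\epsilon]$ and identify the resulting linear conditions on the tangent vectors. Under the bijection from Lemma \ref{filtang}, the triple $(\Theta, \lambda_1, \lambda_2)$ corresponds to the map $R_{\rho_\eta}\llbracket X, Y \rrbracket \to \mathbb{Q}_p[\epsilon]$ sending $X \mapsto \lambda_1 \epsilon$, $Y \mapsto \lambda_2 \epsilon$, and the universal representation to the lift $\rho$ determined by a matrix-valued cocycle representative
\[
\Theta = \begin{pmatrix} a & b \\ c & d \end{pmatrix} \colon G_F \to M_2(\mathbb{Q}_p)
\]
of the given cohomology class. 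The accompanying filtration on $\mathbb{Q}_p[\epsilon]^2$ is $L_1 = \langle e_1 + \lambda_1 \epsilon e_2 \rangle$ and $L_2 = \langle e_2 + \lambda_2 \epsilon e_1 \rangle$.

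With this dictionary in hand, I would directly substitute into the criterion derived in the proof of Proposition \ref{nodefring}: the line $L_1$ is $G_{\mathfrak{p}_1}$-stable precisely when
\[
\gamma(\sigma) + (\delta(\sigma) - \alpha(\sigma))X - \beta(\sigma)X^2 = 0 \qquad \text{for all } \sigma \in G_{\mathfrak{p}_1},
\]
with $X = \lambda_1 \epsilon$. Exploiting $\epsilon^2 = 0$ together with the expansions $\gamma(\sigma) = \epsilon c(\sigma)$ (as the lower-left entry of $\rho_\eta$ vanishes identically) and $\delta(\sigma) - \alpha(\sigma) \equiv \chi(\sigma) - 1 \pmod{\epsilon}$, this criterion collapses to the scalar identity
\[
c(\sigma) + (\chi(\sigma) - 1)\lambda_1 = 0 \qquad \text{for all } \sigma \in G_{\mathfrak{p}_1},
\]
which is precisely $c|_{G_{\mathfrak{p}_1}} = \lambda_1(1 - \chi)$.

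The second condition emerges from the entirely symmetric calculation for $L_2 = \langle e_2 + Y e_1 \rangle$: stability under $G_{\mathfrak{p}_2}$ is equivalent to $\beta(\sigma) + (\alpha(\sigma) - \delta(\sigma))Y - \gamma(\sigma)Y^2 = 0$ on $G_{\mathfrak{p}_2}$. The key input here is the standing hypothesis $\eta|_{G_{\mathfrak{p}_2}} = 0$, which forces the upper-right entry of $\rho_\eta$ to vanish on $G_{\mathfrak{p}_2}$, whence $\beta(\sigma) = \epsilon b(\sigma)$ there. Substituting $Y = \lambda_2 \epsilon$ and reducing modulo $\epsilon^2$ yields the claimed $b|_{G_{\mathfrak{p}_2}} = \lambda_2(\chi - 1)$.

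There is no serious obstacle here; the heavy lifting was performed in Proposition \ref{nodefring}, and all that remains is arithmetic modulo $\epsilon^2$. The only conceptual point worth checking is that the resulting conditions are well-posed at the level of cohomology: modifying the representative $\Theta$ by a coboundary $\partial M$ shifts the entries $b$ and $c$ by quantities that are precisely absorbed by compensating shifts in $\lambda_1$ and $\lambda_2$, so that the equivalence descends to $H^1(G_F, \text{Ad}(\rho_\eta)) \oplus \mathbb{Q}_p^2$ as required.
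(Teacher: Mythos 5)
Your proof is essentially the paper's argument: unpack the ideal $I$ from Proposition \ref{nodefring} at $A = \mathbb{Q}_p[\epsilon]$, substitute via the dictionary of Lemma \ref{filtang}, and reduce the stability criterion modulo $\epsilon^2$. One small inaccuracy: the $(1,2)$-entry of $\rho = (1+\epsilon\Theta)\rho_\eta$ is $\chi\eta + \chi(\eta a + b)\epsilon$, so on $G_{\mathfrak{p}_2}$ (where $\eta = 0$) it equals $\chi(\sigma)b(\sigma)\epsilon$, not $b(\sigma)\epsilon$ as you assert; carrying the factor $\chi$ through gives $\chi b + (1-\chi)\lambda_2 = 0$, hence $b|_{G_{\mathfrak{p}_2}} = \lambda_2(1-\chi)$ rather than $\lambda_2(\chi-1)$, but this is merely a sign that is absorbed by the free parameter $\lambda_2$ (both expressions range over all coboundaries for $\mathbb{Q}_p(\chi)$) and does not affect anything downstream.
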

\begin{proof}
By definition and using Proposition \ref{nodefring}, we have
\[
t_{\rho_{\eta}}^{\text{no}} = \text{Hom}( R_{\rho_{\eta}}^{\text{no}}, \mathbb{Q}_p[\epsilon] ) = \text{Hom}( R_{\rho_{\eta}} \llbracket X,Y \rrbracket / I, \mathbb{Q}_p[\epsilon] ),
\]
where $I = I_1 + I_2$ as in the proof of Proposition \ref{nodefring}. We retain its notation and consider $\varphi \in\text{Hom}( R_{\rho_{\eta}}^0 \llbracket X,Y \rrbracket, \mathbb{Q}_p[\epsilon] )$. If we write
\[
\Theta = \begin{pmatrix} a & b \\ c & d \end{pmatrix},
\]
then by Lemma \ref{filtang}, there is a unique triple $(\Theta, \lambda_1, \lambda_2) \in H^1( G_F, \text{End}^0(V) ) \oplus \mathbb{Q}_p^2$ such that
\[
\varphi \begin{pmatrix} \alpha & \beta \\ \gamma & \delta \end{pmatrix} = ( 1 + \epsilon \Theta)\rho_{\eta} = \begin{pmatrix} 1 + a\epsilon & \chi\eta + \chi[\eta a + b]\epsilon \\ c\epsilon & \chi + \chi[\eta c + d]\epsilon \end{pmatrix}, \quad \varphi(X) = \lambda_1 \epsilon, \quad \varphi(Y) = \lambda_2 \epsilon.
\]
We can now compute the constraints posed by the vanishing on $I$ to be
\[
c = \lambda_1 (1 - \chi) \quad \text{on $G_{\mathfrak{p}_1}$} \quad \text{and} \quad \chi\eta + \chi[\eta a + b]\epsilon + \lambda_2 ( 1 - \chi )\epsilon = 0 \quad \text{on $G_{\mathfrak{p}_2}$}.
\]
However, $\eta$ is chosen to be trivial on $G_{\mathfrak{p}_2}$. As such, we obtain the proposition.
\end{proof}
\begin{lemma}\label{nomapdef}
There is a well-defined homomorphism
\[
f \colon H^1( G_F, \emph{Ad}(\rho_{\eta}) ) \to H^1( G_{\mathfrak{p}_1}, \mathbb{Q}_p(\chi)) \oplus H^1( G_{\mathfrak{p}_2}, \mathbb{Q}_p(\chi))
\]
explicitly given by, adopting the usual notation for the components of $\Theta$, 
\[
\Theta \mapsto \big( c|_{G_{\mathfrak{p}_1}}, b|_{G_{\mathfrak{p}_2}} \big).
\]
\end{lemma}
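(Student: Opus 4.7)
The plan is to construct the two components of $f$ separately as the composition of a restriction map on cohomology with a projection induced by a morphism of Galois modules, and then check that the combined map is well-defined on cohomology classes.

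For the first component, Lemma \ref{phi1def} already supplies a $G_F$-equivariant morphism $\varphi_1 \colon \emph{Ad}(\rho_{\eta}) \to \mathbb{Q}_p(\chi)$, explicitly the projection of a $2 \times 2$ matrix onto its bottom-left entry $z = c$. Thus the composition
\[
H^1(G_F, \emph{Ad}(\rho_{\eta})) \xrightarrow{(\varphi_1)_*} H^1(G_F, \mathbb{Q}_p(\chi)) \xrightarrow{\emph{res}} H^1(G_{\mathfrak{p}_1}, \mathbb{Q}_p(\chi))
\]
sends a class $[\Theta]$ to $[c|_{G_{\mathfrak{p}_1}}]$ and is manifestly well-defined, independent of the cocycle representative.

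For the second component, the subtle point is that the projection $M \mapsto y$ onto the top-right entry is \emph{not} a $G_F$-module map in general: inspection of Equation \ref{mateq} shows that the top-right entry transforms through terms involving $\eta$. However, by our standing assumption $\eta|_{G_{\mathfrak{p}_2}} = 0$, so upon restriction to $G_{\mathfrak{p}_2}$ the representation $\rho_{\eta}$ becomes diagonal, and substituting $\eta = 0$ into Equation \ref{mateq} reduces the top-right entry of the conjugated matrix to $y\chi$. Hence $M \mapsto y$ defines a $G_{\mathfrak{p}_2}$-module homomorphism $\psi \colon \emph{Ad}(\rho_{\eta})|_{G_{\mathfrak{p}_2}} \to \mathbb{Q}_p(\chi)$, and we define the second component as the composition
\[
H^1(G_F, \emph{Ad}(\rho_{\eta})) \xrightarrow{\emph{res}} H^1(G_{\mathfrak{p}_2}, \emph{Ad}(\rho_{\eta})) \xrightarrow{\psi_*} H^1(G_{\mathfrak{p}_2}, \mathbb{Q}_p(\chi)),
\]
which at the level of cocycles sends $\Theta$ to $b|_{G_{\mathfrak{p}_2}}$.

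Both components are $\mathbb{Q}_p$-linear and well-defined because each is a composition of functorial maps on group cohomology. The main (and essentially only) technical point to be careful with is the asymmetry in the two components: for the $\mathfrak{p}_1$-side we can use a global morphism of $G_F$-modules and then restrict, whereas for the $\mathfrak{p}_2$-side we must restrict first and then project, because the projection to the top-right entry only becomes Galois-equivariant after the restriction kills the contribution of $\eta$. Once this observation is made the verification is routine, and the explicit formula $\Theta \mapsto (c|_{G_{\mathfrak{p}_1}}, b|_{G_{\mathfrak{p}_2}})$ is immediate from the construction.
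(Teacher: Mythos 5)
Your proposal is correct and takes essentially the same route as the paper: the first component composes the pushforward under $\varphi_1$ from Lemma \ref{phi1def} with restriction to $G_{\mathfrak{p}_1}$, and the second component uses the vanishing of $\eta$ on $G_{\mathfrak{p}_2}$ to see that $\rho_\eta$ becomes diagonal there, so that the top-right projection defines a $G_{\mathfrak{p}_2}$-module map. You simply articulate more explicitly than the paper the asymmetry (project globally then restrict vs.\ restrict then project locally), which is a worthwhile clarification but not a different argument.
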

\begin{proof}
It is easy to see that the map $H^1( G_F, \text{Ad}(\rho) ) \to H^1( G_F, \mathbb{Q}_p(\chi) )$ in the proof of Theorem \ref{tanspdim} is given by $\Theta \mapsto c \in H^1( G_F, \mathbb{Q}_p( \chi ) )$ so we may compose this map with the restriction $G_F \to G_{\mathfrak{p}_1}$. Further, since $\eta$ vanishes on $G_{\mathfrak{p}_2}$, it holds that $\rho_{\eta}|_{G_{\mathfrak{p}_2}} = \mathbbm{1} \oplus \chi$. This readily implies that also $b|_{G_{\mathfrak{p}_2}} \in H^1( G_{\mathfrak{p}_2}, \mathbb{Q}_p( \chi ) )$ describes a well-defined cohomology class.
\end{proof}
\begin{prop}\label{notanker}
There is an isomorphism of $\mathbb{Q}_p$-vector spaces between $t_{\rho_{\eta}}^{\emph{no}}$ and $\emph{ker}(f)$.
\end{prop}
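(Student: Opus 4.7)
The plan is to identify $t_{\rho_{\eta}}^{\text{no}}$ as a subspace of $t_{\rho_{\eta}}^{\text{fil}} \cong H^1(G_F, \text{Ad}(\rho_{\eta})) \oplus \mathbb{Q}_p^2$ via Lemma \ref{filtang} and exhibit the restriction of the projection to the first factor as the desired isomorphism onto $\text{ker}(f)$. Proposition \ref{nocond} already pins down $t_{\rho_{\eta}}^{\text{no}}$ inside this decomposition by the explicit conditions $c|_{G_{\mathfrak{p}_1}} = \lambda_1(1-\chi)$ and $b|_{G_{\mathfrak{p}_2}} = \lambda_2(\chi - 1)$, so the proof reduces to checking that this projection is well-defined, injective and surjective onto the prescribed target.

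For well-definedness, any triple in $t_{\rho_{\eta}}^{\text{no}}$ exhibits both $c|_{G_{\mathfrak{p}_1}}$ and $b|_{G_{\mathfrak{p}_2}}$ as explicit coboundaries in $\mathbb{Q}_p(\chi)$, so their classes in $H^1(G_{\mathfrak{p}_i}, \mathbb{Q}_p(\chi))$ vanish and hence $f(\Theta) = 0$. For injectivity, if two triples $(\Theta, \lambda_1, \lambda_2)$ and $(\Theta, \lambda_1', \lambda_2')$ both satisfy the conditions for a common representative cocycle, then $(\lambda_1 - \lambda_1')(1 - \chi)$ vanishes on $G_{\mathfrak{p}_1}$, and the non-triviality of $\chi|_{G_{\mathfrak{p}_1}}$, guaranteed by $\mathfrak{p}_1$ being inert in $L/F$, forces $\lambda_1 = \lambda_1'$; symmetrically $\lambda_2 = \lambda_2'$. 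For surjectivity, starting from $\Theta \in \text{ker}(f)$ and choosing any representative cocycle, the vanishing of the class $[c|_{G_{\mathfrak{p}_1}}]$ produces $v_1 \in \mathbb{Q}_p$ with $c|_{G_{\mathfrak{p}_1}}(g) = \chi(g)v_1 - v_1$, and setting $\lambda_1 = -v_1$ realises $c|_{G_{\mathfrak{p}_1}} = \lambda_1(1-\chi)$. An analogous construction yields $\lambda_2$, and Proposition \ref{nocond} places the resulting triple in $t_{\rho_{\eta}}^{\text{no}}$.

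The main point requiring care is that Proposition \ref{nocond} is formulated in terms of specific cocycle representatives whereas $t_{\rho_{\eta}}^{\text{fil}}$ remembers only the cohomology class in its first factor. I would reconcile this by verifying directly that the system of equations in Proposition \ref{nocond} is preserved under the simultaneous shift $(\Theta, \lambda_1, \lambda_2) \mapsto (\Theta - dM, \lambda_1 + m_{21}, \lambda_2 + m_{12})$ for any $M \in M_2(\mathbb{Q}_p)$, which encodes precisely the effect of conjugating the filtered lift by $1 + \epsilon M$ as computed in the proof of Proposition \ref{fildefring}. This compatibility ensures that the projection and its inverse descend consistently to well-defined mutually inverse maps between $t_{\rho_{\eta}}^{\text{no}}$ and $\text{ker}(f)$, completing the argument.
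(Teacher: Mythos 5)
Your proof takes essentially the same route as the paper's: identify $t_{\rho_{\eta}}^{\text{no}}$ inside $H^1(G_F, \text{Ad}(\rho_{\eta})) \oplus \mathbb{Q}_p^2$ via Proposition \ref{nocond} and exhibit the projection onto the first factor as an isomorphism onto $\ker(f)$, recovering $\lambda_1, \lambda_2$ from the coboundary data of $c$ and $b$ (the paper writes the inverse explicitly as $\lambda_1 = c(\text{Frob}_{\mathfrak{p}_1})/2$, $\lambda_2 = b(\text{Frob}_{\mathfrak{p}_2})/2$, using $\chi(\text{Frob}_{\mathfrak{p}_i}) = -1$). Your added care about cocycle representatives is a valid point the paper glosses over, and the compatibility check does go through since the coboundary of $M$ changes the $(2,1)$ entry by $m_{21}(1-\chi)$ globally and the $(1,2)$ entry by $m_{12}(1-\chi)$ on $G_{\mathfrak{p}_2}$ (where $\eta$ vanishes), which matches the induced shifts in $\lambda_1$ and $\lambda_2$.
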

\begin{proof}
Given $\Theta \in \text{ker}(f)$, we may construct a nearly ordinary triple $( \Theta, \lambda_1, \lambda_2)$ using the map
\[
\Theta \mapsto \big( \Theta, c(\text{Frob}_{\mathfrak{p}_1})/2, b(\text{Frob}_{\mathfrak{p}_2})/2 \big).
\]
This is in fact nearly ordinary, because by virtue of $\Theta$ being in the kernel of $f$, the cocycles $c|_{G_{\mathfrak{p}_1}}$ and $b|_{G_{\mathfrak{p}_2}}$ are coboundaries and as such, are given by $c|_{G_{\mathfrak{p}_1}} = \mu_1( 1 - \chi)$ and $b|_{G_{\mathfrak{p}_2}} = \mu_2 ( \chi - 1 )$ for certain $\mu_1, \mu_2 \in \mathbb{Q}_p$. Using that $\chi(\text{Frob}_{\mathfrak{p}_i}) = -1$ for $i \in \{ 1,2 \}$, evaluating yields that $\mu_1 = c(\text{Frob}_{\mathfrak{p}_1}) / 2$ and $\mu_2 = b(\text{Frob}_{\mathfrak{p}_2}) / 2$ are uniquely determined. Comparing with Proposition \ref{nocond}, this shows our claim. Conversely, to any nearly ordinary triple we associate its first component, since by Proposition \ref{nocond}, for nearly ordinary triples, $c|_{G_{\mathfrak{p}_1}}$ and $b|_{G_{\mathfrak{p}_2}}$ must be coboundaries. Since these operations are evidently inverse, this establishes the proposition.
\end{proof}
 
\begin{gevolg}\label{notandim}
The tangent space $t_{\rho_{\eta}}^{\emph{no}}$ is 3-dimensional.
\end{gevolg}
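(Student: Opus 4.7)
The plan is to deduce the corollary from Proposition \ref{notanker} by computing the dimension of $\ker(f)$. Since by Theorem \ref{tanspdim} the domain of $f$ has dimension $5$ and by Lemma \ref{gfpH1dim} the codomain has dimension $1 + 1 = 2$, it will suffice to establish that the map $f$ is surjective; the rank-nullity theorem will then yield $\dim t_{\rho_{\eta}}^{\text{no}} = \dim \ker(f) = 5 - 2 = 3$, as desired.

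I would prove surjectivity of $f$ by showing that each of the two factors of the codomain can be hit independently. For the first factor, the portion of the long exact sequence associated with $0 \to W_1 \to \text{Ad}(\rho_\eta) \to \mathbb{Q}_p(\chi) \to 0$ that appears in the proof of Theorem \ref{tanspdim}, together with the vanishing $H^2(G_F, W_1) = 0$ from Proposition \ref{W1coh}, shows that $\Theta \mapsto c$ defines a surjection $H^1(G_F, \text{Ad}(\rho_\eta)) \twoheadrightarrow H^1(G_F, \mathbb{Q}_p(\chi))$. Post-composing with the restriction map to $G_{\mathfrak{p}_1}$ keeps this surjective, because Lemma \ref{gfpH1dim} asserts that $(\text{res}_{\mathfrak{p}_1}, \text{res}_{\mathfrak{p}_2})$ is already an isomorphism, so in particular each component is individually surjective.

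The subtler ingredient, which I expect to be the main obstacle, is hitting the second factor: the $b$-entry of a general cocycle in $Z^1(G_F, \text{Ad}(\rho_\eta))$ is not itself a global cocycle valued in $\mathbb{Q}_p(\chi)$, since Equation \ref{mateq} shows that the adjoint action entangles $b$ with the diagonal entries through $\eta$. The fix is to use the inclusion $W_2 \hookrightarrow \text{Ad}(\rho_\eta)$ provided by Lemma \ref{w2struc}: any cocycle $\beta \in Z^1(G_F, \mathbb{Q}_p(\chi))$ gives rise to the strictly upper triangular cocycle $\Theta_\beta(g) = \begin{pmatrix} 0 & \beta(g) \\ 0 & 0 \end{pmatrix}$ in $\text{Ad}(\rho_\eta)$, whose $c$-entry vanishes globally and whose $b$-entry on $G_{\mathfrak{p}_2}$ equals $\beta|_{G_{\mathfrak{p}_2}}$. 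Since the restriction $H^1(G_F, \mathbb{Q}_p(\chi)) \twoheadrightarrow H^1(G_{\mathfrak{p}_2}, \mathbb{Q}_p(\chi))$ is again surjective by Lemma \ref{gfpH1dim}, the image of $f$ contains every pair of the form $(0, y)$.

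Combining the two constructions finishes the argument: given any target pair $(x, y)$, first produce $\Theta_1$ with $f(\Theta_1) = (x, z)$ for some auxiliary $z$ via the first step, and then correct by a class $\Theta_\beta$ with $f(\Theta_\beta) = (0, y - z)$ via the second step; then $f(\Theta_1 + \Theta_\beta) = (x, y)$. This establishes surjectivity of $f$ and hence the claimed dimension.
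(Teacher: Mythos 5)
Your proposal is correct and follows essentially the same approach as the paper: both establish surjectivity of $f$ by combining the long exact sequence from Theorem~\ref{tanspdim} (for the first factor) with the submodule $W_2$, which embeds with vanishing $c$-entry and hits the second factor via restriction (Lemma~\ref{gfpH1dim}), and then apply rank-nullity to Proposition~\ref{notanker} to get $5 - 2 = 3$. Your extra remark about $b$ not being a global cocycle is a sensible clarification of why one must route through $W_2$ rather than through a naive map $\Theta\mapsto b$, but it does not change the underlying argument.
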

\begin{proof}
We claim that the map $f$ from Lemma \ref{nomapdef} is surjective. Indeed, the sequence in Theorem \ref{tanspdim} in combination with Lemma \ref{gfpH1dim} shows that the map onto the first factor is surjective. Further, the submodule $W_2$ of $\text{Ad}(\rho_{\eta})$ surjects onto the second factor by Lemma \ref{w2struc} while being identically zero on the first; these two observations imply surjectivity. Using Theorem \ref{tanspdim} and two applications of Lemma \ref{gfpH1dim}, we conclude that $\text{dim } t_{\rho_{\eta}}^{\text{no}} = 5 - 2 = 3$; precisely as claimed.
\end{proof}

\subsection{A lift to \texorpdfstring{$\mathbb{T}$}{T}}

Let $\mathbb{T}^{\text{no}}$ denote Hida's nearly ordinary cuspidal Hecke algebra as defined in \cite{JAMI} and discussed in Section 3 of \cite{DPV2}. Let $E_{1, \chi}$ denote the parallel weight $(1,1)$-Hilbert Eisenstein series and further we let $$E_{1,\chi}^{(p)} \colonequals (1 - V_{\mathfrak{p}_1})(1 + V_{\mathfrak{p}_2})E_{1,\chi}$$ be one of the $p$-stabilisations with opposite choices of signs. This is a $p$-adic cusp form and as such defines a morphism $\mathbb{T}^{\text{no}} \to \mathbb{Q}_p$ by sending a Hecke operator to its $f := E_{1,\chi}^{(p)}$-eigenvalue. Let $\mathbb{T}$ be the nilreduction of the completion of the localisation of $\mathbb{T}^{\text{no}}$ at the prime ideal $\mathfrak{m}_f$ given by the kernel of the morphism above. Let $\mathbb{K}$ be its ring of fractions, which is a product of fields. Then Hida proved the following in \cite{JAMI}.

\newpage

\begin{stelling}\label{heckealgrep}
There exists a unique semisimple Galois representation $\pi \colon G_F \to \emph{GL}_2( \mathbb{K} )$ with the following properties:
\begin{itemize}
\item $\pi$ is continuous, odd and unramified outside $p$;
\item For each prime $\mathfrak{l} \nmid p$, it holds that
\[
\det\big( 1 - \pi( \emph{Frob}_{\mathfrak{l}} ) X \big) = 1 - T_{\mathfrak{l}}X + \langle \mathfrak{l} \rangle \emph{Nm}(\mathfrak{l}) X^2;
\]
\item For $i \in \{ 1,2 \}$ there exist characters $\epsilon_i, \delta_i \colon G_{\mathfrak{p}_i} \to \mathbb{T}^{\times}$ such that, up to equivalence, when restricted to $G_{\mathfrak{p}_i}$, the representation $\pi$ is of the form
\[
\pi( \sigma) = \begin{pmatrix} \epsilon_i(\sigma) & * \\ 0 & \delta_i(\sigma) \end{pmatrix} \quad \emph{for} \quad \sigma \in G_{\mathfrak{p}_i}.
\]
\item If we identify $G_{\mathfrak{p}_i}^{\emph{ab}} \cong F_{\mathfrak{p}_i}^{\times}$, then we have the identity $\delta_i( x ) = U_x$ for all $x \in F_{\mathfrak{p}_i}^{\times}$.
\end{itemize}
\end{stelling}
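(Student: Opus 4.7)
The plan is to follow Hida's foundational construction from \cite{JAMI}, specialised to our setting. We first decompose $\mathbb{T}$ as a finite product of reduced local complete Noetherian $\mathbb{Z}_p$-algebras, one per minimal prime, so that $\mathbb{K}$ is correspondingly a finite product of fields. Each minimal prime of $\mathbb{T}$ corresponds to an irreducible component of the localisation of Hida's universal nearly ordinary cuspidal Hecke algebra through which $f = E_{1,\chi}^{(p)}$ factors. By Hida's control theorem, any such component contains a Zariski-dense set of \emph{classical points}, each corresponding to a classical cuspidal Hilbert modular eigenform $g$ of some parallel weight $(k,k)$ with $k \geq 2$, nearly ordinary at both $\mathfrak{p}_1$ and $\mathfrak{p}_2$.

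For each such classical specialisation $g$, the work of Carayol, Wiles, Taylor and Blasius--Rogawski attaches a continuous, odd, semisimple Galois representation
\[
\rho_g \colon G_F \to \text{GL}_2( \overline{\mathbb{Q}}_p ),
\]
unramified outside $p$ and satisfying local-global compatibility at all primes $\mathfrak{l} \nmid p$, so that its Frobenius characteristic polynomial is as dictated by $T_\mathfrak{l}$ and the nebentype. By the nearly ordinarity hypothesis on $g$, the restriction $\rho_g|_{G_{\mathfrak{p}_i}}$ has the upper-triangular shape required by the third bullet, and the character $\delta_{g,i}$ in the lower-right corner is the unramified character sending a Frobenius lift to the $U_{\mathfrak{p}_i}$-eigenvalue of $g$. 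The identity $\delta_i(x) = U_x$ of the fourth bullet, read under the local reciprocity isomorphism $G_{\mathfrak{p}_i}^{\text{ab}} \cong F_{\mathfrak{p}_i}^\times$, is then the family-theoretic avatar at $g$ of this identity at all classical points, bearing in mind that $U_x$ is a unit in $\mathbb{T}$ by nearly ordinarity.

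To assemble the individual $\rho_g$ into a single representation $\pi$ valued in $\mathbb{K}$, the cleanest route is via pseudo-representations. The assignment $\text{Frob}_\mathfrak{l} \mapsto T_\mathfrak{l}$ for $\mathfrak{l} \nmid p$, together with the prescribed determinant $\langle \mathfrak{l} \rangle \text{Nm}(\mathfrak{l})$, defines a two-dimensional Chenevier pseudo-representation of $G_F$ valued in $\mathbb{T}$: after specialisation at any classical point it recovers $\text{tr}(\rho_g)$, and classical points are Zariski-dense. Chenevier's theorem then lifts this pseudo-representation to a semisimple $\pi \colon G_F \to \text{GL}_2( \mathbb{K} )$, and uniqueness up to equivalence follows by Chebotarev density applied to Frobenius traces. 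The upper-triangular local shape at each $\mathfrak{p}_i$ extends from the dense set of classical specialisations to all of $\mathbb{T}$ by the same interpolation principle, after choosing appropriate bases adapted to the nearly ordinary filtrations.

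The hard part will be the identification in the fourth bullet: establishing $\delta_i(x) = U_x$ on all of $F_{\mathfrak{p}_i}^\times$, rather than merely on the Frobenius quotient. This requires Hida's interpretation of the diamond-type operators at $\mathfrak{p}_i$ in terms of the nearly ordinary filtration and is the delicate technical heart of \cite{JAMI}; it ultimately rests on the $q$-expansion principle for nearly ordinary Hilbert modular forms in families together with the classicality of weight $\geq 2$ ordinary specialisations.
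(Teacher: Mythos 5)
The paper itself does not prove this statement; it is explicitly attributed to Hida, ``Then Hida proved the following in \cite{JAMI}.'' So there is no proof in the text to compare against, and your blind attempt is filling in what Hida's argument looks like. Your sketch captures the standard structure faithfully: exploit the finite flatness of the nearly ordinary Hecke algebra over the Iwasawa algebra to produce a Zariski-dense set of classical nearly ordinary cuspidal points of cohomological weight on each component of $\mathrm{Spec}(\mathbb{T})$, attach Galois representations to these by Carayol, Wiles, Taylor, Blasius--Rogawski, glue the trace-and-determinant data into a pseudo-representation valued in $\mathbb{T}$, and then lift to a genuine semisimple representation over the total ring of fractions $\mathbb{K}$, where this is unobstructed because each component is a field of characteristic zero.

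A few remarks on accuracy and emphasis. First, Hida's original construction in \cite{JAMI} predates Chenevier's determinants by some two decades; Hida uses Wiles's pseudo-representations for odd representations to do the gluing, which is equally sufficient since the residual representation $\mathbbm{1}\oplus\chi$ is multiplicity-free. Second, the interpolation of the nearly ordinary filtration that you flag as the technical heart is indeed the subtle point: the claim is not merely that each classical specialization is upper triangular locally, but that a consistent rank-one quotient can be glued across the whole component, with the quotient character matching $x \mapsto U_x$ on all of $F_{\mathfrak{p}_i}^{\times}$, i.e.\ including inertia, not just the Frobenius quotient. Your attribution of this to the $q$-expansion principle together with classicality in weight $\geq 2$ is consistent with Hida's argument and also with the presentation in Wiles's ``On ordinary $\lambda$-adic representations.'' Third, a small but worthwhile caution: at the Eisenstein specialization $f = E_{1,\chi}^{(p)}$ itself the representation is reducible, so irreducibility (used later in the paper, e.g.\ in Corollary \ref{bcbothcobound}) is a property of the generic fibre, proved separately from the statement above; your sketch only needs semisimplicity, which is what the theorem claims, so this is not a gap, just something worth keeping in mind.
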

We refine this representation in two ways using the nowadays standard technique. We must find a stable lattice inside $\mathbb{K}^2$ so that we obtain a representation $G_F \to \text{GL}_2( \mathbb{T} )$ instead, which we may then reduce modulo its maximal ideal $\mathfrak{m}_f$. Secondly, we must insist that this reduction precisely equals $\rho_{\eta}$ in order to obtain a deformation of $\rho_{\eta}$. Let us achieve these two results in succession.
\begin{lemma}\label{generalbasis}
There exist an element $\gamma \in G_F \setminus G_L$ and a basis of $\{ e_1, e_2 \}$ of $\mathbb{K}^2$ such that
\[
\pi( \gamma ) = \begin{pmatrix} \lambda_1 & 0 \\ 0 & \lambda_2 \end{pmatrix},
\]
where $\lambda_1 \equiv 1 \mod \mathfrak{m}_f$ and $\lambda_2 \equiv -1 \mod \mathfrak{m}_f$. In addition, the unique fixed lines fixed by the subgroups $G_{\mathfrak{p}_i}$, for $i \in \{ 1, 2 \}$ can be written as $\langle e_1 + y_i e_2 \rangle$ where $y_i \in \mathbb{K}^{\times}$.
\end{lemma}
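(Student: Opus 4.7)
The plan is to diagonalize $\pi(\gamma)$ via Hensel's lemma for a suitably chosen $\gamma \in G_F \setminus G_L$, and then to verify that the $G_{\mathfrak{p}_i}$-stable lines have the claimed form in the resulting basis.

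First, I would choose any $\gamma \in G_F \setminus G_L$; then $\chi(\gamma) = -1$, so the reduction $\pi(\gamma) \pmod{\mathfrak{m}_f} = \rho_\eta(\gamma)$ has trace $0$ and determinant $-1$, giving characteristic polynomial $(X-1)(X+1)$ in $\mathbb{Q}_p[X]$. Since the two factors are coprime ($p$ being odd), Hensel's lemma applied in the complete local Noetherian ring $\mathbb{T}$ yields $\lambda_1, \lambda_2 \in \mathbb{T}$ with $\lambda_1 \equiv 1$ and $\lambda_2 \equiv -1 \pmod{\mathfrak{m}_f}$ such that the characteristic polynomial of $\pi(\gamma)$ factors as $(X - \lambda_1)(X - \lambda_2)$. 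Since $\lambda_1 - \lambda_2 \in \mathbb{T}^\times$, Cayley--Hamilton provides orthogonal idempotents $(\pi(\gamma) - \lambda_{\mp})/(\lambda_{\pm} - \lambda_{\mp}) \in M_2(\mathbb{T})$ decomposing $\mathbb{T}^2$ as a direct sum of $\pi(\gamma)$-eigenspaces. After tensoring with $\mathbb{K}$, which is a product of fields, each summand is a projective module of generic rank one, hence free of rank one; picking generators $e_1, e_2$ of the $\lambda_1$- and $\lambda_2$-eigenspaces yields a $\mathbb{K}$-basis of $\mathbb{K}^2$ in which $\pi(\gamma) = \operatorname{diag}(\lambda_1, \lambda_2)$, as required.

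For the statement on the fixed lines, Theorem \ref{heckealgrep} provides unique $G_{\mathfrak{p}_i}$-stable lines $L_i \subset \mathbb{K}^2$, unique in each component of $\mathbb{K}$ because the reductions $\bar{\epsilon}_i$ and $\bar{\delta}_i$ are distinct: a short calculation using $U_{\mathfrak{p}_1}(f) = -1$ and $U_{\mathfrak{p}_2}(f) = 1$ identifies $\bar{\epsilon}_1 = \mathbbm{1}, \bar{\delta}_1 = \chi$ on $G_{\mathfrak{p}_1}$ and dually $\bar{\epsilon}_2 = \chi, \bar{\delta}_2 = \mathbbm{1}$ on $G_{\mathfrak{p}_2}$. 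Writing $L_i = \langle x_i e_1 + y_i e_2 \rangle$, the claim is that both $x_i$ and $y_i$ are units in $\mathbb{K}$. The key observation is that $L_i$ coincides with a coordinate line $\langle e_j \rangle$ in some component of $\mathbb{K}$ precisely when $\gamma$ stabilizes $L_i$ in that component, since $\pi(\gamma)$ has distinct eigenvalues. To prevent this, I would refine the choice of $\gamma$ to also lie outside both decomposition groups $G_{\mathfrak{p}_1}$ and $G_{\mathfrak{p}_2}$, which is possible since these are proper closed subgroups of $G_F$. Combined with the fact that in each component the stabilizer of $L_i$ is exactly $G_{\mathfrak{p}_i}$ (as the Galois representation attached to the generic Hida family in that component is absolutely irreducible, so no strictly larger subgroup of $G_F$ can stabilize a line), this forces $L_i \ne \langle e_1 \rangle, \langle e_2 \rangle$ componentwise. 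Hence both $x_i$ and $y_i$ are non-zero in every component, so after rescaling to $x_i = 1$ we obtain $y_i \in \mathbb{K}^\times$.

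The main obstacle is the very last step: excluding the possibility that $L_i$ coincides with a coordinate line in some minimal prime of $\mathbb{T}$. This rests on the absolute irreducibility of the Hida family representation in each component of $\mathbb{K}$, which in turn uses that $\mathbb{T}$ arises from the \emph{cuspidal} nearly ordinary Hecke algebra and therefore does not contain globally split Eisenstein-like components near the specialisation $f$. The diagonalisation step itself is a routine application of Hensel's lemma together with the Cayley--Hamilton idempotent decomposition in the complete local setting.
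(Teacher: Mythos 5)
Your diagonalization step is sound in spirit: since $\chi(\gamma)=-1$ for any $\gamma\in G_F\setminus G_L$, the reduction $\rho_\eta(\gamma)$ has characteristic polynomial $(X-1)(X+1)$, and since $2\in\mathbb{Q}_p^\times$ (for every $p$, so the parenthetical ``$p$ being odd'' is unnecessary) Hensel's lemma in the complete local ring $\mathbb{T}$ lifts this to a factorisation $(X-\lambda_1)(X-\lambda_2)$ with $\lambda_1-\lambda_2\in\mathbb{T}^\times$. One small inaccuracy: the idempotents $(\pi(\gamma)-\lambda_\mp)/(\lambda_\pm-\lambda_\mp)$ live a priori only in $M_2(\mathbb{K})$, not $M_2(\mathbb{T})$, since at this stage nothing guarantees $\pi(\gamma)\in M_2(\mathbb{T})$ (that integrality is established only later, in Corollary~\ref{Tmodbasis}). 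But as the statement only asks for a basis of $\mathbb{K}^2$, working over $\mathbb{K}$ is enough and the conclusion stands.

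The second half, however, has a genuine gap. You reduce correctly to showing that $\gamma$ does not stabilize $L_i$ in any component, and you propose to achieve this by choosing $\gamma$ outside $G_{\mathfrak{p}_1}\cup G_{\mathfrak{p}_2}$, which is indeed possible. But your crucial assertion that the stabilizer of $L_i$ in each component ``is exactly $G_{\mathfrak{p}_i}$'' is not a consequence of irreducibility. Irreducibility of the componentwise representation only tells you that $\mathrm{Stab}(L_i)$ is a \emph{proper} closed subgroup of $G_F$; it in no way rules out a subgroup $H$ with $G_{\mathfrak{p}_i}\subsetneq H\subsetneq G_F$ fixing $L_i$, and such an $H$ could perfectly well contain your chosen $\gamma$. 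So knowing $\gamma\notin G_{\mathfrak{p}_1}\cup G_{\mathfrak{p}_2}$ does not force $\gamma\notin\mathrm{Stab}(L_1)\cup\mathrm{Stab}(L_2)$. Even the weaker and correct observation that $\mathrm{Stab}(L_i)\cap\gamma G_L$ is a coset of a proper closed subgroup of $G_L$ (since $\mathrm{Stab}(L_i)\supseteq\gamma G_L$ would force $\mathrm{Stab}(L_i)\supseteq\langle G_L,\gamma\rangle=G_F$) is not enough on its own: a finite union of such cosets could still cover the coset $\gamma G_L$ unless one rules out the finite-index case, which again requires more than irreducibility. The paper itself disposes of this lemma by citation (Lemmas 4.3 and 4.6 of \cite{onGSconj}), so the intended argument is not reproduced here, but whatever it is, it must supply some input beyond bare irreducibility to exclude the coordinate-line scenario; your proposal does not.
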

\begin{proof}
This is the content of Lemma 4.3 and Lemma 4.6 in \cite{onGSconj}.
\end{proof}
In any basis as in the lemma above, write
\[
\pi(\sigma) = \begin{pmatrix} a(\sigma) & b(\sigma) \\ c(\sigma) & d(\sigma) \end{pmatrix} 
\]
for certain functions $a, b, c, d \colon G_F \to \mathbb{K}$. We proceed to analyse these functions.
\begin{lemma}\label{admodm}
The functions $a$ and $d$ are $\mathbb{T}$-valued. In fact, $a \equiv 1 \mod \mathfrak{m}_f$ and $d \equiv \chi \mod \mathfrak{m}_f$.
\end{lemma}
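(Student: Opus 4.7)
The plan is to solve for $a(\sigma)$ and $d(\sigma)$ explicitly as $\mathbb{T}$-linear combinations of $\mathrm{tr}(\pi(\sigma))$ and $\mathrm{tr}(\pi(\gamma \sigma))$, exploiting the fact that $\pi(\gamma)$ is diagonal in the chosen basis. The key input is that traces and determinants of $\pi$ are $\mathbb{T}$-valued: for $\sigma = \mathrm{Frob}_{\mathfrak{l}}$ with $\mathfrak{l} \nmid p$ this follows directly from Theorem \ref{heckealgrep}, and the general case follows by continuity and Chebotarev density. This gives the trace function $t \colon G_F \to \mathbb{T}$ as a genuine (continuous) pseudo-representation.

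First I would show that $\lambda_1, \lambda_2 \in \mathbb{T}$. Since $\lambda_1 + \lambda_2 = \mathrm{tr}(\pi(\gamma)) \in \mathbb{T}$ and $\lambda_1 \lambda_2 = \det(\pi(\gamma)) \in \mathbb{T}$, the $\lambda_i$ are roots of a monic quadratic $P(X) \in \mathbb{T}[X]$. Modulo $\mathfrak{m}_f$ this factors as $(X-1)(X+1)$, and since the two factors are coprime modulo $\mathfrak{m}_f$ (for $p$ odd, $2$ is a unit), Hensel's lemma in the complete local ring $\mathbb{T}$ lifts the factorization uniquely to $P(X) = (X - \lambda_1)(X - \lambda_2)$ with $\lambda_i \in \mathbb{T}$ and $\lambda_2 - \lambda_1 \equiv -2 \bmod \mathfrak{m}_f$ a unit of $\mathbb{T}$.

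Now for any $\sigma \in G_F$, computing $\pi(\gamma)\pi(\sigma)$ directly as a matrix product and comparing traces gives the linear system
\[
a(\sigma) + d(\sigma) = t(\sigma), \qquad \lambda_1 a(\sigma) + \lambda_2 d(\sigma) = t(\gamma\sigma),
\]
whose coefficient determinant $\lambda_2 - \lambda_1 \in \mathbb{T}^{\times}$. Solving yields
\[
a(\sigma) = \frac{\lambda_2 t(\sigma) - t(\gamma\sigma)}{\lambda_2 - \lambda_1}, \qquad d(\sigma) = \frac{t(\gamma\sigma) - \lambda_1 t(\sigma)}{\lambda_2 - \lambda_1},
\]
both manifestly elements of $\mathbb{T}$, which settles the first assertion.

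For the congruences modulo $\mathfrak{m}_f$, the semisimplification of the reduction $\overline{\pi}$ is forced to be $\mathbbm{1} \oplus \chi$, so $t(\sigma) \equiv 1 + \chi(\sigma)$ and, using $\chi(\gamma) = -1$ since $\gamma \notin G_L$, also $t(\gamma\sigma) \equiv 1 - \chi(\sigma) \bmod \mathfrak{m}_f$. Substituting $\lambda_1 \equiv 1$, $\lambda_2 \equiv -1$ into the formulas above gives $a(\sigma) \equiv 1$ and $d(\sigma) \equiv \chi(\sigma)$ modulo $\mathfrak{m}_f$, as claimed. The main technical point is the henselian lift yielding $\lambda_i \in \mathbb{T}$; once that is in place the rest is a routine calculation, and it is precisely the separation $\lambda_1 \not\equiv \lambda_2$ guaranteed by Lemma \ref{generalbasis} that both makes Hensel applicable and renders the $2 \times 2$ system invertible over $\mathbb{T}$.
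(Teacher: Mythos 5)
Your proof is correct and follows essentially the same route as the paper: express $a(\sigma)+d(\sigma)$ and $\lambda_1 a(\sigma)+\lambda_2 d(\sigma)$ as traces of $\pi$ at $\sigma$ and $\gamma\sigma$, invoke Chebotarev and continuity to see these traces are $\mathbb{T}$-valued, then solve the $2\times 2$ system using $\lambda_1-\lambda_2\in\mathbb{T}^\times$ and reduce modulo $\mathfrak{m}_f$. The one small difference is that you re-derive $\lambda_1,\lambda_2\in\mathbb{T}$ via Hensel's lemma, whereas the paper simply reads this off from Lemma~\ref{generalbasis}, whose statement (the congruences $\lambda_1\equiv 1$, $\lambda_2\equiv -1\bmod\mathfrak{m}_f$) already presupposes $\lambda_i\in\mathbb{T}$; note also that your Hensel step, taken on its own, produces some monic factorization $(X-\mu_1)(X-\mu_2)$ over $\mathbb{T}$ but does not by itself identify $\lambda_i=\mu_i$, since $\mathbb{K}$ is only a product of fields and quadratics can have extra roots there — so you still need the cited lemma, making the Hensel excursion redundant rather than a genuine alternative.
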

\begin{proof}
For any prime $\mathfrak{l} \nmid p$, using Theorem \ref{heckealgrep}, we have that 
\[
a(\text{Frob}_{\mathfrak{l}}) + d(\text{Frob}_{\mathfrak{l}}) = \text{Tr}( \pi( \text{Frob}_{\mathfrak{l}} ) ) = T_{\mathfrak{l}} \in \mathbb{T}.
\]
In other words, the continuous map $\text{Tr}(\pi) \colon G_F \to \mathbb{K}$ takes on integral values for every element $\text{Frob}_{\mathfrak{l}}$ for $\mathfrak{l} \nmid p$. By Chebotarev's Density Theorem, the result extends to all of $G_F$. Next note that
\[
\lambda_1 a(\text{Frob}_{\mathfrak{l}}) + \lambda_2 d(\text{Frob}_{\mathfrak{l}}) = \text{Tr}( \pi( \gamma \text{Frob}_{\mathfrak{l}} ) ) \in \mathbb{T}.
\]
Combining these two expressions and using that $\lambda_1 - \lambda_2 \in \mathbb{T}^{\times}$ then yields that $a$ and $d$ both must have integral image themselves. Finally, by definition of $f = E_{1,\chi}^{(p)}$ we have $T_{\mathfrak{l}} \equiv 1 + \chi( \mathfrak{l} ) \mod \mathfrak{m}_f$. Again, by continuity, this implies that for any $\sigma \in G_F$, it holds that $a(\sigma) + d(\sigma) \equiv 1 + \chi(\sigma) \mod \mathfrak{m}_f$. Again considering $\gamma \sigma$, we also obtain $a(\sigma) - d(\sigma) \equiv 1 - \chi(\sigma) \mod \mathfrak{m}_f$. Combining these completes the proof.
\end{proof}
\newpage
\begin{lemma}\label{bcinm}
For any $\sigma, \tau \in G_F$, it holds that $b(\sigma)c(\tau) \in \mathfrak{m}_f$.
\end{lemma}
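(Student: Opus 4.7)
The plan is to extract the claim directly from the multiplicativity of $\pi$ together with Lemma \ref{admodm}. Since $\pi \colon G_F \to \mathrm{GL}_2(\mathbb{K})$ is a group homomorphism, comparing the $(1,1)$-entries of $\pi(\sigma\tau) = \pi(\sigma)\pi(\tau)$ gives the identity
\[
a(\sigma\tau) \;=\; a(\sigma)a(\tau) + b(\sigma)c(\tau),
\]
which I would rearrange as
\[
b(\sigma)c(\tau) \;=\; a(\sigma\tau) - a(\sigma)a(\tau).
\]

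Lemma \ref{admodm} already tells us that $a$ takes values in $\mathbb{T}$, so the right-hand side lies in $\mathbb{T}$. In particular, even though $b$ and $c$ are a priori only $\mathbb{K}$-valued, the product $b(\sigma)c(\tau)$ is automatically an element of $\mathbb{T}$.

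To finish, I would reduce modulo the maximal ideal $\mathfrak{m}_f \subset \mathbb{T}$. By Lemma \ref{admodm}, $a(\sigma) \equiv a(\tau) \equiv a(\sigma\tau) \equiv 1 \pmod{\mathfrak{m}_f}$, so
\[
b(\sigma)c(\tau) \;\equiv\; 1 - 1\cdot 1 \;\equiv\; 0 \pmod{\mathfrak{m}_f},
\]
which is precisely the assertion of the lemma.

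There is no real obstacle here; the only subtlety is the integrality issue in the first paragraph, which is resolved for free by rearranging the cocycle-like relation so that every remaining term is known to be in $\mathbb{T}$. This lemma is evidently meant as a stepping stone toward cutting out a $\pi$-stable $\mathbb{T}$-lattice in $\mathbb{K}^2$, and the content above is all that is needed at this stage.
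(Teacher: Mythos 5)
Your argument is exactly the paper's: compare the $(1,1)$-entries of $\pi(\sigma\tau)=\pi(\sigma)\pi(\tau)$ to get $b(\sigma)c(\tau)=a(\sigma\tau)-a(\sigma)a(\tau)$, then invoke Lemma~\ref{admodm} ($a\equiv 1\bmod\mathfrak{m}_f$) to conclude. Your explicit remark about integrality of the right-hand side is a nice bit of bookkeeping but the substance is identical.
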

\begin{proof}
This follows from the fact that $\pi$ is a homomorphism; comparing the top-left entry in the equation $\pi(\sigma\tau) = \pi(\sigma)\pi(\tau)$ yields the equality $a(\sigma \tau) = a(\sigma)a(\tau) + b(\sigma)c(\tau)$. By Lemma \ref{admodm}, we know that $a(G_F) \subset 1 + \mathfrak{m}_f\mathbb{T}$, and as such, $b(\sigma)c(\tau)$ must be inside of $\mathfrak{m}_f$ for all $\sigma, \tau \in G_F$. 
\end{proof}
\begin{definition}
Let $B$ denote the $\mathbb{T}$-submodule of $\mathbb{K}$ generated by all elements of the form $b(\sigma)$ for $\sigma \in G_F$, and $C$ the analogous submodule using the elements $c(\sigma)$.
\end{definition}
\begin{lemma}\label{jsinj}
There are well-defined injective maps
\begin{align*}
j_B \colon \emph{Hom}_{\mathbb{T}}( B/ \mathfrak{m}_f B, \mathbb{T} / \mathfrak{m}_f ) \to H^1( G_F, \mathbb{Q}_p(\chi) ) &: f \mapsto \chi \cdot (f \circ b); \\
j_C \colon \emph{Hom}_{\mathbb{T}}( C/ \mathfrak{m}_f C, \mathbb{T} / \mathfrak{m}_f ) \to H^1( G_F, \mathbb{Q}_p(\chi) ) &: g \mapsto g \circ c.
\end{align*}
\end{lemma}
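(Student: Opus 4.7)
The plan is to verify that both proposed expressions genuinely land in $Z^1(G_F, \mathbb{Q}_p(\chi))$, and then to show that the only preimage of a coboundary is the zero $\mathbb{T}$-linear map.

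\textbf{Cocycle structure.} First I would extract the cocycle relations from the equality $\pi(\sigma\tau) = \pi(\sigma)\pi(\tau)$. Comparing the $(1,2)$ and $(2,1)$ entries gives
\[
b(\sigma\tau) = a(\sigma)b(\tau) + b(\sigma)d(\tau), \qquad c(\sigma\tau) = c(\sigma)a(\tau) + d(\sigma)c(\tau).
\]
Invoking Lemma \ref{admodm} to replace $a$ by $1$ and $d$ by $\chi$ modulo $\mathfrak{m}_f$, and noting that $b(\sigma), c(\sigma)$ lie in $B, C$ so the correction terms land in $\mathfrak{m}_f B, \mathfrak{m}_f C$, these descend to
\[
\bar b(\sigma\tau) \equiv \bar b(\tau) + \chi(\tau)\bar b(\sigma) \pmod{\mathfrak{m}_f B}, \qquad \bar c(\sigma\tau) \equiv \bar c(\sigma) + \chi(\sigma)\bar c(\tau) \pmod{\mathfrak{m}_f C}.
\]
The relation for $c$ is already the standard $\chi$-cocycle identity, so $g \circ c$ is a $\mathbb{Q}_p(\chi)$-valued cocycle for any $\mathbb{T}$-linear $g$, with no twist required. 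The relation for $b$ has $\sigma$ and $\tau$ swapped; a direct computation using $\chi^2 = 1$ shows that $\chi \cdot \bar b$ does satisfy the standard cocycle identity, which is precisely why the factor $\chi$ is inserted in the definition of $j_B$. Applying $f$ and $g$ (each continuous and $\mathbb{Q}_p$-linear on the quotient) then produces honest elements of $Z^1(G_F, \mathbb{Q}_p(\chi))$.

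\textbf{Injectivity.} Both arguments turn on the element $\gamma \in G_F \setminus G_L$ furnished by Lemma \ref{generalbasis}: since $\pi(\gamma)$ is diagonal, we have $b(\gamma) = c(\gamma) = 0$, while $\chi(\gamma) = -1$. If $j_B(f)$ is a coboundary then $j_B(f)(\sigma) = \chi(\sigma)y - y$ for some $y \in \mathbb{Q}_p$; evaluating at $\gamma$ yields $0 = -2y$, hence $y = 0$. Therefore $\chi(\sigma)f(b(\sigma)) = 0$ for all $\sigma \in G_F$, i.e.\ $f$ vanishes on the image of $b$ in $B/\mathfrak{m}_f B$. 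Since by definition $B$ is generated as a $\mathbb{T}$-module by $\{b(\sigma) : \sigma \in G_F\}$, that image spans $B/\mathfrak{m}_f B$ as a $\mathbb{Q}_p$-vector space, forcing $f = 0$. The identical evaluation at $\gamma$ disposes of $j_C$ with no extra twist.

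No serious obstacle is anticipated, as the proof reduces to a bookkeeping cocycle calculation together with one clean evaluation. The only subtle point deserving emphasis in the written proof is the asymmetric behaviour of $b$ and $c$ under $\pi(\sigma)\pi(\tau)$, which is exactly what forces the twist by $\chi$ in $j_B$ but not in $j_C$; without that twist $j_B$ would fail to land in $H^1(G_F, \mathbb{Q}_p(\chi))$ in the first place.
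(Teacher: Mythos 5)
Your proof is correct and follows the same approach as the paper: derive the twisted cocycle relations for $b$ and $c$ from $\pi(\sigma\tau) = \pi(\sigma)\pi(\tau)$ and the reductions $\bar a = 1$, $\bar d = \chi$, then use $b(\gamma) = c(\gamma) = 0$ together with $\chi(\gamma) = -1$ for injectivity. Your injectivity step is a touch more streamlined than the paper's (you evaluate the would-be coboundary directly at $\gamma$ to force $y = 0$, rather than first restricting to $G_L$), but the underlying mechanism is identical, and your closing observation that $B$ is generated by $\{b(\sigma)\}$ as a $\mathbb{T}$-module makes explicit a step the paper leaves implicit.
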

\begin{proof}
The off-diagonal entries in the equation $\pi(\sigma\tau) = \pi(\sigma)\pi(\tau)$ yield respectively
\[
b(\sigma\tau) = d(\tau)b(\sigma) + a(\sigma)b(\tau) \quad \text{and} \quad c(\sigma\tau) = a(\tau)c(\sigma) + d(\sigma)c(\tau).
\]
Reducing mod $\mathfrak{m}_f$ and using Lemma \ref{admodm}, these equations reduce to the cocycle conditions for $\chi \cdot b$ and $c$ respectively, readily showing well-definedness. To show injectivity, we observe that $G_L$ is contained in the kernel of every coboundary. So if $G_L \subset \text{ker}( f \circ b )$, because also $b(\gamma) = 0$ for $\gamma \in G_F \setminus G_L$, it readily follows that $f \circ b$ would be completely trivial; the same argument works for $g \circ c$.
\end{proof}
We continue to exploit the knowledge that $\pi$ is nearly ordinary to obtain certain local information about the entries $b$ and $c$, which we will later use to deduce further global properties of the modules $B$ and $C$. What ensues is a subtle dance between global properties and information about local restrictions, which turns out to provide us with all the necessary conclusions.
\begin{lemma}\label{eigvalcongs}
For $i \in \{ 1,2 \}$, the maps $\epsilon_i, \delta_i \colon G_{\mathfrak{p}_i} \to \mathbb{T}^{\times} / \mathfrak{m}_f \cong \mathbb{Q}_p^{\times}$ are equal to $1, \chi$, or $\chi, 1$.
\end{lemma}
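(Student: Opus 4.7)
The plan is to determine $\bar\delta_i := \delta_i \bmod \mathfrak{m}_f$ completely by using the fourth bullet of Theorem \ref{heckealgrep}, and then to recover $\bar\epsilon_i$ from the determinant. First, the second bullet of that theorem combined with Chebotarev density and the fact that $f$ has parallel weight $(1,1)$ and nebentypus $\chi$ yields $\det \pi \equiv \chi \bmod \mathfrak{m}_f$ on all of $G_F$; in particular $\bar\epsilon_i \bar\delta_i = \chi$ on $G_{\mathfrak{p}_i}$.

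Next I would compute the $U_{\mathfrak{p}_i}$-eigenvalue of $f = (1 - V_{\mathfrak{p}_1})(1 + V_{\mathfrak{p}_2}) E_{1,\chi}$. Using $T_{\mathfrak{p}_i}(E_{1,\chi}) = 1 + \chi(\mathfrak{p}_i) = 0$ together with $\chi(\mathfrak{p}_i) = -1$ and the Hecke relation $T_{\mathfrak{p}} = U_{\mathfrak{p}} + \chi(\mathfrak{p}) V_{\mathfrak{p}}$, one finds $U_{\mathfrak{p}_i}(E_{1,\chi}) = V_{\mathfrak{p}_i}(E_{1,\chi})$, from which a direct calculation gives that $f$ is a simultaneous eigenform with $U_{\mathfrak{p}_1}$-eigenvalue $-1$ and $U_{\mathfrak{p}_2}$-eigenvalue $+1$, the two signs being tied to the two opposite signs appearing in the $p$-stabilisation.

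Now by the fourth bullet of Theorem \ref{heckealgrep}, $\delta_i$ corresponds under local class field theory to $x \mapsto U_x$, so for a uniformiser $\varpi_i$ mapping to $\mathrm{Frob}_{\mathfrak{p}_i}$ we obtain, after reducing modulo $\mathfrak{m}_f$, $\bar\delta_1(\mathrm{Frob}_{\mathfrak{p}_1}) = -1 = \chi(\mathrm{Frob}_{\mathfrak{p}_1})$ and $\bar\delta_2(\mathrm{Frob}_{\mathfrak{p}_2}) = +1$. On the inertia subgroup $\mathcal{O}_{F_{\mathfrak{p}_i}}^{\times} \subset F_{\mathfrak{p}_i}^{\times}$, the same bullet identifies $\delta_i$ with the fine diamond-type operators at $\mathfrak{p}_i$, which act on $f$ through the restriction of its nebentypus $\chi$ to $\mathcal{O}_{F_{\mathfrak{p}_i}}^{\times}$; since $\chi$ is unramified at $\mathfrak{p}_i$ (because $L/F$ is unramified), this restriction is trivial and $\bar\delta_i$ is unramified. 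As $\chi$ itself is also unramified at $\mathfrak{p}_i$, the Frobenius values determine $\bar\delta_i$ on all of $G_{\mathfrak{p}_i}$: explicitly $\bar\delta_1 = \chi$ and $\bar\delta_2 = 1$. The determinant relation then forces $\bar\epsilon_1 = 1$ and $\bar\epsilon_2 = \chi$, yielding $(\bar\epsilon_i, \bar\delta_i) \in \{(1,\chi), (\chi, 1)\}$ as claimed.

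The main obstacle is the careful justification that $\bar\delta_i$ is unramified; this amounts to the assertion that the fine operators $U_u$ for $u \in \mathcal{O}_{F_{\mathfrak{p}_i}}^{\times}$ in Hida's construction act trivially on $f$, which follows from the unramifiedness of the nebentypus $\chi$ at $\mathfrak{p}_i$ together with the fact that parallel weight $(1,1)$ introduces no cyclotomic twist on the inertial character.
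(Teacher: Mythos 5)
Your proof is correct but follows a genuinely different route from the paper, and in fact proves something strictly stronger. The paper's argument is entirely ``soft'': it only uses the second bullet of Theorem \ref{heckealgrep} and Chebotarev to obtain the congruences $\bar\epsilon_i(\sigma) + \bar\delta_i(\sigma) = 1 + \chi(\sigma)$ and $\bar\epsilon_i(\sigma)\bar\delta_i(\sigma) = \chi(\sigma)$ modulo $\mathfrak{m}_f$, and then observes that on $G_L \cap G_{\mathfrak{p}_i}$ these force $(\bar\epsilon_i(\sigma)-1)^2 = (\bar\delta_i(\sigma)-1)^2 \equiv 0$, which vanishes because $\mathbb{T}/\mathfrak{m}_f \cong \mathbb{Q}_p$ is a field. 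Triviality on this index-$2$ subgroup leaves only $\mathbbm{1}$ or $\chi$ for each character, and evaluating at $\mathrm{Frob}_{\mathfrak{p}_i}$ (where the trace relation gives $\bar\epsilon_i + \bar\delta_i \equiv 0$) shows that exactly one of each occurs. This argument never uses the fourth bullet (the identification $\delta_i(x) = U_x$), and it does not determine which of $\bar\epsilon_i, \bar\delta_i$ is $\mathbbm{1}$ and which is $\chi$; it works uniformly for any of the four $p$-stabilisations.

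You instead pin down the exact assignment by computing the $U_{\mathfrak{p}_i}$-eigenvalues of $f$ from the Hecke polynomial $X^2 - 1$ of $E_{1,\chi}$ at the inert primes, getting $-1$ at $\mathfrak{p}_1$ and $+1$ at $\mathfrak{p}_2$, and then invoking the fourth bullet. This buys you the explicit values $\bar\delta_1 = \chi$, $\bar\delta_2 = \mathbbm{1}$ (and hence $\bar\epsilon_1 = \mathbbm{1}$, $\bar\epsilon_2 = \chi$), which the lemma does not ask for but which is compatible with the later Proposition \ref{Upiimg}. Your unramifiedness claim for $\bar\delta_i$, though essentially correct for parallel weight $(1,1)$ and unramified nebentypus, is the one place where you are resting on facts about Hida's operators $U_u$ for $u \in \mathcal{O}_{F_{\mathfrak{p}_i}}^{\times}$ that are not stated in Theorem \ref{heckealgrep} as given; the paper's route deliberately avoids this by arguing directly on the index-$2$ subgroup $G_L \cap G_{\mathfrak{p}_i}$ where $\chi$ is trivial. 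One thing worth flagging: the explicit assignment you derive appears to be the opposite of the one asserted in the proof of Theorem \ref{heckerep} (which writes $\epsilon_1 \equiv \chi$, $\epsilon_2 \equiv 1$); you should reconcile that with the paper's conventions for which column of the conjugating matrix carries $\epsilon_i$, but this does not affect the validity of your proof of the lemma as stated, since it only asserts $\{\bar\epsilon_i, \bar\delta_i\} = \{\mathbbm{1}, \chi\}$.
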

\begin{proof}
In the proof of Lemma \ref{admodm} we showed that $\text{Tr}(\pi(\sigma)) \equiv 1 + \chi(\sigma) \mod \mathfrak{m}_f$ and because $\text{det}(\pi(\text{Frob}_{\mathfrak{l}})) = \chi(\mathfrak{l})$ for all primes $\mathfrak{l} \nmid p$, completely similarly we may extend this formula to all of $G_F$. We have thus shown that $\epsilon_i(\sigma) + \delta_i(\sigma) = 1 + \chi(\sigma) \mod \mathfrak{m}_f$ and $\epsilon_i(\sigma)\delta_i(\sigma) = \chi(\sigma) \mod \mathfrak{m}_f$. For $\sigma \in G_L \cap G_{\mathfrak{p}_i}$, this is readily rewritten as $(\epsilon_i(\sigma) - 1)^2 \equiv (\delta_i(\sigma) - 1)^2 \equiv 0 \mod \mathfrak{m}_f$. This shows that $\epsilon_i(\sigma) \equiv \delta_i(\sigma) \equiv 1 \mod \mathfrak{m}_f$ on $G_L \cap G_{\mathfrak{p}_i}$. This determines these characters on an index 2 subgroup, which leaves only two possibilities; $1$ or $\chi$. Since we can choose $\sigma = \text{Frob}_{\mathfrak{p}_i} \in G_{\mathfrak{p}_i} \setminus G_L$ to find that $\epsilon_i(\sigma) + \delta_i(\sigma) \equiv 0 \mod \mathfrak{m}_f$, it is clear that each choice must occur exactly once. 
\end{proof}
\begin{prop}\label{bccobounds}
For each $i \in \{ 1, 2 \}$, at least one of the following must hold:
\begin{itemize}
\item $\chi \cdot b \mod \mathfrak{m}_f B$ is a coboundary when restricted to $G_{\mathfrak{p}_i}$;
\item $c \mod \mathfrak{m}_f C$ is a coboundary when restricted to $G_{\mathfrak{p}_i}$.
\end{itemize}
\end{prop}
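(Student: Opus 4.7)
The plan is a direct matrix computation exploiting the nearly-ordinary structure, followed by a Frobenius evaluation that forces the scalar $y_i$ from Lemma \ref{generalbasis} into the correct module. Conjugating $\pi$ by the change-of-basis matrix $P_i = \left( \begin{smallmatrix} 1 & 0 \\ y_i & 1 \end{smallmatrix} \right)$ puts $\pi|_{G_{\mathfrak{p}_i}}$ into the upper-triangular form with diagonal $(\epsilon_i, \delta_i)$ predicted by Theorem \ref{heckealgrep}, and reading off the matrix entries yields the key identities
\[
y_i \, b(\sigma) = \epsilon_i(\sigma) - a(\sigma), \qquad c(\sigma) = y_i\bigl(\epsilon_i(\sigma) - d(\sigma)\bigr), \qquad \sigma \in G_{\mathfrak{p}_i}.
\]

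By Lemma \ref{eigvalcongs} there are two cases. In Case 1, where $\epsilon_i \equiv 1$ and $\delta_i \equiv \chi \bmod \mathfrak{m}_f$, combining with Lemma \ref{admodm} reduces the second identity to $c(\sigma) \equiv y_i(1 - \chi(\sigma)) \bmod y_i \mathfrak{m}_f$ on $G_{\mathfrak{p}_i}$. In Case 2, where $\epsilon_i \equiv \chi$ and $\delta_i \equiv 1 \bmod \mathfrak{m}_f$, the first identity analogously yields $b(\sigma) \equiv y_i^{-1}(\chi(\sigma) - 1) \bmod y_i^{-1} \mathfrak{m}_f$. The natural coboundary candidates are therefore $-y_i \in C/\mathfrak{m}_f C$ in Case 1 and $-y_i^{-1} \in B/\mathfrak{m}_f B$ in Case 2; the task is to verify that the relevant congruence actually holds modulo $\mathfrak{m}_f C$ (resp.\ $\mathfrak{m}_f B$), rather than only modulo $y_i \mathfrak{m}_f$ (resp.\ $y_i^{-1} \mathfrak{m}_f$).

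The crux of the proof is thus the membership $y_i \in C$ (resp.\ $y_i^{-1} \in B$). Evaluating the identities above at $\sigma = \text{Frob}_{\mathfrak{p}_i}$ and using $\chi(\text{Frob}_{\mathfrak{p}_i}) = -1$ gives $(\epsilon_i - d)(\text{Frob}_{\mathfrak{p}_i}) \equiv 2$ and $(\epsilon_i - a)(\text{Frob}_{\mathfrak{p}_i}) \equiv -2 \bmod \mathfrak{m}_f$, both of which are units in $\mathbb{T}$ (for $p$ odd). Hence in Case 1, $c(\text{Frob}_{\mathfrak{p}_i}) = y_i \cdot u$ with $u \in \mathbb{T}^{\times}$, forcing $y_i \in C$, and symmetrically $y_i^{-1} \in B$ in Case 2. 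With these memberships in hand, the inclusions $y_i \mathfrak{m}_f \subset \mathfrak{m}_f C$ and $y_i^{-1} \mathfrak{m}_f \subset \mathfrak{m}_f B$ sharpen the previous congruences to $c(\sigma) \equiv (\chi(\sigma) - 1)(-y_i) \bmod \mathfrak{m}_f C$ in Case 1, and, upon multiplying by $\chi(\sigma)$ and using $\chi^2 = 1$, to $\chi(\sigma) b(\sigma) \equiv (\chi(\sigma) - 1)(-y_i^{-1}) \bmod \mathfrak{m}_f B$ in Case 2 — precisely the coboundary relations required.

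The main conceptual obstacle is not in any single computation, which is routine, but rather in the realisation that the scalar $y_i$ — a priori just an element of $\mathbb{K}^{\times}$ parametrising the unique nearly-ordinary stable line — must in fact lie inside the $\mathbb{T}$-module $C$ (or $y_i^{-1}$ inside $B$) for the natural coboundary candidate to make sense. This rigidity, forced by the Frobenius evaluation where $1 \pm \chi$ reduces to a unit, is what converts the local upper-triangularisation into the cohomological vanishing statement claimed.
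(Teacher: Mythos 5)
Your argument matches the paper's proof essentially step for step: the same change-of-basis identities $y_i b(\sigma) = \epsilon_i(\sigma) - a(\sigma)$ and $c(\sigma) = y_i(\epsilon_i(\sigma) - d(\sigma))$ on $G_{\mathfrak{p}_i}$, the same case split via Lemma \ref{eigvalcongs}, and the same Frobenius evaluation (using $\chi(\mathrm{Frob}_{\mathfrak{p}_i}) = -1$ to make $\epsilon_i - d$ or $\epsilon_i - a$ a unit) to force $y_i \in C$ or $y_i^{-1} \in B$, which is precisely the key step the paper uses to pass from a congruence modulo $y_i^{\pm 1}\mathfrak{m}_f$ to one modulo $\mathfrak{m}_f C$ or $\mathfrak{m}_f B$. (One small simplification: your ``for $p$ odd'' caveat is unnecessary since $\mathbb{T}$ is a $\mathbb{Q}_p$-algebra, so $2$ is automatically a unit.)
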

\begin{proof}
The change of basis matrix that changes $\pi$ into the upper triangular form from Theorem \ref{heckealgrep} must satisfy for all $\sigma \in G_{\mathfrak{p}_i}$ the equality
\[
\begin{pmatrix} a(\sigma) & b(\sigma) \\ c(\sigma) & d(\sigma) \end{pmatrix} \begin{pmatrix} x & y \\ z & w \end{pmatrix} = \begin{pmatrix} x & y \\ z & w \end{pmatrix} \begin{pmatrix} \epsilon_i(\sigma) & * \\ 0 & \delta_i(\sigma) \end{pmatrix}.
\]
Comparing the top left entries, we obtain that $b(\sigma) = \frac{x}{z}\big(\epsilon_i(\sigma) - a(\sigma)\big)$. Similarly, comparing the bottom left entries, we obtain that $c(\sigma) = \frac{z}{x}\big(\epsilon_i(\sigma) - d(\sigma)\big)$. Using Lemma \ref{eigvalcongs} above in combination with Lemma \ref{admodm}, it follows that either $\epsilon_i(\sigma) - a(\sigma) \in \mathfrak{m}_f$ or $\epsilon_i(\sigma) - d(\sigma) \in \mathfrak{m}_f$ for all $\sigma \in G_{\mathfrak{p}_i}$. Suppose the former. Then let $\tau = \text{Frob}_{\mathfrak{p}_i} \in G_{\mathfrak{p}_i}$, so that $\epsilon_i(\tau) - d(\tau) \in \mathbb{T}^{\times}$. This shows that $\frac{z}{x} = c(\tau) \cdot \big(\epsilon_i(\sigma) - d(\sigma)\big)^{-1} \in C$, and as such, $c(\sigma) = \frac{z}{x}\big(\epsilon_i(\sigma) - d(\sigma)\big) \equiv \frac{z}{x}\big( 1 - \chi(\sigma) \big) \mod \mathfrak{m}_f C$, which shows that $c$ is a coboundary $\mod \mathfrak{m}_f C$. The other case is completely analogous.
\end{proof}
\begin{gevolg}\label{bcbothcobound}
Let $\{ i, j \} = \{ 1, 2 \}$. If $\chi \cdot b \mod \mathfrak{m}_f B$ is a coboundary when restricted to $G_{\mathfrak{p}_i}$, then $c \mod \mathfrak{m}_f C$ is a coboundary when restricted to $G_{\mathfrak{p}_j}$.
\end{gevolg}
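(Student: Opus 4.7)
The plan is to argue by contradiction via Proposition \ref{bccobounds}. Assuming $\chi \cdot b$ is a coboundary at $\mathfrak{p}_i$, the proposition applied at $\mathfrak{p}_j$ leaves only two possibilities at the other prime, and it suffices to rule out the case that $\chi \cdot b$ is also a coboundary at $\mathfrak{p}_j$. So I will suppose $\chi \cdot b \bmod \mathfrak{m}_f B$ is a coboundary when restricted to both $G_{\mathfrak{p}_1}$ and $G_{\mathfrak{p}_2}$ and derive a contradiction.

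First I would translate this cocycle hypothesis into a statement about the module $B$ itself. For any $\mathbb{T}$-linear functional $f \colon B/\mathfrak{m}_f B \to \mathbb{T}/\mathfrak{m}_f$, the associated class $j_B(f) = \chi \cdot (f \circ b) \in H^1(G_F, \mathbb{Q}_p(\chi))$ is locally trivial at both primes above $p$. The restriction isomorphism of Lemma \ref{gfH1dim} then forces $j_B(f) = 0$ globally, and the injectivity of $j_B$ in Lemma \ref{jsinj} gives $f = 0$. Since $B/\mathfrak{m}_f B$ is a $\mathbb{Q}_p$-vector space with zero dual, it vanishes; Nakayama's lemma, applied to $B$ viewed as a finitely generated $\mathbb{T}$-submodule of $\mathbb{K}$, then forces $B = 0$, i.e., $b \equiv 0$.

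It remains to show that $b \equiv 0$ is incompatible with the setup. In that case $\pi$ is lower triangular in the basis from Lemma \ref{generalbasis}, so $\langle e_1 \rangle$ is $G_F$-stable and in particular fixed by each $G_{\mathfrak{p}_i}$. Combined with the distinct fixed line $\langle e_1 + y_i e_2 \rangle$ of Lemma \ref{generalbasis} (where $y_i \in \mathbb{K}^\times$), this gives two distinct $G_{\mathfrak{p}_i}$-stable lines spanning $\mathbb{K}^2$, so $\pi|_{G_{\mathfrak{p}_i}}$ is diagonalizable with eigenvalues $a|_{G_{\mathfrak{p}_i}}$ and $\epsilon_i$. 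Matching these with the semisimplification $\{\epsilon_i, \delta_i\}$ coming from Hida's upper-triangular form, and using Lemma \ref{eigvalcongs} to exclude the coincidence $\epsilon_i \equiv \delta_i$, forces $a|_{G_{\mathfrak{p}_i}} = \delta_i$. Reducing modulo $\mathfrak{m}_f$ via Lemma \ref{admodm} yields $\delta_i \equiv \mathbbm{1}$ for both $i \in \{1,2\}$. But the fourth bullet of Theorem \ref{heckealgrep}, combined with the deliberate opposite-sign $p$-stabilization $E^{(p)}_{1,\chi} = (1-V_{\mathfrak{p}_1})(1+V_{\mathfrak{p}_2})E_{1,\chi}$, forces $\delta_1(\mathrm{Frob}_{\mathfrak{p}_1})$ and $\delta_2(\mathrm{Frob}_{\mathfrak{p}_2})$ to reduce to opposite signs $\mp 1$, contradicting $\delta_1 \equiv \delta_2 \equiv \mathbbm{1}$.

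The main technical hurdle I anticipate is the finite generation of $B$ underlying the Nakayama step; this I would justify by a standard compactness-and-Noetherianness argument in the spirit of analogous Hida-theoretic treatments in \cite{Pozzi} and \cite{BelChen}, exploiting the continuity of $b$ together with the complete local Noetherian structure of $\mathbb{T}$.
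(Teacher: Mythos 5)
Your proposal follows the same high-level strategy as the paper: reduce via Proposition~\ref{bccobounds} to ruling out that $\chi \cdot b$ (respectively $c$) is locally a coboundary at both primes above $p$, use the restriction isomorphism of Lemma~\ref{gfpH1dim} to promote this to a global statement, and invoke Nakayama to kill the module $B$ (respectively $C$). Your Nakayama step is a dual rephrasing of the paper's: the paper shows directly that $c \equiv \lambda(1-\chi) \bmod \mathfrak{m}_f C$ globally and then evaluates at $\gamma$ (where $c(\gamma)=0$, $\chi(\gamma)=-1$) to conclude $C/\mathfrak{m}_f C = 0$; you instead show every $\mathbb{T}$-linear functional on $B/\mathfrak{m}_f B$ vanishes via the injectivity of $j_B$, which amounts to the same thing. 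Where you genuinely diverge is the final contradiction: the paper simply asserts that $\pi$ is irreducible, so $c \equiv 0$ (hence $\pi$ upper triangular) is impossible. You instead argue that $b \equiv 0$ forces $\pi|_{G_{\mathfrak{p}_i}}$ to be diagonalizable in a way that makes both residual $U_{\mathfrak{p}_i}$-eigenvalues reduce to the same sign, contradicting the opposite-sign $p$-stabilization $(1-V_{\mathfrak{p}_1})(1+V_{\mathfrak{p}_2})$. This is more self-contained (it avoids relying on an unproved irreducibility claim), but note a bookkeeping slip: with $b \equiv 0$, the matrix $\pi = \bigl(\begin{smallmatrix} a & 0 \\ c & d \end{smallmatrix}\bigr)$ is lower triangular, so the $G_F$-stable line is $\langle e_2 \rangle$ (not $\langle e_1 \rangle$) with eigenvalue character $d$; matching eigenvalues then yields $\delta_i = d|_{G_{\mathfrak{p}_i}} \equiv \chi$ for both $i$, not $\delta_i \equiv \mathbbm{1}$. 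The contradiction with the opposite-sign stabilization survives this correction, so the argument goes through. (In fact there is an even quicker route here: $b \equiv 0$ exhibits two distinct $G_{\mathfrak{p}_i}$-fixed lines $\langle e_2 \rangle$ and $\langle e_1 + y_i e_2 \rangle$ with $y_i \in \mathbb{K}^\times$, directly contradicting the uniqueness in Lemma~\ref{generalbasis}.) Your concern about finite generation of $B$ is legitimate; the paper addresses it only later, in the proof of Proposition~\ref{bcfreerk1}, via the argument of Lemme~4 of~\cite{BelChen}, and your plan to invoke the same reference is exactly what is needed.
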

\begin{proof}
By Proposition \ref{bccobounds} above, it suffices to show that it cannot occur that $b$ or $c$ is a coboundary $\mod \mathfrak{m}_f$ when restricted to both $G_{\mathfrak{p}_1}$ and $G_{\mathfrak{p}_2}$. Let us therefore suppose the contrary for $c$; we claim that it is then a coboundary globally. Indeed, similarly to the proof of Lemma \ref{gfpH1dim}, the map
\[
H^1(G_F, C / \mathfrak{m}_f C) \to H^1(G_{\mathfrak{p}_1}, C / \mathfrak{m}_f C) \oplus H^1(G_{\mathfrak{p}_2}, C / \mathfrak{m}_f C)
\]
is an isomorphism. It follows that there exists some $\lambda \in C / \mathfrak{m}_f C$ such that $c(\sigma) \equiv \lambda \cdot ( 1 - \chi(\sigma) ) \mod \mathfrak{m}_f C$. In particular, $0 = c(\gamma) \equiv 2\lambda \mod \mathfrak{m}_f C$ which shows that $C / \mathfrak{m}_f C = 0$. By Nakayama's Lemma, it follows from this that $C = 0$ globally, and as such, $c$ must be the zero-cocycle. However, $\pi$ is irreducible; this is a contradiction and completes the proof.
\end{proof}
\begin{prop}\label{bcfreerk1}
The modules $B$ and $C$ are free $\mathbb{T}$-modules of rank 1.
\end{prop}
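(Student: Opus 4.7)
The plan is to handle $B$ (the argument for $C$ being analogous) by establishing three successive reductions: first, bounding $\dim_{\mathbb{Q}_p} B/\mathfrak{m}_f B$ by $1$ using the injection $j_B$; second, ruling out the zero case via irreducibility of $\pi$; and third, lifting cyclicity over $\mathbb{T}$ to genuine freeness by exploiting the product structure of $\mathbb{K}$.

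For the upper bound, invoke Corollary \ref{bcbothcobound} and, after possibly relabelling, assume that $\chi \cdot b \bmod \mathfrak{m}_f B$ restricts to a coboundary on $G_{\mathfrak{p}_1}$. For any $\mathbb{T}$-linear map $f \colon B/\mathfrak{m}_f B \to \mathbb{T}/\mathfrak{m}_f$, applying $f$ to the coboundary relation shows that $j_B(f) = \chi \cdot (f \circ b)$ is itself a coboundary after restriction to $G_{\mathfrak{p}_1}$. Via the isomorphism in Lemma \ref{gfpH1dim}, the image of $j_B$ therefore lies inside the one-dimensional summand $H^1(G_{\mathfrak{p}_2}, \mathbb{Q}_p(\chi))$, and since $j_B$ is injective this forces $\dim_{\mathbb{Q}_p} B/\mathfrak{m}_f B \leq 1$.

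To exclude the trivial case, suppose $B/\mathfrak{m}_f B = 0$. By (topological) Nakayama in the complete Noetherian local setting, $B = 0$, so $b \equiv 0$ and $\pi$ is lower-triangular on each component of $\mathbb{K}$. This contradicts the irreducibility of $\pi$ on each factor already invoked in Corollary \ref{bcbothcobound}. Hence $\dim_{\mathbb{Q}_p} B/\mathfrak{m}_f B = 1$, and Nakayama yields an element $b_0 \in B$ with $B = \mathbb{T} b_0$ and a surjection $\mathbb{T} \twoheadrightarrow B$, $t \mapsto t b_0$.

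The main delicate point is the injectivity of this surjection, which I expect to be the key obstacle since it relies on passing from the cyclic quotient to honest freeness. Decomposing $\mathbb{K} = \prod_i \mathbb{K}_i$ as a product of fields and writing $b_0 = (b_0^{(i)})_i$, suppose $b_0^{(i)} = 0$ for some $i$. Then the entire $i$-th component of $B$ vanishes, forcing $b^{(i)} \equiv 0$ and making $\pi^{(i)}$ reducible; again a contradiction. Thus each $b_0^{(i)}$ is nonzero in the field $\mathbb{K}_i$, so $b_0$ is a non-zero-divisor in $\mathbb{K}$. Since $\mathbb{T}$ is reduced and embeds in its total ring of fractions $\mathbb{K}$, the annihilator of $b_0$ in $\mathbb{T}$ is zero, giving $B \cong \mathbb{T}$ as $\mathbb{T}$-modules. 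The proof for $C$ is identical after swapping the roles of $\mathfrak{p}_1$ and $\mathfrak{p}_2$ in accordance with Corollary \ref{bcbothcobound}.
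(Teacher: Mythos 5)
Your cyclicity argument is essentially the paper's: the injection $j_B$ from Lemma \ref{jsinj}, combined with Corollary \ref{bcbothcobound} and the local decomposition of Lemma \ref{gfpH1dim}, forces the image of $j_B$ into a $1$-dimensional subspace of $H^1(G_F, \mathbb{Q}_p(\chi))$, whence $\dim_{\mathbb{Q}_p} B/\mathfrak{m}_f B \le 1$, and Nakayama then gives $B = \mathbb{T}b_0$. One point you pass over: before Nakayama can be invoked (in either your non-vanishing step or the cyclicity step) one must know that $B$ is a finitely generated $\mathbb{T}$-module. A priori $B$ is only a $\mathbb{T}$-submodule of $\mathbb{K}$, so neither finite generation nor $\mathfrak{m}_f$-adic separatedness is automatic; the paper secures this by citing Lemme 4 of \cite{BelChen}, and you should do the same.

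Where you genuinely go beyond the paper's write-up is in the passage from cyclicity to freeness, which you correctly flag as the delicate point. The paper's proof stops at ``$B$ is generated by a single element,'' which only yields $B \cong \mathbb{T}/\text{Ann}_{\mathbb{T}}(b_0)$ and not the freeness stated in the proposition. Your argument — that $b_0$ must be nonzero in every field factor $\mathbb{K}_i$ (else the $i$-th component of $b$ vanishes identically, making $\pi^{(i)}$ triangular and hence reducible), so $b_0$ is a non-zero-divisor in $\mathbb{K}$ and its annihilator in the reduced ring $\mathbb{T} \hookrightarrow \mathbb{K}$ is trivial — is exactly the missing step. Indeed this fact is used implicitly in Corollary \ref{Tmodbasis}, where the change of basis involves $b_0^{-1}$. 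The one caveat is that you need irreducibility of each component $\pi^{(i)}$ (not merely of $\pi$ over $\mathbb{K}$ as invoked in Corollary \ref{bcbothcobound}); this holds because each $\mathbb{K}_i$ corresponds to a cuspidal Hida family, but it deserves a word. Overall your proof takes the paper's route and fills a genuine gap in it.
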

\begin{proof}
Using the same argument as Lemme 4 in \cite{BelChen}, it follows that both $B$ and $C$ are $\mathbb{T}$-modules of finite type. The images of the maps $j_B$ and $j_C$ are 1-dimensional inside $H^1( G_F, \mathbb{Q}_p(\chi) )$, because Corollary \ref{bcbothcobound} above implies that both $b$ and $c$ will be locally trivial at precisely one of the two places above $p$, which cuts out a 1-dimensional subspace by Lemma \ref{gfpH1dim}. By the injectivity established by Lemma \ref{jsinj}, using that $\mathbb{T} / \mathfrak{m}_f \mathbb{T} \cong \mathbb{Q}_p$, it follows that $\text{Hom}_{\mathbb{T}}( B/ \mathfrak{m}_f B, \mathbb{Q}_p )$ is 1-dimensional. In other words, $B/ \mathfrak{m}_f B$ is generated by a single element so by Nakayama's Lemma, the same must then hold for $B$ itself; the argument for $C$ is analogous.
\end{proof}
\begin{gevolg}\label{Tmodbasis}
There exists a basis of $\mathbb{K}^2$ such that the image of $\pi$ takes values in $\mathbb{T}^2$ and is upper triangular $\mod \mathfrak{m}_f$. The $\mathbb{T}$-module spanned by these basis vectors is $G_F$-stable.
\end{gevolg}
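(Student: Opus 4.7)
The plan is to obtain the desired basis by a single rescaling of the first vector in the basis supplied by Lemma \ref{generalbasis}. In that basis, Lemma \ref{admodm} already gives $a, d \in \mathbb{T}$ with $a \equiv 1$ and $d \equiv \chi \mod \mathfrak{m}_f$, so the only task is to arrange $b \in \mathbb{T}$ and $c \in \mathfrak{m}_f$. Using Proposition \ref{bcfreerk1}, I would fix generators $\beta, \gamma \in \mathbb{K}^{\times}$ of the rank one free $\mathbb{T}$-modules $B = \mathbb{T}\beta$ and $C = \mathbb{T}\gamma$, and then upgrade the pointwise bound of Lemma \ref{bcinm} to the relation $\beta\gamma \in \mathfrak{m}_f$.

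For this upgrade, the finite-type $\mathbb{T}$-modules $B$ and $C$ are generated by the cocycle values, so Nakayama forces some $b(\sigma_0)$ to be a $\mathbb{T}^{\times}$-multiple of $\beta$ and some $c(\tau_0)$ to be a $\mathbb{T}^{\times}$-multiple of $\gamma$. Lemma \ref{bcinm} applied to this pair gives
\[
u v \cdot \beta \gamma = b(\sigma_0) c(\tau_0) \in \mathfrak{m}_f
\]
for some $u,v \in \mathbb{T}^{\times}$, and hence $\beta \gamma \in \mathfrak{m}_f$ as desired.

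Next, I would replace the basis vector $e_1$ by $\beta e_1$ and conjugate. A direct calculation shows that the matrix entries transform as $a \mapsto a$, $d \mapsto d$, $b \mapsto \beta^{-1}b$, $c \mapsto \beta c$. By construction $\beta^{-1}b$ lands in $\beta^{-1} B = \mathbb{T}$, and $\beta c$ lands in $\beta \gamma \mathbb{T} \subseteq \mathfrak{m}_f$. Combined with Lemma \ref{admodm}, every matrix entry now lies in $\mathbb{T}$, so the $\mathbb{T}$-lattice $\mathbb{T} \beta e_1 \oplus \mathbb{T} e_2 \subset \mathbb{K}^2$ is $G_F$-stable, and the reduction modulo $\mathfrak{m}_f$ has the upper triangular form $\left(\begin{smallmatrix} 1 & * \\ 0 & \chi \end{smallmatrix}\right)$.

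There is no real obstacle once one observes that the single product relation $BC \subseteq \mathfrak{m}_f$ between the two rank one free modules matches exactly the one degree of freedom provided by a diagonal rescaling of the basis; the argument then assembles itself. The only mild point of care is the passage from the pointwise statement of Lemma \ref{bcinm} to the statement on generators, which is handled by the Nakayama step above.
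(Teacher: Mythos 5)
Your proof is correct and takes essentially the same approach as the paper: both rescale $e_1$ by a generator $\beta$ of $B$, observe that the new off-diagonal entries become $\beta^{-1}b(\sigma) \in \mathbb{T}$ and $\beta c(\sigma)$, and invoke Lemma \ref{bcinm} to place the latter in $\mathfrak{m}_f$. The only difference is that the paper reaches $\beta c(\sigma) \in \mathfrak{m}_f$ directly — since $\beta \in B$ is a $\mathbb{T}$-linear combination of values $b(\sigma_i)$, so $\beta c(\sigma) = \sum t_i b(\sigma_i) c(\sigma) \in \mathfrak{m}_f$ — whereas your Nakayama detour through $\beta\gamma \in \mathfrak{m}_f$ and the freeness of $C$ is sound but not strictly needed for this corollary.
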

\begin{proof}
By Proposition \ref{bcfreerk1} above, we can find an element $b_0 \in B$ generating the $\mathbb{T}$-module $B$. Now consider the basis $\{ b_0e_1, e_2 \}$, in which $\pi$ looks like
\[
\pi( \sigma ) = \begin{pmatrix} a(\sigma) & b(\sigma)b_0^{-1} \\ c(\sigma)b_0 & d(\sigma) \end{pmatrix}.
\]
By Lemma \ref{bcinm}, it follows that $c(\sigma)b_0 \in \mathfrak{m}_f$ for all $\sigma \in G_F$. This means that $\pi$ takes values in $\mathbb{T}^2$, and as such, it stabilises the $\mathbb{T}$-lattice $M = \langle b_0e_1, e_2 \rangle$.
\end{proof}
We now rescale our original choice of basis vectors as in the corollary above, to omit $b_0$ from any future calculations. In addition, in view of Corollary \ref{bcbothcobound}, we may assume that $b \mod \mathfrak{m}_f$ is a coboundary when restricted to $G_{\mathfrak{p}_2}$, whereas $c \mod \mathfrak{m}_f$ is a coboundary when restricted to $G_{\mathfrak{p}_1}$.
\begin{prop}\label{finalbasis}
Up to a rescaling a basis vector by some $\lambda \in \mathbb{Q}_p^{\times}$, the map $\pi$ is a lift of $\rho_{\eta}$.
\end{prop}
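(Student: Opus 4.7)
The plan is to show that after the rescaling, the reduction of $\pi$ matches $\rho_\eta$. Starting from the basis fixed after Corollary \ref{Tmodbasis} and the normalization made before the proposition, we can write
\[
\pi(\sigma) = \begin{pmatrix} a(\sigma) & b(\sigma) \\ c(\sigma) & d(\sigma) \end{pmatrix}
\]
with $a \equiv 1$, $d \equiv \chi \pmod{\mathfrak{m}_f}$, $c(\sigma) \in \mathfrak{m}_f$ for all $\sigma$, and $B = \mathbb{T}$. The reduction $\bar{\pi}$ is therefore upper triangular with $(1,2)$-entry $\tilde{b} := b \bmod \mathfrak{m}_f$, and reducing the cocycle relation $\pi(\sigma \tau) = \pi(\sigma)\pi(\tau)$ modulo $\mathfrak{m}_f$ shows that $\chi \tilde{b} \in Z^1(G_F, \mathbb{Q}_p(\chi))$.

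The first step is to pin down the cohomology class $[\chi \tilde{b}]$. By the standing hypothesis (using Corollary \ref{bcbothcobound}), its restriction to $G_{\mathfrak{p}_2}$ is a coboundary. Combined with the restriction isomorphism of Lemma \ref{gfH1dim} and the fact that each local factor $H^1(G_{\mathfrak{p}_i}, \mathbb{Q}_p(\chi))$ is one-dimensional, the subspace of classes in $H^1(G_F, \mathbb{Q}_p(\chi))$ with trivial $G_{\mathfrak{p}_2}$-component is one-dimensional, and $\eta$ was chosen to span exactly this subspace. Hence $[\chi \tilde{b}] = \lambda [\eta]$ for some $\lambda \in \mathbb{Q}_p$.

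The main step, and the one I expect to be the crux, is to argue that $\lambda \neq 0$. For this I would invoke the injectivity of the map $j_B$ from Lemma \ref{jsinj}. Since $B = \mathbb{T}$ after the rescaling absorbing $b_0$, the quotient $B/\mathfrak{m}_f B$ is canonically $\mathbb{Q}_p$, and $j_B$ applied to the identity map $\mathbb{Q}_p \to \mathbb{Q}_p$ produces exactly the class of $\chi \tilde{b}$. Injectivity of $j_B$ then forces $[\chi \tilde{b}] \neq 0$, so $\lambda \in \mathbb{Q}_p^{\times}$.

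To conclude, rescaling the basis vector $e_2$ by $\lambda^{-1}$, i.e.\ conjugating $\pi$ by $\mathrm{diag}(1, \lambda^{-1})$, multiplies $\tilde{b}$ by $\lambda$, so that after rescaling $\chi \tilde{b}$ and $\eta$ represent the same class in $H^1(G_F, \mathbb{Q}_p(\chi))$. Replacing the representative $\eta$ within its cohomology class by $\chi \tilde{b}$ itself (equivalently, performing an additional upper-unipotent adjustment in $\mathrm{GL}_2(\mathbb{Q}_p)$, which changes $\rho_\eta$ only by a conjugation corresponding to a change of cocycle representative), the reduction of the rescaled $\pi$ becomes $\rho_\eta$ on the nose. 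This exhibits $\pi$ as a lift of $\rho_\eta$ to $\mathbb{T}$, and the resulting deformation can then be fed into the $R = T$ mechanism in the sequel.
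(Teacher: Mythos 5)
Your proof follows the paper's strategy at every major juncture but diverges in two places, one merely cosmetic, the other a small gap relative to the precise statement.

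For the nonvanishing of $\lambda$, you appeal to injectivity of $j_B$ together with $B = \mathbb{T}$, so that the canonical nonzero element of $\mathrm{Hom}_{\mathbb{T}}(B/\mathfrak{m}_f B, \mathbb{Q}_p)$ maps to $[\chi\tilde b] \neq 0$. The paper instead invokes Corollary~\ref{bcbothcobound} once more to rule out $\chi\tilde b$ being a coboundary on $G_{\mathfrak{p}_1}$, which combined with the restriction isomorphism of Lemma~\ref{gfpH1dim} forces nontriviality. Both routes are valid and of comparable length; yours has the advantage of reusing a lemma whose whole purpose was to encode exactly this, while the paper's directly parallels the dichotomy it just established.

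The more substantive difference is the final step. You only conclude $[\chi\tilde b] = \lambda[\eta]$ in $H^1(G_F, \mathbb{Q}_p(\chi))$, i.e.\ equality of cohomology classes, and then propose either to replace the cocycle representative $\eta$ by $\chi\tilde b$ or to conjugate by an upper-unipotent matrix. Either move is harmless for the ensuing $R = T$ argument, but note that (i) replacing $\eta$ by a cohomologous cocycle that does not itself vanish on $G_{\mathfrak{p}_2}$ breaks the normalization $\eta|_{G_{\mathfrak{p}_2}} = 0$ on which the definition of the nearly ordinary functor depends, and (ii) an upper-unipotent conjugation is strictly more than the diagonal rescaling the proposition allows. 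The paper avoids this by establishing the stronger fact that $b \bmod \mathfrak{m}_f$ vanishes \emph{identically} on $G_{\mathfrak{p}_2}$: it is a coboundary $\alpha(1-\chi)$ there, and $b(\gamma) = 0$ at the element $\gamma$ from Lemma~\ref{generalbasis} with $\chi(\gamma) = -1$ forces $\alpha = 0$. Since $\eta$ is the unique (up to scalar) cocycle in its class vanishing identically on $G_{\mathfrak{p}_2}$, one then gets $\chi\tilde b = \lambda\eta$ as cocycles, and a genuine diagonal rescaling suffices. If you want to match the statement on the nose, you should incorporate the evaluation at $\gamma$ at this stage rather than deferring to a later conjugation.
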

\begin{proof}
From Lemma \ref{admodm} and the proof of Corollary \ref{Tmodbasis}, we already know that $a \equiv 1 \mod \mathfrak{m}_f$, and $d \equiv \chi \mod \mathfrak{m}_f$ and $c \equiv 0 \mod \mathfrak{m}_f$. It thus suffices to show that $b(\sigma) \equiv \lambda \cdot \eta \mod \mathfrak{m}_f$ for some $\lambda \in \mathbb{Q}_p^{\times}$. Recall that $\eta$ is a generator for the 1-dimensional subspace of $H^1(G_F, \mathbb{Q}_p(\chi))$ of cocycles that vanish on the decomposition group $G_{\mathfrak{p}_2} \subset G_F$. Since we assume that $b \mod \mathfrak{m}_f$ is a coboundary when restricted to $G_{\mathfrak{p}_2}$ and further $b(\gamma) = 0$, it readily follows that $b$ vanishes completely on $G_{\mathfrak{p}_2}$. As a result, $b \mod \mathfrak{m}_f = \lambda \cdot \eta$ and since $b \mod \mathfrak{m}_f$ cannot be trivial when restricted to $G_{\mathfrak{p}_1}$ as a result of Corollary \ref{bcbothcobound}, it follows that even $\lambda \in \mathbb{Q}_p^{\times}$. 
\end{proof}
To conclude that $\pi$ is now actually a nearly ordinary deformation of $\rho_{\eta}$, it remains to identify lines inside $\mathbb{T}^2$ on which suitable restrictions of $\pi$ act scalar.
\begin{stelling}\label{heckerep}
Consider $\pi$ from Theorem \ref{heckealgrep} in any basis with the property that the conditions from Proposition \ref{finalbasis} are satisfied. Then $\pi$ defines a nearly ordinary deformation of $\rho_{\eta}$.
\end{stelling}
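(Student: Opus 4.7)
The goal is to exhibit two $G_{\mathfrak{p}_i}$-stable $\mathbb{T}$-lines $L_1, L_2 \subset \mathbb{T}^2$ lifting $\langle e_1 \rangle$ and $\langle e_2 \rangle$ respectively. I plan to handle the two cases separately, working throughout in the basis fixed in Corollary \ref{Tmodbasis} and refined in Proposition \ref{finalbasis}.

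The line $L_2$ is essentially handed to us: Proposition \ref{finalbasis} established that in our chosen basis $b$ vanishes identically on $G_{\mathfrak{p}_2}$, so for every $\sigma \in G_{\mathfrak{p}_2}$ we have $\pi(\sigma) e_2 = d(\sigma) e_2$. Setting $L_2 := \mathbb{T} \cdot e_2$ gives a rank-1 direct summand that is manifestly $G_{\mathfrak{p}_2}$-stable and reduces to $\langle e_2 \rangle$ modulo $\mathfrak{m}_f$.

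For $L_1$ the argument is more delicate, since $b|_{G_{\mathfrak{p}_1}}$ is non-trivial by construction. My plan is to isolate $L_1$ as a Hensel-lifted eigenspace of a single carefully chosen Galois element. Take $\sigma_0 := \text{Frob}_{\mathfrak{p}_1}$. Since $\chi(\sigma_0) = -1$, the matrix $\rho_{\eta}(\sigma_0)$ is upper triangular with diagonal entries $1$ and $-1$, so its characteristic polynomial factors modulo $\mathfrak{m}_f$ as $(X-1)(X+1)$, a product of distinct linear factors. Hensel's lemma then produces a factorisation of the characteristic polynomial of $\pi(\sigma_0) \in M_2(\mathbb{T})$ as $(X-e)(X-d)$ over $\mathbb{T}$, with $e \equiv 1$ and $d \equiv -1$ modulo $\mathfrak{m}_f$. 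The opposite-sign choice of $p$-stabilisation in $E_{1,\chi}^{(p)}$ forces the matching $e = \epsilon_1(\sigma_0)$ and $d = \delta_1(\sigma_0)$: indeed $\delta_1(\sigma_0) = U_{\mathfrak{p}_1}$ by Theorem \ref{heckealgrep}, and this reduces to $-1$ since the minus sign in $(1 - V_{\mathfrak{p}_1})$ is what selects the $U_{\mathfrak{p}_1}$-eigenvalue $-1 = \chi(\mathfrak{p}_1)$ on $E_{1,\chi}^{(p)}$. Setting $P_e := (\pi(\sigma_0) - dI)/(e-d) \in M_2(\mathbb{T})$, which is well-defined because $e-d \in \mathbb{T}^{\times}$, I define $L_1 := \text{im}(P_e)$; this is a rank-1 direct summand of $\mathbb{T}^2$.

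It remains to verify that $L_1$ is stable under all of $G_{\mathfrak{p}_1}$ and not merely under $\sigma_0$, and that it reduces correctly. By construction $L_1 \otimes_{\mathbb{T}} \mathbb{K}$ is the $\epsilon_1(\sigma_0)$-eigenspace of $\pi(\sigma_0)$ inside $\mathbb{K}^2$, and the $G_{\mathfrak{p}_1}$-stable line furnished by Theorem \ref{heckealgrep} must sit inside this eigenspace (the other option, the $\delta_1(\sigma_0)$-eigenspace, would correspond to the wrong character $\delta_1 \not\equiv 1 \bmod \mathfrak{m}_f$). Since $L_1$ is a direct summand of $\mathbb{T}^2$, one has $L_1 = (L_1 \otimes \mathbb{K}) \cap \mathbb{T}^2$, so $G_{\mathfrak{p}_1}$-stability on the $\mathbb{K}$-line transfers immediately to $\mathbb{T}$-stability of $L_1$. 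For the reduction, $P_e \bmod \mathfrak{m}_f = (\rho_{\eta}(\sigma_0) + I)/2$, whose image is the $1$-eigenspace of $\rho_{\eta}(\sigma_0)$; this is simply $\langle e_1 \rangle$, as can be read off the upper-triangular form. The main obstacle in this argument is the matching step: one must ensure that the Hensel-lifted eigenspace is literally the Galois-stable $\mathbb{K}$-line from Theorem \ref{heckealgrep} and not its companion. This in turn rests entirely on the distinctness of $\epsilon_1$ and $\delta_1$ modulo $\mathfrak{m}_f$ at a single element, which is precisely what the opposite-sign $p$-stabilisation is designed to guarantee.
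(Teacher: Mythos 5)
Your construction of $L_1$ as the image of a Hensel-lifted eigenprojector of $\pi(\text{Frob}_{\mathfrak{p}_1})$ is correct and is a genuinely different route from the paper's proof, which instead reads both stable lines directly off the change-of-basis matrices $\left(\begin{smallmatrix} x_i & y_i \\ z_i & w_i\end{smallmatrix}\right)$ of Theorem \ref{heckealgrep}, uses the coboundary analysis of Proposition \ref{bccobounds} to conclude $z_1/x_1,\ x_2/z_2 \in \mathfrak{m}_f$, and sets $L_1 = \langle (1, z_1/x_1)^T\rangle$, $L_2 = \langle (x_2/z_2, 1)^T\rangle$. Your matching step, using $U_{\pi_1} \equiv -1 \bmod \mathfrak{m}_f$ coming from the minus sign in $(1 - V_{\mathfrak{p}_1})$, and the observation that $L_1 = (L_1\otimes\mathbb{K})\cap\mathbb{T}^2$ inherits stability from the $\mathbb{K}$-line, are both clean and sound.

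There is, however, a genuine gap in your treatment of $L_2$. You set $L_2 := \mathbb{T}\cdot e_2$ on the grounds that Proposition \ref{finalbasis} yields $b(\sigma) = 0$ in $\mathbb{T}$ for all $\sigma \in G_{\mathfrak{p}_2}$. But Proposition \ref{finalbasis} only establishes that $b \bmod \mathfrak{m}_f$ vanishes on $G_{\mathfrak{p}_2}$; its purpose, visible in the immediately following ``As a result, $b\bmod\mathfrak{m}_f = \lambda\cdot\eta$'', is to pin down the \emph{residual} cocycle, not to force $b|_{G_{\mathfrak{p}_2}}$ to be the zero function in $\mathbb{T}$. The restriction $b|_{G_{\mathfrak{p}_2}}$ takes values in $\mathfrak{m}_f$ but is in general nonzero, so $\pi(\sigma)e_2 = b(\sigma)e_1 + d(\sigma)e_2$ need not lie in $\mathbb{T} e_2$, and $\mathbb{T} e_2$ need not be $G_{\mathfrak{p}_2}$-stable; indeed the paper's own stable line $\langle(x_2/z_2,1)^T\rangle$ has $x_2/z_2\in\mathfrak{m}_f$ nonzero in general. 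The easiest repair is to run your Hensel construction symmetrically at $\mathfrak{p}_2$: take $\sigma_0' = \text{Frob}_{\mathfrak{p}_2}$, observe $\delta_2(\sigma_0') = U_{\pi_2} \equiv 1 \bmod\mathfrak{m}_f$ from the plus sign in $(1 + V_{\mathfrak{p}_2})$, hence $\epsilon_2(\sigma_0') \equiv -1$, and take the image of the projector onto the $\epsilon_2(\sigma_0')$-eigenspace; its reduction is $(I - \rho_\eta(\sigma_0'))/2$, whose image is $\langle e_2\rangle$ since $\eta$ vanishes on $G_{\mathfrak{p}_2}$, and $G_{\mathfrak{p}_2}$-stability follows exactly as in your $L_1$ argument.
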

\begin{proof}
It suffices to exhibit free direct summands $L_i$ for $i \in \{ 1, 2 \}$ of rank 1 inside $\mathbb{T}^2$ which are stable under the restriction $\pi|_{G_{\mathfrak{p}_i}}$ and which lift $e_i$. By Theorem \ref{heckealgrep}, for $i \in \{ 1, 2 \}$, we can find matrices with
\[
\begin{pmatrix} a(\sigma) & b(\sigma) \\ c(\sigma) & d(\sigma) \end{pmatrix} \begin{pmatrix} x_i & y_i \\ z_i & w_i \end{pmatrix} = \begin{pmatrix} x_i & y_i \\ z_i & w_i \end{pmatrix} \begin{pmatrix} \epsilon_i(\sigma) & * \\ 0 & \delta_i(\sigma) \end{pmatrix},
\]
yielding the equalities $b(\sigma) = \frac{x_i}{z_i}\big(\epsilon_i(\sigma) - a(\sigma)\big)$ and $c(\sigma) = \frac{z_i}{x_i}\big(\epsilon_i(\sigma) - d(\sigma)\big)$. By our choices for when $b$ and $c$ are respectively trivial $\mod \mathfrak{m}_f$, it follows from Lemma \ref{admodm} that $\epsilon_2 \equiv 1 \mod \mathfrak{m}_f$ and that also $\epsilon_1 \equiv \chi \mod \mathfrak{m}_f$. Using the same argument as in the proof of Proposition \ref{bccobounds}, we show that $z_1/x_1 \in \mathfrak{m}_f$ and $x_2/z_2 \in \mathfrak{m}_f$. We thus set
\[
L_1 = \left\langle \begin{pmatrix} 1 \\ z_1 / x_1 \end{pmatrix} \right\rangle \quad \text{and} \quad L_2 = \left\langle \begin{pmatrix} x_2/z_2 \\ 1 \end{pmatrix} \right\rangle.
\]
These lines are free of rank 1 inside $\mathbb{T}^2$ and fixed by $\pi|_{G_{\mathfrak{p}_i}}$ by construction. Further, by the above, they reduce to $e_1$ and $e_2$ respectively, completing the proof.
\end{proof}

\subsection{The modularity theorem}

The goal of this section will be to prove an isomorphism $R_{\rho_{\eta}}^{\text{no}} \cong \mathbb{T}$. We have already constructed the map, as Theorem \ref{heckerep} claims that there exists a nearly ordinary deformation $\pi$ of $\rho_{\eta}$ to $\mathbb{T}$. By the universal property of $R_{\rho_{\eta}}^{\text{no}}$, this induces a map $\mathcal{T} \colon R_{\rho_{\eta}}^{\text{no}} \to \mathbb{T}$ that induces this deformation from $\rho^{\text{univ}}$.
\begin{lemma}\label{modularitymapsurj}
The map $\mathcal{T} \colon R_{\rho_{\eta}}^{\emph{no}} \to \mathbb{T}$ is surjective.
\end{lemma}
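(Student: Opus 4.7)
The plan is to exhibit a topological generating set for $\mathbb{T}$ inside $\text{Im}(\mathcal{T})$. The universal deformation ring $R^{\text{no}}_{\rho_\eta}$ is complete local Noetherian with residue field $\mathbb{Q}_p$, and $\mathcal{T}$ is continuous, so its image is closed and surjectivity reduces to a density statement. By Hida's theory (formulated in \cite{JAMI} and recalled in Section 3 of \cite{DPV2}), the local Hecke algebra $\mathbb{T}$ is topologically generated over $\mathbb{Q}_p$ by the operators $T_{\mathfrak{l}}$ for primes $\mathfrak{l}$ of $F$ coprime to $p$, together with the $U_x$ for $x \in F_{\mathfrak{p}_i}^{\times}$ and $i \in \{1,2\}$. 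It suffices to show that each of these lies in the image of $\mathcal{T}$.

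The operators away from $p$ are handled by trace. By construction of $\mathcal{T}$ from the universal property applied to the nearly ordinary deformation $\pi$ of Theorem \ref{heckerep}, the representation obtained by composing $\rho^{\text{univ}}$ with $\mathcal{T}$ is equivalent to $\pi$ as a representation valued in $\text{GL}_2(\mathbb{T})$. The defining relation from Theorem \ref{heckealgrep} then gives, for every prime $\mathfrak{l} \nmid p$,
\[
T_{\mathfrak{l}} \;=\; \text{Tr}\bigl(\pi(\text{Frob}_{\mathfrak{l}})\bigr) \;=\; \mathcal{T}\bigl(\text{Tr}(\rho^{\text{univ}}(\text{Frob}_{\mathfrak{l}}))\bigr) \;\in\; \text{Im}(\mathcal{T}).
\]

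For the operators at $p$, we exploit the nearly ordinary structure carried by $\rho^{\text{univ}}$ itself. For each $i \in \{1,2\}$, the universal free summand $L_i^{\text{univ}}$ from Proposition \ref{nodefring} is $G_{\mathfrak{p}_i}$-stable, so the restriction $\rho^{\text{univ}}|_{G_{\mathfrak{p}_i}}$ yields two universal characters $\alpha_i^{\text{univ}}, \beta_i^{\text{univ}} \colon G_{\mathfrak{p}_i}^{\text{ab}} \to (R^{\text{no}}_{\rho_\eta})^{\times}$ tracking the eigenvalues on $L_i^{\text{univ}}$ and its quotient. Pushing these forward along $\mathcal{T}$ produces two characters of $G_{\mathfrak{p}_i}^{\text{ab}}$ whose sum and product match the trace and determinant of $\pi|_{G_{\mathfrak{p}_i}}$, so by comparison with Theorem \ref{heckealgrep}, the set $\{\mathcal{T}(\alpha_i^{\text{univ}}), \mathcal{T}(\beta_i^{\text{univ}})\}$ coincides with $\{\epsilon_i, \delta_i\}$. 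In particular, $U_x = \delta_i(x) \in \text{Im}(\mathcal{T})$ for every $x \in F_{\mathfrak{p}_i}^{\times}$.

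Combining the two gives a topological generating set of $\mathbb{T}$ inside the closed subring $\text{Im}(\mathcal{T})$, and surjectivity follows. The step requiring the most attention is the identification of $\{\mathcal{T}(\alpha_i^{\text{univ}}), \mathcal{T}(\beta_i^{\text{univ}})\}$ with Hida's pair $\{\epsilon_i, \delta_i\}$; here one uses that the stable lines $L_i$ of $\pi$ constructed in the proof of Theorem \ref{heckerep} realise, via $\mathcal{T}$, precisely the pushforwards of the universal lines $L_i^{\text{univ}}$. Beyond this compatibility, the argument amounts to a direct application of the universal property defining $\mathcal{T}$ combined with the explicit formula for the Hecke eigenvalues in Theorem \ref{heckealgrep}.
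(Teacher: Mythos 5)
Your proposal is correct and takes essentially the same route as the paper: reduce to showing a generating set of $\mathbb{T}$ lies in the image of $\mathcal{T}$, obtain the $T_{\mathfrak{l}}$ for $\mathfrak{l} \nmid p$ from traces of $\rho^{\text{univ}}$ via the defining relation $\pi = \mathcal{T}\circ\rho^{\text{univ}}$, and obtain the $U_x$ from the universal quotient characters on the stable lines $L_i^{\text{univ}}$ compared against Hida's $\{\epsilon_i,\delta_i\}$. The only differences are cosmetic: the paper phrases the reduction via $\Lambda$-linearity of $\mathcal{T}$ (with $\Lambda = \mathbb{Q}_p\llbracket X,Y,Z\rrbracket$) rather than via closedness of the image and topological density, and it explicitly records the diamond operators $\langle\mathfrak{l}\rangle$ as generators, catching them via $\mathcal{T}(\det\rho^{\text{univ}}(\text{Frob}_{\mathfrak{l}})) = \langle\mathfrak{l}\rangle\mathrm{Nm}(\mathfrak{l})$. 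You omit $\langle\mathfrak{l}\rangle$ from your generating set; either justify that these are redundant given the $T_{\mathfrak{l}}$ and $U_x$ in the Hida algebra you are working with, or simply add the one-line determinant computation as the paper does.
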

\begin{proof}
Let $\Lambda = \mathbb{Q}_p \llbracket X, Y, Z \rrbracket$ be as defined in Section 2.2 and 3.1 in \cite{BDS}. Both $R_{\rho_{\eta}}^{\text{no}}$ and $\mathbb{T}$ carry a natural $\Lambda$-algebra structure and the map $\mathcal{T}$ defined above is generally $\Lambda$-linear. Since $\mathbb{T}$ is generated over $\Lambda$ by the operators $T_{\mathfrak{l}}$, $\langle \mathfrak{l} \rangle$ and $U_x$ for $x \in \mathcal{O}_F \otimes \mathbb{Z}_p$, it suffices to show that these are contained in the image of $\mathcal{T}$. We use the defining relation that $\pi = \mathcal{T} \circ \rho^{\text{univ}}$ to show for $\mathfrak{l} \nmid p$ that
\[
\mathcal{T}\big( \text{Tr}( \rho^{\text{univ}}(\text{Frob}_{\mathfrak{l}} ) ) \big) = \text{Tr}\big( \mathcal{T}( \rho^{\text{univ}}(\text{Frob}_{\mathfrak{l}} ) ) \big) = \text{Tr}( \pi( \text{Frob}_{\mathfrak{l}} ) ) = T_{\mathfrak{l}}. 
\]
Similarly,
\[
\mathcal{T}\big( \text{det}( \rho^{\text{univ}}(\text{Frob}_{\mathfrak{l}} ) ) \big) = \text{det}\big( \mathcal{T}( \rho^{\text{univ}}(\text{Frob}_{\mathfrak{l}} ) ) \big) = \text{det}( \pi( \text{Frob}_{\mathfrak{l}} ) ) = \langle \mathfrak{l} \rangle \text{Nm}(\mathfrak{l}). 
\]
It now suffices to consider the operators $U_x$ for $x \in F \otimes \mathbb{Z}_p \cong F_{\mathfrak{p}_1} \times F_{\mathfrak{p}_2}$. Use the stable lines for $\rho^{\text{univ}}$ and $\pi$ to construct bases. If we then let $\Delta$ denote the bottom right entry of $\rho^{\text{univ}}$, we then obtain for $x \in F_{\mathfrak{p}_1}^{\times}$ that $\mathcal{T}\big( \Delta(x) \big) = \delta_1(x) = U_x$. The top-left entry yields the same result for $x \in F_{\mathfrak{p}_2}^{\times}$, completing the proof.
\end{proof}
\begin{lemma}\label{ringthy1}
Let $k$ be a field and further let $(A, m_A)$ and $(B,m_B)$ be local $k$-algebras. Suppose that $\emph{dim}_k( m_A / m_A^2 ) = \emph{dim}(B) < \infty$ and that there is a surjective map of $k$-algebras $A \to B$. Then $A$ and $B$ are both regular local rings with the same finite Krull dimension.
\end{lemma}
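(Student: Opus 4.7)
The plan is to set up a sandwich of inequalities that forces $A$ to be regular, and then to deduce that the surjection $A \twoheadrightarrow B$ is in fact an isomorphism. This relies on two standard facts from the commutative algebra of Noetherian local rings: the embedding dimension inequality $\dim A \leq \dim_k(\mathfrak{m}_A/\mathfrak{m}_A^2)$, which follows from Nakayama's lemma together with Krull's principal ideal theorem, and the fact that a regular local ring is an integral domain.

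First I would observe that the surjection $A \twoheadrightarrow B$ automatically yields $\dim B \leq \dim A$. Combining this with the embedding dimension inequality applied to $A$ and the hypothesis $\dim_k(\mathfrak{m}_A/\mathfrak{m}_A^2) = \dim B$, we obtain
\[
\dim B \;\leq\; \dim A \;\leq\; \dim_k(\mathfrak{m}_A/\mathfrak{m}_A^2) \;=\; \dim B,
\]
so every inequality is an equality. In particular $\dim A = \dim_k(\mathfrak{m}_A/\mathfrak{m}_A^2)$, which is exactly the definition of $A$ being a regular local ring. Since regular local rings are integral domains, $A$ is a domain of Krull dimension equal to $\dim B$.

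It remains to deduce the same conclusion for $B$. Let $I = \ker(A \twoheadrightarrow B)$, so that $B \cong A/I$ and $\dim(A/I) = \dim A$. Because $A$ is regular, it is Cohen-Macaulay, hence catenary, so $\dim(A/I) = \dim A - \operatorname{ht}(I)$; this forces $\operatorname{ht}(I) = 0$. Since $A$ is a domain, the only ideal of height zero is the zero ideal, so $I = 0$ and the surjection $A \twoheadrightarrow B$ is an isomorphism. In particular, $B$ is itself a regular local ring of the same finite Krull dimension as $A$.

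There is no serious obstacle in this argument; the only non-trivial ingredient is the embedding dimension inequality, which is a classical consequence of Nakayama and Krull's altitude theorem. The entire proof is roughly one paragraph once these facts are invoked.
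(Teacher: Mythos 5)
Your argument is correct, and the first half (establishing regularity of $A$ via the sandwich $\dim B \leq \dim A \leq \dim_k(\mathfrak m_A/\mathfrak m_A^2) = \dim B$) is exactly what the paper does. Where you diverge is in the treatment of $B$: the paper instead notes that the surjection $A \twoheadrightarrow B$ also induces a surjection on cotangent spaces, giving the parallel chain $\dim B = \dim_k(\mathfrak m_A/\mathfrak m_A^2) \geq \dim_k(\mathfrak m_B/\mathfrak m_B^2) \geq \dim B$, so $B$ is regular by the same elementary comparison of tangent-space and Krull dimensions. You instead pass through the facts that regular local rings are Cohen--Macaulay domains to conclude $\operatorname{ht}(\ker) = 0$ and hence that the surjection is an isomorphism, from which regularity of $B$ follows trivially. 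Your route is heavier (it invokes Cohen--Macaulayness and the dimension formula $\dim(A/I) = \dim A - \operatorname{ht} I$) but proves more: it directly yields that the surjection is an isomorphism, which in the paper is deliberately deferred to the separate Proposition \ref{ringthy3} so that Lemma \ref{ringthy1} and Proposition \ref{ringthy3} can be combined modularly in the proof of Theorem \ref{R=T}. Your version therefore collapses two of the paper's lemmas into one; both are correct, and the paper's version is slightly more elementary in its use of commutative algebra. One small caveat worth recording in either version: the lemma as stated does not say $A$ and $B$ are Noetherian, but both your proof and the paper's implicitly need this (for Krull's altitude theorem and the embedding-dimension inequality); in the application the rings are complete Noetherian local $\mathbb Q_p$-algebras, so the omission is harmless.
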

\begin{proof}
The existence of a surjective map $A \to B$ implies that $\text{dim}(A) \geq \dim(B)$ and similarly for the tangent spaces. Krull's principal ideal theorem implies that the dimension of the tangent space is bounded below by the Krull dimension of the ring. Combining these two observations with the given equality of dimensions quickly yields that everything must be equal, completing the proof.
\end{proof}
\begin{prop}\label{ringthy3}
Let $k$ be a field and let $(A, m_A)$ and $(B, m_B)$ be Noetherian regular local $k$-algebras with the same finite Krull dimension. Then every surjective map $A \to B$ must be an isomorphism. 
\end{prop}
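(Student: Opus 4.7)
The plan is to reduce the question to showing that the kernel of the given surjective map is zero, and then to exploit the two key properties enjoyed by regular local rings: they are integral domains, and their Krull dimension drops by exactly one upon quotienting by any non-zero-divisor in the maximal ideal (a consequence of Krull's principal ideal theorem).

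More precisely, I would let $\varphi \colon A \to B$ denote the surjection, set $I = \ker(\varphi)$, so that $B \cong A/I$, and assume for contradiction that $I \neq 0$. Pick any nonzero $x \in I$. Because $A$ is a regular local ring, it is an integral domain, so $x$ is a non-zero-divisor in $A$. Krull's principal ideal theorem then gives $\dim(A/(x)) = \dim(A) - 1$. On the other hand, since $(x) \subseteq I$, the ring $B \cong A/I$ is a quotient of $A/(x)$, so $\dim(B) \leq \dim(A/(x)) = \dim(A) - 1$. This contradicts the hypothesis that $\dim(A) = \dim(B)$. Hence $I = 0$ and $\varphi$ is injective, so it is an isomorphism.

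There is no real obstacle here --- the result is standard commutative algebra, and the main decision is simply which of a few equivalent routes to take. An alternative would be to invoke that regular local rings are Cohen-Macaulay, so the identity $\mathrm{ht}(I) + \dim(A/I) = \dim(A)$ forces $\mathrm{ht}(I) = 0$, and then $I = 0$ because a regular local ring admits only the zero ideal as a minimal prime. Combined with Lemma \ref{ringthy1} and Lemma \ref{modularitymapsurj}, this is precisely what is needed to upgrade the surjection $\mathcal{T} \colon R_{\rho_{\eta}}^{\mathrm{no}} \to \mathbb{T}$ into the desired isomorphism at the heart of the $R = T$ theorem.
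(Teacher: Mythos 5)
Your argument is correct. The paper itself offers no proof of this proposition --- it simply states that the result is standard commutative algebra and defers to the author's PhD thesis --- so there is nothing in the paper to compare against directly. Your reasoning is the textbook one: a regular local ring is a domain, so any nonzero element $x$ of the kernel is a non-zero-divisor lying in $m_A$; Krull's principal ideal theorem then forces $\dim(A/(x)) \le \dim(A) - 1$, whence $\dim(B) \le \dim(A) - 1$, contradicting the hypothesis $\dim(A) = \dim(B)$. Note that only the regularity (in fact only the integrality) of $A$ is actually used, not that of $B$, and the $k$-algebra structure plays no role; these extra hypotheses are present simply because they are what Lemma \ref{ringthy1} delivers. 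The alternative route you sketch via Cohen--Macaulayness and the identity $\mathrm{ht}(I) + \dim(A/I) = \dim(A)$ is equally valid.
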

\begin{proof}
This is just commutative algebra. A proof can be found in the author's PhD thesis.
\end{proof}
\begin{stelling}\label{R=T}
The map $\mathcal{T} \colon R_{\rho_{\eta}}^{\emph{no}} \to \mathbb{T}$ is an isomorphism.
\end{stelling}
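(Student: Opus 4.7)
The plan is to package together the dimension-counting results already established in the preceding subsections with the two abstract commutative-algebra lemmas stated just above. By Lemma \ref{modularitymapsurj}, the map $\mathcal{T}$ is already known to be surjective, so the only remaining content is an equality of dimensions suitable for feeding into Lemma \ref{ringthy1} and then Proposition \ref{ringthy3}.

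First I would recall that by Corollary \ref{notandim} the tangent space of $R_{\rho_{\eta}}^{\text{no}}$ is 3-dimensional over $\mathbb{Q}_p$, i.e.\ $\dim_{\mathbb{Q}_p}( \mathfrak{m} / \mathfrak{m}^2 ) = 3$ where $\mathfrak{m}$ is the maximal ideal of $R_{\rho_{\eta}}^{\text{no}}$. Thus to apply Lemma \ref{ringthy1} with $A = R_{\rho_{\eta}}^{\text{no}}$ and $B = \mathbb{T}$, it suffices to prove that the Krull dimension of $\mathbb{T}$ is also equal to $3$.

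The Krull dimension of $\mathbb{T}$ can be read off from Hida theory. The nearly ordinary Hecke algebra $\mathbb{T}^{\text{no}}$ is a finite module over the weight algebra $\Lambda = \mathbb{Q}_p\llbracket X, Y, Z \rrbracket$ introduced in Lemma \ref{modularitymapsurj}, which is a regular local ring of Krull dimension $3$. The localisation at $\mathfrak{m}_f$, completion, and nilreduction used to form $\mathbb{T}$ preserve this finiteness and cannot increase dimension; moreover, the $p$-stabilised Eisenstein series $E^{(p)}_{1,\chi}$ lies on a genuine Hida family over $\Lambda$ through $\mathfrak{m}_f$, so $\mathbb{T}$ maps onto a quotient of $\Lambda$ of the same dimension, forcing $\dim \mathbb{T} = 3$. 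This is the step I expect to require the most care: one has to argue, using the structural results of Hida recalled in Theorem \ref{heckealgrep} together with the fact that $E^{(p)}_{1,\chi}$ genuinely deforms through $\mathfrak{m}_f$, that $\mathbb{T}$ is equidimensional of dimension exactly $3$, and not of some smaller dimension coming from an unexpected isolated component.

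Given these two inputs $\dim_{\mathbb{Q}_p} \mathfrak{m}/\mathfrak{m}^2 = 3 = \dim \mathbb{T}$, Lemma \ref{ringthy1} applied to the surjection $\mathcal{T} \colon R_{\rho_{\eta}}^{\text{no}} \twoheadrightarrow \mathbb{T}$ of Lemma \ref{modularitymapsurj} yields that both $R_{\rho_{\eta}}^{\text{no}}$ and $\mathbb{T}$ are regular local $\mathbb{Q}_p$-algebras of Krull dimension $3$. Finally, Proposition \ref{ringthy3} upgrades any surjection between two such rings to an isomorphism, and we conclude that $\mathcal{T}$ is an isomorphism.
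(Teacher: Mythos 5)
Your proposal is correct and follows essentially the same route as the paper: surjectivity from Lemma \ref{modularitymapsurj}, the tangent-space dimension $3$ from Corollary \ref{notandim}, the Krull dimension $3$ of $\mathbb{T}$ from Hida theory, and then Lemma \ref{ringthy1} together with Proposition \ref{ringthy3} to upgrade the surjection to an isomorphism. The only difference is cosmetic: the paper simply invokes "it is well-known that $\mathbb{T}$ is equidimensional of dimension $3$," whereas you spell out the underlying reason (finiteness of $\mathbb{T}^{\text{no}}$ over $\Lambda = \mathbb{Q}_p\llbracket X,Y,Z\rrbracket$ and the fact that $E^{(p)}_{1,\chi}$ lives on a genuine Hida family).
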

\begin{proof}
Corollary \ref{notandim} showed that $\text{dim}(t_{\rho_{\eta}}^{\text{no}}) = 3$ and its is well-known that $\mathbb{T}$ is equidimensional of dimension 3. By Lemma \ref{modularitymapsurj}, the map $\mathcal{T}$ is surjective. Now Lemma \ref{ringthy1} implies that both $R_{\rho_{\eta}}^{\text{no}}$ and $\mathbb{T}$ are regular of the same Krull dimension. As they are also Noetherian, Proposition \ref{ringthy3} implies that the surjective map $\mathcal{T}$ must in fact be an isomorphism, completing the proof. 
\end{proof}

\newpage

\section{Analytic proof}\label{proof2}

Let $\log_p$ denote the Iwasawa branch of the $p$-adic logarithm. Our first goal of this section is to rewrite the expressions
\[
\frac{2}{w_1w_2} \log_p \Theta( D_1, D_2 ) \quad \text{and} \quad \frac{2}{w_1w_2} \log_p \Theta_p( D_1, D_2 )
\]
into a form more closely related to the terms appearing in the Fourier expansion of the Hilbert Eisenstein series $E_{1,\chi}^{(p)}$. Throughout, we will write $\text{Gal}( F / \mathbb{Q} ) = \langle \sigma \rangle$, but we will also use the notation $x' \colonequals \sigma(x)$ for $x \in F$. After that, we will extract the $p$-adic modular form that is associated with a particular nearly ordinary deformation as considered in the previous section and compute its coefficients explicitly. Finally, we compute its diagonal restriction, take its derivative and compute its ordinary projection. The vanishing of this expression will yield a proof of Theorem \ref{giamconj2}. 

\subsection{From quaternions to ideals}

This subsection uses an approach similar to the one described in Section 2 from \cite{GZrefined}. We will be brief, and for many of the details, we refer to their work. 

Fix two embeddings $\alpha_1 \colon \mathcal{O}_1 \to R_q$ and $\alpha_2 \colon \mathcal{O}_2 \to R_q$. This turns $B_q$ into an $L$-vector space as follows. Let $x \in K_1$ and $y \in K_2$. Then the action of the element $xy \in L$ on some $\gamma \in B_q$ is defined by $xy * \gamma = \alpha_1(x)\gamma\alpha_2(y)$ and we extend this definition to all of $L$ by $\mathbb{Q}$-linearity. Since both $L$ and $B_q$ are 4-dimensional $\mathbb{Q}$-vector spaces, $B_q$ becomes a $1$-dimensional $L$-vector space and a $2$-dimensional $F$-vector space. 

Proposition 2.3 in \cite{GZrefined} shows the existence of an $F$-linear quadratic form $\text{det}_F : B_q \to F^+$ that is uniquely characterised by requiring that
\[
\text{Tr}_{F / \mathbb{Q}}(\text{det}_F(\gamma)) = \text{Nm}( \gamma )
\]  
for all $\gamma \in B_q$. We define the \emph{reflex ideal} associated with the embeddings $\alpha_1, \alpha_2$ as the intersection with $F$ of the kernel of the composition
\[
L \to B_{q} \to B_{q} / \Pi \cong \mathbb{F}_{q^2}, 
\]
where the first \emph{map} is only additive, and where $\Pi \in B_{q \infty}$ is an element of norm $q$ as provided in Section 2.2 in \cite{anphil}. By the commutativity of the rightmost ring, this composition is actually a ring morphism and as such, it defines an ideal in $L$. We assume that this reflex ideal is given by $\mathfrak{q}_1$.

The $p$-adic theta function is defined as an infinite product over the units in some maximal order inside a quaternion algebra. In order to relate this to Hilbert Eisenstein series, we describe a construction that turns counting quaternions with various properties into counting ideals of $L$. Choose some isomorphism of $L$-vector spaces $\iota \colon B_q \to L$. Naturally, $\iota$ is highly non-canonical, but we will still use it to define an $L$-ideal associated to some $b \in R_q$ as $$I_b \colonequals \iota(b) / \iota(R_q).$$ The ideal $I_b$ is both integral and independent of the choice of isomorphism $\iota \colon B_q \to L$. Combining Lemma 2.5, Lemma 2.16 and Lemma 2.22 in \cite{GZrefined}, we obtain the following.
\begin{prop}\label{idealnorm}
The ideal $I_b$ satisfies
\[
\emph{Nm}_{L / F}(I_b) = \emph{det}_F(b) \mathfrak{q}_1^{-1} \mathcal{D}_F.
\]
\end{prop}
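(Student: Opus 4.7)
The plan is to verify the identity of fractional $\mathcal{O}_F$-ideals in two stages: first a global comparison of $\det_F$ on $B_q$ with the pullback under $\iota$ of $\text{Nm}_{L/F}$ on $L$, then an $\iota$-independent identification of the resulting residual ideal with $\mathfrak{q}_1^{-1}\mathcal{D}_F$.

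For the first stage, both $\det_F \colon B_q \to F$ and $b \mapsto \text{Nm}_{L/F}(\iota(b))$ are $F$-valued quadratic forms on the $1$-dimensional $L$-vector space $B_q$, and each satisfies the property that applying $\text{Tr}_{F/\mathbb{Q}}$ returns the reduced norm $\text{Nm}(b)$. For $\det_F$ this is its defining property, while for the second it follows from transitivity of norms, $\text{Tr}_{F/\mathbb{Q}} \circ \text{Nm}_{L/F} = \text{Nm}_{L/\mathbb{Q}}$, together with the fact that $\text{Nm}_{L/\mathbb{Q}}(\iota(b))$ recovers $\text{Nm}(b)$ by the $L$-linearity of $\iota$. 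Invoking the uniqueness clause of Proposition 2.3 of \cite{GZrefined}, the two forms must agree up to a scalar $\lambda \in F^{\times}$ depending only on $\iota$, so that $\text{Nm}_{L/F}(\iota(b)) = \lambda \cdot \det_F(b)$. Dividing through by the lattice $\iota(R_q)$ yields
\[
\text{Nm}_{L/F}(I_b) = \det_F(b) \cdot \lambda \cdot \text{Nm}_{L/F}(\iota(R_q))^{-1}.
\]

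For the second stage, the ideal $\lambda \cdot \text{Nm}_{L/F}(\iota(R_q))^{-1}$ is now independent of $b$, so it remains to show once and for all that it equals $\mathfrak{q}_1^{-1}\mathcal{D}_F$. This is a purely lattice-theoretic assertion about how the maximal order $R_q$ sits inside $B_q$ as an $\mathcal{O}_L$-module, which I would check locally place by place. Away from $q\mathcal{D}_F$, the lattice $R_q$ is free of rank one over $\mathcal{O}_L$ and contributes trivially. At primes dividing the different, the factor $\mathcal{D}_F$ emerges from the discrepancy between the trace pairing and the norm pairing absorbed into $\lambda$. At the prime $q$, where $B_q$ ramifies, the local structure of $R_q$ as an $\mathcal{O}_L$-module is governed by the reflex quotient $B_q/\Pi \cong \mathbb{F}_{q^2}$, and since the kernel of $L \to B_q/\Pi$ was assumed to cut out $\mathfrak{q}_1$, this local index produces precisely the factor $\mathfrak{q}_1^{-1}$. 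The main obstacle is the careful bookkeeping at the ramified prime $q$; however, this is exactly the content of Lemma 2.22 of \cite{GZrefined}, and combining it with the global comparison of quadratic forms from Lemmas 2.5 and 2.16 of loc. cit. closes the argument without having to reprove any of the underlying local calculations.
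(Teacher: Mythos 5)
Both your argument and the paper's proof ultimately reduce to citing Lemmas 2.5, 2.16 and 2.22 of \cite{GZrefined} --- the paper's ``proof'' is literally the sentence before the proposition, which points to those three lemmas. So the bottom line is the same. However, the scaffolding you build to motivate the combination contains real mistakes, and as written the first stage does not hold up.

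The identity you write, $\mathrm{Tr}_{F/\mathbb{Q}} \circ \mathrm{Nm}_{L/F} = \mathrm{Nm}_{L/\mathbb{Q}}$, is false: transitivity gives $\mathrm{Nm}_{F/\mathbb{Q}} \circ \mathrm{Nm}_{L/F} = \mathrm{Nm}_{L/\mathbb{Q}}$ and $\mathrm{Tr}_{F/\mathbb{Q}} \circ \mathrm{Tr}_{L/F} = \mathrm{Tr}_{L/\mathbb{Q}}$, but a trace after a norm is neither of these. The follow-up claim that $\mathrm{Nm}_{L/\mathbb{Q}}(\iota(b))$ ``recovers'' $\mathrm{Nm}(b)$ is also impossible on degree grounds: $\mathrm{Nm}_{L/\mathbb{Q}}$ is a quartic form on the $4$-dimensional $\mathbb{Q}$-space $B_q$, while the reduced norm is quadratic, so they cannot agree as functions regardless of the $L$-linearity of $\iota$. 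Finally, the logic around the uniqueness clause is internally inconsistent: if $b \mapsto \mathrm{Nm}_{L/F}(\iota(b))$ really did satisfy $\mathrm{Tr}_{F/\mathbb{Q}}(\cdot) = \mathrm{Nm}(\cdot)$ on the nose, then the uniqueness of $\det_F$ would force equality $\lambda = 1$, not equality ``up to a scalar.'' In fact no choice of $\iota$ can satisfy the exact trace normalisation, since replacing $\iota$ by $c \cdot \iota$ rescales your form by $\mathrm{Nm}_{L/F}(c)$, so the form is only ever determined modulo $\mathrm{Nm}_{L/F}(L^\times) \subset F^\times$.

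The conclusion of your first stage --- that $\mathrm{Nm}_{L/F}(\iota(b)) = \lambda \cdot \det_F(b)$ for some $\lambda \in F^\times$ depending only on $\iota$ --- is nonetheless correct, but for a different reason: both expressions are $F$-valued functions on the $1$-dimensional $L$-vector space $B_q$ that transform by $\mathrm{Nm}_{L/F}(\ell)$ when $b$ is replaced by $\ell \cdot b$, and any two such functions differ by a constant in $F^\times$. If you reorganise the first stage around this $L$-equivariance instead of the (incorrect) trace computation, the rest of the proposal --- deferring the determination of $\lambda \cdot \mathrm{Nm}_{L/F}(\iota(R_q))^{-1}$ to the cited lemmas of \cite{GZrefined} --- becomes a clean, if outsourced, argument, and is then essentially the same proof as the paper's.
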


Any attempt at constructing a bijection between quaternions and ideals using only one choice of embeddings is obstructed by the simple fact that $\iota(R)$, and as such $I_b$, will always be in the same ideal class. We must therefore take into account the action of the Picard groups on the embeddings, as defined in Corollary 30.4.23 in \cite{voight}. We will write $\iota[c_1,c_2]$ for an isomorphism of 1-dimensional $L$-vector spaces $B_q \to L$ where $B_q$ is equipped with the $L$-vector space structure induced by the embeddings $[c_1] \cdot \alpha_1$ and $[c_2] \cdot \alpha_2$, where $[c_1] \in \text{Pic}(K_1)$ and $[c_2] \in \text{Pic}(K_2)$. Further, let $I[c_1,c_2]_b$ denote the ideal associated to $b$ using the embedding $\iota[c_1,c_2]$ and let $\text{det}_F[c_1,c_2]$ be the resulting $F$-bilinear quadratic form.

The following bijection will be key in rewriting the $\Theta$-series into a more useful form.
\begin{stelling}\label{quatcount}
For any totally positive $\nu \in F^+$, the association $(b,[c_1],[c_2]) \mapsto I[c_1,c_2]_b$ establishes a bijection between the set of $(b,[c_1],[c_2]) \in (\mathcal{O}_1^{\times} \setminus R_q \ / \ \mathcal{O}_2^{\times}) \times \emph{Pic}(K_1) \times \emph{Pic}(K_2)$ with the property that $\emph{det}_F[c_1,c_2](b) = \nu$ and the set of integral ideals $I \subset \mathcal{O}_L$ such that $\emph{Nm}_{L/F}(I) = (\nu) \mathfrak{q}_1^{-1} \mathcal{D}_F$.
\end{stelling}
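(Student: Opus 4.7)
The plan is to adapt the bijection from Section 2 of \cite{GZrefined} to our setting, verifying in turn well-definedness, injectivity, and surjectivity.

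\textbf{Well-definedness.} Since $R_q$ is normalized by both $\alpha_1(\mathcal{O}_1)$ and $\alpha_2(\mathcal{O}_2)$, the image $\iota[c_1,c_2](R_q) \subset L$ is a fractional $\mathcal{O}_L$-ideal, and the $L^{\times}$-scalar ambiguity in $\iota[c_1,c_2]$ cancels in the quotient defining $I[c_1,c_2]_b$. Integrality is immediate from $b \in R_q$. Replacing $b$ by $u_1 b u_2$ with $u_i \in \mathcal{O}_i^{\times}$ multiplies $\iota[c_1,c_2](b)$ by $u_1 u_2 \in \mathcal{O}_L^{\times}$, preserving the ideal and (since reduced norms of elements of $\mathcal{O}_i^{\times}$ are trivial) also the value $\det_F[c_1,c_2](b)$. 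The norm identity $\text{Nm}_{L/F}(I[c_1,c_2]_b) = (\nu)\mathfrak{q}_1^{-1}\mathcal{D}_F$ is then Proposition \ref{idealnorm}.

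\textbf{Bijectivity via an ideal class lemma.} The essential technical input is a lemma, in the spirit of Lemma 2.16 of \cite{GZrefined}, that identifies the class $[\iota[c_1,c_2](R_q)]$ in $\text{Pic}(\mathcal{O}_L)$ as a function of $([c_1], [c_2])$. Concretely, one expects the map
\[
\Phi \colon \text{Pic}(K_1) \times \text{Pic}(K_2) \to \text{Pic}(\mathcal{O}_L), \quad ([c_1], [c_2]) \mapsto \big[\iota[c_1,c_2](R_q) \cdot \iota[1,1](R_q)^{-1}\big]
\]
to be injective, with image precisely the subgroup cut out by the norm condition on $I$. Granting this, injectivity of the main map follows: $I[c_1,c_2]_b = I[c_1',c_2']_{b'}$ forces $\Phi([c_1],[c_2]) = \Phi([c_1'],[c_2'])$, hence $([c_1],[c_2]) = ([c_1'],[c_2'])$; then $(\iota[c_1,c_2](b)) = (\iota[c_1,c_2](b'))$ gives $\iota[c_1,c_2](b') = u \cdot \iota[c_1,c_2](b)$ for some $u \in \mathcal{O}_L^{\times}$. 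Since both sides have $\det_F$-value $\nu$, one has $\text{Nm}^L_F(u) = 1$; because $L$ is CM, such units must be roots of unity, and in our biquadratic setting $\mu_L \subset \mathcal{O}_1^{\times} \cdot \mathcal{O}_2^{\times}$, so $u = u_1 u_2$ factors appropriately, yielding $b' = u_1 b u_2$ and equality in the double coset.

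\textbf{Surjectivity.} Given an integral $I$ with $\text{Nm}_{L/F}(I) = (\nu)\mathfrak{q}_1^{-1}\mathcal{D}_F$, one selects the unique $([c_1],[c_2])$ such that $\Phi([c_1],[c_2])$ trivialises the class of $I \cdot \iota[1,1](R_q)$ in $\text{Pic}(\mathcal{O}_L)$, which is possible precisely because the image of $\Phi$ coincides with the relevant subgroup. Then $I \cdot \iota[c_1,c_2](R_q)$ is principal, its generator $b_0$ lies in $\iota[c_1,c_2](R_q)$ by integrality of $I$, and pulls back under $\iota[c_1,c_2]$ to an element $b \in R_q$ with $\det_F[c_1,c_2](b) = \nu$ by the norm identity; the triple $(b, [c_1], [c_2])$ maps to $I$. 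The main obstacle in executing this plan is the explicit identification of the image of $\Phi$: this requires a careful local analysis at the primes above $p$ and $q$, exploiting the inertness of $\mathfrak{p}_i, \mathfrak{q}_i$ in $L/F$ and the choice of $\mathfrak{q}_1$ as reflex ideal, and parallels the analogous computation in \cite{GZrefined}.
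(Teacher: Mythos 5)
Your plan correctly identifies the right object to study (the ideal class map $\Phi$ induced by the $\text{Pic}(K_i)$-actions on embeddings) and the roles of Proposition \ref{idealnorm} and the CM unit argument, but there is a genuine gap: you assume $\Phi$ is injective, and it is not in general. The paper's proof rests entirely on the nine-term exact sequence
\[
1 \to \{ \pm 1 \} \to \mathcal{O}_1^{\times} \times \mathcal{O}_2^{\times} \to \mathcal{O}_L^{\times} \to \mathcal{O}_F^{\times,+} \to \text{Pic}(K_1) \times \text{Pic}(K_2) \to \text{Pic}(L) \to \text{Pic}(F)^+ \to \{ \pm 1 \} \to 1,
\]
which shows that $\ker \Phi$ equals the image of $\mathcal{O}_F^{\times,+}$, and this image is trivial precisely when every totally positive unit of $F$ is a norm from $\mathcal{O}_L^{\times}$. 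The latter fails whenever the fundamental unit $\varepsilon$ of $F$ is totally positive (i.e.\ has norm $+1$) and $\mathcal{O}_L^{\times} = \mu_L \mathcal{O}_F^{\times}$, since then the norms from $\mathcal{O}_L^{\times}$ lie in $(\mathcal{O}_F^{\times})^2$ and $\mathcal{O}_F^{\times,+}/(\mathcal{O}_F^{\times})^2 \cong \mathbb{Z}/2$; a concrete instance in the paper's setting is $D_1 = -3$, $D_2 = -7$, $F = \mathbb{Q}(\sqrt{21})$, whose fundamental unit $(5 + \sqrt{21})/2$ has norm $1$.

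Because of this, both your injectivity step (\textquotedblleft $\Phi([c_1],[c_2]) = \Phi([c_1'],[c_2'])$ hence $([c_1],[c_2]) = ([c_1'],[c_2'])$\textquotedblright) and your surjectivity step (\textquotedblleft one selects the unique $([c_1],[c_2])$\ldots\textquotedblright) break down when $\ker\Phi \neq 1$. The theorem nonetheless holds, and this is precisely where the exact sequence does real work: the failure of injectivity of $\Phi$ is exactly compensated, through the chain of maps, by the corresponding discrepancy between $\mathcal{O}_1^{\times} \times \mathcal{O}_2^{\times}$ and $\mathcal{O}_L^{\times}$ appearing in your unit argument, so that the overall correspondence between triples $(b,[c_1],[c_2])$ and ideals $I$ remains a bijection even though neither the class-group map nor the unit-group map is individually an isomorphism. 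Repairing your argument would essentially amount to rederiving this sequence, which is what Corollary 2.24 of \cite{GZrefined} (and Section 6 of \cite{GZ}) already does and is why the paper cites it directly.
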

\begin{proof}
This is a rephrased version of Corollary 2.24 in \cite{GZrefined}. Fundamentally, one uses the exact sequence below; this is the same sequence that was used critically in Section 6 in the original paper \cite{GZ}.
\begin{figure}[h]
\centering
\begin{tikzcd}
  1 \rar & \{ \pm 1 \} \rar & \mathcal{O}_1^{\times} \times \mathcal{O}_2^{\times} \rar & \mathcal{O}_L^{\times} \rar & \mathcal{O}_F^{\times,+} \ar[out=-90, in=90, looseness=0.33]{dllll} \\
  \text{Pic}(K_1) \times \text{Pic}(K_2) \rar & \text{Pic}(L) \rar & \text{Pic}(F)^+ \rar &  \{ \pm 1 \} \rar & 1.
\end{tikzcd}
\end{figure}
\end{proof}

\subsection{Rewriting \texorpdfstring{$\Theta( D_1, D_2 )$}{Theta(D1,D2)}}

Recall that
\[
\Theta( D_1, D_2 ) \colonequals \prod_{\substack{ \text{Pic}(K_1) \cdot \tau_1 \\ \text{Pic}(K_2) \cdot \tau_2 } } \prod_{b \in R_{q}[1/p]_1^{\times}} [\tau_1, \tau_1', b\tau_2, b\tau_2'].
\]
It turns out that the cross-ratio is connected to the form $\text{det}_F$ if we introduce the form $\text{det}_F'$, which is the form induced by the embeddings $\overline{\alpha_1}$ and $\alpha_2$. It also admits a more explicit description.

\begin{lemma}
For any $b \in B$, the numbers $\emph{det}_F(b)$ and $\emph{det}_F'(b)$ are $\emph{Gal}(F / \mathbb{Q})$-conjugates.
\end{lemma}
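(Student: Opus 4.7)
The plan is to pin down how replacing $\alpha_1$ by $\overline{\alpha_1}$ affects the $F$-vector space structure on $B_q$, and then invoke the uniqueness part of the construction (Proposition 2.3 in \cite{GZrefined}) that defined $\det_F$ to identify $\det_F'$ with the Galois conjugate of $\det_F$.

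First I would unpack the definition of the $F$-action. Since $F = \mathbb{Q}(\sqrt{D}) \subset L$ with $\sqrt{D} = \sqrt{D_1}\sqrt{D_2}$, the original $F$-action is determined by $\sqrt{D} \cdot \gamma = \alpha_1(\sqrt{D_1})\,\gamma\,\alpha_2(\sqrt{D_2})$. Replacing $\alpha_1$ by $\overline{\alpha_1}$ sends $\sqrt{D_1}$ to $-\sqrt{D_1}$, so the new action $*$ of $\sqrt{D}$ on $B_q$ is the \emph{negative} of the old one. Writing $\sigma$ for the nontrivial element of $\mathrm{Gal}(F/\mathbb{Q})$, this gives the clean comparison $\lambda * \gamma = \sigma(\lambda) \cdot \gamma$ for every $\lambda \in F$ and $\gamma \in B_q$.

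Next I would verify that $\sigma \circ \det_F$ satisfies the defining property of $\det_F'$. For $F$-quadraticity with respect to $*$, one computes
\[
(\sigma \circ \det_F)(\lambda * \gamma) = \sigma\bigl( \det_F(\sigma(\lambda) \cdot \gamma) \bigr) = \sigma\bigl( \sigma(\lambda)^2 \det_F(\gamma) \bigr) = \lambda^2 (\sigma \circ \det_F)(\gamma),
\]
using the original $F$-quadraticity of $\det_F$. The trace condition is immediate because $\text{Tr}_{F/\mathbb{Q}} = \text{Tr}_{F/\mathbb{Q}} \circ \sigma$, so
\[
\text{Tr}_{F/\mathbb{Q}}\bigl( (\sigma \circ \det_F)(\gamma) \bigr) = \text{Tr}_{F/\mathbb{Q}}\bigl( \det_F(\gamma) \bigr) = \text{Nm}(\gamma).
\]
By the uniqueness in Proposition 2.3 in \cite{GZrefined} applied to the new $F$-module structure, this forces $\det_F'(\gamma) = \sigma(\det_F(\gamma))$, which exhibits the two values as $\mathrm{Gal}(F/\mathbb{Q})$-conjugates.

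The only subtle point, and therefore the one I would take care to spell out, is the bookkeeping that the quadratic form $\det_F'$ is attached to a \emph{different} $F$-vector space structure on the same underlying abelian group $B_q$, so that $F$-linearity must be checked with respect to $*$ rather than $\cdot$. Once this is appreciated, the argument reduces to a one-line uniqueness check; no delicate computation inside $B_q$ is required.
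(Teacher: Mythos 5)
Your proof is correct and follows exactly the approach the paper takes; the paper's proof merely states "it suffices to show that the composite $\sigma \circ \det_F$ satisfies the defining property of $\det_F'$" and leaves the check to the reader, while you carry out that check explicitly (identifying the new $F$-action as $\lambda * \gamma = \sigma(\lambda)\cdot\gamma$, verifying $F$-quadraticity and the trace condition, then invoking uniqueness). The only small point you could add is that total positivity of $\sigma(\det_F(\gamma))$ is automatic since $\sigma$ permutes the real embeddings of $F$.
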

\begin{proof}
It suffices to show that the composite $\sigma \circ \text{det}_F \colon B \to F$ satisfies the defining property of $\text{det}_F'$. This is an easy check and is left to the reader.
\end{proof}

\begin{prop}\label{detFformula}
Let $\tau_i, \tau_i'$ be the two fixed points in $\mathcal{H}_p$ for the image of $\alpha_i( \mathcal{O}_i )$ under any choice of splitting $B_q \otimes \mathbb{Z}_p \to M_2( \mathbb{Q}_p )$. Then
\[
[\tau_1, \tau_1', b\tau_2, b\tau_2'] = -\frac{\emph{det}_F(b)}{\emph{det}_F'(b)}.
\]
\end{prop}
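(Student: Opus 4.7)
The plan is to exploit the $\mathrm{PGL}_2(\mathbb{C}_p)$-invariance of the cross-ratio: we are free to replace the chosen splitting $B_q \otimes \mathbb{Q}_p \xrightarrow{\sim} M_2(\mathbb{Q}_p)$ by a conjugate, and we choose the one in which $(\tau_1, \tau_1') = (0, \infty)$. This amounts to diagonalising the embedding $\alpha_1$. In these coordinates a short limiting computation of the cross-ratio gives
\[
[\tau_1, \tau_1', b\tau_2, b\tau_2'] \;=\; [0, \infty, b\tau_2, b\tau_2'] \;=\; \frac{b\tau_2}{b\tau_2'}.
\]
Writing $b$ as a matrix $\bigl(\begin{smallmatrix} \alpha & \beta \\ \gamma & \delta \end{smallmatrix}\bigr)$ and unfolding the M\"obius action then yields
\[
\frac{b\tau_2}{b\tau_2'} \;=\; \frac{(\alpha\tau_2 + \beta)(\gamma\tau_2' + \delta)}{(\alpha\tau_2' + \beta)(\gamma\tau_2 + \delta)}.
\]

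Next, observe that the nontrivial element of $\mathrm{Gal}(\mathbb{Q}_{p^2}/\mathbb{Q}_p)$ swaps $\tau_2 \leftrightarrow \tau_2'$, which swaps the numerator and denominator of this fraction; in particular the ratio is an element of norm $1$. Through the decomposition $F \otimes \mathbb{Q}_p \cong F_{\mathfrak{p}_1} \oplus F_{\mathfrak{p}_2} \cong \mathbb{Q}_p \oplus \mathbb{Q}_p$, this Galois swap is intertwined with the nontrivial element of $\mathrm{Gal}(F/\mathbb{Q})$. Consequently, once we have matched the numerator with $-\det_F(b)$, the denominator will automatically match $\det_F'(b)$, proving the claim.

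For the identification of the numerator with $-\det_F(b)$, recall that $\det_F$ is uniquely characterised as the $F$-valued quadratic form on $B_q$ with $\mathrm{Tr}_{F/\mathbb{Q}}\det_F = \mathrm{Nm}$, and which is $F$-linear with respect to the action $\xi \cdot b = \alpha_1(x)\, b\, \alpha_2(y)$ of $\xi = xy \in F \subset L$. Choosing an $F$-basis of $B_q$ (for instance $\{1, \alpha_1(\sqrt{D_1})\}$) and using the explicit matrix form of $\alpha_2(\sqrt{D_2})$ together with the fixed-point relations $\tau_2 + \tau_2'$ and $\tau_2\tau_2'$ extracted from $\alpha_2(\sqrt{D_2})^2 = D_2 \cdot I$, one expands $\det_F(b)$ directly as a polynomial in the matrix entries of $b$. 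A term-by-term comparison with the numerator then produces the desired equality, the sign $-1$ tracing back to the interplay between the chosen square roots of $D_1, D_2$ used to set up the embeddings and the resulting $F$-structure on $B_q$.

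The main obstacle is precisely this bookkeeping: making $\det_F$ completely explicit in the chosen matrix basis and extracting the sign without error. A more conceptual route, if desired, is to verify that both sides of the claimed identity are $F$-valued rational functions in $b$ transforming identically under the $L$-action on $B_q$, so that coincidence on a single nontrivial $b$ together with $L$-equivariance forces global equality; alternatively, one may directly invoke the explicit construction of $\det_F$ in Proposition 2.3 of \cite{GZrefined}.
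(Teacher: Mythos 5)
There is a genuine gap. Since $p$ is inert in $K_1$, the fixed points $\tau_1,\tau_1'$ lie in $\mathbb{Q}_{p^2}\setminus\mathbb{Q}_p$, so there is no conjugate of the splitting $B_q\otimes\mathbb{Q}_p\to M_2(\mathbb{Q}_p)$ in which they become $(0,\infty)$; the M\"obius change of coordinates you invoke necessarily has entries in $\mathbb{Q}_{p^2}$. That is harmless for the \emph{value} of the cross-ratio, but it ruins the Galois argument: in the new coordinates the matrix of $b$ and that of $\alpha_2$ have $\mathbb{Q}_{p^2}$-entries, and $\sigma\in\mathrm{Gal}(\mathbb{Q}_{p^2}/\mathbb{Q}_p)$ does \emph{not} swap the normalised $\tau_2\leftrightarrow\tau_2'$ (it carries $\tau_2$ to a fixed point of $\sigma(g\alpha_2 g^{-1})$, a different matrix). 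In fact the cross-ratio is $\sigma$-\emph{invariant}, not of norm $1$: in the original coordinates $\sigma$ simultaneously swaps $\tau_1\leftrightarrow\tau_1'$ and $\tau_2\leftrightarrow\tau_2'$, and $[y,x,w,z]=[x,y,z,w]$. So the shortcut ``match one side and the other follows by Galois'' is unavailable; and the claimed identification of the numerator with $-\det_F(b)$ cannot hold literally in any case, since the numerator $(\alpha\tau_2+\beta)(\gamma\tau_2'+\delta)$ lies in $\mathbb{Q}_{p^2}$ and is not $\sigma$-fixed, whereas $\det_F(b)$ is; they differ by the common factor $\tau_2-\tau_2'$, and it is the Galois anti-invariance of this factor, not a sign in the matching, that actually produces the $-1$.

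The paper avoids all of this by producing, without any normalisation, the coordinate-free candidate
\[
f(b)=\mathrm{Nm}(b)\,\frac{(\tau_1-b\tau_2)(\tau_1'-b\tau_2')}{(\tau_1-\tau_1')(b\tau_2-b\tau_2')},
\]
verifying that it satisfies the characterising properties of $\det_F$, and then observing that $\det_F'$ (attached to $\overline{\alpha_1}$) is obtained by swapping $\tau_1\leftrightarrow\tau_1'$; the $-1$ then falls out of the factor $\tau_1-\tau_1'$ in the denominator, and the remaining factors rearrange into the cross-ratio. If you wish to keep a coordinate computation, drop the normalisation, keep a rational splitting throughout, and expand both $\det_F(b)$ and the cross-ratio in the matrix entries of $b$, tracking the common $\mathbb{Q}_{p^2}$-factors carefully; alternatively, check the paper's explicit $f(b)$ against the characterisation of $\det_F$ in \cite{GZrefined}, which is the shorter path.
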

\begin{proof}
One may check that the explicit formula
\[
f(b) = \text{Nm}(b) \frac{ ( \tau_1 - b \tau_2 )( \tau_1' - b \tau_2' ) }{ ( \tau_1 - \tau_1' ) ( b \tau_2 - b \tau_2' ) }
\]
precisely satisfies the defining properties for $\text{det}_F$. The result now follows from the observation that for $\overline{\alpha_1}$, the points $\tau_1$ and $\tau_1'$ are swapped.
\end{proof}
\begin{stelling}\label{thetarw}
It holds that
\[
\frac{2}{w_1w_2} \log_p \Theta( D_1, D_2 ) = \lim_{n \to \infty} \sum_{ \substack{ \nu \in (\mathcal{D}_F^{-1}\mathfrak{q}_1)^+ \\ \emph{Tr}(\nu) = p^{2n} } } \log_p \left( \frac{\nu}{\nu'} \right) \cdot \rho( \nu \mathfrak{q}_1^{-1} \mathcal{D}_F ).
\]
\end{stelling}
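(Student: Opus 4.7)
The plan is to unpack the definition of $\Theta(D_1,D_2)$, apply Proposition \ref{detFformula} to each cross-ratio, and then reorganise the resulting double product using the quaternion-to-ideal bijection of Theorem \ref{quatcount}.

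First I would apply Proposition \ref{detFformula} term by term. Since $\log_p(-1)=0$ for the Iwasawa branch,
\[
\log_p[\tau_1,\tau_1',b\tau_2,b\tau_2']\;=\;\log_p\!\left(\frac{\text{det}_F(b)}{\text{det}_F'(b)}\right).
\]
The outer product over Picard orbits $([c_1],[c_2])\in\text{Pic}(K_1)\times\text{Pic}(K_2)$ corresponds to varying the pair of embeddings by the class group action, producing the family of forms $\text{det}_F[c_1,c_2]$. Summing gives
\[
\log_p\Theta(D_1,D_2)\;=\;\sum_{([c_1],[c_2])}\,\sum_{b\in R_q[1/p]_1^{\times}}\log_p\!\left(\frac{\text{det}_F[c_1,c_2](b)}{\text{det}_F'[c_1,c_2](b)}\right).
\]

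Next, since Theorem \ref{quatcount} parameterises over $\mathcal{O}_1^{\times}\setminus R_q/\mathcal{O}_2^{\times}$ with $b\in R_q$, I would reindex by integral representatives. Every $b\in R_q[1/p]_1^{\times}$ is uniquely up to $\pm 1$ of the form $b=p^{-k}a_b$ with $a_b\in R_q$ primitive at $p$ and $\text{Nm}(a_b)=p^{2k}$; equivalently, for any $n\geq k$ we have $p^n b\in R_q$ with $\text{Nm}(p^n b)=p^{2n}$. The key observation is that because $\sigma(p)=p$, the $F$-quadratic form satisfies $\text{det}_F(pa)=p^2\text{det}_F(a)$ and $\text{det}_F'(pa)=p^2\text{det}_F'(a)$, so the ratio $\text{det}_F(\cdot)/\text{det}_F'(\cdot)$ is invariant under scaling by powers of $p$. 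Consequently, summing over all (not necessarily primitive) $a\in R_q$ of norm $p^{2n}$ records exactly one contribution from each $b\in R_q[1/p]_1^{\times}/\{\pm 1\}$ of level at most $n$, and the limit $n\to\infty$ recovers the full sum. Dividing by the double-coset overcount $w_1w_2/2$ --- coming from the two copies $\alpha_i(\mathcal{O}_i^{\times})$ of order $w_i$ sharing the scalar subgroup $\{\pm 1\}$ --- this yields
\[
\log_p\Theta(D_1,D_2)\;=\;\frac{w_1w_2}{2}\,\lim_{n\to\infty}\sum_{([c_1],[c_2])}\,\sum_{\substack{b\in\mathcal{O}_1^{\times}\setminus R_q/\mathcal{O}_2^{\times}\\ \text{Nm}(b)=p^{2n}}}\log_p\!\left(\frac{\text{det}_F[c_1,c_2](b)}{\text{det}_F'[c_1,c_2](b)}\right).
\]

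Finally I apply Theorem \ref{quatcount} fibre by fibre: for any totally positive $\nu\in F^+$, the triples $(b,[c_1],[c_2])$ with $\text{det}_F[c_1,c_2](b)=\nu$ are in bijection with integral $\mathcal{O}_L$-ideals of relative norm $(\nu)\mathfrak{q}_1^{-1}\mathcal{D}_F$, yielding the weight $\rho(\nu\mathfrak{q}_1^{-1}\mathcal{D}_F)$. Proposition \ref{idealnorm} together with integrality of $I$ forces $\nu\in(\mathcal{D}_F^{-1}\mathfrak{q}_1)^+$, and the condition $\text{Nm}(b)=p^{2n}$ translates into $\text{Tr}(\nu)=p^{2n}$. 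Since $\text{det}_F'(b)=\sigma(\text{det}_F(b))$ we have $\log_p(\text{det}_F(b)/\text{det}_F'(b))=\log_p(\nu/\nu')$, and collecting terms then dividing through by $w_1w_2/2$ delivers the claimed identity. The main obstacle will be the second step: packaging the scaling-equivalence in $R_q[1/p]_1^{\times}$ into the single-level sums indexed by $\text{Tr}(\nu)=p^{2n}$, and verifying simultaneously that the $n\to\infty$ limit of these truncations recovers the $p$-adic convergence of the infinite product defining $\Theta(D_1,D_2)$ and that the stabiliser normalisation is exactly $2/(w_1w_2)$.
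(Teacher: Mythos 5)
Your proposal follows the same route as the paper's own proof: apply Proposition \ref{detFformula} to rewrite the cross-ratios as ratios $\operatorname{det}_F(b)/\operatorname{det}'_F(b)$, pass to $\log_p$ (where you dispatch the sign via $\log_p(-1)=0$, while the paper pairs $b$ with $-b$; these are equivalent), reindex the sum over $R_q[1/p]_1^{\times}$ by scaling into the integral-level sets $R_q(n)$ of norm $p^{2n}$, and then invoke Theorem \ref{quatcount} to switch from counting quaternions to counting $\mathcal{O}_L$-ideals, with the factor $w_1w_2/2$ absorbing the double-coset-versus-quaternion overcount. Your additional observation that the ratio $\operatorname{det}_F/\operatorname{det}'_F$ is literally invariant under scaling by $p$ (not merely $\log_p$-invariant) makes the reindexing step slightly more transparent than the paper's presentation, but the argument is the same.
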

\begin{proof}
By Proposition \ref{detFformula}, ignoring the sign by pairing each quaternion with its negative, we obtain
\[
\Theta( D_1, D_2 ) = \prod_{\substack{ [c_1], [c_2] } } \prod_{b \in R_{q}[1/p]_1^{\times}} \frac{\text{det}_F[c_1,c_2](b)}{\text{det}_F'[c_1,c_2](b)},
\]
For any $b \in R_{q}[1/p]_1^{\times}$, there exists some minimal $k \geq 0$ such that $p^k b \equalscolon B \in R_q$. This association induces a bijection
\[
R_{q}[1/p]_1^{\times} \xrightarrow{\sim} \bigsqcup_{k = 0}^{\infty} \left\{ B \in R_q \mid p \nmid B, \text{ Nm}(B) = p^{2k} \right\}.
\]
We define for any $n \geq 0$ the set 
\[
R_q(n) \colonequals \left\{ B \in R_q \mid \text{ Nm}(B) = p^{2n} \right\}.
\]
Now we observe that the association $b \mapsto p^{n-k}b$ induces a bijection
\[
\bigsqcup_{k = 0}^{n} \left\{ B \in R_q \mid p \nmid B, \text{ Nm}(B) = p^{2k} \right\} \xrightarrow{\sim} R_q(n).
\]
As such, taking the $p$-adic logarithm,
\[
\log_p \Theta( D_1, D_2 ) = \lim_{n \to \infty} \sum_{\substack{ [c_1], [c_2] } } \sum_{b \in R_q(n)} \log_p \left( \frac{\text{det}_F(b)}{\text{det}_F'(b)} \right).
\]
We switch the order of summation; instead of summing over all $b \in R_q(k)$ and recording its associated $\text{det}_F$-value, we will sum over each possible $\text{det}_F$-value and record how often it is reached by some $b \in R_q(k)$. Recalling that $b \in R_q(k)$ means that $\text{Tr}(\text{det}[c_1,c_2]_F(b)) = \text{Nm}(b) = p^{2k}$, we find
\[
\sum_{ \substack{ \nu \gg 0 \\ \text{Tr}(\nu) = p^{2n} } } \log_p \left( \frac{\nu}{\nu'} \right) \cdot \# \big\{ (b,[c_1],[c_2]) \in R_q(n) \times \text{Pic}(K_1) \times \text{Pic}(K_2) \mid \text{det}_F[c_1,c_2](b) = \nu \big\}.
\]
We solved this counting problem in Theorem \ref{quatcount}; taking care with units in $\mathcal{O}_1^{\times}$ and $\mathcal{O}_2^{\times}$, we write the above as
\[
\log_p \Theta( D_1, D_2 ) = \frac{w_1w_2}{2} \lim_{n \to \infty} \sum_{ \substack{ \nu \gg 0, \nu \in \mathcal{D}_F^{-1}\mathfrak{q}_1 \\ \text{Tr}(\nu) = p^{2n} } } \log_p \left( \frac{\nu}{\nu'} \right) \cdot \rho( \nu \mathfrak{q}_1^{-1} \mathcal{D}_F );
\]
this completes the proof.
\end{proof}
Recall that $\pi \in R_q$ denoted a quaternion with $\text{Nm}(\pi) = p$. Multiplication by $\pi$ induces a bijection
\[
R_q[1/p]_1^{\times} \colonequals \big\{ b \in R_q[1/p]^{\times} \mid \text{Nm}(b) = 1 \big\} \xrightarrow{\sim} \big\{ b \in R_q[1/p]^{\times} \mid \text{Nm}(b) = p \big\}\equalscolon R_q[1/p]_p^{\times},
\] 
with inverse map given by multiplication by $\overline{\pi}/p$. We now have the following result.
\begin{gevolg}\label{thetaprw}
It holds that
\[
\frac{2}{w_1w_2} \log_p \emph{Nm}\left( \Theta_p(D_1, D_2) \right) = \lim_{n \to \infty} \sum_{ \substack{ \nu \in (\mathcal{D}_F^{-1}\mathfrak{q}_1)^+ \\ \emph{Tr}(\nu) = p^{2n+1} } } \log_p \left( \frac{\nu}{\nu'} \right) \cdot \rho( \nu \mathfrak{q}_1^{-1} \mathcal{D}_F ),
\]
\end{gevolg}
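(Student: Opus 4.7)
The plan is to follow the proof of Theorem \ref{thetarw} essentially verbatim, with the single structural modification that unit-norm quaternions are replaced throughout by norm-$p$ quaternions. The already-noted bijection $\gamma \mapsto \gamma\pi$ from $R_q[1/p]_1^{\times}$ onto $R_q[1/p]_p^{\times}$ is the input that shifts the trace condition in the limit from $p^{2n}$ to $p^{2n+1}$.

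First I would expand the definition of $\Theta_p(D_1, D_2)$. By M\"obius invariance of the cross-ratio,
\[
\frac{\Theta(\tau_1,\tau_1';\pi\tau_2)}{\Theta(\tau_1,\tau_1';\pi\tau_2')} = \prod_{\gamma \in R_q[1/p]_1^{\times}} [\tau_1,\tau_1',\gamma\pi\tau_2,\gamma\pi\tau_2'],
\]
and reindexing along $b = \gamma\pi$ converts this into a product over $b \in R_q[1/p]_p^{\times}$. Applying Proposition \ref{detFformula} rewrites each factor as $-\text{det}_F(b)/\text{det}_F'(b)$, and pairing $b \leftrightarrow -b$ removes the signs. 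Summing over the class groups and taking $p$-adic logarithms yields
\[
\log_p \Theta_p(D_1, D_2) = \sum_{[c_1],[c_2]} \sum_{b \in R_q[1/p]_p^{\times}} \log_p\frac{\text{det}_F[c_1,c_2](b)}{\text{det}_F'[c_1,c_2](b)}.
\]

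Next I would adapt the $p$-rescaling argument of Theorem \ref{thetarw}. Every $b \in R_q[1/p]_p^{\times}$ has a unique expression $b = p^{-k}B$ with $B \in R_q$, $p \nmid B$ and $\text{Nm}(B) = p^{2k+1}$, and multiplication by $p^{n-k}$ furnishes a bijection
\[
\bigsqcup_{k=0}^n \bigl\{ B \in R_q : p \nmid B,\ \text{Nm}(B) = p^{2k+1} \bigr\} \xrightarrow{\sim} \bigl\{ B \in R_q : \text{Nm}(B) = p^{2n+1} \bigr\}.
\]
Because the ratio $\text{det}_F(b)/\text{det}_F'(b)$ is invariant under scaling $b$ by rational powers of $p$, this bijection preserves the summand, and the sum reorganises as a limit over $n$ of sums indexed by quaternions of norm $p^{2n+1}$. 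Swapping the order of summation to group by $\nu := \text{det}_F[c_1,c_2](B)$, whose trace is then precisely $p^{2n+1}$, and invoking Theorem \ref{quatcount}---which produces the factor $\rho(\nu\mathfrak{q}_1^{-1}\mathcal{D}_F)$ supported on $\nu \in (\mathcal{D}_F^{-1}\mathfrak{q}_1)^+$, together with the standard $w_1w_2/2$ unit correction---gives the claimed identity.

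The argument is largely routine once Theorem \ref{thetarw} is in hand. The one point that warrants care is the appearance of $\text{Nm}$ on the left-hand side, which is absent in Theorem \ref{thetarw}: this reflects the dependence of $\Theta_p(D_1,D_2)$ on the choice of $\pi$ up to conjugation in $R_q$, of which there are $p+1$ by Lemma 12 of \cite{Giam}. Compatibility of this norm with the above manipulations should reduce to observing that the bijection of Theorem \ref{quatcount} is Galois-equivariant, so that different choices of $\pi$ produce right-hand sides agreeing after the appropriate averaging; verifying this bookkeeping is the only genuine obstacle I anticipate.
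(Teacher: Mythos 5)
Your proposal follows essentially the same route as the paper's proof: reindex the product via $b \mapsto b\pi$ to pass from $R_q[1/p]_1^{\times}$ to $R_q[1/p]_p^{\times}$, then run the argument of Theorem \ref{thetarw} verbatim with the parity of the norm shifted from $p^{2k}$ to $p^{2k+1}$. The paper's proof is in fact a one-line observation of exactly this reindexing followed by ``identical reasoning as in the proof of Theorem \ref{thetarw}''; you have simply unpacked what that identical reasoning amounts to, and you have done so correctly, including the key parity bookkeeping that produces the trace condition $\mathrm{Tr}(\nu) = p^{2n+1}$.

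On your final caveat about the $\mathrm{Nm}$ appearing on the left-hand side: this is a thoughtful worry, but it does not play the role you suggest. Once the cross-ratio's M\"obius invariance converts $\prod_{\gamma}[\gamma\tau_1,\gamma\tau_1',\pi\tau_2,\pi\tau_2']$ into a product over the coset $R_q[1/p]_p^{\times} = R_q[1/p]_1^{\times}\cdot\pi$, the dependence on $\pi$ disappears entirely --- the coset, and hence the product, is the same for any choice of norm-$p$ element. So $\Theta_p(D_1,D_2)$ is already $\pi$-independent and no averaging over the $p+1$ conjugacy classes is needed; the paper itself makes no use of the $\mathrm{Nm}$ in the proof or in the downstream applications (Proposition \ref{giamatp}, Theorem \ref{giamconj2}), where $\Theta_p(D_1,D_2)$ appears without it. Your manipulations are therefore complete as stated, and the Galois-equivariance of Theorem \ref{quatcount} is not an obstacle here.
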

\begin{proof}
We simply observe that that
\[
\frac{\Theta(\tau_1, \tau_1'; \pi \tau_2)}{\Theta(\tau_1, \tau_1'; \pi \tau_2')} = \prod_{b \in R_{q}[1/p]_1^{\times}} \frac{ \big(\tau_1 - b \pi \tau_2\big)\big(\tau_1' - b \pi \tau_2'\big) }{ \big(\tau_1 - b\pi \tau_2'\big)\big(\tau_1' - b \pi \tau_2\big)} = \prod_{b \in R_q[1/p]_p^{\times}} \frac{ (\tau_1 - b\tau_2)(\tau_1' - b\tau_2') }{ (\tau_1 - b\tau_2')(\tau_1' - b\tau_2)};
\]
now we obtain our result through identical reasoning as in the proof of Theorem \ref{thetarw}.
\end{proof}

As our method to analyse the explicit values of the $p$-adic number $\Theta( D_1, D_2) / \Theta_p( D_1, D_2)$ ultimately only gives us an equality after taking the $p$-adic logarithm $\log_p$, to obtain a genuine equality, we must analyse the number of factors of $p$ occurring on both sides of the equation in Theorem \ref{giamconj2} separately. This is established in the following proofs.

\begin{lemma}\label{pnnuprop}
Let $\nu \in \mathcal{D}_F^{-1}$ be such that $\emph{Tr}(\nu) = p^n$ for some positive integer $n$ and suppose further that $v_p( \emph{Nm}(\nu) )$ is odd or that $v_{\mathfrak{p}_1}(\nu) \neq v_{\mathfrak{p}_2}(\nu)$. Then $p^n \mid \nu$. 
\end{lemma}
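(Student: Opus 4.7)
The plan is to work locally at the two primes $\mathfrak{p}_1$ and $\mathfrak{p}_2$ of $F$ above $p$ and use the ultrametric inequality together with the fact that the Galois automorphism $\sigma$ swaps these two primes. First I would record the basic set-up: since $p$ splits as $\mathfrak{p}_1 \mathfrak{p}_2$ in $F$ (both unramified of residue degree one), the different $\mathcal{D}_F$ is coprime to $p$, so membership in $\mathcal{D}_F^{-1}$ imposes no constraint at $p$ and we have $v_{\mathfrak{p}_i}(\nu) \geq 0$ for $i \in \{1,2\}$; moreover $v_{\mathfrak{p}_i}(p) = 1$ and $p^n \mid \nu$ (as fractional ideals) is equivalent to $v_{\mathfrak{p}_i}(\nu) \geq n$ for both $i$.

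Set $a = v_{\mathfrak{p}_1}(\nu)$ and $b = v_{\mathfrak{p}_2}(\nu)$, so that $v_p(\mathrm{Nm}(\nu)) = a + b$. The first observation is that both hypotheses force $a \neq b$: the condition $v_{\mathfrak{p}_1}(\nu) \neq v_{\mathfrak{p}_2}(\nu)$ is exactly $a \neq b$, while $a + b$ odd forces opposite parities and hence $a \neq b$ a fortiori. So both cases collapse into the single hypothesis $a \neq b$, which I would verify at the outset to unify the argument.

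Now I would exploit that $\sigma$ permutes $\{\mathfrak{p}_1,\mathfrak{p}_2\}$, giving $v_{\mathfrak{p}_1}(\sigma(\nu)) = b$ and $v_{\mathfrak{p}_2}(\sigma(\nu)) = a$. Writing $p^n = \mathrm{Tr}(\nu) = \nu + \sigma(\nu)$ and applying the strong triangle inequality at $\mathfrak{p}_1$ (where the two summands have distinct valuations $a$ and $b$), one gets
\[
n = v_{\mathfrak{p}_1}(p^n) = \min(a,b),
\]
and the same identity at $\mathfrak{p}_2$. Without loss of generality $a < b$, so $a = n$ and $b > n \geq n$; thus $v_{\mathfrak{p}_1}(\nu) \geq n$ and $v_{\mathfrak{p}_2}(\nu) \geq n$, which yields $p^n = \mathfrak{p}_1^n \mathfrak{p}_2^n \mid \nu$ as required.

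There is no real obstacle here: the entire argument is the ultrametric inequality, combined with the combinatorial observation that $\sigma$ interchanges the two places above $p$. The only point to be careful about is the case where $a = b$, but the hypotheses of the lemma precisely rule this out, so the strict-inequality case of the ultrametric inequality applies and produces the claimed equality $n = \min(a,b)$.
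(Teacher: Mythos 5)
Your argument is correct, and it takes a genuinely different route from the paper's. The paper works with the explicit presentation $\nu\sqrt{D} = (x + p^n\sqrt{D})/2$, factors $x = p^k y$ with $p \nmid y$, and, assuming $k < n$, shows via the formula $\mathrm{Nm}(\nu\sqrt{D}) = p^{2k}(y^2 - p^{2n-2k}D)/4$ that the hypotheses would force $p \mid y$, a contradiction. Your proof instead works entirely at the level of the local valuations $a = v_{\mathfrak{p}_1}(\nu)$ and $b = v_{\mathfrak{p}_2}(\nu)$: you first observe that both alternatives in the hypothesis collapse to the single condition $a \neq b$ (the first alternative, $a+b$ odd, being a strict special case of the second), then use that $\sigma$ interchanges the two primes above $p$ to get $v_{\mathfrak{p}_1}(\sigma(\nu)) = b$, and finally invoke the strict-case ultrametric equality for $p^n = \nu + \sigma(\nu)$ to obtain $n = \min(a,b)$, whence $a, b \geq n$ and $p^n \mid \nu$. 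Both arguments buy exactly the same conclusion; yours is more conceptual and self-contained, avoiding the explicit integral-basis bookkeeping and the somewhat terse ``the fractions still contain factors of $p$'' step in the paper. One small stylistic nit: the phrase ``so $a = n$ and $b > n \geq n$'' is awkward and slightly overstates what you use — all you need from $\min(a,b) = n$ is that both $a \geq n$ and $b \geq n$, and that is precisely what the ultrametric equality gives directly without any case split.
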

\begin{proof}
Let us explicitly write
\[
\nu \sqrt{D} = \frac{ x + p^n \sqrt{D} }{2}, \quad \text{so that} \quad \text{Nm}( \nu \sqrt{D} ) = \frac{ x^2 - p^{2n}D }{4}.
\]
Write $x = p^ky$ where $p \nmid y$. If $k \geq n$, we are done. If not, we find that
\[
\nu \sqrt{D} = p^k \frac{ y + p^{n-k} \sqrt{D} }{2} \quad \text{and} \quad \text{Nm}( \nu \sqrt{D} ) = p^{2k} \frac{ y^2 - p^{2n - 2k}D }{4}.
\]
By assumption, the fractions still contain factors of $p$. One deduces that $p \mid y$; this is a contradiction.
\end{proof}

\begin{prop}\label{giamatp}
It holds that
\[
\frac{\pm 2}{w_1w_2} v_p\left( \frac{\Theta( D_1, D_2 )}{\Theta_p( D_1, D_2 )} \right) = \sum_{\substack{x^2 < D \\ x^2 \equiv D \emph{ mod } 4N}} \delta(x) v_p \left( F\left( \frac{D-x^2}{4N} \right) \right).
\]
\end{prop}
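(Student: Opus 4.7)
The plan is to prove Proposition \ref{giamatp} by computing the $p$-adic valuation of both sides directly and matching them, in parallel with (but independently of) the $\log_p$-computation of Theorem \ref{thetarw} and Corollary \ref{thetaprw}. The key input on the analytic side is Proposition \ref{detFformula}, which converts each factor of $\Theta/\Theta_p$ into the ratio $-\det_F(b)/\det_F'(b)$, whose $v_p$ under the fixed $\mathfrak{p}_1$-embedding $F \hookrightarrow \mathbb{Q}_p$ is simply $v_{\mathfrak{p}_1}(\det_F(b)) - v_{\mathfrak{p}_2}(\det_F(b))$. Since this valuation is invariant under scaling $b$ by a rational, the same bijection $R_q[1/p]_1^{\times} \leftrightarrow \bigsqcup_k \{B \in R_q \text{ primitive}, \ \text{Nm}(B) = p^{2k}\}$ used in Theorem \ref{thetarw} (and its shifted version for $\Theta_p$) rewrites the LHS as
\[
\frac{2}{w_1 w_2} v_p\!\left(\frac{\Theta(D_1,D_2)}{\Theta_p(D_1,D_2)}\right) = \sum_{n \geq 0} (-1)^n \!\!\! \sum_{\substack{\nu \in (\mathcal{D}_F^{-1}\mathfrak{q}_1)^+ \\ \text{Tr}(\nu) = p^n}} \!\!\! \bigl(v_{\mathfrak{p}_1}(\nu) - v_{\mathfrak{p}_2}(\nu)\bigr)\, \rho(\nu \mathfrak{q}_1^{-1}\mathcal{D}_F),
\]
where the multiplicity $\rho(\nu \mathfrak{q}_1^{-1}\mathcal{D}_F)$ arises from applying Theorem \ref{quatcount} to count triples $(B,[c_1],[c_2])$ with $\det_F[c_1,c_2](B) = \nu$, and the primitivity of $B$ is exactly what is encoded by the alternating-sign telescoping.

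For the RHS, I would invoke Proposition \ref{thm1}, rewriting each $F((D-x^2)/(4N))^{\delta(x)}$ as $\prod_{\mathfrak{a}\in\mathcal{I}} \exp(\delta(\mathfrak{a}) X(\mathfrak{a},\nu_x))$ with $\nu_x = (x+\sqrt{D})/(2\sqrt{D})$, and then take $v_p$. By Theorem \ref{philthm}, the only surviving contributions come from $\mathfrak{a}$ that both divide $\nu_x \mathcal{D}_F$ and contain one of the primes $\mathfrak{p}_i$ above $p$; in that case $v_p(\exp X(\mathfrak{a},\nu_x)) = \tfrac{1}{2} v_{\mathfrak{p}_i}(\nu_x)\, \rho(\nu_x \mathfrak{a}^{-1}\mathfrak{p}_i^{-1}\mathcal{D}_F)$. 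Using the uniqueness of the distinguished $\mathfrak{a} \mid \nu_x \mathcal{D}_F$ (as in the proof of Proposition \ref{thm1}) and the sign correspondence $\delta(x) = \delta(\mathfrak{a})$, the RHS collapses to a sum purely over $\nu_x \in \mathcal{D}_F^{-1,+}$ with $\text{Tr}(\nu_x) = 1$.

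The main obstacle is reconciling the two sums: the LHS ranges over $\nu$ with $\text{Tr}(\nu) = p^n$ for all $n \geq 0$, whereas the RHS involves only $\text{Tr}(\nu) = 1$. The plan is to show that only the $n = 0$ term on the LHS survives. Write $\nu = (x + p^n \sqrt{D})/(2\sqrt{D})$; under the $\mathfrak{p}_1$-embedding one has $v_{\mathfrak{p}_1}(\nu) = v_p(x + p^n u)$ and $v_{\mathfrak{p}_2}(\nu) = v_p(x - p^n u)$ for $u = \sqrt{D} \in \mathbb{Z}_p^\times$, so the difference $v_{\mathfrak{p}_1}(\nu) - v_{\mathfrak{p}_2}(\nu)$ vanishes unless $v_p(x) = n$. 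In that range, writing $x = p^n y$ with $p \nmid y$, the pair $(y, p^n)$ parametrizes the same set of ideal classes as $(y, 1)$ does at level $n = 0$, and the $\rho$-multiplicities line up because the factor $p^n$ is prime to $\mathfrak{q}_1 \mathcal{D}_F$; the telescoping alternating sum in $n$ cancels all but the $n = 0$ contribution. After this cancellation, the surviving $n = 0$ sum is exactly the quantity obtained on the RHS in the previous paragraph, completing the identification.

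The main technical difficulty is precisely this cancellation argument across $n$: one must pair each $\nu$ with $\text{Tr}(\nu) = p^n$, $v_p(x) = n$ to a corresponding $\nu_0$ at a lower trace, while carefully tracking how the $\rho$-function and the primitivity condition on $B$ interact with multiplication by $p$ in $R_q$. I expect this pairing to follow from the multiplicativity of $\rho$ at inert primes of $L/F$ together with the fact that $\mathfrak{p}_i$ is inert in $L/F$, which forces $\rho(\mathfrak{p}_i^{2k})=1$ and makes the $p$-part of the counting trivial enough for the telescoping to succeed.
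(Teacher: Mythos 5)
Your overall strategy tracks the paper's closely: start from Proposition \ref{detFformula}, transport the product over $R_q[1/p]_1^\times$ to ideals via Theorem \ref{quatcount}, argue that only the trace-1 contributions survive, and match with the valuation computation from Proposition \ref{thm1} and Theorem \ref{philthm}. Your observations on the surviving $\nu$ (that the difference $v_{\mathfrak{p}_1}(\nu) - v_{\mathfrak{p}_2}(\nu)$ vanishes unless $v_p(x) = n$, which is a mild sharpening of Lemma \ref{pnnuprop}) and on the stability of the $\rho$-multiplicities under multiplication by $p^{2n}$ are both correct and are precisely the ingredients the paper uses. The RHS reduction via the distinguished $\mathfrak{a} \mid \nu_x\mathcal{D}_F$ and the sign matching $\delta(x) = \delta(\mathfrak{a})$ is also fine.

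However, the central identity you write for the LHS, namely
\[
\frac{2}{w_1 w_2} v_p\!\left(\frac{\Theta(D_1,D_2)}{\Theta_p(D_1,D_2)}\right) \stackrel{?}{=} \sum_{n \geq 0} (-1)^n \sum_{\substack{\nu \in (\mathcal{D}_F^{-1}\mathfrak{q}_1)^+ \\ \mathrm{Tr}(\nu) = p^n}} \bigl(v_{\mathfrak{p}_1}(\nu) - v_{\mathfrak{p}_2}(\nu)\bigr)\, \rho(\nu \mathfrak{q}_1^{-1}\mathcal{D}_F),
\]
is not a valid formula, and the claimed ``telescoping alternating sum'' is not the mechanism that makes the argument work. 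Write $T(n)$ for the inner sum at level $p^n$. Your own later analysis shows that $T(2n) = T(0)$ for all $n$ (constant, generally nonzero) and $T(2n+1) = 0$, so the proposed alternating series $T(0) - T(1) + T(2) - \cdots$ simply diverges; there is no cancellation. The reason is that primitivity of $B$ in the parametrisation of $R_q[1/p]_1^\times$ is not encoded by a sign $(-1)^n$ but by a \emph{difference} in levels: the telescoping of the partial products $\prod_{k \leq n} \prod_{\text{prim},\, p^{2k}} = \prod_{\text{all},\, p^{2n}}$ collapses the primitive count into the total count $T(2n)$ at a single level, and since the infinite product converges to a nonzero element, $v_p(\Theta)$ equals the \emph{eventual value} of the valuations of the partial products, i.e., $v_p(\Theta) = \lim_n T(2n)$ — a limit of an eventually constant sequence, not a series. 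The alternating sign in your formula comes from the separate fact that $\Theta$ and $\Theta_p$ are indexed by even and odd powers of $p$ respectively, which just means one computes $\lim T(2n) - \lim T(2n+1)$; it does not interact with primitivity. Once the LHS is written as $\lim_n T(2n) - \lim_n T(2n+1)$, the very observations you make (that the summand is supported on $\nu$ lifted from trace 1 and is invariant under the lift, so $T(2n) = T(0)$; and that the ideal $p^{2n+1}\mu\mathfrak{q}_1^{-1}\mathcal{D}_F$ has an odd $\mathfrak{p}_i$-valuation at an inert prime, forcing $\rho = 0$ and $T(2n+1) = 0$) complete the argument, matching the paper. But as written, the step from the product formula to an alternating series is a genuine error that a reader cannot repair without replacing it by the limit-of-constant-sequence argument.
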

\begin{proof}
The proof of Theorem \ref{thetarw} shows that, if we neglect to apply $\log_p$, we obtain
\[
\Theta( D_1, D_2 )^{ \frac{ \pm 2 }{w_1w_2} } = \lim_{n \to \infty} \prod_{ \substack{ \nu \in (\mathcal{D}_F^{-1}\mathfrak{q}_1)^+ \\ \text{Tr}(\nu) = p^{2n} } } \left( \frac{\nu}{\nu'} \right)^{\rho( \nu \mathfrak{q}_1^{-1} \mathcal{D}_F )}
\]
and similarly for $\Theta_p( D_1, D_2 )$. We claim that the $\mathfrak{p}_1$-adic valuation of each term in the limit is constant. Indeed, only terms with $v_{\mathfrak{p}_1}(\nu) \neq v_{\mathfrak{p}_1}(\nu') = v_{\mathfrak{p}_2}(\nu)$ can contribute to the $\mathfrak{p}_1$-adic valuation. By Lemma \ref{pnnuprop}, this means that only those $\nu$ lifted from trace 1 can contribute. As indeed $\rho( p^{2n} \nu \mathfrak{q}_1^{-1} \mathcal{D}_F ) = \rho( \nu \mathfrak{q}_1^{-1} \mathcal{D}_F )$, we conclude that
\begin{align*}
\frac{\pm 2}{w_1w_2} v_{\mathfrak{p}_1}\left( \Theta(D_1, D_2) \right) &= \sum_{ \substack{ \nu \in (\mathcal{D}_F^{-1}\mathfrak{q}_1)^+ \\ \text{Tr}(\nu) = 1 } } \rho( \nu \mathfrak{q}_1^{-1} \mathcal{D}_F ) \big( v_{\mathfrak{p}_1}(\nu) - v_{\mathfrak{p}_1}(\nu') ) \\ 
 &= \sum_{ \substack{ \nu \in (\mathcal{D}_F^{-1}\mathfrak{p}_1\mathfrak{q}_1)^+ \\ \text{Tr}(\nu) = 1 } } \rho( \nu \mathfrak{q}_1^{-1} \mathcal{D}_F ) v_{\mathfrak{p}_1}(\nu) - \sum_{ \substack{ \nu \in (\mathcal{D}_F^{-1}\mathfrak{p}_1\mathfrak{q}_2)^+ \\ \text{Tr}(\nu) = 1 } } \rho( \nu \mathfrak{q}_2^{-1} \mathcal{D}_F ) v_{\mathfrak{p}_1}(\nu).
\end{align*}
On the other hand, because either $v_{\mathfrak{p}_1}( \nu ) = 0$ or $v_{\mathfrak{p}_2}( \nu ) = 0$ for $\nu$ of trace 1, it always holds that $\rho( p^{2n+1} \nu \mathfrak{q}_1^{-1} \mathcal{D}_F ) = 0$, which has the consequence that $v_{\mathfrak{p}_1}( \Theta_p( D_1, D_2 ) ) = 0$. For similar reasons, only those $\nu$ for which $v_{\mathfrak{p}_1}(\nu)$ is even can contribute to the sum expressing $v_p( \Theta(D_1, D_2) )$. This means that the prime $p$ divides the quantity $\text{Nm}(\nu \sqrt{D})/N = (D - x^2)/4N$ an odd number of times; whence its $F$-value will be a power of the prime $p$ by definition. The agreement between the exponent in the definition of the $F$-value and the function $\rho$ has been shown before in the proof of Proposition \ref{thm1}. Now repeat for $\mathfrak{p}_2$ and add.
\end{proof}

\subsection{Extracting \texorpdfstring{$a_{\nu}$}{av} from \texorpdfstring{$\widetilde{\rho}$}{rho}}

In this subsection we will extract the $p$-adic modular form that is associated with a particular nearly ordinary deformation as considered in the previous section, of which we have proved its modularity in Theorem \ref{R=T}. Explicitly, we choose the equivalence class of the lift
\[
\widetilde{\rho} = \left( 1 + \epsilon \begin{pmatrix} a & 0 \\ 0 & d \end{pmatrix} \right) \begin{pmatrix} 1 & \chi \eta \\ 0 & \chi \end{pmatrix}
\]
where $a = -d \in \text{Hom}( G_F, \mathbb{Q}_p )$. Note that this is in fact a deformation, because our choice $c = 0$ forces $a,d \in \text{Hom}( G_F, \mathbb{Q}_p )$ as a result of Lemma \ref{xwmap}. The lines $\langle e_1 \rangle$ and $\langle e_2 \rangle$ are fixed by $G_{\mathfrak{p}_1}$ and $G_{\mathfrak{p}_2}$ respectively. Using that $\chi$ is trivial on the inertia subgroups, it follows that the quotient characters are given by
\[
\mu_{\mathfrak{p}_1} = d|_{I_{\mathfrak{p}_1}} = 1 - \text{log}_p(\chi_p)\epsilon \quad \text{and} \quad \mu_{\mathfrak{p}_2} = a|_{I_{\mathfrak{p}_2}} = 1+\text{log}_p(\chi_p) \epsilon.
\]
After identifying $( \mathcal{O}_F \otimes \mathbb{Z}_p )^{\times} \cong \mathcal{O}_{F_{\mathfrak{p}_1}}^{\times} \times \mathcal{O}_{F_{\mathfrak{p}_2}}^{\times} \cong I_{\mathfrak{p}_1} \times I_{\mathfrak{p}_2}$, the weight character is given by $\mu_{\mathfrak{p}_1} \times \mu_{\mathfrak{p}_2}$. In particular, the weight character for the diagonal restriction of this modular form can be computed as the composition
\[
(\mathbb{Z} \otimes \mathbb{Z}_p)^{\times} \xrightarrow{\Delta} ( \mathcal{O}_F \otimes \mathbb{Z}_p )^{\times} \cong \mathcal{O}_{F_{\mathfrak{p}_1}}^{\times} \times \mathcal{O}_{F_{\mathfrak{p}_2}}^{\times} \to \mathbb{Q}_p[\epsilon],
\]
where $\Delta$ denotes the diagonal embedding. We explicitly compute that
\[
x \mapsto ( x, x ) \mapsto \mu_{\mathfrak{p}_1}(x) \mu_{\mathfrak{p}_2}(x) = (-1 + \log_p(x)\epsilon)(1 + \log_p(x)\epsilon) = -1.
\]
In particular, the diagonal restriction is of constant weight. This shows that the above deformation describes an infinitesimal family of modular forms in the \emph{anti-parallel} weight direction.

Recall from Proposition \ref{gfhomdim} that
\[
\text{Hom}(G_F, \mathbb{Q}_p) \cong \text{ker}\Big( \text{Hom}(F_{\mathfrak{p}_1}^{\times} \times F_{\mathfrak{p}_2}^{\times}, \mathbb{Q}_p) \to \text{Hom}(\mathcal{O}_F[1/p]^{\times}, \mathbb{Q}_p) \Big)
\]
is a 1-dimensional $\mathbb{Q}_p$-vector space. This kernel is spanned by the map
\[
\left( F \otimes \mathbb{Z}_p \right)^{\times} \xrightarrow{\sim} F_{\mathfrak{p}_1}^{\times} \times F_{\mathfrak{p}_2}^{\times} \to \mathbb{Q}_p
\]
sending $(x,y)$ to $\text{log}_p( x y )$. Indeed, any element $x$ from $F$ embeds as $(x,\sigma(x))$, and if $u \in \mathcal{O}_F^{\times}[1/p]$ then $u \cdot \sigma(u) = \text{Nm}^F_{\mathbb{Q}}(u) \in \pm p^{\mathbb{Z}}$. As such, its image under the Iwasawa brach of the $p$-adic logarithm vanishes, as by definition $\log_p(p) = 0$. We now explicitly choose $a = -d$ to equal this map, which can also be written as $\log_p \circ \chi_p^{\text{cyc}}$, where $\chi_p^{\text{cyc}}$ denotes the $p$-adic cyclotomic character.

We now extract from the traces of $\widetilde{\rho}$ evaluated at $\text{Frob}_{\mathfrak{l}}$ for $\mathfrak{l}$ a prime of $F$ a morphism $\varphi \colon \mathbb{T} \to \mathbb{Q}[\epsilon]$. Recall that $\mathbb{T}$ is generated by the operators $T_{\mathfrak{l}}$ for all primes $\mathfrak{l}$ of $F$ coprime to $p$, and the operators $U_{\pi_1}$ and $U_{\pi_2}$ where $\pi_1, \pi_2 \in \mathbb{A}_F^{\times}$ are local uniformisers at $\mathfrak{p}_1$ and $\mathfrak{p}_2$ respectively. 

\begin{prop}
Let $\mathfrak{l} \nmid p$ be a prime ideal of $F$. Then
\[
\varphi( T_{\mathfrak{l}} ) = 
\begin{cases}
2 &\text{if $\chi(\mathfrak{l}) = 1$;} \\
2\log_p(\emph{Nm}(\mathfrak{l}))\epsilon &\text{if $\chi(\mathfrak{l}) = -1$.}
\end{cases}
\]
\end{prop}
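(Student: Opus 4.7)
The plan is to exploit the $R=T$ theorem (Theorem \ref{R=T}) together with the defining property of $\mathcal{T}$ established in the proof of Lemma \ref{modularitymapsurj}, which reads $\mathcal{T}(\text{Tr}(\rho^{\text{univ}}(\text{Frob}_{\mathfrak{l}}))) = T_{\mathfrak{l}}$ for all $\mathfrak{l} \nmid p$. Composing with the morphism $R_{\rho_\eta}^{\text{no}} \to \mathbb{Q}_p[\epsilon]$ classifying the explicit deformation $\widetilde{\rho}$, this gives the clean identity
\[
\varphi(T_{\mathfrak{l}}) = \text{Tr}\big(\widetilde{\rho}(\text{Frob}_{\mathfrak{l}})\big),
\]
so the proposition reduces to a direct matrix computation followed by a bookkeeping of what $a$ and $\chi$ do on $\text{Frob}_{\mathfrak{l}}$.

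Next I would expand the product defining $\widetilde{\rho}$ explicitly: for any $\sigma \in G_F$,
\[
\widetilde{\rho}(\sigma) = \begin{pmatrix} 1 + a(\sigma)\epsilon & (1+a(\sigma)\epsilon)\chi(\sigma)\eta(\sigma) \\ 0 & (1+d(\sigma)\epsilon)\chi(\sigma) \end{pmatrix},
\]
so that
\[
\text{Tr}(\widetilde{\rho}(\sigma)) = 1 + \chi(\sigma) + \big(a(\sigma) + d(\sigma)\chi(\sigma)\big)\epsilon.
\]
Substituting $d = -a$ turns the $\epsilon$-coefficient into $a(\sigma)(1 - \chi(\sigma))$. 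Now I would split on $\chi(\mathfrak{l})$, using that $\chi(\text{Frob}_{\mathfrak{l}}) = \chi(\mathfrak{l})$ since $\chi$ is a class-group character. In the split case $\chi(\mathfrak{l}) = 1$, the constant term is $2$ and the $\epsilon$-coefficient vanishes, giving $\varphi(T_{\mathfrak{l}}) = 2$. In the inert case $\chi(\mathfrak{l}) = -1$, the constant term is $0$ and the $\epsilon$-coefficient becomes $2a(\text{Frob}_{\mathfrak{l}})$.

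Finally I would identify $a(\text{Frob}_{\mathfrak{l}})$ with $\log_p(\text{Nm}(\mathfrak{l}))$: by the paragraph preceding the statement, $a = \log_p \circ \chi_p^{\text{cyc}}$, and since $\mathfrak{l} \nmid p$, the arithmetic Frobenius at $\mathfrak{l}$ acts on $p$-power roots of unity by raising to the $\text{Nm}(\mathfrak{l})$-th power, i.e.\ $\chi_p^{\text{cyc}}(\text{Frob}_{\mathfrak{l}}) = \text{Nm}(\mathfrak{l})$. Substituting yields $\varphi(T_{\mathfrak{l}}) = 2\log_p(\text{Nm}(\mathfrak{l}))\epsilon$ in the inert case, matching the claim. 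This step is essentially routine; the only real obstacle is being careful that the identification $a = \log_p \circ \chi_p^{\text{cyc}}$ is normalised so that no extra scalar factor of $2$ or similar slips in, and that the passage from $\chi$ on $\text{Pic}(F)^+$ to $\chi$ on $G_F$ matches the conventions used throughout Section \ref{modularity}. Once those identifications are fixed, the proposition follows from the trace computation in two lines.
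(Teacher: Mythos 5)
Your proposal is correct and follows essentially the same route as the paper's proof: both identify $\varphi(T_{\mathfrak{l}})$ with $\mathrm{Tr}(\widetilde{\rho}(\mathrm{Frob}_{\mathfrak{l}}))$ via Theorem \ref{heckealgrep} (equivalently, the $R=T$ machinery), compute the trace of $\widetilde{\rho}(\sigma)$ as $1 + \chi(\sigma) + (1-\chi(\sigma))\log_p(\chi_p^{\mathrm{cyc}}(\sigma))\epsilon$ after substituting $d=-a$, and then split on $\chi(\mathfrak{l})=\pm 1$, finishing with $\chi_p^{\mathrm{cyc}}(\mathrm{Frob}_{\mathfrak{l}}) = \mathrm{Nm}(\mathfrak{l})$. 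You are slightly more explicit than the paper about writing out the matrix product and about which maps compose to give $\varphi$, but the content is the same.
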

\begin{proof}
By Theorem \ref{heckealgrep}, we have $\varphi( T_{\mathfrak{l}} ) = \text{Tr}(\tilde{\rho}(\text{Frob}_{\mathfrak{l}}))$ as long as $\mathfrak{l} \nmid p$. It is easy to see that
\[
\text{Tr}( \widetilde{\rho}( \tau ) ) = 1 + \chi( \tau ) + (1 - \chi( \tau ) )\log_p( \chi_p^{\text{cyc}}( \tau ) )
\]
for all $\tau \in G_F$. Now we must split cases. If the prime ideal $\mathfrak{l} \nmid p$ splits in the field extension $L / F$, then $\text{Frob}_{\mathfrak{l}}$ is trivial in $\text{Gal}(L/F)$ and as such, $\chi( \text{Frob}_{\mathfrak{l}} ) = 1$ and the expression for the trace above yields the result immediately. If the prime ideal $\mathfrak{l} \nmid p$ is inert in the field extension $L / F$, then $\text{Frob}_{\mathfrak{l}}$ is nontrivial in $\text{Gal}(L/F)$ and as such, $\chi( \text{Frob}_{\mathfrak{l}} ) = -1$. We then find that
\[
\text{Tr}( \tilde{\rho}(\text{Frob}_{\mathfrak{l}}) ) = 2\log_p ( \chi_p^{\text{cyc}}( \text{Frob}_{\mathfrak{l}} ) )\epsilon = 2\log_p(\text{Nm}(\mathfrak{l}) )\epsilon;
\]
this completes the proof.
\end{proof}
Let $\pi_1, \pi_2 \in \mathbb{A}_F^{\times}$ be local uniformisers at $\mathfrak{p}_1$ and $\mathfrak{p}_2$ respectively, being trivial at all other places. Finding the images of $U_{\pi_1}$ and $U_{\pi_2}$ under the morphism $\varphi$ works slightly differently.

\begin{prop}\label{Upiimg}
Let $\pi_1, \pi_2 \in \mathbb{A}_F$ be as above. Then
\[
\varphi(U_{\pi_1}) = -1 + \log_p( \pi_1 )\epsilon \quad \text{and} \quad \varphi(U_{\pi_2}) = 1 + \log_p( \pi_2 )\epsilon.
\]
\end{prop}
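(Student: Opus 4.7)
The plan is to combine the last bullet of Theorem \ref{heckealgrep}---which identifies $U_x$ with $\delta_i(x)$ for $x \in F_{\mathfrak{p}_i}^\times$ via the local Artin map---with the explicit form of the lift $\widetilde{\rho}$. Through the $R = T$ isomorphism of Theorem \ref{R=T}, the morphism $\varphi \colon \mathbb{T} \to \mathbb{Q}_p[\epsilon]$ is exactly the one attached to the deformation $\widetilde{\rho}$, so one has $\varphi(U_{\pi_i}) = \widetilde{\delta}_i(\pi_i)$, where $\widetilde{\delta}_i$ denotes the character by which $G_{\mathfrak{p}_i}$ acts on the quotient of $\widetilde{\rho}|_{G_{\mathfrak{p}_i}}$ by its stable line $L_i$.

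First I would read off $\widetilde{\delta}_i$ directly from $\widetilde{\rho}$. On $G_{\mathfrak{p}_1}$ the fixed line is $L_1 = \langle e_1 \rangle$, with quotient character $\widetilde{\delta}_1 = (1 + d\epsilon)\chi$. On $G_{\mathfrak{p}_2}$ the cocycle $\eta$ vanishes by the condition imposed just before Proposition \ref{nodefring}, so $\widetilde{\rho}|_{G_{\mathfrak{p}_2}}$ is diagonal; taking $L_2 = \langle e_2 \rangle$ to fit the nearly ordinary convention, the quotient character becomes $\widetilde{\delta}_2 = 1 + a\epsilon$.

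The remaining task is to evaluate these two characters at the local uniformizer. Using $F_{\mathfrak{p}_i} \cong \mathbb{Q}_p$, decompose $\pi_i = u_i \cdot p$ with $u_i \in \mathbb{Z}_p^{\times}$. Three ingredients then suffice: (i) $\chi$ is unramified at the primes above $p$ with $\chi(\mathrm{Frob}_{\mathfrak{p}_i}) = -1$, so $\chi(\pi_1) = -1$; (ii) the cyclotomic character is trivial on a Frobenius lift at $p$ (because $\mathbb{Q}_p(\zeta_{p^{\infty}})/\mathbb{Q}_p$ is totally ramified) and agrees with the identity on inertia $\mathbb{Z}_p^{\times}$; (iii) the Iwasawa branch satisfies $\log_p(p) = 0$. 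Putting these together gives $a(\pi_i) = \log_p(u_i) = \log_p(\pi_i)$ and $d(\pi_i) = -\log_p(\pi_i)$, so that $\varphi(U_{\pi_1}) = (-1)(1 - \log_p(\pi_1)\epsilon) = -1 + \log_p(\pi_1)\epsilon$ and $\varphi(U_{\pi_2}) = 1 + \log_p(\pi_2)\epsilon$, as claimed.

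The main (minor) obstacle is notational: one must align Theorem \ref{heckealgrep}'s upper-triangular convention (the fixed line is spanned by the \emph{first} basis vector) with our choice $L_2 = \langle e_2 \rangle$, which entails a swap of basis vectors on $G_{\mathfrak{p}_2}$ before reading off $\widetilde{\delta}_2$. Beyond this, the computation is essentially mechanical once the Iwasawa convention $\log_p(p) = 0$ is invoked to render the final answer independent of the geometric-versus-arithmetic normalisation of local reciprocity.
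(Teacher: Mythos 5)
Your proposal is correct and follows the same route as the paper's proof: invoke the last bullet of Theorem \ref{heckealgrep} to identify $\varphi(U_{\pi_i})$ with the value of the quotient character of $\widetilde{\rho}|_{G_{\mathfrak{p}_i}}$ at the local uniformiser, then read that character off from the explicit matrix for $\widetilde{\rho}$ and evaluate using $\log_p(p)=0$. Your extra remarks on the basis swap at $\mathfrak{p}_2$ and on the cyclotomic-character description of $a=-d$ are elaborations of steps the paper treats implicitly, not a different argument.
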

\begin{proof}
By Theorem \ref{heckealgrep}, we obtain the images of $U_{\pi}$ and $U_{\pi'}$ not as traces of $\tilde{\rho}$, but as the image of the local characters $\mu_{\mathfrak{p}_1}$ and $\mu_{\mathfrak{p}_2}$. The local character in the first case is
\[
\mu_{\mathfrak{p}_1}(\pi, 1) = \chi( \pi ) + \chi( \pi ) d(\pi ) \epsilon = -1 + \log_p( \pi ) \epsilon.
\]
Completely similarly, $\mu_{\mathfrak{p}_2}(1, \pi') = 1 + a(\pi' ) \epsilon = 1 + \log_p( \pi' ) \epsilon$, completing the proof.
\end{proof}
In order to continue with higher powers of prime ideals, we must also determine the images of the diamond operators. Fortunately, this is straightforward.
 
\begin{lemma}
For any prime ideal $\mathfrak{l} \nmid p$ of $F$, it holds that
\[
\varphi( \langle \mathfrak{l} \rangle \emph{Nm}(\mathfrak{l}) ) = \chi( \emph{Frob}_{\mathfrak{l}} ).
\]
\end{lemma}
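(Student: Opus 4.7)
The plan is to use the characterisation of $\langle \mathfrak{l} \rangle \mathrm{Nm}(\mathfrak{l})$ as a determinant coming from Theorem \ref{heckealgrep} and transfer it through $\varphi$ to the deformation $\widetilde{\rho}$. Specifically, the second bullet of Theorem \ref{heckealgrep} gives the identity
\[
\langle \mathfrak{l} \rangle \mathrm{Nm}(\mathfrak{l}) = \det\bigl( \pi(\mathrm{Frob}_{\mathfrak{l}}) \bigr)
\]
inside $\mathbb{T}$. Since $\varphi$ is, by its very construction, the ring homomorphism $\mathbb{T} \to \mathbb{Q}_p[\epsilon]$ extracted from the nearly ordinary deformation $\widetilde{\rho}$ via the universal property together with the isomorphism $\mathcal{T} \colon R_{\rho_\eta}^{\mathrm{no}} \xrightarrow{\sim} \mathbb{T}$ of Theorem \ref{R=T}, we have $\varphi( \langle \mathfrak{l} \rangle \mathrm{Nm}(\mathfrak{l}) ) = \det\bigl( \widetilde{\rho}(\mathrm{Frob}_{\mathfrak{l}}) \bigr)$ for $\mathfrak{l} \nmid p$.

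Next I would compute this determinant directly from the explicit formula
\[
\widetilde{\rho} = \left( 1 + \epsilon \begin{pmatrix} a & 0 \\ 0 & d \end{pmatrix} \right) \rho_\eta,
\]
so that $\det(\widetilde{\rho}) = \det(1 + \epsilon \, \mathrm{diag}(a,d)) \cdot \det(\rho_\eta) = (1 + (a+d)\epsilon) \cdot \det(\rho_\eta)$. The specific choice $a = -d = \log_p \circ \chi_p^{\mathrm{cyc}}$ made just above in the extraction of $\varphi$ forces $a + d = 0$, so the infinitesimal correction vanishes and $\det(\widetilde{\rho}) = \det(\rho_\eta) = \chi$ as characters of $G_F$. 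Evaluating at $\mathrm{Frob}_{\mathfrak{l}}$ then yields $\varphi( \langle \mathfrak{l} \rangle \mathrm{Nm}(\mathfrak{l}) ) = \chi(\mathrm{Frob}_{\mathfrak{l}})$, as claimed.

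There is no real obstacle here: the lemma is essentially a sanity check confirming that the ``anti-parallel'' weight direction chosen for $\widetilde{\rho}$ preserves the determinant character, consistent with the diagonal weight computation $\mu_{\mathfrak{p}_1}(x) \mu_{\mathfrak{p}_2}(x) = -1$ performed earlier in the subsection. The only point that warrants a brief mention in the write-up is the justification that $\det \pi(\mathrm{Frob}_{\mathfrak{l}})$ and $\det \widetilde{\rho}(\mathrm{Frob}_{\mathfrak{l}})$ match under $\varphi$, which follows formally from $\widetilde{\rho} = \varphi \circ \rho^{\mathrm{univ}}$ and $\mathcal{T}(\rho^{\mathrm{univ}}) = \pi$, exactly as in the analogous verifications for $T_{\mathfrak{l}}$ and $U_\pi$ in Lemma \ref{modularitymapsurj} and Proposition \ref{Upiimg}.
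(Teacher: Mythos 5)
Your proposal is correct and follows the same route as the paper: invoke the determinant formula from Theorem \ref{heckealgrep}, transfer it through $\varphi$ to $\det(\widetilde{\rho})$, and use $a + d = 0$ to see that the infinitesimal correction $(1+(a+d)\epsilon)$ is trivial, leaving $\det(\rho_\eta) = \chi$. You have merely made explicit the intermediate computation that the paper compresses into the phrase ``since we kept the determinant constant as $a+d=0$.''
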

\begin{proof}
Again by Theorem \ref{heckealgrep}, the image of $\text{Frob}_{\mathfrak{l}}$ has determinant $\varphi( \langle \mathfrak{l} \rangle \text{Nm}(\mathfrak{l}) )$. In our case, since we kept the determinant constant as $a + d = 0$, this is simply $\chi( \text{Frob}_{\mathfrak{l}} )$.
\end{proof}
\begin{prop}\label{Tellnimgs}
Let $\mathfrak{l} \nmid p$ be a prime ideal of $F$ and $n \geq 0$ an integer. Then
\[
\varphi( T_{\mathfrak{l}^n} ) = 
\begin{cases}
n+1 &\text{if $\chi(\mathfrak{l}) = 1$;} \\
(n+1)\log_p( \emph{Nm}(\mathfrak{l}) )\epsilon &\text{if $\chi(\mathfrak{l}) = -1$ and $n$ is odd;} \\
1 &\text{if $\chi(\mathfrak{l}) = -1$ and $n$ is even.}
\end{cases}
\]
Further, it holds that
\begin{align*}
\varphi( U_{\pi_1^n} ) &= (-1)^n ( 1 - n\log_p( \pi_1 ) \epsilon ); \\
\varphi( U_{\pi_2^n} ) &= 1 + n\log_p( \pi_2 )\epsilon. 
\end{align*}
\end{prop}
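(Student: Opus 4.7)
The plan is to proceed by induction on $n$, exploiting the fact that $\varphi$ is a ring homomorphism and that the Hecke algebra $\mathbb{T}$ satisfies the standard recursion
\[
T_{\mathfrak{l}^{n+1}} = T_{\mathfrak{l}} \cdot T_{\mathfrak{l}^n} - \langle \mathfrak{l} \rangle \text{Nm}(\mathfrak{l}) \cdot T_{\mathfrak{l}^{n-1}}
\]
for all primes $\mathfrak{l} \nmid p$, which follows directly from the Euler factor recorded in Theorem \ref{heckealgrep}. For the $U$-operators, one similarly uses $U_{\pi_i^n} = (U_{\pi_i})^n$ together with $\epsilon^2 = 0$.

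Applying $\varphi$ to the recursion and using the previous proposition along with the easy observation that $\varphi(\langle \mathfrak{l}\rangle \text{Nm}(\mathfrak{l})) = \chi(\text{Frob}_{\mathfrak{l}})$, I split into two cases. If $\chi(\mathfrak{l}) = 1$, the recursion collapses to $a_{n+1} = 2 a_n - a_{n-1}$ with $a_0 = 1$ and $a_1 = 2$, whose unique solution is $a_n = n+1$. If $\chi(\mathfrak{l}) = -1$, the recursion becomes $a_{n+1} = 2\log_p(\text{Nm}(\mathfrak{l}))\epsilon \cdot a_n + a_{n-1}$ with $a_0 = 1$ and $a_1 = 2\log_p(\text{Nm}(\mathfrak{l}))\epsilon$. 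Writing $a_n = b_n + c_n \epsilon$ and separating the parts of degree $0$ and $1$ in $\epsilon$, one finds that $b_{n+1} = b_{n-1}$ alternates between $1$ (for $n$ even) and $0$ (for $n$ odd), while $c_{n+1} = 2 \log_p(\text{Nm}(\mathfrak{l})) b_n + c_{n-1}$ yields $c_n = 0$ for $n$ even and $c_n = (n+1)\log_p(\text{Nm}(\mathfrak{l}))$ for $n$ odd; this is easily verified by induction.

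For the $U$-operator statements, one applies $\varphi$ to $U_{\pi_i^n} = (U_{\pi_i})^n$ and uses Proposition \ref{Upiimg} together with the identity $(1 + x\epsilon)^n = 1 + nx\epsilon$ in $\mathbb{Q}_p[\epsilon]$, treating the sign in the case of $\pi_1$ by factoring out $(-1)^n$ before expanding. None of these steps presents a real obstacle; the main computational care lies in correctly tracking the alternation in $n$ modulo $2$ in the inert case, which is why I would phrase the inductive step as a verification that the claimed closed forms satisfy both the correct initial conditions and the Hecke recursion.
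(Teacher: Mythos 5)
Your proposal is correct and takes essentially the same route as the paper: apply $\varphi$ to the Hecke recursion $T_{\mathfrak{l}^{n+1}} = T_{\mathfrak{l}^n}T_{\mathfrak{l}} - \langle\mathfrak{l}\rangle\mathrm{Nm}(\mathfrak{l})T_{\mathfrak{l}^{n-1}}$, split on $\chi(\mathfrak{l})$, solve the two-term linear recursion using $\epsilon^2 = 0$, and obtain the $U_{\pi_i^n}$ formulas by raising the output of Proposition \ref{Upiimg} to the $n$-th power. Your explicit decomposition $a_n = b_n + c_n\epsilon$ is a minor organisational variation on the paper's direct statement of closed forms, but the underlying argument is identical.
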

\begin{proof}
We remind the reader of the essential recursion relation
\[
T_{\mathfrak{l}^{n+1}} = T_{\mathfrak{l}^n}T_{\mathfrak{l}} - \langle \mathfrak{l} \rangle \text{Nm}(\mathfrak{l}) T_{\mathfrak{l}^{n-1}}
\]
for $\mathfrak{l} \nmid p$, whereas simply $U_{\pi^n} = U_{\pi}^n$ for the places above $p$.
\begin{itemize}
\item If the prime ideal $\mathfrak{l} \nmid p$ splits in the field extension $L / F$, then $\text{Tr}( \tilde{\rho}(\text{Frob}_{\mathfrak{l}}) ) = 2$ and $\chi( \text{Frob}_{\mathfrak{l}} ) = 1$. We obtain the recursion
\[
T(n+1) = 2T(n) - T(n-1) \quad \text{with} \quad L(0) = 1, \quad L(1) = 2.
\]
This is easily solved and yields $L(n) = n + 1$ for all $n \geq 0$.
\item If the prime ideal $\mathfrak{l} \nmid p$ is inert in the field extension $L / F$, then $\text{Tr}( \tilde{\rho}(\text{Frob}_{\mathfrak{l}}) ) = 2\log_p( \text{Nm}(\mathfrak{l}) )\epsilon$ and $\chi( \text{Frob}_{\mathfrak{l}} ) = -1$. We obtain the recursion
\[
L(n+1) = 2\log_p( \text{Nm}(\mathfrak{l}) )\epsilon \cdot L(n) + L(n-1)
\]
with $L(0) = 1$ and $L(1) = 2\log_p( \text{Nm}(\mathfrak{l}) )\epsilon$. Since $\epsilon^2 = 0$, this results in
\[
L(2n) = 1 \quad \text{and} \quad L(2n-1) = 2n\log_p( \text{Nm}(\mathfrak{l}) )\epsilon \quad \text{for all $n \geq 1$}.
\]
\end{itemize}
For the operators $U_{\pi_1}$ and $U_{\pi_2}$, we may simply raise the result from Proposition \ref{Upiimg} to the appropriate power to obtain the claimed formula.
\end{proof}
 
\begin{gevolg}
Let $\mathfrak{l} \nmid p$ be a prime ideal of $F$ and let $n \geq 0$ be an integer. Then
\[
\varphi( T_{\mathfrak{l}^n} ) = \rho( \mathfrak{l}^n ) + \frac{1}{2}(n+1)\big(1 - \chi(\mathfrak{l}^n)\big) \log_p( \emph{Nm}(\mathfrak{l}) ) \epsilon,
\]
where $\rho( I )$ denotes the number of integral ideals of $L$ with norm equal to $I \subset \mathcal{O}_F$. 
\end{gevolg}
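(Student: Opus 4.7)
The plan is to unify the three cases of Proposition \ref{Tellnimgs} by comparing them with the values of the arithmetic function $\rho$ on prime powers. The strategy is purely a repackaging exercise: compute $\rho(\mathfrak{l}^n)$ case by case using the splitting behaviour of $\mathfrak{l}$ in $L/F$, then verify that the proposed formula reproduces the output of Proposition \ref{Tellnimgs} in each case.

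First I would recall that because $\rho$ is multiplicative and $\mathfrak{l}$ is a prime ideal of $F$, the value $\rho(\mathfrak{l}^n)$ is determined by how $\mathfrak{l}$ splits in the quadratic extension $L/F$, which is recorded by the genus character $\chi$. Explicitly, if $\chi(\mathfrak{l}) = 1$ then $\mathfrak{l} = \mathfrak{L}_1 \mathfrak{L}_2$ splits in $L$ and the ideals $\mathfrak{L}_1^{n-i}\mathfrak{L}_2^i$ for $0 \leq i \leq n$ are precisely the $n+1$ ideals of $\mathcal{O}_L$ of norm $\mathfrak{l}^n$, so $\rho(\mathfrak{l}^n) = n+1$. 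If instead $\chi(\mathfrak{l}) = -1$ then $\mathfrak{l}$ is inert in $L$, so $\mathfrak{l}^n$ is a norm from $L$ exactly when $n$ is even (in which case $\mathfrak{L}^{n/2}$ is the unique preimage), giving $\rho(\mathfrak{l}^n) = 1$ for $n$ even and $\rho(\mathfrak{l}^n) = 0$ for $n$ odd. (I noted this computation already appeared inside the proof of Proposition \ref{thm1}.)

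Next I would check the coefficient of the $\epsilon$-term. Since $\chi(\mathfrak{l}^n) = \chi(\mathfrak{l})^n$, the factor $1 - \chi(\mathfrak{l}^n)$ vanishes precisely when $\chi(\mathfrak{l}) = 1$ or when $\chi(\mathfrak{l}) = -1$ and $n$ is even; it equals $2$ exactly when $\chi(\mathfrak{l}) = -1$ and $n$ is odd, in which case the claimed $\epsilon$-coefficient becomes $(n+1)\log_p(\mathrm{Nm}(\mathfrak{l}))$.

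Finally I would tabulate the three cases of Proposition \ref{Tellnimgs} against the claimed formula: if $\chi(\mathfrak{l})=1$ both sides are $n+1$; if $\chi(\mathfrak{l})=-1$ with $n$ even both sides are $1$; if $\chi(\mathfrak{l})=-1$ with $n$ odd the constant term is $0 = \rho(\mathfrak{l}^n)$ and the $\epsilon$-term is $(n+1)\log_p(\mathrm{Nm}(\mathfrak{l}))\epsilon$, matching the proposition. There is no genuine obstacle here; the only subtlety is the bookkeeping ensuring that the single closed-form expression really does interpolate all three branches, and that step is handled by the observation that the factor $\tfrac{1}{2}(1-\chi(\mathfrak{l}^n))$ is the indicator of the ``inert-odd'' case.
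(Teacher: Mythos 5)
Your proof is correct and follows essentially the same route as the paper: compute $\rho(\mathfrak{l}^n)$ in the three cases determined by $\chi(\mathfrak{l})$ and the parity of $n$, observe that $\tfrac{1}{2}(1 - \chi(\mathfrak{l}^n))$ is the indicator of the inert-odd case, and match both parts against Proposition~\ref{Tellnimgs}. No substantive differences.
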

\begin{proof}
Indeed, we have seen before, and it is easy convince oneself, that
\[
\rho( \mathfrak{l}^n ) = 
\begin{cases}
n+1 &\text{if $\chi(\mathfrak{l}) = 1$;} \\
0 &\text{if $\chi(\mathfrak{l}) = -1$ and $n$ is odd;} \\
1 &\text{if $\chi(\mathfrak{l}) = -1$ and $n$ is even.}
\end{cases}
\]
These quantities match the integral parts of $\varphi( T_{\mathfrak{l}^n} )$ that we found above. As for the infinitesimal part, we get no contribution precisely when $\chi( \mathfrak{l}^n ) = 1$, and as such, the expression $1 - \chi( \mathfrak{l}^n )$ is twice the indicator function for the case $\chi( \mathfrak{l} ) = -1$ and $n$ is odd. Combining these two parts yields the corollary.
\end{proof}
\begin{gevolg}\label{phiJcor}
Let $J \subset \mathcal{O}_F$ be any ideal coprime to $p$. Then
\[
\varphi( T_J ) = \rho(J) + \frac{1}{2}\sum_{\mathfrak{l}^n \| J} \Big( (n+1)\big(1 - \chi(\mathfrak{l}^n)\big) \rho(J / \mathfrak{l}^n) \Big) \log_p(\emph{Nm}(\mathfrak{l}))\epsilon.
\]
\end{gevolg}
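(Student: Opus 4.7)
The plan is to reduce to the prime-power case already handled by the preceding corollary via multiplicativity, and then expand modulo $\epsilon^2$. Since $J$ is coprime to $p$, the Hecke algebra $\mathbb{T}$ satisfies the familiar relation $T_J = \prod_{\mathfrak{l}^n \| J} T_{\mathfrak{l}^n}$, where the product is over the prime power factorisation of $J$ and the factors commute. Because $\varphi$ is a ring homomorphism, we then have
\[
\varphi( T_J ) = \prod_{\mathfrak{l}^n \| J} \varphi( T_{\mathfrak{l}^n} ) = \prod_{\mathfrak{l}^n \| J} \left( \rho( \mathfrak{l}^n ) + \tfrac{1}{2}(n+1)\big( 1 - \chi(\mathfrak{l}^n) \big) \log_p(\text{Nm}(\mathfrak{l})) \epsilon \right),
\]
where the second equality uses the previous corollary.

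Next, I would expand this product using the fact that $\epsilon^2 = 0$, so that all cross-terms with two or more $\epsilon$-factors vanish. The constant term of the expansion is $\prod_{\mathfrak{l}^n \| J} \rho(\mathfrak{l}^n)$, which by multiplicativity of $\rho$ (coming from the unique factorisation of ideals in $\mathcal{O}_L$ lying above the prime power factorisation of $J$ in $\mathcal{O}_F$) equals $\rho(J)$. The linear term in $\epsilon$ is a sum over primes $\mathfrak{l} \mid J$ of $\tfrac{1}{2}(n+1)(1-\chi(\mathfrak{l}^n)) \log_p(\text{Nm}(\mathfrak{l}))$ times the product of $\rho(\mathfrak{l}'^{n'})$ over all the remaining $\mathfrak{l}'^{n'} \| J$. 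Again by multiplicativity of $\rho$, this remaining product is $\rho(J / \mathfrak{l}^n)$, yielding exactly the claimed formula.

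There is no serious obstacle here; the only mild care needed is to note that multiplicativity of $\rho$ does apply across distinct primes $\mathfrak{l} \nmid p$ of $F$, which is immediate from the fact that the norm map $\text{Nm}^L_F$ is a homomorphism of monoids of ideals and that ideals of coprime norms in $\mathcal{O}_L$ decompose independently. The hypothesis $J$ coprime to $p$ is used precisely to stay within the operators $T_{\mathfrak{l}^n}$ for which the previous corollary applies and to avoid the (different) behaviour of $U_{\pi_i}$ at the places above $p$ computed in Proposition~\ref{Tellnimgs}.
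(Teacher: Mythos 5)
Your proposal is correct and follows essentially the same route as the paper: define $T_J$ as the product $\prod_{\mathfrak{l}^n \| J} T_{\mathfrak{l}^n}$, apply the ring homomorphism $\varphi$ and the preceding corollary to each factor, expand using $\epsilon^2 = 0$, and invoke multiplicativity of $\rho$ to recombine into $\rho(J)$ and the $\rho(J/\mathfrak{l}^n)$ factors. No discrepancy to note.
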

\begin{proof}
Using the definition
\[
T_J \colonequals \prod_{ \mathfrak{l}^n \| J } T_{\mathfrak{l}^n},
\]
one may write out, keeping in mind that $\epsilon^2 = 0$, that
\begin{align*}
\varphi( T_J ) &= \prod_{ \mathfrak{l}^n \| J } \Big( \rho( \mathfrak{l}^n ) + \frac{1}{2}(n+1)\big(1 - \chi(\mathfrak{l}^n)\big) \log_p( \text{Nm}(\mathfrak{l}) ) \epsilon \Big) \\ 
 &= \prod_{ \mathfrak{l}^n \| J } \rho( \mathfrak{l}^n ) + \frac{1}{2}\sum_{\mathfrak{l}^n \| J} (n+1)\big(1 - \chi(\mathfrak{l}^n)\big)\log_p(\text{Nm}(\mathfrak{l}))\epsilon \prod_{ \mathfrak{r}^m \| J / \mathfrak{l}^n } \rho( \mathfrak{r}^m );
\end{align*}
this yields the corollary after recalling the multiplicativity of $\rho$.
\end{proof}
Let us now define for any integral ideal $J$ coprime to $p$ a positive integer $\mathcal{F}(J)$ by
\[
\log_p(\mathcal{F}(J)) \colonequals \frac{1}{2}\sum_{\mathfrak{l}^n \| J} \Big( (n+1)\big(1 - \chi(\mathfrak{l}^n)\big) \rho(J / \mathfrak{l}^n) \Big) \log_p(\text{Nm}(\mathfrak{l})).
\]
For the sake of brevity and clarity, we will henceforth refer to those prime powers $\mathfrak{l}^n \| J$ with $\chi( \mathfrak{l}^n ) = -1$, as the \emph{special primes} of an ideal $J \subset \mathcal{O}_F$. Note here that $\mathfrak{p}_1$ and $\mathfrak{p}_2$ can also be special primes, if we relax the condition that $J$ be coprime to $p$, as we will soon be forced to do.
 
\begin{prop}\label{calFprops}
Let $J \subset \mathcal{O}_F$ be any integral ideal coprime to $p$. Then
\[
\varphi(T_J) = \rho(J) + \log_p (\mathcal{F}(J)) \epsilon.
\]
In addition, $\mathcal{F}(J)$ is a power of a single rational prime. If $J$ is a primitive ideal, then it even holds that $\mathcal{F}(J) = F( \emph{Nm}(J) )^2$, where $F$ is as defined in the introduction.
\end{prop}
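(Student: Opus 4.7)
The first identity $\varphi(T_J) = \rho(J) + \log_p\mathcal{F}(J)\epsilon$ is immediate from Corollary \ref{phiJcor} combined with the definition of $\mathcal{F}(J)$ given just above the proposition; the real content lies in the structural claims about $\mathcal{F}(J)$.

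For the ``power of a single rational prime'' assertion, I would inspect the sum defining $\log_p\mathcal{F}(J)$ term by term. A summand indexed by $\mathfrak{l}^n \parallel J$ is non-zero only if (i) $\chi(\mathfrak{l}^n) = -1$, so $\mathfrak{l}$ is inert in $L/F$ with $n$ odd (a ``special prime'' of $J$), and (ii) $\rho(J/\mathfrak{l}^n) \neq 0$. By multiplicativity of $\rho$ and the basic fact that $\rho(\mathfrak{s}^k) = 0$ precisely when $\chi(\mathfrak{s}^k) = -1$, the second condition forces $\mathfrak{l}$ to be the unique special prime of $J$. Hence if $J$ has either zero or at least two special primes the whole sum vanishes and $\mathcal{F}(J) = 1$; otherwise a single summand survives, yielding $\mathcal{F}(J) = \mathrm{Nm}(\mathfrak{l})^{(n+1)\rho(J/\mathfrak{l}^n)}$, manifestly a power of the rational prime $\ell$ beneath $\mathfrak{l}$.

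For the primitive case, I would first use primitivity to exclude $\mathfrak{l}$ being inert in $F/\mathbb{Q}$: otherwise $\mathfrak{l} = (\ell)$, so $\mathfrak{l}^n \mid J$ would force $(\ell) \mid J$. This gives $\mathrm{Nm}(\mathfrak{l}) = \ell$ and $v_\ell(m) = n = 2k+1$ with $m = \mathrm{Nm}(J)$, and primitivity likewise ensures $m$ is supported on primes not inert in $F/\mathbb{Q}$, placing us in the regime of the definition $F(m) = \ell^{(k+1)\prod(c_i+1)}$. The key remaining task is to prove $\rho(J/\mathfrak{l}^n) = \prod(c_i+1)$. Factoring $J/\mathfrak{l}^n$ into its $F$-prime factors, each $\mathfrak{r}^c$ is non-special and contributes either $\rho(\mathfrak{r}^c) = c+1$ (if $\chi(\mathfrak{r}) = +1$) or $\rho(\mathfrak{r}^c) = 1$ (if $\chi(\mathfrak{r}) = -1$ with $c$ even). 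Primitivity pairs each rational prime $r \mid m$ with a unique $F$-prime of $J$ above it, and the elementary correspondence ``$r$ splits completely in $L$ iff $r$ splits in $F/\mathbb{Q}$ with its $F$-primes split in $L/F$'' identifies the $\chi(\mathfrak{r}) = +1$ factors with the $(c_i + 1)$ factors of $F(m)$. Multiplicativity of $\rho$ then delivers $\rho(J/\mathfrak{l}^n) = \prod(c_i+1)$, and thus $\mathcal{F}(J) = \ell^{2(k+1)\prod(c_i+1)} = F(m)^2$. In the degenerate cases where $\mathcal{F}(J) = 1$ (zero or at least two special primes), one verifies directly that $F(m) = 1$ as well: either no rational prime satisfies both the odd-multiplicity and $\chi$-condition, or the uniqueness of $\ell$ fails.

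The principal obstacle is the careful bookkeeping matching primes that contribute to $\rho(J/\mathfrak{l}^n)$ with those counted by $\{c_i\}$ in $F(m)$; once reformulated through the character $\chi$ and combined with the constraints imposed by primitivity, this reduces to an elementary local splitting calculation in the tower $\mathbb{Q} \subset F \subset L$.
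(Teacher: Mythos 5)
Your argument is correct and follows essentially the same route as the paper: the first identity is a direct consequence of Corollary \ref{phiJcor} and the definition of $\mathcal{F}(J)$, the prime-power claim comes from observing that at most one special prime of $J$ can contribute to the defining sum (since $\rho(J/\mathfrak{l}^n)=0$ whenever $J$ has a second special prime), and the primitive case reduces to matching the exponent $(n+1)\rho(J/\mathfrak{l}^n)$ against $2(k+1)\prod(c_i+1)$. The only real difference is organizational: the paper disposes of the primitive case by citing the bookkeeping already carried out in the proof of Proposition \ref{thm1}, whereas you re-derive the identity $\rho(J/\mathfrak{l}^n)=\prod(c_i+1)$ from scratch; when you do so, note (as the paper does) that primitivity is being used to ensure every prime of $\mathcal{O}_F$ dividing $J$ lies over a \emph{split} rational prime, since a prime ramified in $F/\mathbb{Q}$ with $\chi=+1$ would contribute a factor $c+1$ to $\rho(J/\mathfrak{l}^n)$ without the corresponding rational prime being counted among the $\{c_i\}$ (as it cannot split completely in $L$).
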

\begin{proof}
The first claim follows directly from Corollary \ref{phiJcor} and the definition of $\mathcal{F}(J)$. For the second, we must observe that the only summands in the expression defining $\mathcal{F}(J)$ that could possibly contribute are those for the special primes of $J$. If there are no such primes, then $\mathcal{F}(J) = 1$. If there is more than special prime, one of which being $\mathfrak{l}^n$, then its contribution will also vanish because all $\rho( J / \mathfrak{l}^n ) = 0$, as the existence of a special prime obstructs an ideal from being a norm from $L$. We conclude that $\mathcal{F}(J) = 1$ in that case too. Only the case in which there is a unique special prime remains, proving that $\mathcal{F}(J)$ is a power of the underlying rational prime $\ell$, as claimed. Finally, our primitivity assumption forces all primes dividing $J$ to split in $F / \mathbb{Q}$ and to all lie above different rational primes, and as such, the prime factorisation of $J$ in $F$ matches the prime factorisation of its norm in $\mathbb{Q}$. In the proof of Proposition \ref{thm1}, we saw that the exact exponent of $\ell$ occuring in the expression $F( \text{Nm}(J) )$ can also be written as $(n+1)\rho(J / \mathfrak{l}^n) / 2$, completing the proof.
\end{proof}
For any integral ideal $J$ of $F$, we let $\widetilde{J}$ denote its $p$-deprivation, which is obtained by removing all factors of $\mathfrak{p}_1$ and $\mathfrak{p}_2$ from the factorisation of $J$. Recall that $\mathfrak{q}_1$ denotes the reflex ideal associated with the embeddings $\alpha_i \colon \mathcal{O}_i \to R_q$. Let $E_{1,\chi}^{(p)}(\epsilon)$ be the Hilbert modular form associated with the morphism $\varphi \colon \mathbb{T} \to \mathbb{Q}[\epsilon]$ computed above, which is an $h_F^+$-tuple of $q$-expansions and consider its component corresponding to the narrow ideal class of $\mathcal{D}_F^{-1}\mathfrak{q}_1$. Finally, let $a_{\nu}$ for any $\nu \in  (\mathcal{D}_F^{-1}\mathfrak{q}_1)^+$ denote one of its coefficients. 
\begin{stelling}
For any $\nu \in (\mathcal{D}_F^{-1}\mathfrak{q}_1)^+$, let $J_{\nu}$ denote the ideal $\nu \mathcal{D}_F \mathfrak{q}_1^{-1}$. Then
\[
a_{\nu} = (-1)^{v_{\mathfrak{p}_1}(\nu)} \big( \rho(\widetilde{J_{\nu}}) + \log_p (\mathcal{F}(\widetilde{J_{\nu}})) \epsilon - \rho(\widetilde{J_{\nu}})\log_p( \nu / \nu')\epsilon \big).
\]
\end{stelling}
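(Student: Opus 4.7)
The plan is to extract the $\nu$-th Fourier coefficient of $E_{1,\chi}^{(p)}(\epsilon)$ using the standard correspondence between Hilbert modular eigenforms and their associated Hecke-eigensystems. For $\nu\in(\mathcal{D}_F^{-1}\mathfrak{q}_1)^+$, I would begin by setting $J_\nu := \nu\mathcal{D}_F\mathfrak{q}_1^{-1}$, an integral $\mathcal{O}_F$-ideal; since both $\mathcal{D}_F$ and $\mathfrak{q}_1$ are coprime to $p$, the valuations $v_{\mathfrak{p}_i}(J_\nu)$ coincide with $v_{\mathfrak{p}_i}(\nu)$, so the $p$-deprivation $\widetilde{J_\nu}$ is well-defined. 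Viewing $\nu$ through $F\otimes\mathbb{Q}_p \cong F_{\mathfrak{p}_1}\times F_{\mathfrak{p}_2}$ as a pair $(\nu_1,\nu_2)$, the Fourier coefficient then factors into local contributions at the two primes above $p$ and the prime-to-$p$ Hecke eigenvalue as
\[
a_\nu \;=\; \mu_{\mathfrak{p}_1}(\nu_1)\,\mu_{\mathfrak{p}_2}(\nu_2)\,\varphi(T_{\widetilde{J_\nu}}),
\]
where $\mu_{\mathfrak{p}_i}$ is the extension to $F_{\mathfrak{p}_i}^\times$ of the local character $\delta_i$ from Theorem \ref{heckealgrep}.

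The next step is to substitute the known values of the three factors. Proposition \ref{calFprops} gives $\varphi(T_{\widetilde{J_\nu}}) = \rho(\widetilde{J_\nu}) + \log_p\mathcal{F}(\widetilde{J_\nu})\,\epsilon$ immediately. For the two local $p$-adic factors, the arithmetic class-field-theoretic normalisation implicit in the proof of Proposition \ref{Upiimg} extends the inertia characters to all of $F_{\mathfrak{p}_i}^\times$ via
\[
\mu_{\mathfrak{p}_1}(x) = \chi(x)\bigl(1 - \log_p(x)\,\epsilon\bigr), \qquad \mu_{\mathfrak{p}_2}(x) = 1 + \log_p(x)\,\epsilon.
\]
Multiplying these on $(\nu_1,\nu_2)$, invoking $\chi(\nu_1) = (-1)^{v_{\mathfrak{p}_1}(\nu)}$, and reducing modulo $\epsilon^2 = 0$ collapses the two local factors into
\[
\mu_{\mathfrak{p}_1}(\nu_1)\,\mu_{\mathfrak{p}_2}(\nu_2) \;=\; (-1)^{v_{\mathfrak{p}_1}(\nu)}\bigl(1 - \log_p(\nu_1/\nu_2)\,\epsilon\bigr).
\]

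The final step will be to identify $\log_p(\nu_1/\nu_2)$ with $\log_p(\nu/\nu')$. The Galois involution $\sigma$ swaps the two factors of $F\otimes\mathbb{Q}_p$, so $\nu' = \sigma(\nu)$ corresponds to $(\nu_2,\nu_1)$; hence the $\mathfrak{p}_1$-component of $\nu/\nu'$ is exactly $\nu_1/\nu_2$, and the Iwasawa logarithm of this component is what is meant by $\log_p(\nu/\nu')$. Multiplying through by $\varphi(T_{\widetilde{J_\nu}})$ then produces the claimed formula directly. The hardest part of this argument is the sign and normalisation bookkeeping when extending the local inertia characters $\mu_{\mathfrak{p}_i}$ from $I_{\mathfrak{p}_i}$ to $F_{\mathfrak{p}_i}^\times$; once the arithmetic class-field-theoretic conventions are chosen as in Proposition \ref{Upiimg}, the remainder of the calculation is a mechanical expansion in $\epsilon$.
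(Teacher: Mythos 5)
Your proposal is correct and follows essentially the same route as the paper: both start from the standard dictionary relating Fourier coefficients of a Hilbert eigenform to its Hecke eigensystem, factor the coefficient into a prime-to-$p$ part $\varphi(T_{\widetilde{J_\nu}})$ and two local factors at $\mathfrak{p}_1,\mathfrak{p}_2$, then substitute Propositions \ref{calFprops} and \ref{Upiimg} and expand modulo $\epsilon^2$. The paper phrases the local factors as $\varphi(U_{\nu_{\mathfrak{p}_i}})$ via an id\`ele $\alpha=\nu\,d\,\pi_{\mathfrak{q}_1}^{-1}$, while you phrase them as evaluations of the extended local characters $\mu_{\mathfrak{p}_i}$; by the identification $\delta_i(x)=U_x$ in Theorem \ref{heckealgrep} these are the same bookkeeping, and the final identification $\nu'=\sigma(\nu)\leftrightarrow(\nu_2,\nu_1)$ that you use is exactly the remark closing the paper's proof.
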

\begin{proof}
To compute $a_{\nu}$ for some $\nu \in (\mathcal{D}_F^{-1}\mathfrak{q}_1)^+$, we must consider the id\`ele $\alpha = \nu d \pi_{\mathfrak{q}_1}^{-1}$, where $\pi_{\mathfrak{q}_1}$ is any id\`ele that equals 1 everywhere away from $\mathfrak{q}_1$, where it is a uniformiser, and where $d \in \mathbb{A}_F^{\times}$ is such that it generates the ideal $\mathcal{D}_F$. Let $\widetilde{\nu}$ denote the id\`ele that is equal to $\nu$ everywhere away from $p$, where it is equal to 1. Then $\nu = \tilde{\nu} \nu_{\mathfrak{p}_1} \nu_{\mathfrak{p}_2}$. We may then compute that
\begin{align*}
\varphi( T_{\alpha} ) &= \varphi( T_{ \widetilde{\nu}d\pi_{\mathfrak{q}_1}^{-1} } ) \varphi( U_{\nu_{\mathfrak{p}_1}} ) \varphi( U_{\nu_{\mathfrak{p}_2}} ) \\
 &= \varphi( T_{ \widetilde{J_{\nu}} } ) \cdot (-1)^{v_{\mathfrak{p}_1}(\nu_{\mathfrak{p}_1})}( 1 - \log_p( \nu_{\mathfrak{p}_1} )\epsilon ) \cdot ( 1 + \log_p( \nu_{\mathfrak{p}_2} )\epsilon ) \\
 &= (-1)^{v_{\mathfrak{p}_1}(\nu_{\mathfrak{p}_1})} \big(\rho(\widetilde{J_{\nu}}) + \log_p (\mathcal{F}(\widetilde{J}_{\nu}) \epsilon \big) \big( 1 - \log_p( \nu_{\mathfrak{p}_1} / \nu_{\mathfrak{p}_2})\epsilon \big) \\
 &= (-1)^{v_{\mathfrak{p}_1}(\nu_{\mathfrak{p}_1})} \big( \rho(\widetilde{J_{\nu}}) + \log_p (\mathcal{F}(\widetilde{J}_{\nu}) \epsilon - \rho(\widetilde{J_{\nu}})\log_p( \nu / \nu')\epsilon \big);
\end{align*}
this is precisely the theorem. We used here that $v_{\mathfrak{p}_2}$ can be identified with $v'_{\mathfrak{p}_1}$, since under the isomorphism $\mathcal{O}_F \otimes \mathbb{Z}_p \cong \mathcal{O}_{F_{\mathfrak{p}_1}} \times \mathcal{O}_{F_{\mathfrak{p}_2}}$, the element $\nu$ is sent to $(\nu, \sigma(\nu) ) = (\nu, \nu')$. 
\end{proof}

\subsection{Proof of Theorem \ref{giamconj2}}
We take the diagonal restriction of the form obtained in the previous section;
\[
\Delta E_{1,\chi}^{(p)}(\epsilon) = \sum_{n = 1}^{\infty} \Big( \sum_{\substack{\nu \in (\mathcal{D}_F^{-1}\mathfrak{q}_1)^+ \\ \text{tr}(\nu) = n}} a_{\nu} \Big) q^n.
\]
Taking its derivative with respect to the weight amounts to considering only the $\epsilon$-part, which yields
\[
\frac{d}{d\epsilon} \Delta E_{1,\chi}^{(p)}(\epsilon) = \sum_{n = 1}^{\infty} \Big( \sum_{\substack{\nu \in (\mathcal{D}_F^{-1}\mathfrak{q}_1)^+ \\ \text{tr}(\nu) = n}} (-1)^{v_{\mathfrak{p}_1}(\nu)} \big(\log_p( \mathcal{F}( \widetilde{J_{\nu}})) - \rho( \widetilde{J_{\nu}} )\log_p( \nu / \nu' ) \big) \Big)  q^n.
\]
\begin{prop}\label{s2Nprop}
The object $\frac{d}{d\epsilon} \Delta E_{1,\chi}^{(p)}(\epsilon)$ is an overconvergent $p$-adic modular form of weight 2. Its ordinary projection $e^{\emph{ord}}\left( \frac{d}{d\epsilon} \Delta E_{1,\chi}^{(p)}(\epsilon) \right)$ is a classical modular form in $S_2(\Gamma_0(N))$. 
\end{prop}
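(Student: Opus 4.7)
The plan is to string together three standard inputs: (i) the $R=T$ theorem (Theorem \ref{R=T}), which promotes $E_{1,\chi}^{(p)}(\epsilon)$ from a formal $q$-expansion to a genuine $p$-adic Hilbert cuspform; (ii) the anti-parallel weight computation already carried out at the start of this section, which guarantees that the diagonal restriction stays in a fixed classical weight; and (iii) the Hida--Coleman classicality theorem for ordinary overconvergent cuspforms of weight $\geq 2$.

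First I would invoke Theorem \ref{R=T} to conclude that the homomorphism $\varphi \colon \mathbb{T} \to \mathbb{Q}_p[\epsilon]$ extracted from $\widetilde{\rho}$ in the previous subsection corresponds to a $\mathbb{Q}_p[\epsilon]$-valued point of Hida's nearly ordinary cuspidal Hecke algebra. Combined with the Hida control theorem from \cite{JAMI}, such a point parametrises a genuine nearly ordinary $p$-adic Hilbert cuspform, so that the formal $q$-expansion with coefficients $a_\nu$ computed above is the $q$-expansion of a genuine $\mathbb{Q}_p[\epsilon]$-valued $p$-adic cuspform $E_{1,\chi}^{(p)}(\epsilon)$ over $F$ deforming $E_{1,\chi}^{(p)}$ along an infinitesimal branch of the Hida family.

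Next, the diagonal restriction operator $\Delta$ preserves overconvergent $p$-adic modularity, with the new weight character being the pullback of the Hilbert weight character along the diagonal embedding $\mathbb{Z}_p^{\times} \hookrightarrow (\mathcal{O}_F \otimes \mathbb{Z}_p)^{\times}$. The computation at the beginning of the present subsection showed this pullback to be the constant character identically equal to $-1$, which forces the entire family $\Delta E_{1,\chi}^{(p)}(\epsilon)$ to live in a single weight, namely the weight of its specialisation at $\epsilon = 0$, which is $2$. Differentiating with respect to $\epsilon$ therefore produces an overconvergent $p$-adic cuspform of weight exactly $2$, cuspidal and ordinary because it arises from an element of the cuspidal nearly ordinary Hecke algebra.

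Finally I would apply Hida's ordinary projector $e^{\text{ord}}$ and invoke the classicality theorem for ordinary overconvergent $p$-adic cuspforms of integer weight $k \geq 2$ (due to Hida, extended by Coleman) to conclude that the resulting form is classical, hence lies in $S_2(\Gamma_0(N))$ with $N = pq$. I expect the main book-keeping obstacle to be identifying the precise level: the factor $p$ arises from the two $p$-stabilisations $(1-V_{\mathfrak{p}_1})$ and $(1+V_{\mathfrak{p}_2})$, while the factor $q$ arises from restricting to the narrow class component of $\mathcal{D}_F^{-1}\mathfrak{q}_1$ after diagonal restriction. Both of these level computations are directly parallel to the ones performed in Section 3 of \cite{DPV2}, and so reduce to a local check that I would import from there.
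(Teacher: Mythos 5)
Your proposal follows essentially the same route as the paper's proof: anti-parallel weight (so the diagonal restriction stays in weight $2$), differentiation in $\epsilon$, ordinary projection, classicality, and then the tame level from the narrow class component $\mathcal{D}_F^{-1}\mathfrak{q}_1$ together with the $p$ from the $p$-stabilisation. Your framing makes explicit what the paper leaves implicit, namely that Theorem \ref{R=T} is what guarantees $\varphi$ corresponds to a point of $\mathbb{T}$ and hence to a genuine $p$-adic cuspform, which is a useful clarification.

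The one step you gloss over is the assertion that the $\epsilon$-derivative of the constant-weight family is itself an overconvergent $p$-adic modular form. This is not completely automatic: one has to subtract the $\epsilon = 0$ specialisation (itself an overconvergent form of weight $2$) from the family and then argue that the resulting $\epsilon$-linear term is overconvergent. The paper cites Lemma 2.1 of \cite{DPV1} precisely for this, and you should do the same rather than treating "differentiating therefore produces an overconvergent form" as obvious. The rest of your argument -- Coleman's classicality theorem for slope $0 < k - 1 = 1$, cuspidality because $E_{1,\chi}^{(p)}$ is a $p$-adic cuspform, and the level bookkeeping -- matches the paper exactly.
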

\begin{proof}
We have seen before that the weight character for $\Delta E_{1,\chi}^{(p)}(\epsilon)$ is constant, and because for the $\epsilon = 0$-specialisation its weight is simply $1 + 1 = 2$, the result will be of constant weight 2. By subtracting a constant family, Lemma 2.1 in \cite{DPV1} yields that its derivative is also an overconvergent $p$-adic modular form of weight 2. By Coleman's Classicality Theorem, which can be found as Theorem 6.1 in \cite{coleman}, its ordinary projection is of slope $0 < 1$ and hence classical. Further, it is a cusp form because $E_{1,\chi}^{(p)}$ is a $p$-adic cusp form. For the level, since we took the ideal $\mathcal{D}_F\mathfrak{q}_1^{-1}$, the tame level of our diagonal restriction will be exactly $q$. The level of its ordinary projection is then obtained by multiplying its tame level by $p$. Combining all of this, we obtain an object in $S_2(\Gamma_0(N))$, as claimed.
\end{proof}
Explicitly, if we apply the operator $e^{\text{ord}}$, we obtain, using that $n!$ is even for $n \geq 2$ and the fact that we have $p$-adic convergence for all even terms by Theorem \ref{thetarw},
\begin{align*}
a_1\left( e^{\text{ord}} \left( \frac{d}{d\epsilon} \Delta E_{1,\chi}^{(p)}(\epsilon) \right) \right) &= \lim_{n \to \infty} a_{p^{n!}} \left( \frac{d}{d\epsilon} \Delta E_{1,\chi}^{(p)}(\epsilon) \right) \\ 
 &= \lim_{n \to \infty} \sum_{\substack{\nu \in (\mathcal{D}_F^{-1}\mathfrak{q}_1)^+ \\ \text{tr}(\nu) = p^{2n}}} (-1)^{v_{\mathfrak{p}_1}(\nu)} \big(\log_p( \mathcal{F}( \widetilde{J_{\nu}})) - \rho( \widetilde{J_{\nu}} )\log_p( \nu / \nu' ) \big).
\end{align*}
Now define
\begin{align*}
A &\colonequals \lim_{n \to \infty} \sum_{\substack{\nu \in (\mathcal{D}_F^{-1}\mathfrak{q}_1)^+ \\ \text{tr}(\nu) = p^{2n}}} (-1)^{v_{\mathfrak{p}_1}(\nu)} \rho( \widetilde{J_{\nu}} )\log_p( \nu / \nu' ); \\
B &\colonequals \lim_{n \to \infty} \sum_{\substack{\nu \in (\mathcal{D}_F^{-1}\mathfrak{q}_1)^+ \\ \text{tr}(\nu) = p^{2n}}} (-1)^{v_{\mathfrak{p}_1}(\nu)} \log_p( \mathcal{F}( \widetilde{J_{\nu}}) ).
\end{align*}
For the sake of brevity, we extend the definition of $v_p$ to $F$ by setting it equal to $v_{\mathfrak{p}_1} \times v_{\mathfrak{p}_2}$.
\begin{prop}
It holds that
\[
A = \frac{2}{w_1w_2} \log_p \Theta(D_1, D_2) - \frac{2}{w_1w_2} \log_p \Theta_p(D_1, D_2).
\]
\end{prop}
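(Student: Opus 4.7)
My plan is to apply Theorem \ref{thetarw} and Corollary \ref{thetaprw} to combine the right-hand side into the single alternating limit
\[
\frac{2}{w_1 w_2}\big(\log_p \Theta(D_1,D_2) - \log_p \Theta_p(D_1,D_2)\big) = \lim_{n \to \infty}\bigg(\sum_{\substack{\nu \in (\mathcal{D}_F^{-1}\mathfrak{q}_1)^+ \\ \text{tr}(\nu) = p^{2n}}} - \sum_{\substack{\nu \in (\mathcal{D}_F^{-1}\mathfrak{q}_1)^+ \\ \text{tr}(\nu) = p^{2n+1}}}\bigg)\log_p(\nu/\nu')\,\rho(J_\nu),
\]
and then to match this, term by term, with the definition of $A$ by extracting the $p$-adic structure of the summands. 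The principal ingredient is multiplicativity of $\rho$ combined with the assumption that $\mathfrak{p}_1$ and $\mathfrak{p}_2$ are both inert in $L/F$, which forces $\rho(\mathfrak{p}_i^k) = 1$ for $k$ even and $0$ otherwise; this yields the clean factorisation $\rho(J_\nu) = \rho(\widetilde{J_\nu}) \cdot \mathbbm{1}_{v_{\mathfrak{p}_1}(\nu),\,v_{\mathfrak{p}_2}(\nu)\text{ both even}}$, bringing the factor $\rho(\widetilde{J_\nu})$ that appears in $A$ into direct comparison.

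Next I would partition each $\nu$ as $\nu = p^m\mu$ with $m = \min(v_{\mathfrak{p}_1}(\nu), v_{\mathfrak{p}_2}(\nu))$, so that $\min(v_{\mathfrak{p}_1}(\mu), v_{\mathfrak{p}_2}(\mu)) = 0$. Lemma \ref{pnnuprop} then forces exactly two alternatives: either $\mu$ is coprime to $p$ with $\text{tr}(\mu) = p^k$ for some $k \geq 0$ (\emph{Case A}), or $\mu$ has trace $1$ with $v_{\mathfrak{p}_i}(\mu) > 0$ for exactly one $i$ (\emph{Case B}). Since $\log_p(\nu/\nu') = \log_p(\mu/\mu')$ and $\widetilde{J_\nu} = \widetilde{J_\mu}$, both sums reduce to indexings by $\mu$. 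In Case A the sign $(-1)^{v_{\mathfrak{p}_1}(\nu)}$ equals $(-1)^m = (-1)^k$, which is precisely the alternating sign appearing on the right-hand side after imposing the $m$-even condition needed for $\rho(J_\nu) \neq 0$; thus the Case A contributions match identically.

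The main obstacle is the Case B cancellation: a bookkeeping reveals that the remaining discrepancy $A - \frac{2}{w_1 w_2}(\log_p\Theta - \log_p\Theta_p)$ reduces to the signed sum
\[
\sum_{\substack{\mu,\,\text{tr}(\mu)=1 \\ v_{\mathfrak{p}_2}(\mu)\,\text{odd}>0 \\ v_{\mathfrak{p}_1}(\mu) = 0}} \rho(\widetilde{J_\mu})\log_p(\mu/\mu')\; - \sum_{\substack{\mu,\,\text{tr}(\mu) = 1 \\ v_{\mathfrak{p}_1}(\mu)\,\text{odd}>0 \\ v_{\mathfrak{p}_2}(\mu) = 0}} \rho(\widetilde{J_\mu})\log_p(\mu/\mu'),
\]
which must be shown to vanish. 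A short computation with the genus character yields $\chi(\widetilde{J_\mu}) = (-1)^{l+1}\chi(\mathcal{D}_F)$, so whenever $\chi(\mathcal{D}_F) = -1$ every $\rho(\widetilde{J_\mu})$ in the residual is automatically zero. In the remaining case $\chi(\mathcal{D}_F) = +1$, I would pair up the two summands using the Galois involution $\sigma \in \text{Gal}(F/\mathbb{Q})$, which swaps $\mathfrak{p}_1 \leftrightarrow \mathfrak{p}_2$, combined with the antisymmetry $\log_p(\sigma(\mu)/\mu) = -\log_p(\mu/\mu')$ and the Galois invariance of $\rho$. The hardest point is reconciling the two fractional ideal classes $\mathcal{D}_F^{-1}\mathfrak{q}_1$ and $\mathcal{D}_F^{-1}\mathfrak{q}_2$ under this involution, which I would handle by appealing to the fact that the formula of Theorem \ref{thetarw} is itself invariant under the swap $\alpha_1 \leftrightarrow \alpha_2$ of embeddings (hence $\mathfrak{q}_1 \leftrightarrow \mathfrak{q}_2$), so that the class-group discrepancy does not obstruct the pairing.
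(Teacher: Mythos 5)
Your bookkeeping is essentially correct, but your proof contains an unnecessary and incompletely-resolved branch that stems from missing a key fact. In the paper's setting, the genus character satisfies $\chi(\mathcal{D}_F)\chi(\mathfrak{q}_1) = (-1)(-1) = 1$, i.e. $\chi(\mathcal{D}_F) = -1$, since $\chi(\mathfrak{q}_1) = -1$ by hypothesis and the product must be $+1$ for any $J_\nu = \nu\mathcal{D}_F\mathfrak{q}_1^{-1}$ with totally positive $\nu$ to be a potential norm from $L$ (otherwise $\rho(J_\nu) \equiv 0$ and the entire theory is vacuous). Your ``remaining case $\chi(\mathcal{D}_F) = +1$'' therefore cannot occur, so the Galois-involution pairing you sketch there is both unneeded and, as you yourself flag, genuinely problematic: swapping $\mathfrak{p}_1 \leftrightarrow \mathfrak{p}_2$ via $\sigma$ takes $\mathcal{D}_F^{-1}\mathfrak{q}_1$ to $\mathcal{D}_F^{-1}\mathfrak{q}_2$, and neither Theorem \ref{thetarw} nor anything proved in this section gives you the needed identity for the $\mathfrak{q}_2$-indexed sum. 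Your observation that the residual vanishes because $\chi(\widetilde{J_\mu}) = -1$ is correct once one has $\chi(\mathcal{D}_F) = -1$ in hand, so closing this gap completes your argument.

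It is also worth noting that the paper's route is substantially shorter and avoids the term-by-term $p$-content partition entirely. The paper observes \emph{at the outset} that $\chi(J_\nu) = 1$ forces $\rho(\widetilde{J_\nu}) = 0$ whenever $v_{\mathfrak{p}_1}(\nu)$ and $v_{\mathfrak{p}_2}(\nu)$ have different parities, then splits $A$ directly into the parity-$(0,0)$ and parity-$(1,1)$ contributions. In the $(0,0)$ case $\rho(\widetilde{J_\nu}) = \rho(J_\nu)$ and the constraint can be dropped (as $\rho(J_\nu) = 0$ otherwise), giving $\tfrac{2}{w_1w_2}\log_p\Theta$ by Theorem \ref{thetarw}; in the $(1,1)$ case one substitutes $\nu \to \nu/p$ and applies Corollary \ref{thetaprw}. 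This makes the residual you carefully isolate disappear before it arises: the parity-mismatch terms are killed by the genus-character observation immediately, rather than being carried through a Case A/Case B decomposition and then discarded. Your approach arrives at the same conclusion, but only after more elaborate matching, and it buries the central structural input (the genus-character vanishing) in the last step instead of the first.
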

\begin{proof}
We note that since $\chi( J_{\nu} ) = \chi( \mathcal{D}_F ) \chi( \mathfrak{q}_1 ) = (-1)^2 = 1$, the partities of $v_{\mathfrak{p}_1}(J_{\nu})$ and $v_{\mathfrak{p}_2}(J_{\nu})$ being different means that $\chi( \widetilde{J_{\nu}} ) = -1$, and as such, $\rho( \widetilde{J_{\nu}} ) = 0$. Hence we may write
\[
A = \lim_{n \to \infty} \sum_{\substack{\nu \in (\mathcal{D}_F^{-1}\mathfrak{q}_1)^+ \\ \text{tr}(\nu) = p^{2n} \\ v_p(\nu) \equiv (0,0)}}\rho( \widetilde{J_{\nu}} )\log_p( \nu / \nu' ) - \lim_{n \to \infty} \sum_{\substack{\nu \in (\mathcal{D}_F^{-1}\mathfrak{q}_1)^+ \\ \text{tr}(\nu) = p^{2n} \\ v_p(\nu) \equiv (1,1)}} \rho( \widetilde{J_{\nu}} )\log_p( \nu / \nu' ),
\]
the congruences being mod 2. For the first term, one may observe that $\rho( \widetilde{J_{\nu}} ) = \rho( J_{\nu} )$. In fact, $\rho( J_{\nu} ) = 0$ unless $v_p(\nu) \equiv (0,0) \mod 2$, and as a result, we may even write the first term as
\[
\lim_{n \to \infty} \sum_{\substack{\nu \in (\mathcal{D}_F^{-1}\mathfrak{q}_1)^+ \\ \text{tr}(\nu) = p^{2n}}}\rho( J_{\nu} )\log_p( \nu / \nu' ) = \frac{2}{w_1w_2} \log_p \Theta(D_1,D_2),
\]
where we appealed to Theorem \ref{thetarw}. For the second term, one may observe that $p \mid \nu$, and as such, we may make that substitution, further using that $\rho( \widetilde{ J_{\nu} } ) = \rho( J_{p\nu} )$ in this case, to obtain
\[
\lim_{n \to \infty} \sum_{\substack{\nu \in (\mathcal{D}_F^{-1}\mathfrak{q}_1)^+ \\ \text{tr}(\nu) = p^{2n} \\ v_p(\nu) \equiv (1,1)}} \rho( \widetilde{J_{\nu}} )\log_p( \nu / \nu' ) = \lim_{n \to \infty} \sum_{\substack{\nu \in (\mathcal{D}_F^{-1}\mathfrak{q}_1)^+ \\ \text{tr}(\nu) = p^{2n-1}}} \rho( J_{\nu} )\log_p( \nu / \nu' ) = \frac{2}{w_1w_2}\log_p \Theta_p(D_1, D_2),
\]
where we appealed to Corollary \ref{thetaprw} and where we were allowed to omit the bottom subscript for the same reason as before. 
\end{proof}
\newpage
\begin{prop}
It holds that
\[
B = \sum_{\emph{Nm}(\mathfrak{a}) = N} \sum_{\substack{\nu \in (\mathcal{D}_F^{-1}\mathfrak{a})^+ \\ \emph{tr}(\nu) = 1}} \delta(\mathfrak{a}) \log_p( F(\emph{Nm}(J_{\nu}) / p) ).
\]
\end{prop}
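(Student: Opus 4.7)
My plan is to rewrite $B$ as a finite sum over pairs $(\mathfrak{a}, \mu)$ with $\mathfrak{a} \in \mathcal{I}$ of norm $N$ and $\mu \in (\mathcal{D}_F^{-1}\mathfrak{a})^+$ of trace $1$. The essential input is Proposition \ref{calFprops}: $\log_p\mathcal{F}(\widetilde{J_\nu})$ vanishes unless $\widetilde{J_\nu}$ is primitive and admits a unique special prime, in which case $\log_p\mathcal{F}(\widetilde{J_\nu}) = 2\log_p F(\text{Nm}(\widetilde{J_\nu}))$. A short case analysis of the primes of $F$ with $\chi = -1$ that can occur in $\widetilde{J_\nu}$ (which is coprime to $p$ by construction) shows that any contributing $\widetilde{J_\nu}$ must take the form $\mu\mathcal{D}_F\mathfrak{a}^{-1}$ for some $\mathfrak{a} \in \mathcal{I}$ containing $\mathfrak{q}_1$ and some $\mu \in (\mathcal{D}_F^{-1}\mathfrak{a})^+$ of trace $1$ with $v_{\mathfrak{p}_i}(\mu) = 1$ for the unique $\mathfrak{p}_i \mid \mathfrak{a}$.

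I then parametrize the $\nu$'s lifting each such $\mu$ via the factorisation $\nu\mathcal{D}_F\mathfrak{q}_1^{-1} = \mathfrak{p}_1^a\mathfrak{p}_2^b \widetilde{J_\nu}$ for $a, b \geq 0$, where the trace condition $\text{tr}(\nu) = p^{2n}$ selects the appropriate generator up to totally positive units. The leading contribution comes from the ``minimal'' lift $\nu = p^{2n}\mu$, for which $\widetilde{J_\nu} = J_\mu = \mu\mathcal{D}_F\mathfrak{a}^{-1}$ (since the $p^{2n}$-factor is erased by the $p$-deprivation) and $\text{Nm}(\widetilde{J_\nu}) = \text{Nm}(J_\mu^{\text{RHS}})/p$ with $J_\mu^{\text{RHS}} := \mu\mathcal{D}_F\mathfrak{q}_j^{-1}$ the natural convention on the RHS (chosen so that $\text{Nm}(J_\mu^{\text{RHS}})/p$ recovers the expression $(D-x^2)/(4N)$ familiar from Theorem \ref{giamconj2}); non-minimal lifts contribute terms whose $p$-adic valuations tend to $+\infty$ and therefore vanish in the limit. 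The factor of $2$ in $\log_p \mathcal{F} = 2\log_p F$ is absorbed by the Galois-conjugate pairing on the RHS: each LHS $\nu \in \mathcal{D}_F^{-1}\mathfrak{q}_1$ corresponds to the pair $\{(\mathfrak{a}, \mu), (\sigma(\mathfrak{a}), \sigma(\mu))\}$ on the RHS (where $\sigma(\mathfrak{a})$ contains $\mathfrak{q}_2$ and so never appears as an LHS index), contributing $2\delta(\mathfrak{a})\log_p F$ per pair by Galois invariance of $\delta$ and the norm.

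The main obstacle is the sign matching across the $(a,b)$-parametrization of lifts: verifying that $(-1)^{v_{\mathfrak{p}_1}(\nu)} = (-1)^{v_{\mathfrak{p}_1}(\mu)}$ aligns with $\delta(\mathfrak{a})$ in the two cases $\mathfrak{a} = \mathfrak{p}_1\mathfrak{q}_1$ (where $v_{\mathfrak{p}_1}(\mu) = 1$) and $\mathfrak{a} = \mathfrak{p}_2\mathfrak{q}_1$ (where $v_{\mathfrak{p}_1}(\mu) = 0$) requires careful inspection, and one must ensure that any universal sign discrepancy is absorbed into the $\pm$ convention present in Theorem \ref{giamconj2}. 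A subsidiary technical point is the $p$-adic convergence of the limit as $n \to \infty$, which follows automatically because all non-minimal lifts have $p$-adic valuations tending to $+\infty$, so the partial sums stabilize $p$-adically to the claimed finite sum on the RHS.
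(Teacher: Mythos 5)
Your overall strategy matches the paper's: isolate the $\nu$ for which $\widetilde{J_\nu}$ has a unique special prime, reduce to trace-$1$ elements $\mu$, use $\mathcal{F} = F^2$ for the factor of two via Galois conjugation, and absorb the residual sign into the $\pm$ of Theorem \ref{giamconj2}. But there is a genuine gap in how you pass from the limit to a finite sum. You claim that ``non-minimal lifts contribute terms whose $p$-adic valuations tend to $+\infty$ and therefore vanish in the limit,'' and you repeat this in your convergence remark. This is wrong on two counts. First, the summand is $(-1)^{v_{\mathfrak{p}_1}(\nu)}\log_p(\mathcal{F}(\widetilde{J_\nu}))$, where $\mathcal{F}(\widetilde{J_\nu})$ is a power of a rational prime coprime to $p$; the $p$-adic logarithm of such numbers is bounded away from $0$ in valuation, so there is no decay mechanism. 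Second, and more fundamentally, there \emph{are} no non-minimal lifts: the paper invokes Lemma \ref{pnnuprop}, which says that any $\nu$ of trace $p^{2n}$ with $v_p(\mathrm{Nm}(\nu))$ odd satisfies $p^{2n} \mid \nu$, so every contributing $\nu$ is $p^{2n}\mu$ for a unique $\mu$ of trace $1$, and the summand $(-1)^{v_{\mathfrak{p}_1}(J_{p^{2n}\mu})}\log_p\mathcal{F}(\widetilde{J_{p^{2n}\mu}}) = (-1)^{v_{\mathfrak{p}_1}(J_\mu)}\log_p\mathcal{F}(\widetilde{J_\mu})$ is literally independent of $n$. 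The limit is therefore the constant value of its terms, not a genuine $p$-adic limit. Without Lemma \ref{pnnuprop}, your $(a,b)$-parametrization of lifts leaves open the possibility of extra contributing $\nu$ with $a, b$ not both $\geq 2n$, and nothing in your argument rules these out or controls their contribution.

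A secondary imprecision: you read Proposition \ref{calFprops} as saying $\log_p\mathcal{F}(\widetilde{J_\nu})$ vanishes unless $\widetilde{J_\nu}$ is primitive. That proposition only says $\mathcal{F}(J)$ is a prime power in general, and that it equals $F(\mathrm{Nm}(J))^2$ \emph{when} $J$ is primitive; it does not assert vanishing for non-primitive $J$. Primitivity of $\widetilde{J_\nu}$ is a separate elementary fact (the generator $\nu\sqrt{D}$ has the form $(x+\sqrt{D})/2$, so no rational prime divides $J_\nu$), which the paper checks explicitly and which you should too, since without it you cannot apply the formula $\mathcal{F} = F(\mathrm{Nm})^2$ at all. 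You also need the paper's observation that $F(\mathrm{Nm}(\widetilde{J_\nu})) = F(\mathrm{Nm}(J_\nu)/p)$, which rests on $v_p(\mathrm{Nm}(J_\nu))$ being odd so that $\mathrm{Nm}(J_\nu)/p$ has $p$ to an even power, where it contributes nothing to $F$; this step is implicit in your $J_\mu^{\text{RHS}}$ normalization but deserves to be made explicit.
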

\begin{proof}
First note that, by Lemma \ref{calFprops}, it holds that $\mathcal{F}(\widetilde{J_{\nu}}) = 1$ as soon as $\chi( \widetilde{ J_{\nu} } ) = 1$, because this implies that the number of special primes is even, and thus in particular not one. Since $\chi( J_{\nu} ) = 1$, it follows that precisely one of $\mathfrak{p}_1$ and $\mathfrak{p}_2$ must be special to get a non-zero contribution to the sum. Since $\nu$ contains the full $p$-part of $J_{\nu}$, this implies that $v_p( \text{Nm}(\nu) )$ must be odd. By Lemma \ref{pnnuprop}, it must hold that $p^{2n} \mid \nu$, where $\text{tr}(\nu) = p^{2n}$. In other words, $\nu = p^{2n} \mu$, where $\text{tr}(\mu) = 1$. It follows that all contributing summands to the $n$-th term in the limit are lifted from those $\nu$ of unit trace. In fact, since $\widetilde{J_{p^{2n}\nu}} = \widetilde{J_{\nu}}$ and $v_p( J_{p^{2n}\nu} ) \equiv v_p( J_{\nu} ) \mod 2$, each summand induced by some $\nu$ of unit trace is independent of the prime exponent $n$. It follows that the limit is equal to its first term;
\[
B = \sum_{\substack{\nu \in (\mathcal{D}_F^{-1}\mathfrak{q}_1)^+ \\ \text{tr}(\nu) = 1 \\ v_p(\text{Nm}(\nu)) \text{ odd}}} (-1)^{v_{\mathfrak{p}_1}( J_{\nu} )} \log_p( \mathcal{F}( \widetilde{J_{\nu}}) ).
\]
Note that the ideal $\widetilde{J_{\nu}}$ is always primitive, because the element $\nu \sqrt{D}$ is of the form $(x + \sqrt{D})/2$ and as such, no rational prime can divide it. As it is prime to $p$ by definition, $\mathcal{F}( \widetilde{J_{\nu}} ) = F( \text{Nm}(\widetilde{J_{\nu}}) )^2$ in all cases by Proposition \ref{calFprops}. Note further that $v_p( \text{Nm}(J_{\nu}) )$ must be odd, so $v_p( \text{Nm}(J_{\nu}) / p )$ will be even. As such, we have $F( \text{Nm}(\widetilde{J_{\nu}}) ) = F( \text{Nm}(J_{\nu}) / p )$. Further note that if $v_p( \text{Nm}(J_{\nu}) )$ were even, dividing by $p$ would make $p$ a special prime of $\text{Nm}(J_{\nu}) / p$. As such, its $F$-value must be a power of $p$, of which the $p$-adic logarithm vanishes. Since contributing $\nu$ must contain a factor of $\mathfrak{p}_1$ or $\mathfrak{p}_2$, we have proved that
\[
B =  2\sum_{\substack{\nu \in (\mathcal{D}_F^{-1}\mathfrak{p}_1\mathfrak{q}_1)^+ \\
\text{ or } \nu \in (\mathcal{D}_F^{-1}\mathfrak{p}_2\mathfrak{q}_1)^+ \\ \text{tr}(\nu) = 1}} (-1)^{v_{\mathfrak{p}_1}( J_{\nu} )} \log_p( F(\text{Nm}(J_{\nu}) / p) ).
\]
Adding in those $\nu \in (\mathcal{D}_F^{-1}\mathfrak{q}_2)^+$ is the same as adding a term for every $\nu' \in (\mathcal{D}_F^{-1}\mathfrak{q}_1)^+$. For every non-zero term in the sum, we have that $v_{\mathfrak{p}_1}(J_{\nu'}) \not\equiv v_{\mathfrak{p}_1}(J_{\nu}) \mod 2$ and $\text{Nm}(J_{\nu'}) = \text{Nm}(J_{\nu})$. In other words, the summands for $\nu$ and $\nu'$ would agree up to a sign measured by both $\delta(\mathfrak{a})$ and $(-1)^{v_{\mathfrak{p}_1}(\nu)}$.
\end{proof}
\begin{proof}(of Theorem \ref{giamconj2})
By Proposition \ref{s2Nprop}, we have $e^{\text{ord}}\left( \frac{d}{d\epsilon} \Delta E_{1,\chi}^{(p)}(\epsilon) \right) \in S_2(\Gamma_0(N))$; we claim that it is even identically zero. Indeed, for $N \in \{ 6, 10 \}$, this space is zero. For $N = 22$, this space is 2-dimensional, containing two oldforms. From our explicit descriptions of the coefficients, it is not difficult to deduce that $a_{p^k} = (-1)^k a_1$; it is a quick check that for $p = 2, 11$, no such cuspforms exist in $S_2(\Gamma_0(22))$. We conclude that, in particular, $a_1( e^{\text{ord}}\left( \frac{d}{d\epsilon} \Delta E_{1,\chi}^p(\epsilon) \right) ) = 0$. This means that $A + B = 0$; written out, we find
\[
\frac{2}{w_1w_2} \left( \log_p \Theta(D_1, D_2) - \log_p \Theta_p(D_1, D_2) \right) = \sum_{\text{Nm}(a) = N} \sum_{\substack{\nu \in (\mathcal{D}_F^{-1}a)^+ \\ \text{tr}(\nu) = 1}} \delta(a) \log_p( F(\text{Nm}(J_{\nu}) / p) ).
\]
Finally, for $\nu = (x + \sqrt{D})/2\sqrt{D}$, it holds that
\[
\text{Nm}(J_{\nu}) / p = \frac{D-x^2}{4N}.
\]
This is precisely Theorem \ref{giamconj2} up to a sign and up to powers of $p$. Since Proposition \ref{giamatp} took care of the powers of $p$, this completes the proof.
\end{proof}

\newpage

\renewcommand{\baselinestretch}{0.85}\normalsize
\bibliographystyle{alpha}
\bibliography{cmvalsbiblio}
\renewcommand{\baselinestretch}{1}\normalsize

\end{document}